\documentclass[11pt,twoside]{amsart}

\makeatletter
\@namedef{subjclassname@2020}{%
  \textup{2020} Mathematics Subject Classification}
\makeatother

\DeclareRobustCommand{\SkipTocEntry}[4]{}
\makeatletter
\def\@tocline#1#2#3#4#5#6#7{\relax
  \ifnum #1>\c@tocdepth 
  \else
    \par \addpenalty\@secpenalty\addvspace{#2}%
    \begingroup \hyphenpenalty\@M
    \@ifempty{#4}{%
      \@tempdima\csname r@tocindent\number#1\endcsname\relax
    }{%
      \@tempdima#4\relax
    }%
    \parindent\z@ \leftskip#3\relax \advance\leftskip\@tempdima\relax
    \rightskip\@pnumwidth plus4em \parfillskip-\@pnumwidth
    #5\leavevmode\hskip-\@tempdima
      \ifcase #1
       \or\or \hskip 1em \or \hskip 2em \else \hskip 3em \fi%
      #6\nobreak\relax
    \dotfill\hbox to\@pnumwidth{\@tocpagenum{#7}}\par
    \nobreak
    \endgroup
  \fi}
\makeatother

\setcounter{tocdepth}{1}

\makeatletter
\def\subsubsection{\@startsection{subsubsection}{3}%
  \z@{.5\linespacing\@plus.7\linespacing}{-.5em}%
  {\normalfont\bfseries}}
 \makeatother

\usepackage[all]{xy}
\usepackage{amsfonts}
\usepackage{amsmath}
\usepackage{amssymb}
\usepackage{amsthm}
\usepackage{xcolor}
\usepackage{enumitem}
\usepackage{hyperref}
\usepackage{fullpage}
\usepackage{graphicx}
\usepackage{mathtools}
\usepackage{supertabular}
\usepackage{diagbox}
\usepackage{url}

\setlist[enumerate,1]{label=(\roman*)}
\setlist[enumerate,2]{label=(\alph*)}
\setlist[enumerate,3]{label=(\Roman*)}
\setlist[enumerate,4]{label=(\Alph*)}

\theoremstyle{definition}
\newtheorem{defn}{Definition}[section]

\newtheorem{rmk}[defn]{Remark}
\newtheorem{question}[defn]{Question}
\theoremstyle{plain}
\newtheorem{thm}[defn]{Theorem}

\newtheorem{lem}[defn]{Lemma}
\newtheorem{prop}[defn]{Proposition}
\newtheorem{cor}[defn]{Corollary}

\newtheorem{conj}[defn]{Conjecture}


\def\C{\ensuremath{\mathbb{C}}}
\def\D{\ensuremath{\mathbb{D}}}

\def\H{\ensuremath{\mathbb{H}}}

\def\P{\ensuremath{\mathbb{P}}}
\def\Q{\ensuremath{\mathbb{Q}}}
\def\R{\ensuremath{\mathbb{R}}}
\def\Z{\ensuremath{\mathbb{Z}}}


\def\CC{\ensuremath{\mathcal C}}

\def\FF{\ensuremath{\mathcal F}}

\def\HH{\ensuremath{\mathcal H}}
\def\II{\ensuremath{\mathcal I}}

\def\OO{\ensuremath{\mathcal O}}

\def\TT{\ensuremath{\mathcal T}}





\def\ch{\mathop{\mathrm{ch}}\nolimits}

\def\Coh{\mathop{\mathrm{Coh}}\nolimits}

\def\Db{\mathop{\mathrm{D}^{\mathrm{b}}}\nolimits}
\def\deg{\mathop{\mathrm{deg}}\nolimits}

\def\dim{\mathop{\mathrm{dim}}\nolimits}

\def\ext{\mathop{\mathrm{ext}}\nolimits}
\def\Ext{\mathop{\mathrm{Ext}}\nolimits}
\def\lExt{\mathop{\mathcal Ext}\nolimits}

\def\Gr{\mathop{\mathrm{Gr}}\nolimits}

\def\hom{\mathop{\mathrm{hom}}\nolimits}
\def\Hom{\mathop{\mathrm{Hom}}\nolimits}

\def\RlHom{\mathop{\mathbf{R}\mathcal Hom}\nolimits}
\def\RHom{\mathop{\mathbf{R}\mathrm{Hom}}\nolimits}

\def\mod{\mathop{\mathrm{mod}}\nolimits}
\def\min{\mathop{\mathrm{min}}\nolimits}



\def\into{\ensuremath{\hookrightarrow}}
\def\onto{\ensuremath{\twoheadrightarrow}}

\begin{document}

\title{Sheaves of low rank in three dimensional projective space}
\author{Benjamin Schmidt}
\address{Gottfried Wilhelm Leibniz Universit\"at Hannover, Institut f\"ur Algebraische Geometrie, Welfengarten 1, 30167 Hannover, Germany}
\email{bschmidt.contact@gmail.com}
\urladdr{https://benjaminschmidt.github.io}

\keywords{Derived categories, Moduli spaces of sheaves, Threefolds, Stability conditions}

\subjclass[2020]{14J60 (Primary); 14D20, 14F06, 14F08 (Secondary)}

\begin{abstract}
We classify Chern characters of semistable sheaves up to rank four in three dimensional projective space. As a corollary we show that moduli spaces of semistable sheaves between rank zero and four with maximal third Chern character are smooth and irreducible.
\end{abstract}

\maketitle

\tableofcontents


\section{Introduction}

In \cite{DP85:stable_p2} Dr\'ezet and Le Potier classified the numerical invariants of Gieseker-semistable sheaves in $\P^2$. The answer is surprisingly complex, but beautiful. It involves a fractal curve. On one hand, the problem has been successfully studied on some special surfaces, and the topic remains an active area of research. On the other hand, there have been virtually no results in this direction for threefolds. It turns out that semistable sheaves on threefolds are generally much more difficult to study. Even in the case of $\P^3$ results are scarce. In this article, we will solve the problem for slope-semistable sheaves up to rank four on $\P^3$. 

For $r \in \Z_{> 0}$ and $c \in \Z$ we define $D(r, c)$ to be the supremum over all $d \in \tfrac{1}{2} \Z$ such that there is a slope-semistable sheaf $E$ with rank $r$, $c_1(E) = c$, and $\ch_2(E) = d$. The Bogomolov inequality shows that this is a finite number. It is not difficult to show there are slope-semistable sheaves for not just $d = D(r, c)$, but for $d < D(r, c)$ as well (see Proposition \ref{prop:function_D} for details). Therefore, determining $D(r, c)$ is the same as classifying possible second Chern characters of slope-semistable sheaves.

\begin{thm}
\label{thm:main_ch_2}
The first values of $D(r, c)$ are given by
\begin{enumerate}
    \item $D(r, 0) = 0$ for all $r > 0$,
    \item $D(2, -1) = -\tfrac{1}{2}$,
    \item $D(3, -1) = -\tfrac{1}{2}$,
    \item $D(4, -1) = -\tfrac{3}{2}$, $D(4, -2) = -1$.
\end{enumerate}
\end{thm}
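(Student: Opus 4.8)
The plan is to split the definition of $D(r,c)$ into an upper bound and a realization, and to treat the realization as the easy half: by Proposition~\ref{prop:function_D} it suffices to exhibit, in each case, a single slope-semistable sheaf attaining the stated $\ch_2$, since all smaller values then follow. For the upper bounds I would first extract everything that comes for free from two elementary inputs. The Bogomolov inequality gives $\ch_2 \le \tfrac{c^2}{2r}$ for any slope-semistable sheaf of rank $r$ and $c_1=c$, and integrality of the second Chern class forces $\ch_2 \in \tfrac{c^2}{2}+\Z$. Together these settle (i)--(iii) and the coarse part of (iv): for $(r,0)$ one gets $\ch_2\le 0$; for $(2,-1)$ and $(3,-1)$ one gets $\ch_2\le\tfrac14,\tfrac16$ together with $\ch_2\in\tfrac12+\Z$, hence $\ch_2\le-\tfrac12$; for $(4,-1)$ one gets $\ch_2\le-\tfrac12$; and for $(4,-2)$ one gets $\ch_2\le 0$.

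What remains, and what I expect to be the main obstacle, is that the two characters $(4,-1,-\tfrac12)$ and $(4,-2,0)$ are permitted by Bogomolov and integrality yet carry no slope-semistable sheaf; excluding them needs a bound of Dr\'ezet--Le Potier type, strictly stronger than Bogomolov, and this is where genuinely three-dimensional input enters. The route I would take is tilt-stability: a torsion-free slope-semistable sheaf $E$ is $\nu_{\alpha,\beta}$-semistable throughout the region $\alpha\gg 0$, $\beta<\mu(E)$, and can only be destabilized across the nested semicircular walls. Assuming such an $E$ existed, its discriminant $\overline{\Delta}(E)=\ch_1^2-2\ch_0\ch_2$ is small ($5$ and $4$, respectively), which bounds the radius of the largest wall and leaves only finitely many candidate destabilizing sequences $0\to F\to E\to G\to 0$: the factors must satisfy $\overline{\Delta}(F),\overline{\Delta}(G)\ge 0$ with $\overline{\Delta}(F)+\overline{\Delta}(G)\le\overline{\Delta}(E)$ and matching tilt-slopes along the wall. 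The aim is to show that every such sequence is numerically impossible or yields a subsheaf of slope exceeding $\mu(E)$, contradicting slope-semistability; the hard part is making this enumeration exhaustive, including the complexes that the tilted heart allows in addition to honest sheaves.

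I would cross-check the resulting thresholds, and obtain an independent proof of the upper bounds in (iv), by restricting $E$ to a general plane $H\cong\P^2$, where the restriction is again slope-semistable with the same $(\rk,c_1,\ch_2)$ and the classification of \cite{DP85:stable_p2} applies. In the normalization $\Delta=\tfrac12(c_1/r)^2-\ch_2/r$, the controlling exceptional bundle at both relevant slopes is $T_{\P^2}(-2)$, of slope $-\tfrac12$ and discriminant $\tfrac38$. With $P(x)=\tfrac12(x+1)(x+2)$ the Dr\'ezet--Le Potier threshold is $\tfrac38$ at the slope $-\tfrac12$ and $P(-\tfrac14)-\tfrac38=\tfrac{9}{32}$ at the slope $-\tfrac14$. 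The forbidden characters give $\Delta=\tfrac{5}{32}<\tfrac{9}{32}$ and $\Delta=\tfrac18<\tfrac38$, so they carry no slope-semistable sheaf on $\P^2$ and a fortiori none on $\P^3$; the characters claimed maximal give $\Delta=\tfrac{13}{32}$ and $\Delta=\tfrac38$, which meet or exceed the threshold.

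It remains to realize the maximal values. Here $\mathcal O_{\P^3}^{\oplus r}$ settles (i); the slope-stable bundle $\Omega_{\P^3}(1)$, of character $(3,-1,-\tfrac12)$, settles (iii); a nonsplit extension of the ideal sheaf of a line by $\mathcal O(-1)$ gives a slope-stable sheaf of character $(2,-1,-\tfrac12)$ for (ii); and for the two rank-four values I would build slope-stable sheaves by analogous Serre-type extensions, realizing $(4,-1,-\tfrac32)$ from $\mathcal O(-1)$ together with a rank-three sheaf of character $(3,0,-2)$, and $(4,-2,-1)$ as a three-dimensional analogue of $T_{\P^2}(-2)^{\oplus 2}$. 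Proposition~\ref{prop:function_D} then propagates these to all smaller $\ch_2$, which completes the determination of $D(r,c)$ in the stated range.
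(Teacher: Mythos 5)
Your reduction is set up correctly: Bogomolov plus integrality settle (i)--(iii) and the coarse half of (iv) exactly as in the paper, Proposition~\ref{prop:function_D} handles propagation to smaller $d$, your existence candidates ($\OO^{\oplus r}$, the rank-two extensions of Theorem~\ref{thm:rank_two}, $\Omega(1)$, a nonsplit extension $0 \to \Omega(1) \to E \to \II_L \to 0$, and $F^{\oplus 2}$ with $F \in M(2,-1,-\tfrac{1}{2},\tfrac{5}{6})$) are essentially the paper's, and you correctly identify that everything hinges on excluding $(4,-1,-\tfrac{1}{2})$ and $(4,-2,0)$. But both of your routes for that exclusion have a fatal flaw. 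The tilt-stability route, as you state it, cannot produce a contradiction: your plan is to enumerate candidate destabilizing sequences and show each is impossible, but the conclusion of such an enumeration is only that $E$ would stay tilt-stable in the whole region left of the vertical wall, which is not absurd --- line bundles behave exactly this way, and the paper in fact \emph{proves} this no-wall statement for both characters (Lemma~\ref{lem:walls_(4, -1, -1/2)}, and the first part of the proof of Lemma~\ref{lem:no_(4, -2, 0)}). Nothing in your setup forces a wall to exist, since the only mechanism for a smallest wall (the generalized Bogomolov inequality) involves $\ch_3$, which you never fix. What is missing is the engine the paper then uses: stability along $W(E, \OO(-4)[1])$ gives $\Hom(\OO,E) = 0$ and $\Hom(E, \OO(-4)[1]) = 0$, hence $\chi(E) \leq 0$, i.e.\ an upper bound on $\ch_3(E)$; then, taking $\ch_3(E)$ maximal, one crosses the vertical wall (Lemma~\ref{lem:vertical_wall_crossing}) or passes to the derived dual (Proposition~\ref{prop:derived_dual}) and repeats the argument on the other side, getting a lower bound on $\ch_3(E)$ incompatible with the upper one (Lemmas~\ref{lem:no_(4, -1, -1/2)} and~\ref{lem:no_(4, -2, 0)}). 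Your proposal never mentions $\ch_3$, Euler characteristics, or duality, so even a complete wall enumeration would leave the exclusion unproven.

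The plane-restriction cross-check is worse: it rests on a false principle and, if valid, would contradict the theorem you are proving. There is no restriction theorem for hyperplane sections; slope-semistability on $\P^3$ is not preserved by restriction to a plane. Concretely, $\Omega_{\P^3}(1)$ is slope-stable with $\ch_{\leq 2} = (3,-1,-\tfrac{1}{2})$ and realizes $D(3,-1)$, yet on every plane $H$ the sequence $0 \to \OO_H \to \Omega_{\P^3}(1)|_H \to \Omega_H(1) \to 0$ exhibits a destabilizing subsheaf of slope $0$; correspondingly, \emph{no} semistable sheaf on $\P^2$ has these invariants, since it would satisfy $H^0 = H^2 = 0$ (slope in $(-3,0)$) while $\chi = 3 - \tfrac{3}{2} - \tfrac{1}{2} = 1 > 0$. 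The same $\chi > 0$ obstruction on $\P^2$ also kills $(4,-1,-\tfrac{3}{2})$, where $\chi = 4 - \tfrac{3}{2} - \tfrac{3}{2} = 1$: your threshold $\tfrac{9}{32}$ at slope $-\tfrac{1}{4}$ omits the stronger Dr\'ezet--Le Potier constraint coming from the exceptional bundle $\OO$, namely $\Delta \geq P(-\tfrac{1}{4}) = \tfrac{21}{32}$ in your normalization. So the restriction argument would ``prove'' $D(3,-1) < -\tfrac{1}{2}$ and $D(4,-1) < -\tfrac{3}{2}$, both false. This is not a repairable gap in that route: the $\P^2$ classification genuinely does not govern $\P^3$, and that failure is precisely why the theorem needs the three-dimensional input (tilt stability, $\chi$-vanishing, and duality) described above.
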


Sheaves for $d = D(r, c)$ for $r \leq 4$ can actually be chosen to be Gieseker-semistable instead of just slope-semistable. Using tensor products with line bundles and the dual, one can obtain all $D(r, c)$ for $r \leq 4$ from these values (see again Proposition \ref{prop:function_D} for details).

On $\P^2$ the bounds by Dr\'ezet and Le Potier behave well with respect to scaling, i.e., one can generally obtain a sharp bound of $\ch_2(E)/r(E)$ in terms of the slope $\mu(E) = c_1(E)/r(E)$. This begs the question whether it holds here as well. More precisely, for $\mu \in \Q$, let $\widetilde{D}(\mu)$ be defined as the supremum over all $D(r, c)/r$ such that $r/c = \mu$.

\begin{question}
\label{q:scaling}
Is there a slope-semistable sheaf $E$ of rank $r$, $c_1(E) = c$, and $\ch_2(E) = d$ if and only if $d/r \leq \widetilde{D}(\mu(E))$? Is $\widetilde{D}: \Q \to \Q$ continuous?
\end{question}

Let $r \in \Z_{> 0}$, $c \in \Z$, and $d \in \tfrac{1}{2} \Z$ with $d \leq D(r, c)$. Assume further that for $c$ even we have $d \in \Z$, and for $c$ odd we have $d \notin \Z$. Then $E(r, c, d)$ is defined to be the supremum over all $e \in \tfrac{1}{6} \Z$ such that there is a $2$-Gieseker-semistable sheaf \footnote{The notion of $2$-Gieseker stability is in between slope stability and Gieseker stability. We refer to Section \ref{sec:stability} for full details.} $E$ with $\ch(E) = (r, c, d, e)$. By using the generalized Bogomolov inequality from \cite{BMT14:stability_threefolds, Mac14:conjecture_p3} one can show that $E(r, c, d) \neq \infty$ (see Proposition \ref{prop:function_E} for details).

\begin{thm}
\label{thm:main_ch3}
The values of $E(r, c, d)$ for $r \leq 3$ are given as follows:
\begin{enumerate}
    \item $E(1, 0, d) = \tfrac{1}{2}d^2 - \tfrac{1}{2}d$ for $d \leq 0$,
    \item \begin{enumerate}
    \item $E(2, -1, d) = \tfrac{1}{2}d^2 - d + \tfrac{5}{24}$ for $d \leq -\tfrac{1}{2}$,
        \item $E(2, 0, 0) = E(2, 0, -1) = 0$, and $E(2, 0, -d) = \tfrac{1}{2}d^2 + \tfrac{1}{2}d + 1$ for $d \leq -2$,
    \end{enumerate}
    \item \begin{enumerate}
        \item $E(3, -2, d) = \tfrac{1}{2}d^2 - \tfrac{3}{2}d + \tfrac{2}{3}$, for $d \leq 0$,
        \item $E(3, -1, -\tfrac{1}{2}) = -\tfrac{1}{6}$, and $E(3, -1, d) = \tfrac{1}{2}d^2 + \tfrac{17}{24}$ for $d \leq -\tfrac{3}{2}$,
        \item $E(3, 0, 0) = 0$, $E(3, 0, -1) = -1$, and $E(3, 0, d) = \tfrac{1}{2}d^2 + \tfrac{1}{2}d$ for $d \leq -2$,
    \end{enumerate}
    \item \begin{enumerate}
        \item $E(4, -3, d) = \tfrac{1}{2}d^2 - 2d + \tfrac{11}{8}$ for $d \leq -\tfrac{1}{2}$,
        \item $E(4, -2, d) = \tfrac{1}{2}d^2 - \tfrac{1}{2}d + \tfrac{2}{3}$ for $d \leq -1$,
        \item $E(4, -1, d) = \tfrac{1}{2}d^2 - \tfrac{7}{24}d$ for $d \leq -\tfrac{3}{2}$,
        \item $E(4, 0, 0) = E(4, 0, -2) = 0$, $E(4, 0, -1) = -2$, and $E(4, 0, d) = \tfrac{1}{2}d^2 + \tfrac{3}{2}d + 2$ for $d \leq -3$.
    \end{enumerate}
\end{enumerate}
\end{thm}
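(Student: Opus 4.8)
The plan is to split each value $E(r,c,d)$ into an upper bound and a matching construction, and to reduce the list to a manageable set of normalized cases. First I would invoke Proposition~\ref{prop:function_E} to guarantee that the supremum is finite, so that it suffices to produce, for each triple, a single $2$-Gieseker-semistable sheaf attaining the claimed $e$ together with a proof that no larger value is possible. The constraint $d \leq D(r,c)$ from Theorem~\ref{thm:main_ch_2} restricts the relevant triples $(r,c,d)$, and the standard operations of tensoring by $\OO(k)$ and applying the derived dual cut the work down to the representatives appearing in items (i)--(iv).

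For the upper bound I would pass from Gieseker stability to tilt stability on $\P^3$. A $2$-Gieseker-semistable sheaf with invariants $(r,c,d)$ is $\nu_{\alpha,\beta}$-tilt-semistable for $\alpha \gg 0$ and suitable $\beta$ (the large-volume limit), so the generalized Bogomolov inequality of \cite{BMT14:stability_threefolds, Mac14:conjecture_p3}, in the form $Q_{\alpha,\beta}(E) \geq 0$, holds at every point of the $(\alpha,\beta)$-plane where $E$ remains tilt-semistable. Since $Q_{\alpha,\beta}$ is linear in $\ch_3^\beta(E)$, solving $Q_{\alpha,\beta}(E) \geq 0$ for $e = \ch_3(E)$ yields an upper bound depending on $(r,c,d)$ and on the chosen point; optimizing the point produces the sharp quadratic-in-$d$ expressions in the statement. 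The content of this step is therefore to locate the optimal $(\alpha,\beta)$, which is governed by the wall structure: $E$ is tilt-semistable above its largest numerical wall, and the best bound is obtained by evaluating $Q_{\alpha,\beta} \geq 0$ as the parameters approach that wall. Because $r \leq 4$, the possible destabilizing subobjects have small rank, and their second Chern characters are in turn constrained by Theorem~\ref{thm:main_ch_2}, so the walls can be enumerated explicitly.

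For the matching lower bound I would, case by case, construct an extremal sheaf and verify that it attains the predicted $e$. The leading term $\tfrac{1}{2}d^2$ signals that the extremal objects are as ``planar'' as possible: natural candidates are ideal sheaves of plane curves and their twists, kernels and cokernels of maps between sums of line bundles, and pushforwards from a hyperplane $\P^2 \subset \P^3$. After computing $\ch$ of such a candidate, the essential point is to check $2$-Gieseker-semistability. Here I would again use tilt stability, showing that the constructed object has no destabilizing wall up to the large-volume limit --- once more relying on the finiteness of numerical possibilities for potential subobjects in low rank.

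The main obstacle is the wall analysis underlying both halves: proving that $E$ is genuinely tilt-semistable at the point where the inequality is applied (for the upper bound) and that the constructed extremal objects are not destabilized (for sharpness). Both reduce to classifying the finitely many candidate destabilizers allowed by the rank bound and by Theorem~\ref{thm:main_ch_2}, but the bookkeeping is delicate. I expect the genuinely exceptional work to lie in the boundary cases where the generic quadratic formula fails --- such as $E(2,0,0)=E(2,0,-1)=0$, $E(3,0,-1)=-1$, and $E(4,0,-1)=-2$ --- where the extremal sheaf degenerates and the relevant wall structure changes, forcing a separate, more hands-on argument for each.
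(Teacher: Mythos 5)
Your reduction to normalized representatives, the use of Proposition \ref{prop:function_E} for finiteness, and your instinct about the special boundary cases all match the paper; but the central mechanism you propose for the upper bounds does not work, and it is not the paper's mechanism. You claim that solving $Q_{\alpha,\beta}(E) \geq 0$ for $e$ and optimizing over points where $E$ is tilt-semistable (approaching the largest wall) ``produces the sharp quadratic-in-$d$ expressions.'' It does not. Test it on the simplest case $\ch(E) = (1,0,d,e)$: one computes $Q_{\alpha,\beta}(E) = -2d(\alpha^2+\beta^2) + 4d^2 + 6\beta e$, and the largest possible wall is $W(E,\OO(-1))$, given by $\alpha^2 + \beta^2 - 2d\beta + \beta - 2d = 0$. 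Substituting the wall equation into $Q$ gives $Q = \beta\left(6e - 4d^2 + 2d\right)$ along the wall, so for $\beta < 0$ the inequality $Q_{\alpha,\beta}(E) \geq 0$ yields only $e \leq \tfrac{1}{3}(2d^2 - d)$, and optimizing over the whole region above the wall gives this same value. That is strictly weaker than the sharp bound $\tfrac{1}{2}d^2 - \tfrac{1}{2}d$ (at $d = -2$ it gives $\tfrac{10}{3}$ versus $3$, and the discrepancy grows like $d^2/6$). The same failure occurs in every rank: the generalized Bogomolov inequality, evaluated anywhere in the semistable region, never produces the sharp values of the theorem.

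What your proposal is missing is the induction on $\Delta$ in which $Q_{\alpha,\beta} \geq 0$ plays only an auxiliary role. Since $Q_{\alpha,\beta}(E) < 0$ holds inside the semidisk bounded by $W_Q(E)$ (Proposition \ref{prop:structure_walls}(vi)), a semistable $E$ with $e$ at or near the claimed maximum cannot stay semistable down to $\alpha \to 0$, hence must be destabilized along an actual wall at or outside $W_Q(E)$ by a sequence $0 \to F \to E \to G \to 0$. The sharp bound then comes from additivity, $e = \ch_3(F) + \ch_3(G)$, where $\ch_3(F)$ and $\ch_3(G)$ are controlled by the already-established sharp bounds in lower rank (Theorems \ref{thm:rank_one}, \ref{thm:rank_zero}, \ref{thm:rank_two}) or in equal rank with smaller discriminant, using $\Delta(F) + \Delta(G) \leq \Delta(E)$ from Proposition \ref{prop:properties_dest_sequences}(ii); the comparison of the wall center $s(E,F)$ with $s_Q(E)$, together with the rank restrictions of Proposition \ref{prop:properties_dest_sequences}, is what makes the list of candidate destabilizers $(\ch_0(F), \ch_1(F), \ch_2(F))$ finite. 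This also corrects your framing of the boundary cases: $E(3,0,-1)=-1$, $E(4,0,-1)=-2$, and the non-existence statements for $\ch_{\leq 2}(E) = (4,-1,-\tfrac{1}{2})$ and $(4,-2,0)$ proved in Section \ref{sec:special_cases} are not isolated exceptions patched after the fact, but the base cases and essential inputs of the induction, quoted repeatedly when ruling out destabilizers in the rank three and four wall enumerations. One smaller point: reducing the list of triples ``by the derived dual'' is not available for $E(r,c,d)$, since $\ch_3$ changes sign under dualizing and the bounds are not symmetric (the paper notes this explicitly); this is harmless only because twisting by $\OO(k)$ alone already covers all residues of $c$ modulo $r$.
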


The case $r = 4$ is what is completely new in this article. The case of $r = 1$ deals just with ideal sheaves of curves. This is a fancy way of saying that curves with maximal genus in $\P^3$ are plane curves, a fact known since the days of Castelnuovo. A proof with the techniques presented here and some slight generalization to other notions of stability in the derived category can be found in \cite[Proposition 3.2]{MS20:space_curves}). The case $r = 2$ was first proved in \cite{OS85:spectrum_torsion_free_sheavesII} and a proof with the present methods and some minor generalizations can be found in \cite{Sch20:rank_two_p3}. The rank three case was first solved in \cite{MR87:P3_rank_three_chern_classes}. The method we use leads again to a slight generalization for certain semistable objects in the derived category. Lastly, there is also an appropriate rank zero case (see Theorem \ref{thm:rank_zero}).

As for the second Chern character, we can obtain all values for rank $r \leq 4$ via tensor products with line bundles. Moreover, this theorem together with the first one classifies Chern characters of $2$-Gieseker-semistable sheaves with rank $r \leq 4$. The details for the last two claims can be found in Proposition \ref{prop:function_E}. For $e = E(r, d, c)$ with $r \leq 4$ we actually find Gieseker-semistable sheaves with Chern character $(r, c, d, e)$. Therefore, the definition of $E$ is not dependent on whether we use slope stability or Gieseker stability in this case. However, for $e < E(r, c, d)$ there can be gaps, i.e., values for which there is a $2$-Gieseker-semistable, but not Gieseker-semistable sheaf (see Remark \ref{rmk:ch3_gaps}). Moreover, the analog of Question \ref{q:scaling} for the third Chern character fails (see Remark \ref{rmk:no_scaling_ch3}).

\begin{question}
What are the Chern character gaps for which there are slope-semistable (or $2$-Gieseker-semistable), but not Gieseker-semistable sheaves on $\P^3$?
\end{question}

For rank one the answer to this question is a classical result first stated by Halphen in \cite{Hal82:genus_space_curves} with an incorrect proof that was fixed in \cite{GP78:genre_courbesI}. For rank two this question was answered by Mir\'o-Roig in \cite{MR85:P3_gaps_rank_two}. As far as we know this is unknown for $r \geq 3$.

As an application, we can prove the following Conjecture made in \cite{Sch20:rank_two_p3} in a few more cases.

\begin{conj}
\label{conj:irreducible_smooth}
Let $(r, c, d) \in \Z \oplus \Z \oplus \tfrac{1}{2} \Z$ such that there are Gieseker-semistable sheaves $E$ with $\ch_{\leq 2}(E) = (r, c, d)$. Then the moduli space of Gieseker-semistable sheaves $M(r, c, d, E(r, c, d))$ is irreducible and smooth along the open locus of stable sheaves.
\end{conj}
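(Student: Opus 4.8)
The plan is to convert the numerical sharpness in Theorems \ref{thm:main_ch_2} and \ref{thm:main_ch3} into a structural description of the sheaves attaining the maximal third Chern character. First I would reduce to the finitely many normalized triples $(r,c,d)$ listed there. Tensoring by line bundles and applying the derived dual induce isomorphisms of moduli spaces and act predictably on Chern characters (this is exactly the mechanism behind Proposition \ref{prop:function_E}), so it suffices to treat the normalized characters in the statements, together with the rank-zero case of Theorem \ref{thm:rank_zero}. Since these reductions preserve both irreducibility and the stable locus, no generality is lost.

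The heart of the argument is to extract, from the wall-crossing analysis that computes $E(r,c,d)$, a canonical presentation of every extremal sheaf. In tilt stability the value $E(r,c,d)$ is pinned down by a last numerical wall on which a $\nu_{\alpha,\beta}$-semistable object of the truncated character $(r,c,d)$ becomes strictly semistable, and maximality of $\ch_3$ forces its Jordan--H\"older factors to be the uniquely determined stable objects dictated by that wall. Translating back to sheaves, I expect each Gieseker-stable $E$ with $\ch(E)=(r,c,d,E(r,c,d))$ to sit in a short exact sequence
\[
0 \to A \to E \to B \to 0
\]
with $A,B$ drawn from an explicit short list of exceptional-type building blocks (line bundles, twisted ideal sheaves of plane curves, or ideal sheaves of points); in some rank-four cases the correct presentation may instead be an iterated extension or a two-term complex of such blocks. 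The first deliverable is that this presentation is forced, so that there are no exotic extremal sheaves outside the expected family.

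Granting the presentation, irreducibility would follow by exhibiting an irreducible parameter space that dominates $M(r,c,d,E(r,c,d))$: the building blocks $A,B$ vary in irreducible families (projective spaces of plane curves, symmetric products, or points), and the extensions are parametrized by an open subset of a projectivized $\Ext^1(B,A)$-bundle over that base. Checking that the generic such extension is Gieseker-stable shows simultaneously that the stable locus is dense and that the classifying map to the moduli space is dominant; since the source is irreducible, $M(r,c,d,E(r,c,d))$ is irreducible. Conversely one verifies that every stable $E$ is an extension of this form, so that no component is missed.

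For smoothness it suffices to verify $\Ext^2(E,E)=0$ at each stable point $[E]$: on $\P^3$ one has $H^i(\OO)=0$ for $i\geq 1$, so the trace-free part coincides with $\Ext^2(E,E)$, which is the deformation obstruction space, while $\Ext^3(E,E)\cong\Hom(E,E(-4))^\vee=0$ by Serre duality and stability. I would compute $\Ext^2(E,E)$ by feeding the structural sequence into the long exact sequences obtained from applying $\Hom(-,E)$ and $\Hom(E,-)$, reducing everything to $\Ext$ groups among the building blocks, which are accessible from Bott vanishing and the standard cohomology of ideal sheaves of plane curves; equivalently, Serre duality rewrites the target as $\Ext^1(E,E(-4))^\vee$, which is often easier to kill directly. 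This final computation is where I expect the real difficulty to lie: the building blocks are genuine sheaves rather than bundles, so several off-diagonal $\Ext^1$ and $\Ext^2$ terms among them are nonzero and must be shown to cancel in the long exact sequences, and controlling these uniformly across the case list, especially in the new rank-four cases, is the delicate part of the proof.
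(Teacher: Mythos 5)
Your overall strategy is the same as the paper's: extract explicit presentations of the extremal sheaves from the tilt wall-crossing analysis, parametrize them by projective or Grassmann bundles over irreducible bases (or by quiver moduli), and prove smoothness by homological algebra among the building blocks. Your smoothness mechanism is also equivalent to the paper's: since $\ext^3(E,E)=\hom(E,E(-4))^{\vee}=0$ for stable $E$, bounding $\ext^1(E,E)$ above by the expected dimension $1-\chi(E,E)$ (what the paper does) is the same as proving $\Ext^2(E,E)=0$ (what you propose).

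The genuine gap is in your irreducibility argument for the characters admitting more than one actual wall, namely $(3,-1,-\tfrac{5}{2})$, $(3,0,-3)$, $(4,-2,-2)$, $(4,-1,-\tfrac{5}{2})$ and $(4,0,-4)$ (Propositions \ref{prop:classification_(3,-1,-5/2)}, \ref{prop:classification_(3,0,-3)}, \ref{prop:classification_(4,-2,-2)}, \ref{prop:classification_(4,-1,-5/2)}, \ref{prop:classification_(4,0,-4)}). There the extremal Gieseker moduli space is a union of several strata of stable sheaves with genuinely different presentations: for $\ch(E)=(3,-1,-\tfrac{5}{2},\tfrac{23}{6})$, an open $24$-dimensional family of objects fitting into $0\to\OO(-1)^{\oplus 4}\to E\to\OO(-3)[1]\to 0$, a $23$-dimensional family of extensions of $\OO_V(-2)$ by $T(-2)$, and a family of extensions of $\II_C$ by $F\in M(2,-1,-\tfrac{1}{2},\tfrac{5}{6})$ for conics $C$. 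No single projectivized $\Ext^1$-bundle dominates this union, so your claim that ``every stable $E$ is an extension of this form, so that no component is missed'' fails, and irreducibility of each stratum does not yield irreducibility of the union. This is precisely where the paper has to work: it starts from the chamber above the smallest wall, where the moduli space is a Grassmannian or a quiver moduli space (irreducible and smooth along the stable locus by \cite{Kin94:moduli_quiver_reps}), and at each wall it shows that the newly added locus is irreducible and that every new point satisfies $\ext^1(E,E)\leq\dim M$. Because $\ext^1-\ext^2=1-\chi(E,E)=\dim M$ is numerical, this bound forces $\Ext^2(E,E)=0$, hence the local dimension at a new point equals $\dim M$, which strictly exceeds the dimension of its stratum; therefore the new stratum lies in the closure of the open one, and irreducibility is preserved. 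In other words, smoothness is not an independent final step but the very tool that glues the strata together, a loop your outline does not close.

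Two smaller corrections. First, your reduction invokes the derived dual as part of ``the mechanism behind Proposition \ref{prop:function_E}''; that proposition uses only line-bundle twists, and the paper notes explicitly that $\ch_3$ does not behave well under dualizing (its sign flips), so duality should be dropped --- twists alone suffice to reach the normalized characters. Second, the building blocks are not only line bundles, ideal sheaves of plane curves and points: in the cases $(4,0,-3)$ and $(4,0,-4)$ the quotient is a tilt-stable object of $M^{\alpha,\beta}(1,0,-3,5)$ resp.\ $M^{\alpha,\beta}(1,0,-4,8)$, which is only generically the ideal sheaf of a twisted cubic resp.\ an elliptic quartic, so the $\Ext$ computations must also be carried out for these moduli of complexes (Propositions \ref{prop:final_model_(1, 0, -3, 5)} and \ref{prop:final_model_(1, 0, -4, 8)}).
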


\begin{thm}
\label{thm:conjecture_rank_three}
Conjecture \ref{conj:irreducible_smooth} holds for $r \leq 4$.
\end{thm}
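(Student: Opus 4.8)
The plan is to reduce to a finite list of Chern characters and, for each, to combine an explicit structural description of the extremal sheaves with two computations: one of $\Ext^2(E,E)$ for smoothness, and one producing an irreducible parameter space for irreducibility. By Proposition \ref{prop:function_E}, tensoring by line bundles and dualizing are stability-preserving operations that identify the moduli spaces $M(r,c,d,E(r,c,d))$ for Chern characters lying in the same orbit. Hence it suffices to verify the statement for the finitely many normalized triples $(r,c,d)$ appearing in Theorems \ref{thm:main_ch_2} and \ref{thm:main_ch3} with $r \leq 4$, together with the rank zero case.

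For smoothness, recall that at a stable sheaf $E$ the moduli space has Zariski tangent space $\Ext^1(E,E)$ and that obstructions lie in $\Ext^2(E,E)$; since $H^i(\OO)=0$ for $i>0$ on $\P^3$, the trace-free part agrees with the full group, so it is enough to prove $\Ext^2(E,E)=0$ for every stable $E$ with $\ch(E)=(r,c,d,E(r,c,d))$. The key input is that, because $\ch_3$ is maximal, the proof of Theorem \ref{thm:main_ch3} places these sheaves on a definite (last) wall in tilt stability and thereby exhibits each of them as an extension $0 \to A \to E \to B \to 0$ whose factors $A$ and $B$ are (semi)stable objects of explicitly known Chern character --- in the relevant cases twists of $\OO$ and ideal sheaves of plane curves, or their shifts. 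I would then apply $\Hom(-,E)$ and $\Hom(E,-)$ to this sequence and reduce $\Ext^2(E,E)=0$ to the vanishing of the four groups $\Ext^2(A,A)$, $\Ext^2(A,B)$, $\Ext^2(B,A)$, $\Ext^2(B,B)$ together with surjectivity of the neighboring connecting maps. Each of these reduces, via Serre duality on $\P^3$ and the known cohomology of line bundles and of ideal sheaves of plane curves, to a finite cohomological computation.

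For irreducibility, the same destabilizing sequence provides a uniform construction: the extremal sheaves are obtained as extensions governed by $A$, $B$, and a class in $\Ext^1(B,A)$, and this data varies in an irreducible parameter space --- typically a projective bundle over a Hilbert scheme of plane curves, itself irreducible as a projective bundle over $(\P^3)^{\vee}$. Provided one knows that every stable sheaf with the prescribed Chern character arises this way (see below), the associated classifying map from this irreducible base surjects onto the stable locus $M^{\mathrm{s}}$, so $M^{\mathrm{s}}$, being the image of an irreducible variety, is irreducible; a dimension count against $\ext^1(E,E)$ confirms consistency. Since in each of these cases stable sheaves exist and form an open dense subset, $M$ itself is irreducible.

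The crux, and the genuinely new work, is the rank four analysis in case (iv): there the extremal objects admit more involved resolutions, the four $\Ext^2$ vanishings require more delicate handling (in particular one must rule out nonzero contributions coming from the ideal-sheaf factors), and --- most importantly for irreducibility --- one must prove the converse, namely that \emph{every} stable sheaf with the prescribed maximal Chern character is forced onto the last wall and hence arises from the extension construction. This uniqueness of structure is where the maximality of $\ch_3$ must be used essentially, since a stable object off the wall would contradict the bound $E(r,c,d)$ established in Theorem \ref{thm:main_ch3}. A subsidiary point is to confirm that strictly semistable sheaves along the boundary do not obstruct the density of $M^{\mathrm{s}}$ in $M$.
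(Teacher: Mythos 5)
Your overall skeleton (normalize the Chern character, use the extremal-wall classification to write each sheaf as an extension, then do homological algebra) is the same as the paper's, but the two key steps as you state them do not go through.

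The smoothness step is not just hard to execute, it is based on a false premise: $\Ext^2(E,E)=0$ fails for most of the sheaves in question, so no sequence of cohomological reductions can establish it. Concretely, take $r=3$, $c=-2$ and any $d\leq -3$. By Lemma \ref{lem:rank_three_classification_general} every extremal stable $E$ sits in a sequence $0\to\OO(-1)^{\oplus 3}\to E\to\OO_V(d-1)\to 0$ for a plane $V\subset\P^3$. Serre duality gives $\Ext^{2}(\OO_V(d-1),\OO(-1))\cong H^{1}(\OO_V(d-4))^{\vee}=0$ and $\Ext^{3}(\OO_V(d-1),\OO(-1))\cong H^{0}(\OO_V(d-4))^{\vee}=0$, and the local-to-global spectral sequence gives $\Ext^{\geq 2}(\OO_V,\OO_V)=0$; chasing the three long exact sequences then yields
\[
\Ext^2(E,E)\;\cong\;\Ext^2\bigl(\OO(-1)^{\oplus 3},\OO_V(d-1)\bigr)\;=\;H^2\bigl(\P^2,\OO_{\P^2}(d)\bigr)^{\oplus 3}\;\neq\;0
\quad\text{for } d\leq -3.
\]
Nevertheless the moduli space is smooth there: by Corollary \ref{cor:conjecture_rank_three} it is a Grassmann bundle over $\P^3$. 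So these sheaves are obstructed in the sense that $\Ext^2(E,E)\neq 0$, yet unobstructed in fact; this is exactly the situation your reduction cannot see. The paper never argues via $\Ext^2$-vanishing. It proves smoothness either by identifying $M$ outright (Grassmann or projective bundles over $\P^3$ or over Hilbert schemes of plane curves, King's moduli of quiver representations), or, at the delicate cases, by proving the inequality $\ext^1(E,E)\leq\dim M$ for the relevant points, which forces the tangent space to have the same dimension as the irreducible locus already constructed.

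The irreducibility step has a gap precisely in the cases you call the crux. For $M(3,-1,-\tfrac{5}{2},\tfrac{23}{6})$, $M(3,0,-3,3)$, $M(4,-2,-2,\tfrac{11}{3})$, $M(4,-1,-\tfrac{5}{2},\tfrac{17}{6})$ and $M(4,0,-4,4)$ there are two or three actual walls (see Propositions \ref{prop:classification_(3,-1,-5/2)} through \ref{prop:classification_(4,0,-4)}): different stable sheaves are destabilized at different walls, so there is no single pair $(A,B)$ whose extensions dominate $M$, and your claim that every stable sheaf "is forced onto the last wall" is false as stated. The paper's mechanism is a wall-crossing induction: right above the smallest wall the moduli space is a Grassmannian or a quiver moduli space (smooth and irreducible), and at each larger wall the excised and inserted loci are projective bundles over irreducible bases, while the inserted points satisfy the $\ext^1$ bound above; this is what propagates irreducibility and smoothness up to the Gieseker chamber. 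Two further inaccuracies: dualizing does not preserve maximal $\ch_3$ (its sign flips under $\D$), so only line-bundle twists can be used to normalize; and the set of normalized triples $(r,c,d)$ is infinite, not finite, which is why the uniform-in-$d$ Lemmas \ref{lem:rank_three_classification_general} and \ref{lem:rank_four_classification_general} are needed rather than a finite case check.
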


In fact, there are fairly precise descriptions of these moduli spaces. For rank zero, the moduli spaces with maximal third Chern character are projective bundles over Hilbert schemes of plane curves (see the proof of Corollary \ref{cor:conjecture_rank_zero}). For rank one we are simply dealing with Hilbert schemes of plane curves. The rank two case was dealt with in both \cite{OS85:spectrum_torsion_free_sheavesII} and \cite{Sch20:rank_two_p3}. The rank three case can be found in \cite{MR87:P3_rank_three_extremal_moduli} and Remark \ref{rmk:moduli_spaces_rank_three}. Finally, for rank four we refer to Remark \ref{rmk:moduli_spaces_rank_four}.

\subsubsection*{Ingredients in the proof}

We use the notion of tilt stability in the derived category introduced for K3 surfaces in \cite{Bri08:stability_k3}, generalized to all surfaces in \cite{AB13:k_trivial}, and finally to higher dimensions in \cite{BMT14:stability_threefolds}. It is a generalization of slope stability to objects in another abelian subcategory of the bounded derived category consisting of some two-term complexes. Everything depends on two real parameters $\alpha > 0$, $\beta \in \R$. We refer to Section \ref{sec:stability} for the technical details.

Tilt stability has a few key properties. Walls are nested semicircles, i.e., they are linearly ordered. Above a \emph{biggest wall} semistable objects are simply $2$-Gieseker-semistable sheaves (see Proposition \ref{prop:large_volume_limit}). Using tricks with Euler characteristics or the generalized Bogomolov inequality from \cite{Mac14:conjecture_p3}, we can determine a \emph{smallest wall}. Together, this means that any semistable sheaf $E$ will destabilize along a wall that is determined by a short exact sequence
\[
0 \to F \to E \to G \to 0.
\]
If one knows upper bounds for $\ch_3(F)$ and $\ch_3(G)$, then one gets a bound for $\ch_3(E)$ by linearity of the Chern character in short exact sequences. Each object has the Bogomolov discriminant $\Delta(E) \coloneqq \ch_1(E)^2 - 2 \ch_0(E) \ch_2(E)$. The \emph{Bogomolov inequality} says that any semistable $E$ satisfies $\Delta(E) \geq 0$. Every time we hit a wall, we have $\Delta(F) + \Delta(G) \leq \Delta(E)$ with equality only in very special cases. This sets the problem up as an induction.

Bounding the second Chern character is slightly less direct. Using the classical Bogomolov inequality, one already gets a bound of $\ch_2$ from above. To obtain the stronger bounds claimed in Theorem \ref{thm:ch_2} for rank four, we only need to exclude finitely many cases. Given a value $d$ to exclude, we assume a sheaf $E$ exists and use the above techniques to bound $\ch_3(E)$ from above. We then use the same techniques to bound $\ch_3(E^{\vee})$. This implies a bound of $\ch_3(E)$ from below. Comparing the two bounds leads to a contradiction.

To show that the bounds are sharp, we simply construct a series of examples. Finally, the above technique for bounding third Chern characters also leads to a classification of semistable sheaves with maximal third Chern character in terms of destabilizing short exact sequences. This classification allows to give explicit descriptions of the moduli spaces either directly or with some standard uses of deformation theory. We immediately obtain Theorem \ref{thm:conjecture_rank_three} as a corollary.

\subsubsection*{Plan of the paper}

In Section \ref{sec:stability} we recall fundamental notions of stability. In Section \ref{sec:preparation} we set up our objects of study, prove some of their basic properties, and discuss open questions. Section \ref{sec:rank_zero_and_one} is about a rank zero version of Theorem \ref{thm:main_ch3} and the well-known rank one case. Then we recall the rank two case of Theorem \ref{thm:main_ch3} from \cite{OS85:spectrum_torsion_free_sheavesII, Sch20:rank_two_p3} in Section \ref{sec:rank_two}. In Section \ref{sec:special_cases} we proof Theorem \ref{thm:main_ch_2} and a few special cases of Theorem \ref{thm:main_ch3}. A new proof of the rank three case of Theorem \ref{thm:main_ch3} (see also \cite{MR87:P3_rank_three_chern_classes}) can be found in Section \ref{sec:rank_three} and finally, we prove the remaining rank four case of Theorem \ref{thm:main_ch3} in Section \ref{sec:rank_four}.


\subsubsection*{Acknowledgements}

I would like to thank Arend Bayer, Jack Huizenga, and Emanuele Macr\`i for discussing various aspects of this article. Additionally, I thank the referees for suggesting many improvements.


\subsubsection*{Notation}

\begin{center}
   \begin{supertabular}{ r l }
     $\Db(\P^3)$ & bounded derived category of coherent sheaves on $\P^3$ \\
     $\HH^{i}(E)$ & the $i$-th cohomology sheaf of a complex $E \in \Db(X)$ \\
     $H^i(E)$ & the $i$-th sheaf cohomology group of a complex $E \in \Db(X)$ \\
     $\D(\cdot)$ & the derived dual $\RlHom(\cdot, \OO_X)[1]$ \\
     $\ch(E)$ & total Chern character of an object $E \in \Db(X)$ \\
     $\ch_{\leq l}(E)$ & $(\ch_0(E), \ldots, \ch_l(E))$ \\
   \end{supertabular}
\end{center}


\section{Stability conditions}
\label{sec:stability}

In this section, we give the necessary background on stability conditions. For those completely unfamiliar, we refer to \cite{MS17:lectures_notes} for a more gentle introduction.

The notion of \emph{slope stability} was introduced for vector bundles on curves in \cite{Mum63:quotients}, and later generalized to arbitrary dimensions in \cite{Tak72:Stability1}. In higher dimensions, issues arise in the construction of moduli spaces of slope-semistable sheaves. A modification called \emph{Gieseker stability} was introduced which allows the construction of moduli spaces via a GIT construction (see \cite{Gie77:vector_bundles, Mar77:stable_sheavesI, Sim94:moduli_representations}). We will mostly recall things to set the notation and refer to \cite{HL10:moduli_sheaves} for more details.

For simplicity, we will restrict ourselves to $\P^3$ even though most of the following facts are perfectly fine on other varieties. For a sheaf $E \in \Coh(\P^3)$, we define its \emph{slope} as
\[
\mu(E) \coloneqq \frac{\ch_1(E)}{\ch_0(E)}
\]
with $\mu(E) = \infty$ if $\ch_0(E) = 0$. The sheaf $E$ is \emph{slope-semistable} if for any non-trivial subsheaf $F \into E$ the slope decreases, i.e., $\mu(F) \leq \mu(E)$. It is called \emph{slope-stable} if for all non-trivial subsheaves $F \subset E$ the inequality $\mu(F) < \mu(E/F)$ holds. Classically, an equivalent definition is used for stable that says that $\mu(F) < \mu(E)$, but only when $F$ has smaller rank than $E$. However, our definition generalizes more easily to stability notions in the derived category.

\begin{lem}
\label{lem:dual_semistable_slope}
If $E \in \Coh(X)$ is slope-semistable, then so is the dual sheaf $E^{\vee}$.
\end{lem}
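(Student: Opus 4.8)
The plan is to reduce the statement to a claim about subsheaves of $E^{\vee\vee}$ and then exploit the left-exactness of the dualizing functor $\lHom(-,\OO_X)$. First I would record the numerical behaviour of the dual: since $E^\vee = \lHom(E,\OO_X)$ satisfies $\ch_0(E^\vee) = \ch_0(E)$ and $\ch_1(E^\vee) = -\ch_1(E)$ (the higher $\lExt$-sheaves are supported in codimension $\geq 2$ and hence affect neither rank nor $\ch_1$), we obtain $\mu(E^\vee) = -\mu(E)$; the same computation gives $\mu(G^\vee) = -\mu(G)$ for every torsion-free $G$.

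Before the main argument I would reduce to the case that $E$ is reflexive. The canonical map $E \into E^{\vee\vee}$ is an isomorphism away from a subset of codimension $\geq 2$, so $E$ and $E^{\vee\vee}$ have the same rank and $\ch_1$, hence the same slope, and $E^\vee = (E^{\vee\vee})^\vee$. Moreover $E^{\vee\vee}$ is again slope-semistable: for any subsheaf $G \subseteq E^{\vee\vee}$ the intersection $G \cap E$ agrees with $G$ in codimension $1$, so $\mu(G) = \mu(G\cap E) \leq \mu(E) = \mu(E^{\vee\vee})$. Thus it suffices to treat reflexive $E$, for which $(E^\vee)^\vee = E$.

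Now suppose, for contradiction, that $E^\vee$ is not slope-semistable, and let $F \subseteq E^\vee$ be its maximal destabilizing subsheaf. Then $F$ is saturated, the quotient $Q \coloneqq E^\vee/F$ is torsion-free of positive rank, and $\mu(F) > \mu(E^\vee)$ forces $\mu(Q) < \mu(E^\vee) = -\mu(E)$. Applying the left-exact functor $\lHom(-,\OO_X)$ to the short exact sequence $0 \to F \to E^\vee \to Q \to 0$ produces an exact sequence $0 \to Q^\vee \to E^{\vee\vee} \to F^\vee$, so $Q^\vee$ is a subsheaf of $E^{\vee\vee} = E$. By the numerical computation above, $\mu(Q^\vee) = -\mu(Q) > \mu(E)$, contradicting the slope-semistability of $E$.

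The routine parts are the numerical identities for the dual and the reduction to the reflexive case; the points demanding care are the positivity of $\rk(Q)$ (if $\rk F = \rk E^\vee$ then saturatedness forces $Q = 0$ and $F = E^\vee$, incompatible with $\mu(F) > \mu(E^\vee)$) and the verification that dualizing the destabilizing sequence genuinely yields a subsheaf of $E^{\vee\vee}$ of the expected slope. I expect the main conceptual obstacle to be ensuring that passing to the saturation and to the reflexive hull changes none of the relevant invariants, which rests throughout on the fact that these operations are isomorphisms in codimension one.
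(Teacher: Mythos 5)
Your proof is correct, and it is the standard argument; the paper in fact states this lemma without proof, treating it as a classical fact, so there is no proof to compare against. One tacit point: you assume $E$ is torsion-free (needed for the injectivity of $E \into E^{\vee\vee}$), but this is harmless, since under the paper's convention that rank-zero sheaves have slope $\infty$ a slope-semistable sheaf of positive rank cannot contain a torsion subsheaf, and for a rank-zero (torsion) $E$ one has $E^{\vee} = 0$, where the statement is vacuous. With that remark added, every substantive step checks out: the codimension-two comparison showing $E^{\vee\vee}$ is again semistable of the same slope, the saturation and positivity of $\rk Q$, the left-exactness of $\lHom(-,\OO_X)$ giving $Q^{\vee} \into E^{\vee\vee} = E$, and the slope computation $\mu(Q^{\vee}) = -\mu(Q) > -\mu(E^{\vee}) = \mu(E)$ contradicting semistability of $E$.
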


It seems that the notion of stability depends only on the choice of the slope function $\mu$. Bridgeland's brilliant insight in \cite{Bri07:stability_conditions} is that another crucial ingredient is the definition of what a subobject is. This can be redefined by changing coherent sheaves for another abelian category inside the bounded derived category, more precisely another heart of a bounded t-structure.

For any $\beta \in \R$ we define two full additive subcategories of $\Coh(\P^3)$ as follows. The category $\FF^{\beta} \subset \Coh(\P^3)$ is the smallest extension-closed full additive subcategory that contains all slope-semistable sheaf $E$ with $\mu(E) \leq \beta$. Similarly, $\TT^{\beta} \subset \Coh(\P^3)$ is the smallest extension-closed full additive subcategory of $\Coh(\P^3)$ that contains all slope-semistable sheaf $E$ with $\mu(E) > \beta$. Then one defines $\Coh^{\beta}(\P^3) \subset \Db(X)$ to be the full additive subcategory of complexes $E \in \Db(X)$ such that $\HH^0(E) \in \TT^{\beta}$, $\HH^{-1}(E) \in \FF^{\beta}$, and $\HH^i(E) = 0$ for $i \neq 0, -1$. It turns out that $\Coh^{\beta}(\P^3)$ is the heart of a bounded t-structure. In particular, it is abelian.

To simplify notation, we define the twisted Chern character $\ch^{\beta} \coloneqq \ch \cdot e^{-\beta H}$, where $H$ is the hyperplane class. If $\beta \in \Z$ and $E \in \Db(\P^3)$, then we simply have $\ch^{\beta}(E) = \ch(E(-\beta H))$.

The slope function depends on another real parameter $\alpha > 0$ and is defined as
\[
\nu_{\alpha, \beta} \coloneqq \frac{\ch_2^{\beta} - \frac{\alpha^2}{2} 
\ch_0^{\beta}}{\ch_1^{\beta}}
\]
with $\nu_{\alpha, \beta}(E) = \infty$ if $\ch_1^{\beta}(E) = 0$. As previously, $E \in \Coh^{\beta}(\P^3)$ is called \emph{$\nu_{\alpha, \beta}$-semistable} (or \emph{tilt-semistable}) if for any non-trivial subobject $F \into E$ in $\Coh^{\beta}(\P^3)$ the inequality $\nu_{\alpha, \beta}(F) \leq \nu_{\alpha, \beta}(E)$ holds. Moreover, we call $E$ \emph{$\nu_{\alpha, \beta}$-stable} (or \emph{tilt-stable}) if for any non-trivial subobject $F \into E$ in $\Coh^{\beta}(\P^3)$ the inequality $\nu_{\alpha, \beta}(F) < \nu_{\alpha, \beta}(E/F)$ holds.

In this new notion of stability the role of the rank is replaced by $\ch_1^{\beta}$, i.e., one could for example define stability by only looking at subobjects with strictly smaller $\ch_1^{\beta}$.

There are various known bounds of Chern characters for semistable objects. The most famous is the Bogomolov inequality. It was first proved in \cite{Bog78:inequality}. It also holds in tilt stability as shown in \cite[Corollary 7.3.2]{BMT14:stability_threefolds}.

\begin{thm}[Bogomolov inequality]
If $E$ is a slope-semistable sheaf or a tilt-semistable object, then
\[
\Delta(E) \coloneqq \ch_1^2(E) - 2\ch_0(E) \ch_2(E) \geq 0.
\]
\end{thm}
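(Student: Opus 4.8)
The plan is to treat the two assertions separately: the classical statement for a slope-semistable sheaf $E$, and then the tilt-semistable case, which is the genuine difficulty. It is convenient to record at the outset that $\Delta(E) = \ch_1^2(E) - 2\ch_0(E)\ch_2(E)$ depends only on $(\ch_0, \ch_1, \ch_2)$ and is invariant under twisting by a line bundle; a one-line computation with $\ch_1^{\beta} = \ch_1 - \beta \ch_0 H$ and $\ch_2^{\beta} = \ch_2 - \beta \ch_1 H + \tfrac{\beta^2}{2}\ch_0 H^2$ shows $\Delta = \Delta^{\beta} \coloneqq (\ch_1^{\beta})^2 - 2\ch_0^{\beta}\ch_2^{\beta}$. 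This lets us pass freely between $\ch$ and $\ch^{\beta}$ in the tilt case.

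For a slope-semistable sheaf on $\P^3$ my first step is to reduce to a surface. Restricting $E$ to a general smooth surface $Y \in |kH|$ of sufficiently large degree $k$, the restriction theorem (see \cite{HL10:moduli_sheaves}) guarantees that $E|_Y$ is again slope-semistable, while a direct Chern character computation on $Y$, using $\int_Y (H|_Y)^2 = k$, gives $\Delta(E|_Y) = k\,\Delta(E)$. Since $k > 0$, the inequality on $\P^3$ is equivalent to the inequality on the surface $Y$, which is the classical Bogomolov inequality \cite{Bog78:inequality}, provable on any smooth projective surface via the Hodge index theorem. (When one can arrange $Y \cong \P^2$, there is the cleaner self-contained route: Hirzebruch–Riemann–Roch gives $\chi(E,E) = \ch_0(E)^2\chi(\OO_Y) - \Delta(E)$, and for stable $E$ one has $\hom(E,E) = 1$ together with $\ext^2(E,E) = \hom(E, E(-3))^{\vee} = 0$ since $K_{\P^2}$ has negative degree, whence $\chi(E,E) \leq 1$ and $\Delta(E) \geq \ch_0(E)^2 - 1 \geq 0$; this Fano vanishing is exactly what fails on high-degree surfaces, so it is not available for a general $Y$.)

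The tilt-semistable case I would handle following \cite[Corollary 7.3.2]{BMT14:stability_threefolds}. The point is that an object $E \in \Coh^{\beta}(\P^3)$ is in general an honest two-term complex, so neither the restriction argument nor the sheaf inequality applies directly. The strategy is to control $\Delta(E) = \Delta^{\beta}(E)$ through the two cohomology sheaves $\HH^{-1}(E) \in \FF^{\beta}$ and $\HH^0(E) \in \TT^{\beta}$, each of which is slope-semistable after passing to its Harder–Narasimhan factors and therefore satisfies the classical inequality established above. One then combines these with the $\nu_{\alpha,\beta}$-semistability of $E$ in a quadratic-form estimate in the invariants $(\ch_0^{\beta}, \ch_1^{\beta}, \ch_2^{\beta})$ of the two pieces. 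The main obstacle is precisely the bookkeeping of the cross terms between $\HH^{-1}(E)$ and $\HH^0(E)$, where the structure of the tilted heart enters and where the individual Bogomolov inequalities for the cohomology sheaves must be balanced against the constraint coming from semistability in $\Coh^{\beta}(\P^3)$; this is what forces the more delicate argument of \cite{BMT14:stability_threefolds} rather than a naive reduction to sheaves.
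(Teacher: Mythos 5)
Your proposal is correct in essentially the same sense that the paper's own treatment is: the paper gives no proof of this theorem at all, but simply cites \cite{Bog78:inequality} for the sheaf case and \cite[Corollary 7.3.2]{BMT14:stability_threefolds} for the tilt case, and your argument ultimately rests on exactly the same two citations. What you add beyond the paper is the reduction of the $\P^3$ statement to the surface case (Mehta--Ramanathan restriction to a general $Y\in|kH|$ with $k\gg 0$ together with the computation $\Delta(E|_Y)=k\,\Delta(E)$), which is a genuine and correctly executed supplement, as is the observation that $\Delta$ is unchanged under $\ch\mapsto\ch^{\beta}$; note only that your cleaner parenthetical route requires restriction to a \emph{plane} to preserve semistability, which Mehta--Ramanathan does not give (it needs $k$ large), so it is an aside rather than an alternative. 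The one point to flag is that your description of how \cite{BMT14:stability_threefolds} proves the tilt case is not accurate: their argument is not a quadratic-form estimate balancing the classical inequalities for $\HH^{-1}(E)$ and $\HH^{0}(E)$ against cross terms at the given $(\alpha,\beta)$ --- precisely because those cross terms have no sign and cannot be controlled, that route fails. The actual proof is a large-volume-limit/wall-crossing induction: for $\alpha\gg 0$ tilt-semistable objects are identified with $2$-Gieseker-semistable sheaves, or shifts of slope-semistable sheaves modulo zero-dimensional torsion (compare the paper's Proposition \ref{prop:large_volume_limit}), where the classical inequality applies and the relevant cross terms vanish for trivial reasons; for smaller $\alpha$ one descends through walls, using that the Jordan--H\"older factors along a wall share the same tilt slope, so a Hodge-index-type superadditivity $\Delta(F)+\Delta(G)\le\Delta(E)$ (recorded in this paper as Proposition \ref{prop:properties_dest_sequences}(ii), via \cite[Appendix A]{BMS16:abelian_threefolds}) propagates the inequality downward. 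Since you cite the result rather than reprove it, this inaccuracy is not a logical gap in your proposal, but the sketch as written suggests a strategy that would not succeed if one tried to carry it out.
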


In \cite{BMT14:stability_threefolds} a conjecture was made that in tilt stability there is a similar inequality involving the third Chern character. In the case of $\P^3$ it was proved in \cite{Mac14:conjecture_p3}.

\begin{thm}[Generalized Bogomolov inequality]
Assume that $E \in \Coh^{\beta}(\P^3)$ is $\nu_{\alpha, \beta}$-semistable. Then
\[
Q_{\alpha, \beta}(E) \coloneqq \alpha^2 \Delta(E) + 4 \left( \ch_2^{\beta}(E) \right)^2 - 6 \ch_1^{\beta}(E) \ch_3^{\beta}(E) \geq 0.
\]
\end{thm}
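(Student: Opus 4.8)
The plan is to reduce the inequality to an upper bound on $\ch_3^\beta$ for tilt-stable objects and then to extract that bound from cohomology vanishing via Riemann--Roch. First I would reduce to $\nu_{\alpha,\beta}$-stable objects: since $Q_{\alpha,\beta}$ is continuous in $(\alpha,\beta)$ and the Jordan--H\"older factors of a strictly semistable object all share its slope, a short computation shows $Q_{\alpha,\beta}$ is subadditive on such factors, so it suffices to treat stable $E$. Next I would exploit the fact that on the numerical wall $\{\nu_{\alpha,\beta}(E)=0\}$, where $\ch_2^{\beta}(E)=\tfrac{\alpha^2}{2}\ch_0(E)$, the quadratic form collapses to
\[
Q_{\alpha,\beta}(E) = \ch_1^{\beta}(E)\left(\alpha^2 \ch_1^{\beta}(E) - 6\,\ch_3^{\beta}(E)\right).
\]
Because $\ch_1^{\beta}(E) \geq 0$ for every object of $\Coh^{\beta}(\P^3)$, proving $Q \geq 0$ on this locus is equivalent to the bound $6\,\ch_3^{\beta}(E) \leq \alpha^2 \ch_1^{\beta}(E)$. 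Using the nested, linearly ordered wall structure of tilt stability, I would argue that it is enough to verify the inequality along these $\nu=0$ loci, since an object violating $Q \geq 0$ at any point can be transported to a violation on such a wall.

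The core of the argument is then to bound $\ch_3^{\beta}(E)$ from above for a tilt-stable $E$ sitting on a $\nu=0$ wall. Here I would use that the line bundles $\OO(k)$ and their shifts have $\Delta=0$ and are tilt-stable for all $(\alpha,\beta)$, so stability of $E$ forces the vanishing of $\Hom$ groups between $E$ and suitable twists $\OO(k)[\ell]$ in a range of cohomological degrees dictated by $\nu_{\alpha,\beta}(E)=0$. Feeding these vanishings into Hirzebruch--Riemann--Roch, so that each $\chi(\OO(k),E)$ is expressed purely through $\ch(E)$, converts the cohomology vanishing into exactly the numerical inequality $6\,\ch_3^{\beta}(E)\leq \alpha^2\ch_1^{\beta}(E)$. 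The exceptional collection $\OO,\OO(1),\OO(2),\OO(3)$ on $\P^3$ and the associated Beilinson-type resolution are what make enough twists available to pin $\ch_3$ down; this is the step that genuinely uses that we are in dimension three.

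The main obstacle is establishing the requisite $\Hom$-vanishings uniformly over all tilt-stable $E$, and this is where I expect most of the effort to go. The difficulty is that a priori $E$ can be destabilized by many different subobjects as $(\alpha,\beta)$ varies, so the vanishing cannot be read off directly. I would set this up as an induction on the Bogomolov discriminant $\Delta(E)$: whenever $E$ fails to be stable it fits into a short exact sequence $0 \to F \to E \to G \to 0$ in $\Coh^{\beta}(\P^3)$ with $\Delta(F)+\Delta(G)\leq \Delta(E)$ and both summands nonnegative by the Bogomolov inequality, so the factors have strictly smaller discriminant and the inductive hypothesis applies. The base cases are the objects of minimal discriminant, namely the twists of $\OO$ and their shifts, for which the inequality is a direct computation. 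Assembling the inductive step---controlling $\ch_3(E)$ from those of $F$ and $G$ across every possible wall while preserving the cohomology vanishing---is the technical heart of the proof.
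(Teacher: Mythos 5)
First, a point of calibration: the paper does not prove this theorem at all --- it is imported from Macr\`i's paper \cite{Mac14:conjecture_p3} (together with the reformulation as $Q_{\alpha,\beta}\geq 0$ from \cite{BMS16:abelian_threefolds}), so your proposal has to be measured against that proof. Your skeleton does match it: the reduction to stable objects via subadditivity of $Q$ on Jordan--H\"older factors of equal slope, the (correct) computation that on the locus $\nu_{\alpha,\beta}(E)=0$ the form collapses to $\ch_1^{\beta}(E)\bigl(\alpha^2\ch_1^{\beta}(E)-6\ch_3^{\beta}(E)\bigr)$, the transport of a hypothetical violation to that locus using nestedness of numerical walls (this is exactly what Proposition \ref{prop:structure_walls}, part (vi), makes rigorous, since $W_Q(E)$ is itself a numerical wall with $Q<0$ inside), and a core argument by $\Hom$-vanishing against line bundles plus Riemann--Roch, organized as an induction in which destabilizing sequences strictly decrease $\Delta$.

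The genuine gap is in the core step as you state it. Stability at a \emph{fixed} point $(\alpha,\beta)$ of the $\nu=0$ wall only yields vanishings against twists $\OO(k)$, resp.\ $\OO(k-4)[1]$ via Serre duality, for integers $k$ in the window $(\beta+\alpha,\,\beta+4-\alpha)$; the width-$4$ window comes from $K_{\P^3}=\OO(-4)$, not from a Beilinson resolution, and it is nonempty only for $\alpha<2$, so for most points of the wall there are \emph{no} usable twists at all. Worse, each resulting inequality $\chi(\OO(k),E)\leq 0$ (or its dual) is linear in $\ch(E)$ with coefficients independent of $\alpha$, so a finite set of them taken at one point cannot reproduce the target bound $6\ch_3^{\beta}\leq\alpha^2\ch_1^{\beta}$, whose right-hand side moves with $\alpha$. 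What is missing is the further reduction to the limit $\alpha\to 0$ along the branch of the hyperbola: since the apex of the numerical wall $W_Q(E)$ lies on the hyperbola, the locus $\{Q<0\}$ meets the branch precisely in the segment between $\alpha=0$ and that apex, so the inequality is hardest --- and it suffices to prove it --- for $\alpha$ arbitrarily small, i.e.\ for $\beta$ near $\beta_{\pm}(E)$, where the full window of four twists is available and the Euler-characteristic computation closes (with a separate case analysis when $\beta_{\pm}(E)$ is integral, where $E$ is a twisted line bundle or its shift by Proposition \ref{prop:line_bundles}). Relatedly, your inductive step should transmit $Q\geq 0$ across walls by subadditivity of $Q$ on factors of equal slope, not by ``preserving the cohomology vanishing,'' which is not a property that survives wall-crossing; and the base case $\Delta=0$ also contains the rank-zero classes, for which $Q=4(\ch_2^{\beta})^2\geq 0$ trivially. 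With these repairs your outline becomes, in substance, Macr\`i's proof.
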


Next, we need to connect tilt stability with stability for sheaves. In general, there is no chamber in which tilt stability is the same as either slope stability or Gieseker stability. Instead, we will have to work with the slightly unusual notion of \emph{$2$-Gieseker stability}. 

The polynomial ring $\R[m]$ has a pre-order defined as follows:
\begin{enumerate}
    \item If $f \in \R[m]$ is non-zero, then $f \prec 0$.
    \item If $f, g \in \R[m]$ are non-zero with $\deg(f) > \deg(g)$, then $f \prec g$.
    \item For non-zero $f, g \in \R[m]$ with $\deg(f) = \deg(g)$, let $a_f$ and $a_g$ be their leading coefficients. We define $f \preceq (\prec) g$ if $f(m)/a_f \leq (<) g(m)/a_g$ for $m \gg 0$.
    \item Lastly, if $f, g \in \R[m]$ satisfy both $f \preceq g$ and $g \preceq f$, then $f \asymp g$.
\end{enumerate}

If $E \in \Coh(\P^3)$, then its \emph{Hilbert polynomial} and the terms $\alpha_i(E)$ are defined by
\[
P(E, m) \coloneqq \chi(E(mH)) = \sum_{i=0}^3 \alpha_i(E) m^i.
\]
We also write $P_2(E, m) = \sum_{i = 1}^3 \alpha_i(E) m^i$.

\begin{defn}
\begin{enumerate}
    \item A sheaf $E \in \Coh(\P^3)$ is \emph{Gieseker-semistable} if for any non-trivial subsheaf $F \into E$ the inequality $P(F, m) \preceq P(E, m)$ holds. Moreover, $E$ is called \emph{Gieseker-stable} if instead $P(F, m) \prec P(E, m)$.
    \item A sheaf $E \in \Coh(\P^3)$ is \emph{$2$-Gieseker-semistable} if for any non-trivial subsheaf $F \into E$ the inequality $P_2(F, m) \preceq P_2(E, m)$ holds. Moreover, $E$ is called \emph{$2$-Gieseker-stable} if instead $P_2(F, m) \prec P_2(E/F, m)$.
\end{enumerate}

\end{defn}

Note that as in the definition of slope stability we need to compare to the quotient to define $2$-Gieseker-stable. The reason is that otherwise no sheaf would be slope-stable or $2$-Gieseker-semistable, because there is always a quotient onto a skyscraper sheaf $E \onto \OO_P$ that would ruin the day. The same problem does not occur for Gieseker stability, but the definition would work equivalently by comparing with the quotient.

The following statement was proved in \cite[Proposition 14.2]{Bri08:stability_k3} for K3 surfaces, but the proof is essentially the same in general, see for example \cite[Theorem 5.2]{JM22:walls_bridgeland_3fold}.

\begin{prop}[Large volume limit]
\label{prop:large_volume_limit}
Let $E \in \Db(X)$ and let $\beta < \mu(E)$. Then $E$ is in the category $\Coh^{\beta}(X)$ and $E$ is $\nu_{\alpha, \beta}$-(semi)stable for $\alpha \gg 0$ if and only if $E$ is a $2$-Gieseker-(semi)stable sheaf.
\end{prop}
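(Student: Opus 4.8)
The plan is to prove the ``large volume limit'' comparison in both directions, translating between the tilt-slope ordering $\nu_{\alpha,\beta}$ for $\alpha \gg 0$ and the $2$-Gieseker ordering given by the polynomial $P_2$. The first observation is that when $\beta < \mu(E)$ we have $\ch_1^{\beta}(E) > 0$ (since $\ch_1(E) - \beta\,\ch_0(E) > 0$), so $E$ lies in the torsion part $\TT^{\beta}$ and hence in $\Coh^{\beta}(\P^3)$, giving the first assertion. For the stability comparison, the heart of the matter is an asymptotic expansion: I would compute
\[
\nu_{\alpha,\beta}(E) = \frac{\ch_2^{\beta}(E) - \tfrac{\alpha^2}{2}\ch_0(E)}{\ch_1^{\beta}(E)}
\]
and compare the ordering it induces, as $\alpha \to \infty$, to the ordering induced by $P_2$. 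Concretely, for large $\alpha$ the dominant term is $-\tfrac{\alpha^2}{2}\,\mu(E)$-type data, so the tilt comparison is governed first by $\ch_0/\ch_1^\beta$ (equivalently by slope), and only at the next order by $\ch_2^\beta/\ch_1^\beta$. This should mirror exactly how the leading coefficients of $P_2(E,m) = \alpha_3 m^3 + \alpha_2 m^2 + \alpha_1 m$ order sheaves: via Riemann--Roch, $\alpha_3,\alpha_2,\alpha_1$ are expressible in terms of $\ch_0,\ch_1,\ch_2$ (and fixed Todd-class corrections), so that the lexicographic comparison of normalized $P_2$ matches the two-term asymptotic comparison of $\nu_{\alpha,\beta}$.

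\medskip

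\textbf{Direction one: tilt-semistable for $\alpha \gg 0$ implies $2$-Gieseker-semistable.} Here the subtlety is that a destabilizing \emph{subsheaf} $F \into E$ in $\Coh(\P^3)$ is automatically a subobject in $\Coh^{\beta}(\P^3)$ once $\beta < \mu(F)$, but if $\mu(F) \le \beta$ one must instead view the inclusion through the quotient and the two-term complex structure. I would argue that any subsheaf violating the $2$-Gieseker inequality has, for $\alpha \gg 0$, a larger $\nu_{\alpha,\beta}$ than $E$, using the Riemann--Roch translation above; one has to be slightly careful because $\Coh^{\beta}$-subobjects include more than honest subsheaves. The standard device, as in \cite{Bri08:stability_k3, JM22:walls_bridgeland_3fold}, is that for $\beta$ fixed and $\alpha$ large, the finitely many walls are below a certain radius, so above them tilt-stability is insensitive to $\alpha$ and reduces to the polynomial comparison.

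\medskip

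\textbf{Direction two: $2$-Gieseker-semistable implies tilt-semistable for $\alpha \gg 0$.} For this I would argue by contradiction using the nestedness of walls stated in the excerpt: if $E$ were destabilized for arbitrarily large $\alpha$, the destabilizing subobjects would have to persist, but walls in tilt stability are bounded semicircles, so no wall can extend to $\alpha = \infty$. Hence for $\alpha$ large enough $E$ is $\nu_{\alpha,\beta}$-semistable, and one checks that the limiting order of $\nu_{\alpha,\beta}$ recovers precisely $P_2$-semistability. The same reasoning handles the stable case by using strict inequalities and comparing with the quotient, consistent with the definitions given.

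\medskip

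\textbf{The main obstacle} I anticipate is bookkeeping the passage between subsheaves of $E$ in $\Coh(\P^3)$ and subobjects of $E$ in the tilted heart $\Coh^{\beta}(\P^3)$: a $\Coh^{\beta}$-subobject $F \into E$ gives a long exact sequence in cohomology sheaves, and one must show that the relevant destabilizer can be taken to be a genuine subsheaf (or its quotient) so that the $P_2$-comparison applies. This is exactly where the hypothesis $\beta < \mu(E)$ and the torsion-pair structure $(\TT^\beta, \FF^\beta)$ are used, and where one leans on the cited proofs in \cite{Bri08:stability_k3, JM22:walls_bridgeland_3fold}; the Riemann--Roch asymptotics themselves are routine once this translation is in place.
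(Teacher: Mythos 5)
The paper does not actually prove Proposition \ref{prop:large_volume_limit}: it quotes it from \cite[Proposition 14.2]{Bri08:stability_k3} and \cite[Theorem 5.2]{JM22:walls_bridgeland_3fold}. Your sketch defers to the same references at the decisive points, which by itself would be acceptable; the problem is that the two steps you do carry out yourself are both flawed. First, $\ch_1^{\beta}(E) > 0$ does \emph{not} imply $E \in \TT^{\beta}$. The category $\TT^{\beta}$ is the extension closure of slope-semistable sheaves of slope $> \beta$, so a sheaf belongs to it if and only if \emph{every} slope Harder--Narasimhan factor has slope $> \beta$, not merely the average: $\OO(1) \oplus \OO(-1)$ with $\beta = -\tfrac{1}{2}$ has $\ch_1^{\beta} = 1 > 0$ and $\mu = 0 > \beta$, yet lies in neither $\TT^{\beta}$ nor $\Coh^{\beta}(\P^3)$. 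The correct route in the ``if'' direction uses the implication chain of Section \ref{sec:stability}: a $2$-Gieseker-semistable sheaf is slope-semistable, so all of its HN factors have slope $\mu(E) > \beta$. In the ``only if'' direction, membership in $\Coh^{\beta}(\P^3)$ is a \emph{hypothesis}, and what actually needs proof there --- and is absent from your sketch --- is that an object of $\Coh^{\beta}(\P^3)$ which is $\nu_{\alpha,\beta}$-semistable for $\alpha \gg 0$ is a torsion-free \emph{sheaf}: one kills $\HH^{-1}(E)$ by noting that $\HH^{-1}(E)[1] \into E$ in the heart and that its tilt slope tends to $+\infty$ as $\alpha \to \infty$ (sheaves in $\FF^{\beta}$ are torsion-free of positive rank), and torsion in $\HH^{0}(E)$ is excluded similarly.

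Second, your argument that $2$-Gieseker-semistability implies tilt-semistability for $\alpha \gg 0$ is circular. The nestedness and boundedness of walls (Proposition \ref{prop:structure_walls}) only say that the set of tilt-semistable objects with fixed $\ch_{\leq 2}$ is constant in the unbounded chamber above the largest wall; they do not place your particular $E$ inside that set. An object can be tilt-unstable at every point of that chamber without any wall ``reaching infinity'': for $\beta < -1$ the sheaf $\OO \oplus \OO(-1)$ lies in $\Coh^{\beta}(\P^3)$ and is destabilized by its subsheaf $\OO$ for all $\alpha \gg 0$, which is exactly the expected behaviour of a non-slope-semistable sheaf, and no wall-crossing is involved. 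So ``the walls are bounded'' cannot substitute for the actual verification, namely: for every subobject $F \into E$ in $\Coh^{\beta}(\P^3)$, pass to the image of $F \to E$ in $\Coh(\P^3)$ (the sheaf kernel of $F \to E$ lies in $\FF^{\beta}$, which only improves the relevant inequalities), apply the asymptotic lexicographic comparison --- this part of your sketch is correct --- and then establish a threshold $\alpha_0$ \emph{uniform} over all $F$, which is where the boundedness argument of \cite{Bri08:stability_k3} genuinely enters. As written, both implications have gaps that are not mere bookkeeping.
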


We will not really use Gieseker stability outside the following chain of implications and refer to \cite{HL10:moduli_sheaves} for more background on stability of sheaves.

\centerline{
\xymatrix{
\text{slope-stable} \ar@{=>}[r] & \text{$2$-Gieseker-stable} \ar@{=>}[r] & \text{Gieseker-stable} \ar@{=>}[d] \\
\text{slope-semistable} & \text{$2$-Gieseker-semistable} \ar@{=>}[l] &\text{Gieseker-semistable} \ar@{=>}[l]
}}

For a fixed class $(r, c, d) \in \Z \oplus \Z \oplus \tfrac{1}{2} \Z$, there is a locally finite wall and chamber structure in the upper half-plane $\H := \{ (\alpha, \beta) \in \R^2 : \ \alpha > 0\}$ such that the set of stable objects with $\ch_{\leq 2} = (r, c, d)$ only changes along walls. A \emph{numerical wall} is a non-trivial proper subset of $(\alpha, \beta)$ for which $\nu_{\alpha, \beta}(r, c, d) = \nu_{\alpha, \beta}(s, x, y)$ for some fixed $s, x, y \in \R$. For further clarity we will occasionally call walls \emph{actual walls}. The structure of these walls is well understood and was first formally established in \cite{Mac14:nested_wall_theorem}. 

\begin{prop}[Numerical properties of walls in tilt stability]
\label{prop:structure_walls}
Fix $v = (r, c, d) \in \Z \oplus \Z \oplus \tfrac{1}{2} \Z$ with $\Delta_H(v) \geq 0$.
\begin{enumerate}
    \item Numerical walls are either semicircles with center along the $\beta$-axis or vertical rays. They do not intersect, not even for $\alpha = 0$ unless $\Delta(v) = 0$.
    \item All semicircular numerical walls intersect the curve of $(\alpha, \beta)$ with $\nu_{\alpha, \beta}(v) = 0$ in their apex, i.e., their point with maximal $\beta$.
    \item If $r \neq 0$, then the equation $\nu_{\alpha, \beta}(v) = 0$ defines a hyperbola that is symmetric with respect to the vertical ray $\beta = r/c$. In particular, the semicircular numerical walls are nested along either of the two branches of the hyperbola. Moreover, there is at most one vertical wall given by $\beta = r/c$.
    \item If $r = 0$ and $c \neq 0$, then there is no numerical vertical wall and $\nu_{\alpha, \beta}(v) = 0$ is equivalent to $\beta = d/c$. In particular, all numerical walls are nested semicircles with apex along this vertical ray.
    \item There is a largest actual wall for a fixed $v$, i.e., the radii of these semicircles are bounded from above.
    \item If $\Delta(v) > 0$, then the set of $(\alpha, \beta)$ for which $Q_{\alpha, \beta}(r, c, d, e) = 0$ is the semicircular numerical wall $W((r, c, d), (c, 2d, 3e))$. The region in which $Q_{\alpha, \beta}(r, c, d, e) < 0$ is always given by the semidisk enclosed by $Q_{\alpha, \beta}(r, c, d, e) = 0$.
\end{enumerate}
\end{prop}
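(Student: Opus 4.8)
The plan is to reduce every assertion to an explicit computation with the twisted Chern characters and then read off the geometry. Write $v = (r,c,d)$, let $w = (s,x,y) \in \R^3$ be the auxiliary class defining a numerical wall, and recall $\ch_1^{\beta} = \ch_1 - \beta\ch_0$ and $\ch_2^{\beta} = \ch_2 - \beta\ch_1 + \tfrac{\beta^2}{2}\ch_0$. I would first clear denominators in $\nu_{\alpha,\beta}(v) = \nu_{\alpha,\beta}(w)$ to get
\[
\left(\ch_2^{\beta}(v) - \tfrac{\alpha^2}{2}\ch_0(v)\right)\ch_1^{\beta}(w) - \left(\ch_2^{\beta}(w) - \tfrac{\alpha^2}{2}\ch_0(w)\right)\ch_1^{\beta}(v) = 0.
\]
Expanding, the $\alpha^2$-coefficient is the $\beta$-independent minor $a := rx - cs$, the cubic term in $\beta$ cancels, and the whole equation collapses to
\[
\tfrac{a}{2}\left(\alpha^2 + \beta^2\right) - b\beta - e = 0, \qquad a = rx - cs,\quad b = ry - ds,\quad e = dx - cy,
\]
so that $a,b,e$ are the three $2\times 2$ minors of the matrix with rows $\ch_{\leq 2}(v)$ and $\ch_{\leq 2}(w)$. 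This is already (i): if $a \neq 0$ the locus is a circle centered at $(b/a,0)$ on the $\beta$-axis, and if $a = 0$ it is the vertical line $\beta = -e/b$; only the parts with $\alpha > 0$ are relevant.

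The engine for (ii)--(iv) is the antisymmetric identity $re + cb = da$, which one verifies by substitution. Evaluating the circle at its topmost point $(\alpha,\beta) = (\rho, b/a)$ (its apex) and invoking this identity shows that the apex satisfies $\nu_{\alpha,\beta}(v) = 0$, and by the symmetry of the identity in $v \leftrightarrow w$ also $\nu_{\alpha,\beta}(w) = 0$; this is (ii). For (iii), with $r \neq 0$ the equation $\nu_{\alpha,\beta}(v) = 0$ rewrites as $(\beta - \mu(v))^2 - \alpha^2 = \Delta(v)/r^2$, a hyperbola whose axis of symmetry is the slope line $\beta = \mu(v) = c/r$; a wall is vertical exactly when $a = 0$, and then $re + cb = 0$ forces $\beta = -e/b = c/r$, so there is at most one vertical wall and it sits on the axis. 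For (iv), when $r = 0$ and $c \neq 0$ the same identity gives center $b/a = d/c$ for \emph{every} wall, so all semicircular walls are concentric with apex on the ray $\beta = d/c$; and a rank-zero auxiliary class (i.e. $s = 0$) makes $a = 0$ only in the degenerate case where the two slopes agree identically, so no genuine vertical wall occurs.

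Non-intersection (the nestedness in (i) and (iii)) is the sharpest purely numerical point. For two semicircular walls I would write $u_i = \beta_i - \mu(v)$ for their centers; by (ii) the apex relation reads $u_i^2 - \rho_i^2 = \Delta(v)/r^2$, i.e. the apices lie on the common hyperbola. Subtracting the two circle equations, the linear-in-$\beta$ terms cancel and the intersection is forced onto the axis $\beta = \mu(v)$, where one computes $\alpha^2 = -\Delta(v)/r^2$. Since $\Delta(v) \geq 0$ this is non-positive, so a real intersection with $\alpha \geq 0$ can occur only if $\Delta(v) = 0$, and then only at the single point $(0,\mu(v))$; for $r = 0$ the concentric picture makes disjointness immediate (here $\Delta(v) = c^2 > 0$). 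For (vi) I would expand $Q_{\alpha,\beta}(r,c,d,e)$ directly: since neither $\ch_2^{\beta}$ nor $\ch_3^{\beta}$ involves $\alpha$, the only $\alpha$-dependence is $\alpha^2\Delta(v)$, while a short computation shows the $\beta^4$- and $\beta^3$-coefficients of $4(\ch_2^{\beta})^2 - 6\ch_1^{\beta}\ch_3^{\beta}$ vanish and the $\beta^2$-coefficient equals $\Delta(v)$. Hence
\[
Q_{\alpha,\beta}(r,c,d,e) = \Delta(v)\left(\alpha^2 + \beta^2\right) + (6re - 2cd)\beta + (4d^2 - 6ce),
\]
which by the wall equation above is exactly $-2$ times the defining equation of $W((r,c,d),(c,2d,3e))$. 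For $\Delta(v) > 0$ this is the asserted semicircle, and since the $\alpha^2$-coefficient $\Delta(v)$ is positive (so $Q \to +\infty$ as $\alpha \to \infty$), the region $Q < 0$ is precisely the enclosed semidisk.

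The remaining claim (v) is the main obstacle, and the only point requiring genuine stability input, since numerically the radii $\rho$ are unbounded. Here I would pass from numerical to \emph{actual} walls: such a wall carries a short exact sequence $0 \to F \to E \to G \to 0$ of $\nu_{\alpha,\beta}$-semistable objects of equal tilt-slope, so the Bogomolov inequality gives $\Delta(F), \Delta(G) \geq 0$, and its standard consequence $\Delta(F) + \Delta(G) \leq \Delta(E) = \Delta(v)$ bounds both discriminants. Since $\ch_1^{\beta}$ is additive and non-negative on subobjects of $\Coh^{\beta}(\P^3)$, one has $0 \leq \ch_1^{\beta}(F) \leq \ch_1^{\beta}(E)$ along the wall; combined with the discriminant bound this confines the class of $F$ to finitely many possibilities, each producing a single numerical wall. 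Thus only finitely many numerical walls are actual walls, their radii are bounded, and a largest actual wall exists.
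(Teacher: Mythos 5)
Your computations for (i)--(iv) and (vi) are correct and complete: the reduction of $\nu_{\alpha,\beta}(v)=\nu_{\alpha,\beta}(w)$ to $\tfrac{a}{2}(\alpha^2+\beta^2)-b\beta-e=0$ with $a=rx-cs$, $b=ry-ds$, $e=dx-cy$, the identity $re+cb=da$, the apex computation, the intersection computation forcing $\alpha^2=-\Delta(v)/r^2$, and the expansion $Q_{\alpha,\beta}=\Delta(v)(\alpha^2+\beta^2)+(6re-2cd)\beta+(4d^2-6ce)$ all check out. Note that the paper proves none of this itself: it cites \cite{Sch20:stability_threefolds} for (i)--(v) and calls (vi) a straightforward calculation, so for these parts your argument is a self-contained substitute, and it follows essentially the same computational route as that reference. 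Two minor points: the apex is the point of maximal $\alpha$ (as you correctly use), not maximal $\beta$ as the statement literally reads; and nestedness along a branch needs one more observation beyond non-intersection, since two disjoint semicircles could a priori lie side by side --- for instance, one can note that the endpoints $\mu(v)+u\pm\sqrt{u^2-\Delta(v)/r^2}$ move monotonically in opposite directions as the center parameter $u$ decreases.

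The genuine gap is in (v). You assert that $\Delta(F),\Delta(G)\geq 0$, $\Delta(F)+\Delta(G)\leq\Delta(v)$, and $0\leq\ch_1^{\beta}(F)\leq\ch_1^{\beta}(E)$ along the wall ``confine the class of $F$ to finitely many possibilities,'' so that only finitely many numerical walls can be actual. This finiteness claim is false for the purely numerical constraints you impose. Take $v=(1,0,-1)$, so $\Delta(v)=2$. For each solution of the Pell equation $m^2-2k^2=-1$, the class $w=(k^2,\,-km,\,k^2-\tfrac{1}{2})$ lies in $\Z\oplus\Z\oplus\tfrac{1}{2}\Z$ and satisfies $\Delta(w)=0$ and $\Delta(v-w)=1$, hence $\Delta(w)+\Delta(v-w)\leq\Delta(v)$; moreover $W(v,w)$ is a genuine semicircle of radius $1/(2km)$, and a direct check shows $0\leq\ch_1^{\beta}(w)\leq\ch_1^{\beta}(v)$ at every point of it. This yields infinitely many distinct numerical walls, with radii tending to $0$ and accumulating at $(\beta,\alpha)=(-\sqrt{2},0)$, all of which satisfy every constraint on your list. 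So those constraints cannot bound the number of candidate classes; what they can bound --- and what (v) actually asserts --- is the radius. The repair is to argue only about large walls: for example, Proposition \ref{prop:properties_dest_sequences}(i) gives $\rho(E,F)^2\leq\Delta(E)/\bigl(4\ch_0(F)(\ch_0(F)-\ch_0(E))\bigr)$ for destabilizers of large rank (applying it to $G$ when needed), so on walls of radius at least $\rho_0>0$ the rank of $F$ is bounded; the heart condition along such a wall then bounds $\ch_1(F)$, and the discriminant bounds pin $\ch_2(F)$ to finitely many values in $\tfrac{1}{2}\Z$. This gives finitely many numerical walls of radius at least $\rho_0$ for every $\rho_0>0$, which is what bounds the radii of actual walls. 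Without a step of this kind (or an argument via the large volume limit along a fixed vertical ray), your proof of (v) does not close.
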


A proof of part (i) - (v) can be found in \cite[Theorem 3.3 and Lemma 7.3]{Sch20:stability_threefolds}. Part (vi) is a straightforward calculation. For objects $E, F \in \Db(X)$ we denote the numerical wall where $\nu_{\alpha, \beta}(E) = \nu_{\alpha, \beta}(F)$ by $W(E, F)$. If this wall is semicircular, than we denote the radius as $\rho(E, F)$ and the center along the $\beta$-axis by $s(E, F)$. We denote the numerical wall defined through $Q_{\alpha, \beta}(E) = 0$ as $W_Q(E)$. If the wall is semicircular, then the center and radius of $W_Q(E)$ are denoted by $s_Q(E)$ and $\rho_Q(E)$.

For any tilt-semistable $E$ with $\ch_0(E) \neq 0$, we define $\beta_{-}(E) \leq \beta_{+}(E)$ to be the solutions to the quadratic equation $\nu_{0, \beta}(E) = \ch_2^{\beta}(E) = 0$. We have
\begin{align*}
    \beta_{-}(E) &= \mu(E) - \sqrt{\frac{\Delta(E)}{(H^2 \cdot \ch_1(E))^2}}, \\
    \beta_{+}(E) &= \mu(E) + \sqrt{\frac{\Delta(E)}{(H^2 \cdot \ch_1(E))^2}}.
\end{align*}
Whenever an object $E$ is detabilized along a wall $W$, this is due to a short exact sequence 
\[
0 \to F \to E \to G \to 0
\]
in $\Coh^{\beta}(\P^3)$ for $(\alpha, \beta) \in W$ such that $W = W(E, F) = W(E, G) = W(F, G)$. 

\begin{prop}[Properties of destabilizing sequences]
\label{prop:properties_dest_sequences}
Let $0 \to F \to E \to G \to 0$ be a destabilizing sequence for $E$ along a wall $W$. Then the following properties hold.
\begin{enumerate}
    \item If $W$ is semicircular and $\ch_0(F) > \ch_0(E) \geq 0$ or $\ch_0(F) < \ch_0(E) \leq 0$, then
    \[
    \rho(E, F)^2 \leq \frac{\Delta(E)}{4 \ch_0(F)(\ch_0(F) - \ch_0(E))}.
    \]
    \item It is $\Delta(F) + \Delta(F) \leq \Delta(E)$ with equality if and only if either $\ch_{\leq 2}(F) = 0$ or $\ch_{\leq 2}(G) = 0$.
    \item For any $(\alpha, \beta) \in W$, we have $0 \leq \ch_1^{\beta}(F) \leq \ch_1^{\beta}(E)$. Moreover, in case of equality on either end, the wall is vertical. In particular, if $\ch_1^{\beta_0}(F) > 0$ is minimal for this value of $\beta_0$ then there is no wall that intersects the ray $\beta = \beta_0$.
    \item If $E$ has rank zero, then $F$ and $G$ must have non-zero rank.
    \item Assume that $E$ has strictly positive rank. Then either $\ch_0(F) > 0$ and $\mu(F) \leq \mu(E)$, or $\ch_0(G) > 0$ and $\mu(G) \leq \mu(E)$.
    \item Assume that $W$ is semicircular, $E$ has strictly positive rank, $\ch_0(F) > 0$, and $\mu(F) \leq \mu(E)$. Then $\beta_{-}(E) < \beta_{-}(F) \leq \mu(F) < \mu(E)$.
\end{enumerate}
\end{prop}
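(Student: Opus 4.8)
The plan is to extract the chain of inequalities from the geometry of the semicircular wall together with the Bogomolov inequality applied to $F$. We are given that $E$ has strictly positive rank, that $\ch_0(F) > 0$, and that $\mu(F) \le \mu(E)$. The target inequality $\beta_{-}(E) < \beta_{-}(F) \le \mu(F) < \mu(E)$ has three pieces, and I would establish them essentially from right to left since the right two are nearly formal.

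First I would dispose of the easy middle and right inequalities. The inequality $\beta_{-}(F) \le \mu(F)$ is immediate from the displayed formula $\beta_{-}(F) = \mu(F) - \sqrt{\Delta(F)/(H^2\cdot\ch_1(F))^2}$ together with the Bogomolov inequality $\Delta(F) \ge 0$ (part (ii) guarantees $\Delta(F) \ge 0$ since $F$ is tilt-semistable on the wall, or one invokes the Bogomolov inequality directly). For the strict inequality $\mu(F) < \mu(E)$: we already know $\mu(F)\le\mu(E)$, so I must rule out equality. If $\mu(F) = \mu(E)$, then since $\ch_0(F)>0$ and $\ch_0(E)>0$ the two objects would have proportional $(\ch_0,\ch_1)$, forcing the numerical wall $W(E,F)$ to be vertical rather than semicircular (one sees this because $\nu_{\alpha,\beta}(E)=\nu_{\alpha,\beta}(F)$ with equal slopes makes the $\alpha^2$-terms cancel independently of $\alpha$, leaving a condition on $\beta$ alone); this contradicts the hypothesis that $W$ is semicircular. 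Hence $\mu(F)<\mu(E)$ strictly.

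The main work — and the step I expect to be the genuine obstacle — is the leftmost strict inequality $\beta_{-}(E) < \beta_{-}(F)$. Here the key idea is to use the characterization in Proposition \ref{prop:structure_walls}(ii)–(iii): the semicircular wall $W(E,F)=W$ meets the hyperbola $\nu_{\alpha,\beta}(E)=0$ at its apex, and $\beta_{-}(E)$ is precisely the left endpoint (the intersection with the $\beta$-axis, $\alpha=0$) of that branch. The point $(0,\beta_{-}(E))$ lies on the wall $W$ because $\nu_{0,\beta_{-}(E)}(E)=\ch_2^{\beta_{-}(E)}(E)=0$ and, along $W$, the equality $\nu_{\alpha,\beta}(E)=\nu_{\alpha,\beta}(F)$ persists down to $\alpha=0$ (the walls extend to $\alpha=0$ by Proposition \ref{prop:structure_walls}(i), using $\Delta(E)>0$ which holds since the wall is a proper semicircle). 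Therefore at $\beta=\beta_{-}(E)$ we also have $\ch_2^{\beta_{-}(E)}(F)=0$ up to the proportionality forced by the wall, which places $\beta_{-}(E)$ between the two roots $\beta_{-}(F)$ and $\beta_{+}(F)$ of $\ch_2^\beta(F)=0$, or outside them; I would then use $\mu(F)<\mu(E)$ to pin down that $\beta_{-}(E)$ lies strictly to the left of $\beta_{-}(F)$.

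Concretely, I would argue as follows: both $\beta_{-}(E)$ and $\beta_{-}(F)$ are the smaller $\beta$-axis intercepts of the respective hyperbolas $\nu(E)=0$ and $\nu(F)=0$, and these two hyperbolas are symmetric about $\beta=\mu(E)$ and $\beta=\mu(F)$ respectively. Since $\mu(F)<\mu(E)$, the $F$-hyperbola is shifted left. The subtlety is that a leftward shift of the center does not by itself force $\beta_{-}(E)<\beta_{-}(F)$ without controlling the radii $\sqrt{\Delta/(H^2\ch_1)^2}$. The clean way around this is to use that $(0,\beta_{-}(E))\in W(E,F)$ directly: substituting $\alpha=0,\ \beta=\beta_{-}(E)$ into $\nu_{0,\beta}(E)=\nu_{0,\beta}(F)$ and using $\ch_2^{\beta_{-}(E)}(E)=0$ yields $\ch_2^{\beta_{-}(E)}(F)=0$ as well (the common value of $\nu_{0,\beta_-(E)}$ is zero, and since $\ch_1^{\beta_-(E)}(F)\neq 0$ by part (iii), its numerator $\ch_2^{\beta_-(E)}(F)$ must vanish). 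Thus $\beta_{-}(E)\in\{\beta_{-}(F),\beta_{+}(F)\}$. It cannot equal $\beta_{+}(F)$: that would put $\beta_{-}(E)\ge\mu(F)$, whereas I would check $\beta_{-}(E)<\mu(F)$ using $\mu(F)<\mu(E)$ and the location of the apex. So $\beta_{-}(E)=\beta_{-}(F)$ at worst — and then strictness comes from observing that genuine equality of the two $\beta_{-}$ values would again force the nested hyperbolas to be tangent at $\alpha=0$, contradicting the non-intersection of distinct walls at $\alpha=0$ guaranteed by Proposition \ref{prop:structure_walls}(i) (which holds since $\Delta(E)>0$). This gives the strict inequality $\beta_{-}(E)<\beta_{-}(F)$ and completes the chain.
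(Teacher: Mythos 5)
The easy parts of your chain are fine: $\beta_-(F) \le \mu(F)$ follows from the Bogomolov inequality applied to the tilt-semistable object $F$ (note that part (ii) of the Proposition does \emph{not} give $\Delta(F) \ge 0$; the Bogomolov inequality itself does), and your exclusion of $\mu(F) = \mu(E)$ — proportional $(\ch_0, \ch_1)$ makes the $\alpha^2$-terms cancel, so the numerical wall degenerates to the vertical line — is correct. For comparison, the paper does not prove (vi) at all; it cites the proof of \cite[Lemma 3.6]{BMSZ17:stability_fano}, so a self-contained argument would be genuinely useful if it worked.

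However, your argument for the key inequality $\beta_-(E) < \beta_-(F)$ fails at its central claim, namely that $(0, \beta_-(E))$ lies on $W = W(E,F)$, from which you deduce $\ch_2^{\beta_-(E)}(F) = 0$ and $\beta_-(E) \in \{\beta_-(F), \beta_+(F)\}$. This is false: by Proposition \ref{prop:structure_walls}(ii) the wall meets the hyperbola $\nu_{\alpha,\beta}(E) = 0$ at the apex of the semicircle, which is its \emph{topmost} point, at $\alpha = \rho(E,F) > 0$ (the phrase ``maximal $\beta$'' in the paper's statement is a typo for maximal $\alpha$); the wall does not pass through the hyperbola's vertex $(0, \beta_-(E))$. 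Concretely, for $\ch_{\le 2}(E) = (1,0,-1)$ and $F = \OO(-1)$ (a destabilizing wall by Theorem \ref{thm:rank_one}), the wall is $\alpha^2 + (\beta + \tfrac{3}{2})^2 = \tfrac{1}{4}$, which meets the hyperbola at $(\tfrac{1}{2}, -\tfrac{3}{2})$, while $(0, -\sqrt{2})$ is not on the wall and $\ch_2^{-\sqrt{2}}(\OO(-1)) = \tfrac{3}{2} - \sqrt{2} \neq 0$; here $\beta_-(E) = -\sqrt{2}$ and $\beta_-(F) = -1$, so your intermediate conclusion $\beta_-(E) \in \{\beta_\pm(F)\}$ actually contradicts the strict inequality being proved, and the tangency discussion afterwards cannot repair a false premise. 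What is true, and what rescues an argument in your spirit, is that the apex of $W$ lies on \emph{both} hyperbolas $\nu(E) = 0$ and $\nu(F) = 0$, since $W$ is a numerical wall for $F$ as well as for $E$. Writing $D_E = \Delta(E)/\ch_0(E)^2$, $D_F = \Delta(F)/\ch_0(F)^2$ and letting $s, \rho$ be the center and radius of $W$, this reads
\[
\mu(E) - \sqrt{\rho^2 + D_E} \;=\; s \;=\; \mu(F) - \sqrt{\rho^2 + D_F}.
\]
Since $\mu(E) > \mu(F)$, this forces $D_E > D_F \ge 0$; and since $\alpha \mapsto \sqrt{\alpha^2 + D_E} - \sqrt{\alpha^2 + D_F}$ is strictly decreasing in $\alpha$ when $D_E > D_F \ge 0$, comparing its values at $\alpha = 0$ and $\alpha = \rho > 0$ gives $\sqrt{D_E} - \sqrt{D_F} > \mu(E) - \mu(F)$, which is precisely $\beta_-(E) < \beta_-(F)$.
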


\begin{proof}
A statement close to (i) first appeared in \cite{CH16:ample_cone_plane}. This precise version can be found in \cite[Lemma 2.4]{MS20:space_curves}. Part (ii) is a special case of the results established in \cite[Appendix A]{BMS16:abelian_threefolds}.

To show (iii) note that $\ch_1^{\beta}(F) \geq 0$ is an immediate consequence of the definition of $\Coh^{\beta}(\P^3)$. The inequality $\ch_1^{\beta}(F) \leq \ch_1^{\beta}(E)$ is equivalent to $\ch_1^{\beta}(G) \geq 0$. Finally, in case of either equality, a straightforward computations shows that $W$ becomes the vertical wall.

Part (iv) follows because the equation $\nu_{\alpha, \beta}(E) = \nu_{\alpha, \beta}(F)$ is independent of $(\alpha, \beta)$ whenver both $E$ and $F$ have rank zero. Obviously, this cannot define a wall.

For part (v) there are two cases to consider. If $\ch_0(F) \geq \ch_0(E)$, then the statement follows as a consequence of (iii). The same argument works if $\ch_0(G) \geq \ch_0(E)$. Assume $0 < \ch_0(F)$, and $\ch_0(G) < \ch_0(F)$. Then the fact that $\ch_{\leq 1}(F) + \ch_{\leq 1}(G) = \ch_{\leq 1}(E)$ implies that either $\mu(F) \leq \mu(E) \leq \mu(G)$ or $\mu(G) \leq \mu(E) \leq \mu(F)$.

The proof for (vi) can be found in the proof of \cite[Lemma 3.6]{BMSZ17:stability_fano}.
\end{proof}

Next, we need to talk about the derived dual. For any $E \in \Db(X)$ we define the \emph{derived dual} as $\D(E) \coloneqq \RHom(E, \OO_X)[1]$. As shown in \cite[Proposition 5.1.3]{BMT14:stability_threefolds} tilt stability behaves reasonably well with respect to this operation:

\begin{prop}
\label{prop:derived_dual}
Let $E \in \Coh^{\beta}(\P^3)$ be $\nu_{\alpha, \beta}$-semistable with $\nu_{\alpha, \beta}(E) \neq 0$. Then there is a $\nu_{\alpha, -\beta}$-semistable object $\tilde{E} \in \Coh^{-\beta}(\P^3)$ together with a sheaf $T$ supported in dimension zero and a distinguished triangle
\[
T[-2] \to \tilde{E} \to \D(E) \to T[-1].
\]
\end{prop}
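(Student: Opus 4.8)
The plan is to read off the cohomology sheaves of $\D(E)$ from those of $E$ and then truncate. Write $A \coloneqq \HH^0(E)$ and $B \coloneqq \HH^{-1}(E)$, so that the truncation triangle $B[1] \to E \to A \to B[2]$ in $\Coh^{\beta}(\P^3)$ presents $E$ through its two cohomology sheaves. Applying the contravariant functor $\RHom(-,\OO_{\P^3})$ and shifting by $[1]$ turns this into a triangle
\[
\RHom(A,\OO_{\P^3})[1] \to \D(E) \to \RHom(B,\OO_{\P^3}) \to \RHom(A,\OO_{\P^3})[2],
\]
whose associated long exact sequence of cohomology sheaves expresses each $\HH^k(\D(E))$ in terms of the local Ext-sheaves $\lExt^{k+1}(A,\OO_{\P^3})$ and $\lExt^{k}(B,\OO_{\P^3})$.

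First I would locate these cohomology sheaves using the standard codimension estimates on the smooth threefold $\P^3$: the sheaf $\lExt^i(G,\OO_{\P^3})$ is supported in codimension at least $i$, with the homological characterizations of torsion-free and reflexive sheaves controlling this further. Here the structural properties of tilt-semistable objects enter decisively: $B=\HH^{-1}(E)$ is reflexive, and, the tilt slope of $E$ being finite (the degenerate case $\ch_1^{\beta}(E)=0$ is easily treated separately), $A=\HH^0(E)$ admits no subsheaf supported in dimension $\le 1$, since such a subsheaf would be a subobject of $E$ in $\Coh^{\beta}(\P^3)$ of infinite tilt slope, contradicting semistability. These inputs force $\lExt^{\ge 2}(B,\OO_{\P^3})=0$, make $\lExt^1(B,\OO_{\P^3})$ zero-dimensional, and kill the low-dimensional Ext-sheaves of $A$ that could otherwise appear in top degree. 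Reading the long exact sequence then shows that $\HH^k(\D(E))=0$ for $k\notin\{-1,0,1\}$, that $\HH^{-1}(\D(E))$ and $\HH^0(\D(E))$ are built from $A^{\vee}$, a subsheaf of $B^{\vee}$, and controlled torsion, and that $T\coloneqq\HH^1(\D(E))$ is supported in dimension zero.

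Defining $\tilde E\coloneqq\tau^{\le 0}\D(E)$, the truncation triangle $\tilde E\to \D(E)\to T[-1]\to\tilde E[1]$ rotates into exactly the asserted triangle $T[-2]\to\tilde E\to\D(E)\to T[-1]$. To see $\tilde E\in\Coh^{-\beta}(\P^3)$ I would check the two heart conditions on its cohomology sheaves: $\HH^{-1}(\tilde E)=A^{\vee}$ lies in $\FF^{-\beta}$ because dualizing a slope-semistable sheaf preserves semistability by Lemma~\ref{lem:dual_semistable_slope} and negates the slope, sending $\mu>\beta$ to $\mu<-\beta$; dually $\HH^0(\tilde E)$ is an extension of torsion and a subsheaf of $B^{\vee}$, all lying in $\TT^{-\beta}$. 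The numerical bookkeeping is then immediate: from $\ch_i(\D(E))=(-1)^{i+1}\ch_i(E)$ one obtains $\ch_i^{-\beta}(\D(E))=(-1)^{i+1}\ch_i^{\beta}(E)$, hence $\nu_{\alpha,-\beta}(\tilde E)=\nu_{\alpha,-\beta}(\D(E))=-\nu_{\alpha,\beta}(E)$, the first equality holding because $T$ is zero-dimensional and so does not affect $\ch_{\le 2}$.

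Finally, for $\nu_{\alpha,-\beta}$-semistability of $\tilde E$ I would argue by contradiction, transporting a destabilizer back through $\D$. A subobject $M\hookrightarrow\tilde E$ in $\Coh^{-\beta}(\P^3)$ with $\nu_{\alpha,-\beta}(M)>\nu_{\alpha,-\beta}(\tilde E)$ composes with $\tilde E\to\D(E)$ and dualizes, using $\D\circ\D\cong\id$ on $\Db(\P^3)$ (valid since every sheaf on $\P^3$ is perfect), to a quotient of $E$ whose tilt slope is $-\nu_{\alpha,-\beta}(M)<\nu_{\alpha,\beta}(E)$, contradicting semistability of $E$; the zero-dimensional correction $T$ must be tracked but cannot interfere, as it contributes nothing to $\ch_{\le 2}$ and hence to tilt slopes. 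I expect the genuine difficulty to be concentrated in the middle step — proving that the top cohomology sheaf $T=\HH^1(\D(E))$ is exactly zero-dimensional. This is precisely where reflexivity of $\HH^{-1}(E)$ and the exclusion of low-dimensional torsion in $\HH^0(E)$ are needed, and where the hypothesis $\nu_{\alpha,\beta}(E)\neq 0$ does its work; the contravariant transfer of semistability, while requiring careful handling of the $T$-correction, is comparatively formal once the slope identity $\nu_{\alpha,-\beta}\circ\D=-\nu_{\alpha,\beta}$ is established.
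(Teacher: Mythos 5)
The paper itself gives no proof of this statement: it is quoted verbatim from \cite[Proposition 5.1.3]{BMT14:stability_threefolds}. So your proposal can only be measured against that argument, whose overall architecture you do reproduce correctly: dualize the truncation triangle of $E$, control the sheaves $\lExt^q(\HH^{-1}(E),\OO)$ and $\lExt^q(\HH^0(E),\OO)$ by codimension, truncate $\D(E)$, and transfer semistability back through $\D\circ\D\cong\id$. The problems lie in the two places where you invoke ``structural properties of tilt-semistable objects.''

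The central gap is your claim that $\HH^0(E)$ has no subsheaf of dimension $\leq 1$ ``since such a subsheaf would be a subobject of $E$ in $\Coh^{\beta}(\P^3)$ of infinite tilt slope.'' This inference is invalid: $\HH^0(E)$ is a \emph{quotient} of $E$ in $\Coh^{\beta}(\P^3)$, not a subobject, and a subsheaf $T\subset\HH^0(E)$ lifts to a map $T\to E$ only if an obstruction class in $\Ext^2(T,\HH^{-1}(E))$ vanishes, which it has no reason to do. What semistability (together with \emph{finiteness} of $\nu_{\alpha,\beta}(E)$) actually gives is the weaker vanishing $\Hom(T,E)=0$ for every sheaf $T$ of dimension $\leq 1$: the image of any such map would be a subobject of infinite slope. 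With only this, your long exact sequence bounds $\HH^1(\D(E))$ by dimension $\leq 1$, not dimension zero, and the zero-dimensionality of $T$ — which you correctly identify as the crux — is not reached. The missing idea is to dualize a \emph{second} time: $\HH^2(\D(E))$ is zero-dimensional a priori (torsion-freeness of $\HH^{-1}(E)$ suffices, so your unproven reflexivity claim is not even needed), hence if it were nonzero the truncation $\D(E)\to\HH^2(\D E)[-2]$ would dualize, via $\D\D\cong\id$, to a nonzero map from a zero-dimensional sheaf to $E$, a contradiction; and once $\HH^2=0$, a pure one-dimensional quotient $Q$ of $\HH^1(\D E)$ would give $\D(E)\to\HH^1(\D E)[-1]\to Q[-1]$, dualizing to a nonzero map $\lExt^2(Q,\OO)\to E$ from a one-dimensional sheaf, again a contradiction. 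Whether or not dimension $\leq 1$ torsion in $\HH^0(E)$ can actually occur, the correct proof does not exclude it; it kills its contribution this way.

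Two further points. First, the ``degenerate case $\ch_1^{\beta}(E)=0$'' is not ``easily treated separately'': the conclusion genuinely fails there. For instance $E=\II_P[1]\in\Coh^{0}(\P^3)$ is (trivially) $\nu_{\alpha,0}$-semistable with slope $\infty\neq 0$, yet $\D(E)=\RHom(\II_P,\OO_{\P^3})$ has $\HH^2\cong\OO_P\neq 0$, which is incompatible with any triangle of the stated form. The hypothesis must be read, as it is in every application in this paper, as requiring finite nonzero slope — and note that it is finiteness, not $\nu\neq 0$, that powers the Hom-vanishing above, contrary to where you say the hypothesis ``does its work.'' Second, the semistability transfer is circular as written: to make ``a destabilizing $M\hookrightarrow\tilde E$ dualizes to a quotient of $E$'' precise you need structural control of $\D(M)$, so you must take $M$ to be $\nu_{\alpha,-\beta}$-semistable (e.g.\ the HN-maximal destabilizer), apply to it only the already-established structure statement (not the semistability you are trying to prove), handle $\nu_{\alpha,-\beta}(M)=\infty$ by a separate argument, and compare slopes of images up to the zero-dimensional corrections. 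Relatedly, heart membership of $\HH^0(\tilde E)$ needs the strict inequality $\mu_{\max}(\HH^{-1}(E))<\beta$; membership of $\HH^{-1}(E)$ in $\FF^{\beta}$ only gives $\leq$, and the strictness is another consequence of semistability (an HN factor of slope exactly $\beta$ would yield a subobject of $E$ of infinite slope) that is absent from your sketch.
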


We need the following fact that roughly says if it looks like a line bundle, swims like a line bundle, and quacks like a line bundle, then it is a line bundle.

\begin{prop}[{\cite[Proposition 4.1 and 4.6]{Sch20:stability_threefolds}}]
\label{prop:line_bundles}
Assume that $E$ is slope-semistable or $\nu_{\alpha, \beta}$-semistable for $\alpha > 0$, $\beta < m$ and $\ch(E) = \ch(\OO(m)^{\oplus r})$ for some $r, m \in \Z$ with $r \geq 0$. Then $E \cong \OO(m)^{\oplus r}$. Moreover, if $E$ is $\nu_{\alpha, \beta}$-semistable for $\beta > m$ and $\ch_{\leq 2}(E) = - \ch_{\leq 2}(\OO(m)^{\oplus r})$ for $r, m \in \Z$ with $r \geq 0$, then $E \cong \OO(m)^{\oplus r}[1]$.
\end{prop}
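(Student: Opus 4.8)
The plan is to funnel both assertions into one classification statement---a slope-semistable sheaf with $\ch = (r,0,0,0)$ is $\OO^{\oplus r}$---and then strip off the two cosmetic differences (the twist, the shift, and the fact that $\ch_3$ is free in the second part) by soft arguments. Throughout I reduce to $m=0$ at the start: tensoring by $\OO(-m)$ is an exact autoequivalence sending $\Coh^{\beta}(\P^3)$ to $\Coh^{\beta-m}(\P^3)$, preserving slope- and $\nu_{\alpha,\beta}$-(semi)stability after $\beta \mapsto \beta - m$, and sending $\OO(m)^{\oplus r}$ to $\OO^{\oplus r}$.

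\textbf{Reduction of the first statement to a sheaf.} After twisting I have $\ch(E) = (r,0,0,0)$, so $\Delta(E) = 0$, and either $E$ is a slope-semistable sheaf or $E$ is $\nu_{\alpha,\beta}$-semistable for some $\beta < 0$. In the tilt case I first claim $E$ has no semicircular walls in $\{\beta < 0\}$: along any destabilizing sequence $0 \to F \to E \to G \to 0$, Proposition \ref{prop:properties_dest_sequences}(ii) forces $\Delta(F) = \Delta(G) = 0$ with equality, hence $\ch_{\leq 2}(F) = 0$ or $\ch_{\leq 2}(G) = 0$; but then $\ch_1^{\beta}(F)$ equals $0$ or $\ch_1^{\beta}(E)$, and Proposition \ref{prop:properties_dest_sequences}(iii) makes the wall vertical, i.e.\ located at $\beta = \mu(E) = 0$, outside our region. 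So $E$ is $\nu_{\alpha,\beta}$-semistable for all $\beta < 0$, in particular for $\alpha \gg 0$ with $\beta < 0 = \mu(E)$, and Proposition \ref{prop:large_volume_limit} identifies $E$ with a $2$-Gieseker-semistable, hence slope-semistable, sheaf. Thus in both cases it suffices to treat a slope-semistable sheaf $E$ with $\ch(E) = (r,0,0,0)$.

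\textbf{The classification (main obstacle).} Since a nonzero torsion subsheaf would have infinite slope, $E$ is torsion-free. Its reflexive hull $E^{\vee\vee}$ is again slope-semistable with $\ch_1 = 0$; comparing $\ch_2(E^{\vee\vee}) = \ch_2(E) + \ch_2(E^{\vee\vee}/E) \geq 0$ with the Bogomolov inequality $\Delta(E^{\vee\vee}) = -2r\,\ch_2(E^{\vee\vee}) \geq 0$ gives $\ch_2(E^{\vee\vee}) = 0$, so $\Delta(E^{\vee\vee}) = 0$ as well. Now the $\mu$-stable Jordan--Hölder factors of $E^{\vee\vee}$ have $\Delta = 0$ and slope $0$; a $\mu$-stable sheaf with these invariants must have rank one (a higher-rank $\mu$-stable sheaf with $\Delta = 0$ would be projectively flat, hence trivial after a twist on the simply connected $\P^3$, contradicting stability), and being reflexive with $c_1 = 0$ it is $\OO$. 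Because $\Ext^1(\OO,\OO) = H^1(\OO_{\P^3}) = 0$, the filtration splits and $E^{\vee\vee} \cong \OO^{\oplus r}$. Finally $E^{\vee\vee}/E$ is supported in dimension zero of length $\ch_3(E^{\vee\vee}) - \ch_3(E) = 0$, so $E = E^{\vee\vee} \cong \OO^{\oplus r}$. This rigidification of a $\Delta = 0$ sheaf to the trivial bundle, where the geometry of $\P^3$ (simple connectedness, absence of nontrivial projectively flat bundles) genuinely enters, is the step I expect to be hardest; everything around it is formal.

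\textbf{The second statement.} After twisting, $E \in \Coh^{\beta}(\P^3)$ is tilt-semistable with $\ch_{\leq 2}(E) = (-r,0,0)$ and $\beta > 0$; again $\Delta(E) = 0$, and the wall analysis above shows $E$ is tilt-semistable for all $\beta > 0$. Since $\mu(E) = 0 < \beta$, the large-volume limit on this side (equivalently, Proposition \ref{prop:derived_dual}) presents $E$ as $F[1]$ with $F = \HH^{-1}(E)$ a $2$-Gieseker-semistable, hence slope-semistable, sheaf satisfying $\ch_{\leq 2}(F) = (r,0,0)$ and $\Delta(F) = 0$. By the classification of the previous paragraph, $F^{\vee\vee} \cong \OO^{\oplus r}$, so there is a short exact sequence of sheaves $0 \to F \to \OO^{\oplus r} \to Q \to 0$ with $Q$ of dimension zero. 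Shifting and rotating turns this into a short exact sequence $0 \to Q \to F[1] \to \OO^{\oplus r}[1] \to 0$ in $\Coh^{\beta}(\P^3)$ (all three terms lie in the heart, as $F, \OO \in \FF^{\beta}$ and $Q \in \TT^{\beta}$). This exhibits the skyscraper $Q$, which has $\nu_{\alpha,\beta}(Q) = \infty$, as a subobject of $E = F[1]$; semistability of $E$ forces $Q = 0$. Hence $F \cong \OO^{\oplus r}$ and $E \cong \OO^{\oplus r}[1]$, and in particular $\ch_3(E) = 0$ is recovered a posteriori, consistent with the hypothesis only controlling $\ch_{\leq 2}$.
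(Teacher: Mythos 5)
Your overall skeleton is sound and, for the first statement, matches the opening of the proof in the cited source \cite{Sch20:stability_threefolds} (the paper itself does not reprove this proposition): since $\Delta(E)=0$, Proposition \ref{prop:properties_dest_sequences}(ii)--(iii) forces every potential wall to be vertical, semistability propagates through the whole region $\beta<\mu(E)$, and Proposition \ref{prop:large_volume_limit} reduces everything to slope-semistable sheaves with $\ch=(r,0,0,0)$. The first genuine gap is in your classification of those sheaves: you assume the $\mu$-stable Jordan--H\"older factors of $E^{\vee\vee}$ are reflexive. Only the first, saturated step of the filtration is automatically reflexive; the later factors are quotients and could a priori be ideal sheaves $\II_Z$ of nonempty finite subschemes (rank one, stable, $\ch_1=\ch_2=0$, not reflexive). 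This breaks both halves of your key sentence: the projective-flatness input (Donaldson--Uhlenbeck--Yau/Bando--Siu) is a statement about locally free, or at best reflexive, stable sheaves, so it cannot be applied to the factors as they stand; and the splitting via $\Ext^1(\OO,\OO)=0$ presupposes all factors are already $\OO$. The step can be repaired (replace each factor by its double dual before invoking flatness, then induct on rank using that finite-length sheaves $T$ on $\P^3$ have grade three, so $\Ext^1(\II_Z,\OO)=\Ext^2(T,\OO)=0$), but as written the central argument does not close. It is also worth noting that the cited proof needs no gauge theory at all: it shows $\hom(\OO,E)\geq\chi(\OO,E)=r$ from stability and Serre duality, produces injections $\OO\to E$ with same-slope semistable cokernels, and inducts on $r$.

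The second gap is in the shifted statement. You assert that semistability for $\beta>\mu$ ``presents $E$ as $F[1]$'' with $F=\HH^{-1}(E)$. There is no large-volume-limit statement on this side of the vertical wall in the paper, and Proposition \ref{prop:derived_dual}, which you offer as a substitute, carries finite-length correction terms precisely because the assertion is false in general: a tilt-semistable object of negative rank can have $\HH^0(E)\neq 0$ of finite length (derived duals of $2$-Gieseker-semistable sheaves whose reflexive hull is not locally free are of exactly this shape). Your argument never excludes this case. The omission is fixable along your own lines: in the sequence $0\to\HH^{-1}(E)[1]\to E\to\HH^0(E)\to 0$ the subobject $\HH^{-1}(E)[1]$ has the same tilt slope as $E$, hence is itself semistable; your skyscraper-subobject trick then gives $\HH^{-1}(E)\cong\OO^{\oplus r}$; and since $\Ext^1(\HH^0(E),\OO^{\oplus r}[1])=\Ext^2(\HH^0(E),\OO)^{\oplus r}=0$ for a finite-length sheaf, the sequence splits, after which the summand $\HH^0(E)$, of slope $\nu=\infty$, contradicts semistability unless it vanishes. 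Until these two points are filled in, the proposal is not a complete proof, although its architecture can be salvaged.
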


Finally, we use the following bound on the second Chern character established in \cite{Li19:conjecture_fano_threefold}.

\begin{thm}[{\cite[Proposition 3.2]{Li19:conjecture_fano_threefold}}]
\label{thm:li_bound}
Let $E$ be a tilt-stable object with $\ch_0(E) \neq 0$ such that there is $n \in \Z$ with either $\beta_{-}(E), \beta_{+}(E) \in [n, n+1)$ or $\beta_{-}(E), \beta_{+}(E) \in (n, n+1]$. Then
\[
\Delta(E) \geq \frac{3}{8} \ch_0(E)^2.
\]
\end{thm}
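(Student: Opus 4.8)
The plan is to normalize the configuration by twisting and dualizing, and then to exploit the tilt-stability of $E$ against the two line bundles that frame its interval.

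First I would reduce to a standard position. Tensoring with $\OO(-n)$ fixes both $\ch_0$ and $\Delta$ while translating $\beta_{\pm}$ by $-n$, so I may assume $n = 0$ and hence $0 \le \beta_{-}(E) \le \beta_{+}(E) < 1$; the half-open interval $(n,n+1]$ is handled symmetrically. If $\ch_0(E) < 0$ I replace $E$ by its derived dual via Proposition \ref{prop:derived_dual}, which sends $\beta \mapsto -\beta$, preserves $\Delta$ and $|\ch_0|$, and exchanges the two interval types; thus I may assume $r \coloneqq \ch_0(E) > 0$. Writing $\ch_1(E) = cH$ and $\ch_2(E) = dH^2$, the conditions $\beta_{-}(E) \ge 0$ and $\beta_{+}(E) < 1$ unwind to $\sqrt{\Delta(E)} \le c$ and $\sqrt{\Delta(E)} < r - c$; in particular $\mu(E) \in [0,1)$, $d \ge 0$, and $0 \le c \le r-1$.

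The heart of the argument is a comparison of $E$ with $\OO$ and $\OO(1)$, which are tilt-stable with $\beta_{\pm}(\OO) = 0$ and $\beta_{\pm}(\OO(1)) = 1$, i.e. they sit precisely at the two integers bounding the interval of $E$. Using that $\mu(E)$ lies strictly between these two slopes and that the tilt-slope $\nu_{\alpha,\beta}$ orders semistable objects along walls, I would show that $E$ admits no destabilizing sub- or quotient object built from $\OO$ or $\OO(1)$, which forces the vanishing of enough of the groups $\Hom(\OO(j),E[k])$ and $\Hom(E,\OO(j)[k])$ for $j \in \{0,1\}$. Hirzebruch--Riemann--Roch then turns the surviving, sign-definite Euler characteristics $\chi(\OO(j),E)$ into inequalities. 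Since each such $\chi$ is a value of the Hilbert polynomial of $E$ and therefore carries the unknown $\ch_3(E)$, I would take the linear combination in $j$ in which the $\ch_3(E)$-terms cancel, leaving inequalities that bound $d = \ch_2(E)$ from above purely in terms of $r$ and $c$.

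Finally I would optimize. Feeding $0 \le c \le r-1$ into these Riemann--Roch inequalities yields an upper bound on $\ch_2(E)$, equivalently a lower bound on $\Delta(E) = c^2 - 2rd$, and minimizing this bound over the allowed range of $c$ produces the constant $\tfrac38$. I expect the main obstacle to be exactly this core step: identifying which cohomology groups vanish purely from the position of $\beta_{\pm}(E)$ relative to $0$ and $1$, and arranging the comparison sharply enough to obtain $\tfrac38\ch_0(E)^2$ rather than the weaker $\tfrac14\ch_0(E)^2$ that the bare numerics of a unit interval already impose. It is precisely in squeezing out this improved constant that tilt-stability, rather than mere Bogomolov positivity, does the real work.
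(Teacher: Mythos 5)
The paper itself does not prove this statement---it is imported from the literature (Li, and in this precise form \cite[Theorem 3.1]{BMSZ17:stability_fano})---so your attempt must be measured against those proofs, and it has a genuine gap at its central step. All of the Euler characteristics $\chi(\OO(j),E)$ carry $\ch_3(E)$ with the \emph{same} coefficient $+1$ (indeed $\chi(\OO(j),E)=\chi(E(-j))$ and $\ch_3(E(-j))=\ch_3(E)-j\ch_2(E)+\cdots$), so any linear combination in which the $\ch_3$-terms cancel must use coefficients of both signs. But inequalities can only be combined with nonnegative coefficients, and the only sign-definite Euler characteristics obtainable from tilt-stability of $E$ \emph{in its own heart} all point the same way: near $(0,\beta_-(E))$ one gets $\Hom(\OO(k),E)=0$ for $k\geq 1$ and $\Hom(E,\OO(k)[1])=0$ for $k\leq 0$, whence by Serre duality $\chi(\OO(k),E)=-\ext^1(\OO(k),E)\leq 0$ for $1\leq k\leq 4$; these are all \emph{upper} bounds on $\ch_3(E)$. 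The opposite-direction inequalities, say $\chi(\OO(k),E)\geq 0$ (equivalently $\chi(E,\OO(k-4))\leq 0$), require the vanishing of $\Ext^1(\OO(k),E)=\Hom(\OO(k),E[1])$, which is a Hom between objects of a common heart only on the \emph{other} side of the vertical wall, where the relevant semistable object is $E[1]$ (equivalently, one must invoke the derived dual and Proposition \ref{prop:derived_dual}). Your proposed mechanism---``$E$ admits no destabilizing sub- or quotient object built from $\OO$ or $\OO(1)$''---only ever produces Hom-vanishings inside $E$'s own heart and can never yield this $\Ext^1$-vanishing; note also that applying your upper-bound machinery to the dual gives nothing new, since $\chi(\OO(k),\D(E))=\chi(\OO(4-k),E)$. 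Without the two-sided input, $\ch_3$ cannot be eliminated and no bound on $\Delta(E)$ from below can emerge.

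Two further problems. First, you never address \emph{where} $E$ is stable: the hypothesis grants stability at a single unspecified $(\alpha,\beta)$, while the slope comparisons underlying your vanishings must be made at points near $(0,\beta_-(E))$, and (per the above) stability of $E[1]$ is needed near $(0,\beta_+(E))$. Walls are nested semicircles whose apexes sweep the hyperbola $\nu_{\alpha,\beta}(E)=0$, so they can separate the given stability point from the needed ones; the bulk of the work in the cited proofs goes precisely into this propagation, via the nested-wall theorem and an analysis of potential destabilizing walls. Second, your closing discussion has the logic reversed: the hypothesis forces $\beta_+(E)-\beta_-(E)=2\sqrt{\Delta(E)}/|\ch_0(E)|<1$, i.e.\ $\Delta(E)<\tfrac{1}{4}\ch_0(E)^2$, an \emph{upper} bound. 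Hence the conclusion is incompatible with the hypothesis, and the theorem is in substance a non-existence statement (for $\Delta>0$; the degenerate case $\beta_-=\beta_+$ is tacitly excluded). A correct argument must therefore terminate in a contradiction---as indeed the four vanishing-equalities $\chi(\OO(k),E)=0$, $1\leq k\leq 4$, would force $\ch_0(E)=0$---and there is no meaningful ``minimization over the allowed range of $c$'' that lands on $\tfrac{3}{8}$.
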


We refer to \cite[Theorem 3.1]{BMSZ17:stability_fano} for a proof in the precise terms above. It is possible to circumvent the use of this Theorem and give completely self-contained proofs. However, its use shortens several arguments quite substantially.

We finish the section with some remarks on moduli spaces and deformation theory. The situation for Gieseker-semistable sheaves is well understood and can be found for example in \cite{HL10:moduli_sheaves}. The key property is that moduli spaces are projective and for a Gieseker-stable sheaf $E$ the tangent space is given by $\Ext^1(E, E)$. Moreover, the obstruction lies in $\Ext^2(E, E)$, i.e., if $\Ext^2(E, E) = 0$, then the moduli space is smooth at $E$. In \cite{Ina02:moduli_complexes} and \cite{Lie06:moduli_complexes} it was shown that the deformation theory for complexes is governed by the same $\Ext$-groups. The situation for moduli spaces is more complicated. However, we are only dealing with examples, where the moduli spaces can be directly read off the destabilizing sequences. Therefore, we will not have to dig deeper into this issue.

We will denote the moduli space of Gieseker-semistable sheaves $E$ with $\ch(E) = (r, c, d, e)$ by $M(r, c, d, e)$ and the moduli space of $\nu_{\alpha, \beta}$-semistable objects $E$ with $\ch(E) = (r, c, d, e)$ by $M^{\alpha, \beta}(r, c, d, e)$.


\section{Preparation and open questions}
\label{sec:preparation}

Let 
\begin{align*}
\operatorname{\CC\HH}(\P^3) &\coloneqq \{ \ch(E) : E \in \Coh(\P^3) \}, \\
\operatorname{\CC\HH}_{\leq 2}(\P^3) &\coloneqq \{ \ch_{\leq 2}(E) : E \in \Coh(\P^3) \}, \\
\operatorname{\CC\HH}_{\leq 1}(\P^3) &\coloneqq \{ \ch_{\leq 1}(E) : E \in \Coh(\P^3) \}.
\end{align*}

\begin{lem}
\label{lem:ch_classification}
Let $v = (r, c, d, e) \in \Z \oplus \Z \oplus \tfrac{1}{2} \Z \oplus \tfrac{1}{6} \Z$.
\begin{enumerate}
    \item We have $(r, c) \in \operatorname{\CC\HH}_{\leq 1}(\P^3)$ if and only if 
    \begin{enumerate}
        \item $r \geq 0$, and
        \item if $r = 0$, then $c \geq 0$.
    \end{enumerate}
    \item We have $(r, c, d) \in \operatorname{\CC\HH}_{\leq 2}(\P^3)$ if and only if
    \begin{enumerate}
        \item $(r, c) \in \operatorname{\CC\HH}_{\leq 1}(\P^3)$,
        \item if $r = c = 0$, then $d \geq 0$,
        \item if $c$ is even, then $d \in \Z$, and
        \item if $c$ is odd, then $d \notin \Z$.
    \end{enumerate}
    \item We have $v \in \operatorname{\CC\HH}(\P^3)$ if and only if
    \begin{enumerate}
        \item $(r, c, d) \in \operatorname{\CC\HH}_{\leq 2}(\P^3)$,
        \item if $r = c = d = 0$, then $e \geq 0$,
        \item $\chi(v) \in \Z$.
    \end{enumerate}
\end{enumerate}
\end{lem}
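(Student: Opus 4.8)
This is a characterization of which numerical classes $(r,c,d,e) \in \Z \oplus \Z \oplus \tfrac12\Z \oplus \tfrac16\Z$ arise as the full Chern character of some coherent sheaf on $\P^3$. The natural strategy is to treat the three parts as a hierarchy and prove them by induction on the codimension of the support, since parts (i) and (ii) are prerequisites for (iii). For each part, I would separate the argument into the two implications: necessity of the stated conditions, and sufficiency via explicit construction.

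For the necessity direction, the constraints come from standard facts. Condition (i)(a), $r = \ch_0 \geq 0$, holds because the rank of a sheaf is nonnegative; (i)(b) follows because a rank-zero sheaf is torsion, supported in dimension $\leq 2$, so its first Chern class (the class of the support divisor, counted with multiplicity) is effective, giving $c \geq 0$. For part (ii), conditions (ii)(c) and (ii)(d) are the key arithmetic constraint: I would derive them from integrality of the Hilbert polynomial via Riemann–Roch, which forces $\ch_2 + \tfrac12 c_1 \cdot (\text{something})$ to be integral; concretely, on $\P^3$ one has $\ch_2(E) + \tfrac{c}{2} \in \Z$ for a sheaf, so $d \in \Z$ iff $c$ is even. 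Condition (ii)(b) is again an effectivity statement for sheaves supported in dimension $\leq 1$. Part (iii)(c), integrality of $\chi(v)$, is the analogous Riemann–Roch constraint at the top, and (iii)(b) is effectivity for zero-dimensional support.

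For sufficiency, I would build the classes explicitly, combining a small toolkit: ideal sheaves $\mathcal I_Z$ of subschemes, structure sheaves $\OO_D$ of divisors and $\OO_C$ of curves, skyscrapers $\OO_p$, twists $\OO(n)$, and direct sums. The cleanest approach is to fix the leading invariants and then adjust the lower ones downward using short exact sequences that decrease $\ch_3$ (e.g.\ twisting down an ideal sheaf, or adding torsion supported in lower dimension). For instance, to realize a prescribed $(r,c,d,e)$ with $r > 0$ satisfying all the listed conditions, I would start from $\OO(n_1) \oplus \cdots \oplus \OO(n_r)$ with the $n_i$ chosen so that $(\ch_0,\ch_1) = (r,c)$ and the resulting $\ch_2$ is $\geq d$ in the appropriate sense, then correct $\ch_2$ down by replacing a summand with an ideal sheaf of a plane curve, and finally correct $\ch_3$ down to the target value (with the correct parity/denominators) by subtracting off structure sheaves of points. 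The rank-zero and rank-one cases reduce to realizing the Chern character of divisors-plus-curves-plus-points directly.

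\textbf{Main obstacle.} The genuinely delicate point is the \emph{simultaneous} realization problem in the sufficiency direction: given a target satisfying exactly the stated congruences, I must produce a sheaf hitting it \emph{on the nose}, and the arithmetic constraints (ii)(c)--(d) and (iii)(c) are precisely what make the target reachable. The hard part is organizing the construction so that every admissible class is covered without gaps---in particular checking that the integrality condition $\chi(v) \in \Z$ is not merely necessary but, together with the lower conditions, exactly pins down the lattice of achievable $e$-values, so that the step-by-step correction by skyscrapers lands on the target rather than skipping it. I expect the proof to handle the base cases ($r=0$, then $r=1$) carefully and then bootstrap to general $r$ by taking direct sums with line bundles, reducing the induction to showing that the correction moves generate the full admissible sublattice in $e$.
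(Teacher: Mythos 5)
Your proposal is correct and follows essentially the same route as the paper: necessity via effectivity of the support class and integrality (your Hilbert-polynomial/Riemann--Roch derivation of the parity conditions is equivalent to the paper's use of $c_2(E) \in \Z$ and $\ch_2 = \tfrac{1}{2}\ch_1^2 - c_2$), and sufficiency via explicit building blocks, twisting by $\OO(n)$ to reduce to the effective case, and adjusting $\ch_3$ in integer steps by skyscraper sheaves. The key subtlety you flag---that $\chi(v) \in \Z$ pins down the achievable lattice of $e$-values so the step-by-one corrections land on target---is resolved in the paper exactly as you anticipate, by noting $e - e' = \chi(v) - \chi(E') \in \Z$.
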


\begin{proof}
\begin{enumerate}
\item The highest non-zero Chern character of a sheaf is the degree of its support. Therefore, it has to be positive. This immediately implies that the condition is necessary. The construction of sheaves will be the same in part (ii), and we will therefore delay it for the moment.
\item  For any $E \in \Coh(\P^3)$ we know $c_2(E) \in \Z$ and $\ch_2(E) = \tfrac{1}{2} \ch_1^2(E) - c_2(E)$. This implies the last two conditions. Condition (b) follows again from the fact that the highest non-zero Chern character is the class of the support of the sheaf, i.e., has to be positive.

Next, we have to construct sheaves with given Chern character. Let $(r, c, d)$ fulfill all four conditions from the statement. We can compute $\ch_{\leq 2}(\OO) = (1, 0, 0)$. If $V \subset \P^3$ is a plane, then $\ch_{\leq 2}(\OO_V(f)) = (0, 1, - \tfrac{1}{2})$, and if $L \subset \P^3$ is a line, then $\ch_{\leq 2}(\OO_L) = (0, 0, 1)$. Note that
\[
(r, c, d) = r \ch_{\leq 2}(\OO) + c \ch_{\leq 2}(\OO_V) + (d + \tfrac{c}{2}) \ch_{\leq 2}(\OO_L).
\]
If $c \geq 0$ and $d + \tfrac{c}{2} \geq 0$, then
\[
E = \OO^{\oplus r} \oplus \OO_V^{\oplus c} \oplus \OO_L^{\oplus (d + \tfrac{c}{2})}
\]
satisfies $\ch(E) = (r, c, d)$. If $c < 0$ or $d + \tfrac{c}{2} < 0$, then for any $n \in \Z$ we define \[
(r', c', d') = (r, c, d) \otimes \ch_{\leq 2}(\OO(n)) = (r, c + nr, d + nc + \tfrac{1}{2} n^2 r)
\]
In particular, we can choose $n$ large enough such that $c' > 0$ and $d' + \tfrac{c'}{2} \geq 0$. We have already constructed $E' \in \Coh(\P^3)$ such that $\ch(E') = (r', c', d')$, and $E = E' \otimes \OO(-n)$ satisfies $\ch(E) = (r, c, d)$. 
\item If $r = c = d = 0$, then any $E$ with $\ch(E) = (0, 0, 0, e)$ is supported on a zero-dimensional subscheme of length $e$, i.e., $e \geq 0$. By definition $\chi(E) \in \Z$ for any $E \in \Coh(\P^3)$.

Vice-versa, we need to construct a sheaf $E$ with $\ch(E) = (r, c, d, e)$ subject to the conditions in the statement. If $r = c = d = 0$, then we can choose an arbitrary $P \in \P^3$, and set $E = \OO_P^{\oplus e}$. Assume that $(r, c, d) \neq 0$. We already know that there is an $E' \in \Db(X)$ with $\ch(E') = (r, c, d, e')$ for some $e' \in \tfrac{1}{6} \Z$. From here we get
$e - e' = \chi(v) - \chi(E') \in \Z$. Since $\ch(\OO_P) = (0, 0, 0, 1)$ for any $P \in \P^3$, we can take $E = E' \oplus \OO_P^{\oplus (e' - e)}$ if $e' > e$. We can always find $P \in \P^3$ together with a surjective map $E' \onto \OO_P$ to obtain a sheaf with Chern character $(r, c, d, e' - 1)$. By induction on $e - e'$ starting at $e = e'$, we can finish the remaining case $e' \leq e$. \qedhere
\end{enumerate}
\end{proof}

Less trivially, the point of this article is to understand Chern characters of semistable sheaves instead of arbitrary sheaves. We start by introducing the relevant functions.

\begin{defn}
\begin{enumerate}
    \item We define a function $D: \operatorname{\CC\HH}_{\leq 1}(\P^3) \to \tfrac{1}{2} \Z \cup \{\infty, -\infty\}$ as follows. Let $(r, c) \in \operatorname{\CC\HH}_{\leq 1}(\P^3)$. If $r \neq 0$, then $D(r, c)$ is the supremum over all $d$ such that there is a slope-semistable sheaf $E$ with $\ch_{\leq 2}(E) = (r, c, d)$. If $r = 0$, then $D(r, c)$ is the supremum over all $d$ such that there is a $2$-Gieseker-semistable sheaf $E$ with $\ch_{\leq 2}(E) = (r, c, d)$. In both cases, we define $D(r, c) = -\infty$ if no such sheaf exists.
    \item Similarly, we define a function $E: \operatorname{\CC\HH}_{\leq 2}(\P^3) \to \tfrac{1}{6} \Z \cup \{\infty, -\infty\}$ as follows. Assume that $(r, c, d) \in \operatorname{\CC\HH}_{\leq 2}(\P^3)$. If $(r, c) \neq (0, 0)$, then $E(r, c, d)$ is the supremum over all $e$ such that there is $2$-Gieseker-semistable sheaf $E$ with $\ch(E) = (r, c, d, e)$. If $(r, c) = (0, 0)$, then $E(r, c, d)$ is the supremum over all $e$ such that there is Gieseker-semistable sheaves $E$ with $\ch(E) = (0, 0, d, e)$. Again, we write $E(r, c, d) = -\infty$ if no such sheaf exists.
\end{enumerate}
\end{defn}

We varied which precise notion of semistability to use in this definition. This is certainly confusing at first sight and some explanation is in order.

\begin{enumerate}
    \item Slope stability has the easiest definition. It behaves well under dualizing. This allows us to prove some general results below. The downside is two-fold. Firstly, moduli spaces are classically constructed for Gieseker stability. Secondly, slope stability does not behave well with respect to tilt stability.
    \item It not known whether $2$-Gieseker stability admits reasonable moduli spaces. The big advantage of this notion is that it occurs as a limit for tilt stability. Since tilt stability is our main method of proving non-trivial results, we need this notion largely for technical reasons.
    \item Gieseker stability has somewhat reasonable moduli spaces. Therefore, we ultimately care about this notion the most. However, it is the most cumbersome notion to prove anything about.
\end{enumerate}

The above definition of $D$ and $E$ is using the least strong, but reasonable notion of stability in each case. That makes it possible to prove general results, while obtaining meaningful and non-trivial bounds. All rank zero objects are slope-semistable by our definition. Therefore, we cannot expect good bounds with this notion. Similarly, sheaves supported in dimension one are all $2$-Gieseker-semistable. We expect that the bounds $d \leq D(r, c)$ and $e \leq D(r, c, d)$ will be sharp for Gieseker-semistable sheaves as well. It would be convenient to define $E(r, c, d)$ for $r > 0$ with slope stability as well, but this turns out to cause problems. For example, the sheaf $\OO \oplus \II_L$ for a line $L \subset \P^3$ is slope-semistable with Chern character $(2, 0, -1, 1)$, but according to Theorem \ref{thm:rank_two} there is no Gieseker-semistable sheaf with these invariants.

Next, we prove a few properties of these functions.

\begin{prop}
\label{prop:function_D}
We have $D(0, c) = \infty$ for any $c \geq 0$. Assume that $(r, c) \in \operatorname{\CC\HH}_{\leq 1}(\P^3)$ with $r \neq 0$. Then the following properties hold.
\begin{enumerate}
    \item $D(r, c) < \infty$.
    \item $D(r, c + nr) =  D(r, c) + nc + \tfrac{1}{2}n^2r$ for all $n \in \Z$.
    \item $D(r, -c) = D(r, c)$. 
    \item $D(r, 0) = 0$.
    \item For any $d \leq D(r, c)$ with $(r, c, d) \in \operatorname{\CC\HH}_{\leq 2}(\P^3)$, there is a slope-semistable $E \in \Coh(X)$ with $\ch_{\leq 2}(E) = (r, c, d)$.
\end{enumerate}
\end{prop}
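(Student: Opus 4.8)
The plan is to handle each of the six claims in a natural order, using the elementary construction from Lemma \ref{lem:ch_classification}(ii) to produce sheaves and the Bogomolov inequality to produce bounds, while exploiting two functorial operations on slope-semistable sheaves: tensoring by a line bundle $\OO(n)$ and taking the dual. First I would dispose of $D(0,c)=\infty$: for $r=0$ we are in the $2$-Gieseker-semistable regime, and a sheaf like $\OO_V^{\oplus c}$ on a plane $V$ (when $c>0$), or suitable sums of structure sheaves of planes and lines, are $2$-Gieseker-semistable with $\ch_2$ arbitrarily large, since sheaves supported in dimension one are automatically $2$-Gieseker-semistable (as remarked in the text). For claim (i), $D(r,c)<\infty$ follows directly from the Bogomolov inequality $\Delta(E)=c^2-2rd\geq 0$, which rearranges to $d\leq c^2/(2r)$, a finite upper bound.

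For claim (ii), the key is that tensoring by $\OO(n)$ is an equivalence preserving slope-semistability (a subsheaf of $E(n)$ corresponds to a subsheaf of $E$ with slope shifted uniformly, so the semistability inequality is unaffected). Since $\ch_{\leq 2}(E\otimes\OO(n)) = (r,\,c+nr,\,d+nc+\tfrac12 n^2 r)$ by the twisted-Chern-character formula already recorded in Lemma \ref{lem:ch_classification}(ii), the supremum defining $D$ transforms accordingly, giving
\[
D(r,c+nr)=D(r,c)+nc+\tfrac12 n^2 r.
\]
Claim (iii) uses Lemma \ref{lem:dual_semistable_slope}: if $E$ is slope-semistable with $\ch_{\leq 2}(E)=(r,c,d)$, then $E^{\vee}$ is slope-semistable with $\ch_{\leq 2}(E^{\vee})=(r,-c,d)$ (the rank and $\ch_2$ are preserved while $\ch_1$ changes sign). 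Thus the sets of achievable $d$ for $(r,c)$ and $(r,-c)$ coincide, yielding $D(r,-c)=D(r,c)$. One subtlety I would check here is that $E^{\vee}$ may differ from $E$ by a sheaf supported in codimension $\geq 2$; but since such sheaves do not affect $\ch_0$ or $\ch_1$ and affect $\ch_2$ only in a controlled way, and since we are taking a supremum over all semistable sheaves anyway, the argument goes through — this is the point I expect to require the most care.

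Claim (iv), $D(r,0)=0$, combines an upper and a lower bound. The Bogomolov inequality with $c=0$ gives $-2rd\geq 0$, hence $d\leq 0$, so $D(r,0)\leq 0$. For the reverse, $\OO^{\oplus r}$ is slope-semistable with $\ch_{\leq 2}=(r,0,0)$, giving $D(r,0)\geq 0$. Finally, claim (v) asserts that the supremum $d\leq D(r,c)$ is actually attained at every admissible intermediate value, not just at the top. Here I would start from a slope-semistable sheaf $E_0$ realizing (or approaching) $D(r,c)$ and lower $\ch_2$ in steps while preserving $(r,c)$ and slope-semistability; the natural move is to form elementary modifications, replacing $E$ by the kernel of a general surjection $E\onto\OO_L$ onto the structure sheaf of a line, which decreases $\ch_2$ by $1$ while keeping $(r,c)$ fixed and, for $E$ slope-stable or for generic choice, preserving slope-semistability. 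The main obstacle I anticipate is verifying that such elementary modifications preserve slope-semistability and that they suffice to reach every value of $d$ in $\tfrac12\Z$ (respectively $\Z$) compatible with the parity constraints of Lemma \ref{lem:ch_classification}(ii); once the step size and parity match the lattice $\operatorname{\CC\HH}_{\leq 2}(\P^3)$, an induction downward from $d=D(r,c)$ completes the proof.
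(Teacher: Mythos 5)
Your plan follows the same architecture as the paper's proof---Bogomolov for (i), tensoring by $\OO(n)$ for (ii), Lemma \ref{lem:dual_semistable_slope} for (iii), $\OO^{\oplus r}$ for (iv), and elementary modifications along a line for (v)---but two steps fail as written. The first is the claim $D(0,c)=\infty$ for $c>0$: the sheaf $\OO_V^{\oplus c}$ has the \emph{fixed} value $\ch_2=-\tfrac{c}{2}$, not arbitrarily large $\ch_2$, and enlarging it by summands $\OO_L$ destroys $2$-Gieseker-semistability, because a subsheaf whose polynomial $P_2$ has strictly smaller degree is strictly larger in the pre-order and therefore destabilizes; the remark you invoke about dimension-one sheaves concerns sheaves whose entire support is one-dimensional, not mixed-dimensional sheaves such as $\OO_V^{\oplus c}\oplus\OO_L^{\oplus k}$. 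To get $\ch_2$ unbounded one must twist: the paper uses $\OO_X(n)$ for a surface $X\subset\P^3$ of degree $c$ and lets $n\to\infty$.

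The second, more substantial, gap is in (iii). For a torsion-free but non-reflexive sheaf $E$ the equality $\ch_2(E^\vee)=\ch_2(E)$ is simply false, so ``the sets of achievable $d$ coincide'' is not available, and your hedge that the correction is ``controlled'' does not close the argument: what is needed is the \emph{sign} of the correction. By \cite[Proposition 1.1.10]{HL10:moduli_sheaves} one has $\lExt^3(E,\OO)=0$, $\ch_{\leq 2}(\lExt^2(E,\OO))=(0,0,0)$, and $\ch_{\leq 2}(\lExt^1(E,\OO))=(0,0,y)$ with $y\geq 0$, whence $\ch_{\leq 2}(E^\vee)=(r,-c,d+y)$. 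Applying this to a maximizer $E$ with $d=D(r,c)$ gives $D(r,-c)\geq D(r,c)+y\geq D(r,c)$, and then the same inequality with $(r,-c)$ in place of $(r,c)$ yields equality; if $y$ could be negative, dualizing would only bound $D(r,-c)$ below by something possibly smaller than $D(r,c)$, and the symmetry argument would not close. (Two small slips in (v), both easily repaired: a non-zero map $E\to\OO_L$ need not exist---take $E\to\OO_L(a)$ with $a\gg 0$ and $L$ disjoint from the non-locally-free locus of $E$---and no genericity assumption is needed, since the kernel of any such non-zero map has the same $\ch_{\leq 1}$ as $E$ while its subsheaves are subsheaves of $E$, so slope-semistability is automatic.)
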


\begin{proof}
If $C \subset \P^3$ is a curve of degree $d$, Then $\ch_{\leq 2}(\OO_C) = (0, 0, d)$. Therefore, $D(0,0) = \infty$. Assume that $c > 0$. If $X \subset \P^3$ is a surface of degree $c$, then $\lim_{n \to \infty} \ch_2(\OO_X(n)) = \infty$, i.e., $D(0, c) = \infty$. From now on assume that 
$(r, c) \in \operatorname{\CC\HH}_{\leq 1}(\P^3)$ with $r \neq 0$.

\begin{enumerate}
    \item The Bogomolov inequality $\Delta(E) \geq 0$ is equivalent to $d \leq \tfrac{c^2}{2r}$, i.e, $D(r, c) < \infty$.
    \item The formula $D(r, c + nr) =  D(r, c) + nc + \tfrac{1}{2}n^2r$ is an immediate consequences from the fact that taking the tensor product with $\OO(n)$ preserves stability.
    \item Next, we will prove $D(r, -c) \geq D(r, c)$. By duality, this will then imply $D(r, -c) = D(r, c)$. Assume that there is a slope-semistable sheaf $E$ with $\ch_{\leq 2}(E) = (r, c, D(r, c))$. By Lemma \ref{lem:dual_semistable_slope} the dual sheaf $E^{\vee}$ is slope-semistable as well. Since $E$ is torsion-free, we know due to \cite{HL10:moduli_sheaves}[Proposition 1.1.10] that $\lExt^3(E, \OO_X) = 0$, $\ch_{\leq 2}(\lExt^2(E, \OO_X)) = (0, 0, 0)$, and $\ch_{\leq 2}(\lExt^1(E, \OO_X)) = (0, 0, y)$ for some $y \geq 0$. Therefore, $\ch_{\leq 2}(E^{\vee}) = (r, -c, D(r, c) + y)$, and this implies $D(r, -c) \geq D(r, c) + y \geq D(r, c)$. 
    \item The fact that $D(r, 0) = 0$ for $r \neq 0$ is obvious, because the Bogomolov inequality implies $D(r, 0) \leq 0$ and $\ch_{\leq 2}(\OO^{\oplus r}) = (r, 0, 0)$.
    \item Lastly, we need to show the existence part of the statement. If $D(r, c) = -\infty$, there is nothing to show. Assume $D(r, c) \neq -\infty$. By definition there is a slope-semistable sheaf $E$ with $\ch_{\leq 2}(E) = (r, c, D(r, c))$. If $Z \subset \P^3$ is the closed subset of $Z$ at which $E$ is not locally-free, then $\dim Z \leq 1$, and we can choose a line $L \subset \P^3$ such that $Z \cap L = \emptyset$. Choose $a$ large enough such that $\Hom(E, \OO_L(a)) > 0$. Let $E_1$ be the kernel of a non-zero morphism $E \to \OO_L(a)$. Then $E_1$ is slope-semistable as well, and $\ch_{\leq 2}(E_1) = (r, c, D(r, c) - 1)$. We can continue this process to create slope-semistable sheaves $E_n$ with $\ch_{\leq 2}(E_n)(r, c, D(r, c) - n)$ for any $n \geq 0$. \qedhere
\end{enumerate}
\end{proof}

We will show in Theorem \ref{thm:rank_zero} that an analog of the last property holds for rank zero objects as well, i.e., there is a $2$-Gieseker-semistable sheaf $E \in \Coh(X)$ with $\ch_{\leq 2}(E) = (0, c, d)$ for any $(0, c, d) \in \operatorname{\CC\HH}_{\leq 2}(\P^3)$.

A similar statement can be proved for the third Chern character.

\begin{prop}
\label{prop:function_E}
We have $E(0, 0, d) = \infty$ for any $d \geq 0$. Assume that $(r, c) \in \operatorname{\CC\HH}_{\leq 1}(\P^3)$ with $(r, c) \neq (0, 0)$. Then the following properties hold.
\begin{enumerate}
    \item $E(r, c, d) < \infty$.
    \item $E(r, c + nr, d + nc + \tfrac{1}{2} n^2 r) = E(r, c, d, e) + nd + \tfrac{1}{2} n^2 c + \tfrac{1}{6} n^3 r$ for all $n \in \Z$.
    \item $E(r, 0, 0) = 0$ for $r > 0$.
    \item There is a $2$-Gieseker-semistable sheaf $E \in \Coh(X)$ with $\ch(E) = (r, c, d, e)$ if and only if $e \leq E(r, c, d)$ and $(r, c, d, e) \in \operatorname{\CC\HH}(\P^3)$.
    \item There is a Gieseker-semistable sheaf $E$ with $\ch(E) = (0, 0, d, e)$ for any $(0, 0, d, e) \in \operatorname{\CC\HH}(\P^3)$.
\end{enumerate}
\end{prop}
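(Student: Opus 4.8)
The plan is to follow the template of Proposition~\ref{prop:function_D}, replacing the classical Bogomolov inequality by the generalized one for the finiteness statement and adapting the explicit constructions. For the opening claim $E(0,0,d) = \infty$ with $d \geq 0$: when $d = 0$ the relevant sheaves are zero-dimensional and $\OO_Z$ for $Z$ of length $e$ is Gieseker-semistable with arbitrarily large $e$; when $d > 0$ I would fix a smooth connected plane curve $C \subset \P^3$ of degree $d$ and push forward line bundles $i_*L$ of increasing degree, which are Gieseker-stable with $\ch_3 \to \infty$ by Riemann--Roch. Part (ii) is immediate since $- \otimes \OO(n)$ is an exact autoequivalence preserving $2$-Gieseker-stability and multiplies the Chern character by $e^{nH}$; taking suprema over the admissible $e$ yields the stated transformation law. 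Part (iii) is a direct computation: for the class $(r,0,0)$ one has $\Delta = 0$, so the generalized Bogomolov inequality collapses to $Q_{\alpha,\beta}(E) = 6\beta r e \geq 0$, independent of $\alpha$. Choosing $\beta < 0 = \mu(E)$, where $E$ is tilt-semistable for $\alpha \gg 0$ by Proposition~\ref{prop:large_volume_limit}, forces $e \leq 0$, while $\OO^{\oplus r}$ attains $e = 0$.

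For the finiteness in part (i) I would fix the class $v = (r,c,d)$ and a rational $\beta_0 < \mu(E)$ (any $\beta_0$ if $r = 0$). Any $2$-Gieseker-semistable sheaf $E$ with $\ch_{\leq 2}(E) = v$ is $\nu_{\alpha,\beta_0}$-semistable for $\alpha \gg 0$ by the large volume limit, and since the semicircular walls for $v$ have radius bounded by that of the largest wall (Proposition~\ref{prop:structure_walls}(v)), there is a reference point $(\alpha_0,\beta_0)$, depending only on $v$, lying above every wall crossing the ray $\beta = \beta_0$, at which $E$ is $\nu_{\alpha_0,\beta_0}$-semistable. Applying $Q_{\alpha_0,\beta_0}(E) \geq 0$ and using $\ch_1^{\beta_0}(E) = c - \beta_0 r > 0$ bounds $\ch_3^{\beta_0}(E)$, hence $\ch_3(E)$, from above by a constant depending only on $v$. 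Thus $E(r,c,d) < \infty$.

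The construction in part (iv) rests on the observation that a skyscraper sheaf satisfies $P_2(\OO_P, m) = 0$, since its Hilbert polynomial is the constant $1$. The ``only if'' direction is just the definition of the supremum together with the lattice condition $(r,c,d,e) \in \operatorname{\CC\HH}(\P^3)$. For ``if'', note that $E(r,c,d) \in \tfrac{1}{6}\Z$ is attained (the admissible $e$ are discrete and bounded above by part (i)), so fix a $2$-Gieseker-semistable $E$ with $\ch_3(E) = E(r,c,d)$. Choosing a surjection $E \onto \OO_P$ and letting $E'$ be its kernel gives $\ch(E') = \ch(E) - (0,0,0,1)$ and $P_2(E') = P_2(E)$; hence for any $F \into E' \into E$ the semistability of $E$ yields $P_2(F) \preceq P_2(E) = P_2(E')$, so $E'$ is again $2$-Gieseker-semistable. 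Iterating lowers $\ch_3$ by one at each step, and since all admissible $e$ differ from $E(r,c,d)$ by a non-negative integer, this realizes every $e \leq E(r,c,d)$.

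The main obstacle is part (v), where the skyscraper trick of part (iv) fails: for a pure one-dimensional sheaf, killing a length-one quotient strictly decreases the reduced Hilbert polynomial, so the analogue of $P_2(E') = P_2(E)$ breaks and Gieseker-semistability need not be preserved. Instead I would realize every admissible $e$ directly. For $d = 0$ the class $(0,0,0,e)$ with $e \geq 0$ is realized by a length-$e$ zero-dimensional sheaf, which is automatically Gieseker-semistable. For $d > 0$ the admissible $e$ are exactly all integers; fixing a smooth connected plane curve $C \subset \P^3$ of degree $d$ and pushing forward line bundles $i_*L$ of every degree $\delta \in \Z$ gives Gieseker-stable sheaves with $\ch(i_*L) = (0,0,d,e)$, where Riemann--Roch gives $e = \delta + 1 - g - 2d$ and hence $e$ ranges over all of $\Z$ as $\delta$ does. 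This completes the proposition.
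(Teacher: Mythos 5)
Your proposal is correct and follows essentially the same route as the paper: the large volume limit plus the generalized Bogomolov inequality (and bounded walls) for finiteness in (i) and the bound $e \leq 0$ in (iii), tensoring by $\OO(n)$ for (ii), and the kernel-of-a-skyscraper-quotient iteration for (iv), including the key observation that $P_2$ is unchanged so $2$-Gieseker-semistability is preserved. The only deviation is in (v), where the paper takes a rational curve of degree $d$, twists by line bundles, and removes points via $\OO_C(-Z)$, while you fix a smooth plane curve and vary the degree of a line bundle on it; both constructions produce Gieseker-stable pure one-dimensional sheaves realizing every admissible $e \in \Z$, so this is an equally valid minor variant.
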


\begin{proof}
If $P \in \P^3$, then $\lim_{e \to \infty} \ch_3(\OO_P^{\oplus e}) = \infty$, i.e., $E(0, 0, 0) = \infty$. If $C \subset \P^3$ is a curve of degree $d$, then $\lim_{n \to \infty} \ch_3(\OO_C(n)) = \infty$. Therefore, $E(0, 0, d) = \infty$. From now on assume that $(r, c) \in \operatorname{\CC\HH}_{\leq 1}(\P^3)$ with $(r, c) \neq (0, 0)$.
\begin{enumerate}
    \item By Proposition \ref{prop:large_volume_limit} about the large volume limit and Proposition \ref{prop:structure_walls} there is a numerical wall $W$ for objects $E$ with $\ch_{\leq 2}(E) = (r, c, d)$ such that $E$ is tilt-semistable above $W$ if and only if $E$ is $2$-Gieseker-semistable. In particular, such an $E$ will be tilt-semistable for $0 < \alpha \ll 1$ and $\beta \ll 0$. 

    If $\Delta(r, c, d) = c^2 - 2rd = 0$, then the inequality $Q_{0, \beta}(r, c, d, e) \geq 0$ for $\beta \ll 0$ is equivalent to 
    \[
    e \leq \frac{c^3}{6r^2}.
    \]
    Assume $\Delta(r, c, d) > 0$. Then the inequality $Q_{0, \beta}(r, c, d, e) \geq 0$ is equivalent to
    \[
    \Delta(r, c, d) \beta^2 + (6er - 2cd) \beta + 4d^2 - 6ce \geq 0.
    \]
    Therefore, a $2$-Gieseker-semistable $E$ has bounded $\ch_3$ depending on its other Chern characters and the choice of a $\beta \ll 0$ such that $(0, \beta)$ lies outside the wall $W$.
    
    \item The statement $E(r, c + nr, d + nc + \tfrac{1}{2} n^2 r) = E(r, c, d, e) + nd + \tfrac{1}{2} n^2 c + \tfrac{1}{6} n^3 r$ for all $n \in \Z$ is simply a consequence of the fact that tensoring with the line bundle $\OO(n)$ preserves stability.
    \item We have already shown that $E(r, 0, 0) \leq 0$ in the proof of (i). Equality follows from the existence of $\OO^{\oplus r}$.
    \item Next, we need to construct sheaves. By definition, if $E(r, c, d) \neq \pm \infty$, there is a $2$-Gieseker-semistable sheaf $E$ with $\ch(E) = (r, c, d, E(r, c, d))$. We can find a point $P \in \P^3$ such that there is a surjective homomorphism $E \onto \OO_P$. The kernel $E_1$ is still $2$-Gieseker-semistable and sastifies $\ch(E_1) = (r, c, d, E(r, c, d) - 1)$. We can continue this process to construct $2$-Gieseker-semistable sheaves $E_n$ with $\ch(E_n) = (r, c, d, E(r, c, d) - n)$ for any $n \in \Z_{\geq 0}$.
    \item If $d = 0$, then $E = \OO_P^{\oplus e}$ is Gieseker-semistable and satisfies $\ch(E) = (0, 0, 0, e)$. If $d > 0$, let $C \subset \P^3$ be a curve of degree $d$ and genus $0$. Then $\ch(\OO_C) = (0, 0, d, 1 - 2d)$. By using the tensor product with lines bundles, we can reduce to the case $e = 1 - 2d - e'$ for $e' \in \Z_{\geq 0}$. By taking a length $e'$ closed subscheme $Z \subset C$ of dimension zero, we get $\ch(\OO_C(-Z)) = (0, 0, d, 1 - 2d -e') = (0, 0, d, e)$. \qedhere
\end{enumerate}
\end{proof}

In contrast to the $\ch_2$ case, we do not get nice behavior for $\ch_3$ under dualizing. This is because the sign of $\ch_3$ changes under the derived dual, while it is constant for $\ch_2$.

For objects with maximal Chern characters, we get the following result regarding the vertical wall.

\begin{lem}
\label{lem:vertical_wall_crossing}
Let $E \in \Coh(\P^3)$ be a torsion-free sheaf with $\ch(E) = (r, c, d, e)$ with $d = D(r, c)$ and $e = E(r, c, d)$. Then $E[1]$ is tilt-semistable in an open neighborhood around the vertical wall $\beta = \mu(E)$ if and only if $E$ is slope-semistable and all the stable factors $E$ have parallel $\ch_{\leq 2}$.
\end{lem}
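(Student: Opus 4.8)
The plan is to analyze the two sides of the vertical wall $\beta_0 := \mu(E)$ separately. For $\beta < \beta_0$ one has $E \in \Coh^\beta(\P^3)$, whereas for $\beta > \beta_0$ one has $E \in \FF^\beta$ and hence $E[1] \in \Coh^\beta(\P^3)$; I read the statement as asking that $E$ be $\nu_{\alpha,\beta}$-semistable just to the left and $E[1]$ just to the right of $\beta = \beta_0$. First I would use the nested, radius-bounded structure of walls (Proposition \ref{prop:structure_walls}) together with the maximality of $e$ through the generalized Bogomolov inequality $Q_{\alpha,\beta}(E) \geq 0$ to show that no semicircular wall can lie in a punctured neighborhood of $\beta = \beta_0$. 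Consequently, on either side semistability near the wall is equivalent to the absence of a destabilizer in the limit $\beta \to \beta_0^{\pm}$, and by Proposition \ref{prop:properties_dest_sequences}(iii) any such destabilizing subobject $F$ must satisfy $\ch_1^{\beta_0}(F) = 0$, i.e. it is built from slope-$\beta_0$ pieces.

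The computational core is the following limit. For a slope-semistable sheaf $S$ with $\mu(S) = \beta_0$, a direct expansion of $\nu_{\alpha,\beta}$ gives
\[
\nu_{\alpha,\beta}(S) \sim -\frac{1}{2(\beta_0 - \beta)}\left( \frac{\Delta(S)}{\ch_0(S)^2} + \alpha^2 \right) \quad (\beta \to \beta_0^-),
\]
and likewise $\nu_{\alpha,\beta}(S[1]) \sim \tfrac{1}{2(\beta-\beta_0)}\left(\Delta(S)/\ch_0(S)^2 + \alpha^2\right)$ as $\beta \to \beta_0^+$. Since $\Delta(S)/\ch_0(S)^2 = \mu(S)^2 - 2\ch_2(S)/\ch_0(S)$, the $\nu$-order of slope-$\beta_0$ objects near the wall is controlled entirely by the ratio $\ch_2/\ch_0$, and $\ch_2(E)/\ch_0(E)$ is the $\ch_0$-weighted average of the $\ch_2(E_i)/\ch_0(E_i)$ over the stable factors $E_i$ of $E$.

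For the implication ``$\Leftarrow$'' I would first note that if all stable factors are parallel then each $E_i$ has $\nu_{\alpha,\beta}(E_i) = \nu_{\alpha,\beta}(E)$ for \emph{all} $(\alpha,\beta)$, so $E$ is an iterated extension of objects of equal tilt-slope and it suffices to treat $E$ stable. A stable sheaf has no proper subsheaf or quotient sheaf of slope $\beta_0$, hence no destabilizer with $\ch_1^{\beta_0}=0$ of the obvious shape; combined with the first reduction this yields semistability of $E$ and $E[1]$ near the wall. For ``$\Rightarrow$'' I argue by contraposition. If $E$ is not slope-semistable, its Harder--Narasimhan filtration produces a quotient $G$ with $\mu(G) < \beta_0$; then $E[1] \twoheadrightarrow G[1]$ with $\ch_1^\beta(G[1])$ bounded away from $0$, so $\nu_{\alpha,\beta}(G[1])$ stays finite while $\nu_{\alpha,\beta}(E[1]) \to +\infty$, destabilizing $E[1]$. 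If $E$ is slope-semistable but the factors are not parallel, take a stable subsheaf $A_1 \subseteq E$ of slope $\beta_0$ (a first Jordan--H\"older factor): by the limit computation, $\ch_2(A_1)/\ch_0(A_1) > \ch_2(E)/\ch_0(E)$ makes $A_1$ destabilize $E$ from the left, while $\ch_2(A_1)/\ch_0(A_1) < \ch_2(E)/\ch_0(E)$ makes $A_1[1]$ destabilize $E[1]$ from the right. Hence semistability on both sides forces $\ch_2(A_1)/\ch_0(A_1) = \ch_2(E)/\ch_0(E)$, i.e. $A_1$ is parallel to $E$; then $E/A_1$ has the same slope and the same ratio, $(E/A_1)[1]$ is again semistable near the wall, and induction on the length finishes the argument.

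The step I expect to be the main obstacle is the first reduction: justifying that maximality of $d$ and $e$ genuinely removes all semicircular walls from a punctured neighborhood of the vertical wall, so that semistability there is equivalent to the purely numerical condition at $\beta = \beta_0$. This is where the generalized Bogomolov inequality must be used quantitatively --- bounding $\ch_3(F)$ and $\ch_3(G)$ for a hypothetical destabilizing sequence and comparing with the maximal value $e = E(r,c,d)$ --- and where one must also rule out ``mixed'' destabilizing subobjects $F$ with $\HH^0(F) \neq 0$ by a short analysis of the cohomology sheaves permitted by $\ch_1^{\beta_0}(F) = 0$. The $\nu$-limit computation and the Jordan--H\"older induction are then routine.
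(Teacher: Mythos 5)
Your asymptotic computation of $\nu_{\alpha,\beta}$ at the vertical wall, the resulting characterization of parallelism of $\ch_{\leq 2}$, and the Jordan--H\"older induction are all correct, and your contrapositive treatment of the non-slope-semistable case is fine (it is in fact easier than you make it: for $\beta$ slightly larger than $\mu(E)$ the object $E[1]$ is not even in $\Coh^{\beta}(\P^3)$ when $E$ has a subsheaf of larger slope). The genuine gap is exactly in the step you defer to, and the tools you propose cannot close it. The dangerous destabilizers are torsion sheaves $T$ supported in dimension at most one: any extension $0 \to E \to G \to T \to 0$ with $G$ torsion-free of HN slopes $\leq \mu(E)$ exhibits $T$ as a subobject of $E[1]$ in $\Coh^{\mu(E)}(\P^3)$ with quotient $G[1]$, and since $\nu_{\alpha,\beta}(T) = +\infty$ identically, such a $T$ destabilizes $E[1]$ at \emph{every} point strictly to the right of the vertical wall. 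These destabilizers are invisible to your first reduction: the numerical wall $W(E[1], T)$ is the vertical wall itself, not a semicircle, so proving that no semicircular wall meets a punctured neighborhood rules out nothing (and that statement is automatic anyway, from nestedness, the existence of a largest wall, and the fact that distinct numerical walls do not intersect --- no maximality of $e$ is needed for it). Slope-stability of $E$ also says nothing about them, since they arise from classes in $\Ext^1(T, E)$, not from subsheaves or quotient sheaves of $E$; your phrase ``no destabilizer of the obvious shape'' is precisely where the proof has a hole.

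Your proposed repair --- using $Q_{\alpha,\beta} \geq 0$ quantitatively to bound $\ch_3$ of the pieces of a destabilizing sequence --- would also fail: the generalized Bogomolov inequality gives bounds that are in general far weaker than the sharp suprema $E(r,c,d)$ (were it otherwise, the main theorems of this paper would be immediate), so comparing a $Q$-derived bound with $e$ produces no contradiction. The paper's argument is soft instead: reduce to the case that the destabilizing subobject $F \subset E[1]$ is tilt-stable; by the large volume limit $F$ is then either the shift of a slope-stable sheaf of slope $\mu(E)$ --- excluded by the parallelism hypothesis --- or a torsion sheaf $T$ as above, in which case the cohomology sheaf sequence yields $0 \to E \to G \to T \to 0$ with $G$ torsion-free, slope-semistable (its HN slopes are $\leq \mu(E) = \mu(G)$), and $\ch(G) = (r, c, d + d_T, e + e_T)$; this contradicts $d = D(r,c)$ when $T$ has one-dimensional support, and $e = E(r,c,d)$ when $T$ is zero-dimensional, directly from the definitions of $D$ and $E$ as suprema. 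That maximality is indispensable is shown by $E = \OO \oplus \II_P$: it is slope-semistable with parallel stable factors, yet $\OO_P \subset E[1]$ destabilizes it right of the wall, consistent with the lemma only because $\ch_3(E) = -1 < E(2,0,0) = 0$. Note also that your reduction ``it suffices to treat $E$ stable'' discards this crucial hypothesis: the stable factors of $E$ need not have maximal invariants for their own $\ch_{\leq 2}$ (that would be an instance of the open scaling question), whereas the exclusion of torsion destabilizers must use maximality of $\ch(E)$ itself, applied to the extension $G \supset E$. Since your ``$\Leftarrow$'' direction rests entirely on the deferred step, the proposal as written does not go through.
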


\begin{proof}
Observe that $E[1]$ for a torsion-free sheaf $E \in \Coh(\P^3)$ is tilt-semistable along $\beta = \mu(E)$ if and only if $E$ is slope-semistable. However, while this holds for semistability, it does not for stability.
\begin{enumerate}
    \item Assume that $E[1]$ resp. $E$ is tilt-semistable in an open neighborhood around $\beta = \mu(E)$. By Proposition \ref{prop:large_volume_limit} about the large volume limit we know that $E$ is $2$-Gieseker-semistable. In particular, $E$ is slope-semistable. If $E$ is not slope-stable, then there is a non-trivial short exact sequence
    \[
    0 \to F \to E \to G \to 0
    \]
    in $\Coh(\P^3)$ such that $\mu(F) = \mu(E) = \mu(G)$ and these objects are all slope-semistable. Therefore, the appropriate shift of this sequence is also exact in $\Coh^{\mu(E)}(\P^3)$ making $E$ strictly tilt-semistable. Since $E$ is tilt-stable in an open neighborhood of the vertical wall, this sequence cannot destabilize $E$ for $\beta < \mu(E)$ or destabilize $E[1]$ for $\beta > \mu$. This is implies that $\ch_{\leq 2}(F)$, $\ch_{\leq 2}(E)$, and $\ch_{\leq 2}(G)$ are all parallel.

    \item Assume that $E$ is slope-semistable and all the stable factors of $E$ have parallel $\ch_{\leq 2}$. Then it is least tilt-semistable along $\beta = \mu(E)$. Assume that $E$ is tilt-unstable on either side of the vertical wall. Then there is a non-trivial short exact sequence
    \[
    0 \to F \to E[1] \to G[1] \to 0
    \]
    in $\Coh^{\mu(E)}(\P^3)$ making $E$ strictly-semistable. Without loss of generality, we may assume that $F$ is tilt-stable. In particular, this means that either $F$ or $F[-1]$ is tilt-stable for $\beta < \mu(E)$ and $\alpha \gg 0$. Therefore, we either get that $F$ is the shift of a torsion-free slope-stable sheaf or $F$ is a torsion sheaf. Taking the long exact sequence in cohomology shows that $G$ is a torsion-free sheaf, and we get
    \[
    0 \to \HH^{-1}(F) \to E \to G \to \HH^0(F) \to 0.
    \]
    Note that $\nu_{\alpha, \mu(E)}(E) = \infty$, and therefore, the same has to be true for $F$ and $G$. This implies $\ch_1^{\mu(E)}(F) = \ch_1^{\mu(E)}(G) = 0$. Now by Proposition \ref{prop:large_volume_limit} about the large volume limit we have two possibilities for $F$. Firstly, it could be that $F$ is the shift of a torsion-free $2$-Gieseker-stable sheaf with $\mu(F) = \mu(E)$. Since $F$ destabilizes $E$ to some side of the wall, it cannot be that $\ch_{\leq 2}(F)$ is parallel to $\ch_{\leq 2}(E)$, a contradiction. Secondly, $F$ could be a torsion sheaf supported in dimension less than or equal to one. However, in this case $G$ is an object that contradicts our maximality assumption on the Chern characters of $E$. \qedhere
\end{enumerate}
\end{proof}

We would like to raise a few questions about this. Someone more brave might have made them conjectures.

\begin{question}[Arbitrary rank and $c_1$?]
Is $D(r, c) \neq -\infty$ for any $(r, c) \in \operatorname{\CC\HH}_{\leq 1}(\P^3)$? What if we replace slope stability by Gieseker stability or $2$-Gieseker stability in the definition of $D(r, c)$?
\end{question}

\begin{question}[Difference slope stability and Gieseker stability]
Is there a Gieseker-semistable sheaf $E$ with $\ch_{\leq 2}(E) = (r, c, d)$ for all $(r, c, d) \in \operatorname{\CC\HH}_{\leq 2}(\P^3)$ with $d \leq D(r, c)$?
\end{question}

Both questions have an affirmative answer up to rank $4$ by the results in the upcoming sections of this article.

\begin{question}[Difference slope stability and Gieseker stability II]
For which $(r, c, d, e) \in \operatorname{\CC\HH}(\P^3)$ with $e \leq E(r, c, d, e)$ is there no Gieseker-semistable sheaf $E$ with $\ch(E) = (r, c, d, e)$?
\end{question}

\begin{rmk}
\label{rmk:ch3_gaps}
The last question is reasonable, because there are certainly gaps where there is a $2$-Gieseker-semistable, but not Gieseker-semistable sheaf. For example, for any $P \in \P^3$ the object $\OO \oplus \II_P$ is $2$-Gieseker-semistable with Chern character $(2, 0, 0, -1)$. Assume that $E$ is a Gieseker-semistable sheaf $E$ with $\ch(E) = (2, 0, 0, -1)$.

Assume further that $E$ is slope-stable. Then Theorem \ref{thm:li_bound} shows $\Delta(E) > 0$, a contradiction. Thus, $E$ has to be strictly slope-semistable. This means there is a short exact sequence
\[
0 \to F \to E \to G \to 0
\]
where $F$ and $G$ are slope stable sheaves of rank one with slope zero. Since $\ch_2(F), \ch_2(G) \leq 0$ and $\ch_2(F) + \ch_2(G) = 0$, we must have $\ch_2(F) = \ch_2(G) = 0$. Similarly, $\ch_3(F), \ch_3(G) \leq 0$ and $\ch_3(F) + \ch_3(G) = -1$ implies that one of $\ch_3(F), \ch_3(G)$ has to be $0$ while the other one is $-1$. Since $E$ is Gieseker-semistable we must have $\ch(F) = (1, 0, 0, -1)$ and $\ch_3(G) = (1, 0, 0, 0)$. Therefore, $F = \II_Q$ for some $Q \in \P^3$ and $G = \OO$. Since $H^1(\II_Q) = 0$ this implies $E = \OO \oplus \II_Q$ and $E$ is clearly not Gieseker-semistable.
\end{rmk}

The bounds that Dr\'ezet and Le Potier proved in the case of $\P^2$ (see \cite{DP85:stable_p2}) scale well. More precisely, this means that for objects of positive rank on $\P^2$ one can obtain sharp bounds of $\ch_2(E)/r(E)$ from the slope $\mu(E) = c_1(E)/r(E)$. For $\mu \in \Q$, let $\widetilde{D}(\mu)$ be defined as the supremum over all $D(r, c)/r$ such that $c/r = \mu$. It is an obvious question whether the same holds for $\P^3$.

\begin{question}
Is there a slope-semistable sheaf $E$ with $\ch_{\leq 2}(E) = (r, c, d) \in \operatorname{\CC\HH}_{\leq 2}(\P^3)$ and $r > 0$ if and only if $d/r \leq \widetilde{D}(\mu(E))$? Is $\widetilde{D}: \Q \to \Q$ continuous?
\end{question}

\begin{rmk}
\label{rmk:no_scaling_ch3}
The analog of this question for the third Chern character is wrong. For example due to Theorem \ref{thm:rank_two} there is a slope-semistable sheaf $E$ with $\ch(E) = (2, 0, -6, 16)$. On the other hand, if $\ch(F) = (1, 0, -3, e)$, then $e \leq 6$ due to Theorem \ref{thm:rank_one}.
\end{rmk}


\section{Rank one and rank zero bounds}
\label{sec:rank_zero_and_one}

The following fact was proved in \cite[Proposition 3.2]{MS20:space_curves}. Note that the statement there contains less information than here, but the proof allows to draw these additional conclusions.

\begin{thm}
\label{thm:rank_one}
Let $E \in \Db(\P^3)$ with $\ch(E) = (1, 0, d, e)$ for some $d \in \Z_{\leq 0}$ and $e \in \Z$. Assume that $E$ is either slope-semistable or $\nu_{\alpha, \beta}$-semistable for some $(\alpha, \beta) \in \R_{> 0} \times \R$. Then $e \leq \tfrac{1}{2}d^2 - \tfrac{1}{2}d$. Moreover, $E(1, 0, d) = \tfrac{1}{2}d^2 - \tfrac{1}{2}d$ and if $e = E(1, 0, d)$, then $E$ fits into a short exact sequence
\[
0 \to \OO(-1) \to E \to \OO_V(d) \to 0
\]
for a plane $V \subset \P^3$. Moreover, any such non-trivial extension is given by $\II_C$ for a plane curve $C \subset \P^3$ of degree $-d$ (resp. $E = \OO$ for $d = 0$), and is both slope-stable and $\nu_{\alpha, \beta}$-stable for $(\alpha, \beta)$ strictly above $W(E, \OO(-1))$ (or for all $(\alpha, \beta) \in \R_{> 0} \times \R$ if $d = 0$).
\end{thm}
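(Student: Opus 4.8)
The plan is to establish the bound $e \leq \tfrac{1}{2}d^2 - \tfrac{1}{2}d$ via a wall-crossing argument in tilt stability, identify the extremal case, and then analyze the extremal extensions. For the bound, I would first reduce to the tilt-semistable case: by Proposition \ref{prop:large_volume_limit}, a slope-semistable sheaf with $\mu(E) = 0 > \beta$ is $\nu_{\alpha, \beta}$-semistable for $\alpha \gg 0$, so it suffices to treat $\nu_{\alpha, \beta}$-semistable $E$. Since $\ch(E) = (1, 0, d, e)$ with $d \leq 0$, we have $\Delta(E) = -2d \geq 0$. The strategy is to track $E$ as $\alpha \to 0$: either $E$ remains stable down to $\alpha = 0$, or it is destabilized along a largest wall (Proposition \ref{prop:structure_walls}(v)). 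I would use the destabilizing sequence $0 \to F \to E \to G \to 0$ together with the constraints in Proposition \ref{prop:properties_dest_sequences}, in particular the rank bound (v) forcing $\ch_0(F) = 1, \ch_0(G) = 0$ (or the reverse), and the discriminant inequality (ii), to pin down the numerics of $F$ and $G$.

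The key computation identifies $\OO(-1)$ as the natural destabilizer. With $\ch_1(F) = -1$ forced by the heart condition and minimality, $F$ should be $\OO(-1)$ and $G$ should have $\ch_{\leq 2}(G) = (0, 1, d + \tfrac{1}{2})$, matching $\ch_{\leq 2}(\OO_V(d))$. Here I would invoke Proposition \ref{prop:line_bundles} to upgrade the numerical match $\ch_{\leq 2}(F) = \ch_{\leq 2}(\OO(-1))$ to an actual isomorphism $F \cong \OO(-1)$. The bound on $e$ then follows from the generalized Bogomolov inequality $Q_{\alpha, \beta}(G) \geq 0$ applied to $G$ along the wall, together with linearity $\ch_3(E) = \ch_3(\OO(-1)) + \ch_3(G)$; the extremal value $e = \tfrac{1}{2}d^2 - \tfrac{1}{2}d$ is exactly where $G$ sits on its own $Q$-wall, forcing $G \cong \OO_V(d)$ by the rank-zero analog of Proposition \ref{prop:line_bundles} or a direct argument.

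For the structural conclusion, I would argue that when $e = E(1, 0, d)$ the destabilizing sequence is forced to be $0 \to \OO(-1) \to E \to \OO_V(d) \to 0$, so $E$ is an extension of these two objects. The identification of non-trivial extensions with ideal sheaves $\II_C$ of plane curves is essentially the observation that such an $E$ is torsion-free of rank one with $\ch_1 = 0$, hence an ideal sheaf twisted trivially, whose cosupport is the degree $-d$ plane curve $C = V \cap (\text{section})$; I would compute $\Ext^1(\OO_V(d), \OO(-1))$ to see the extension space is nonzero and matches the space of such curves. Finally, stability of $\II_C$ above the wall $W(E, \OO(-1))$ follows because the only candidate destabilizer is $\OO(-1)$ itself, which defines that wall, so strictly above it $E$ is stable.

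\textbf{The main obstacle} I anticipate is the uniqueness of the destabilizing configuration: ruling out other numerical possibilities for $(F, G)$ that respect Proposition \ref{prop:properties_dest_sequences} but do not give $\OO(-1)$ and $\OO_V(d)$. This requires carefully combining the radius bound (i), the discriminant inequality (ii) with its equality case, and the $\ch_1^\beta$ constraints (iii) to eliminate spurious walls, and it is where the low value $\Delta(E) = -2d$ (small for $|d|$ small) does most of the work in limiting the combinatorics.
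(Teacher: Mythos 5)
The paper proves this statement by citation --- it is \cite[Proposition 3.2]{MS20:space_curves} --- and your overall architecture (reduction to tilt stability via Proposition \ref{prop:large_volume_limit}, a destabilizing wall, identification of the subobject via Proposition \ref{prop:line_bundles}, then the extension analysis) does match that proof. But your quantitative engine has a genuine gap: applying the generalized Bogomolov inequality to the quotient $G$ along the wall provably does not give the sharp bound. First fix the arithmetic: $\ch_{\leq 2}(\OO_V(d)) = (0,1,d-\tfrac{1}{2})$, not $(0,1,d+\tfrac{1}{2})$, so $\ch(G) = (0,1,d-\tfrac{1}{2},e+\tfrac{1}{6})$. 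A direct computation then shows that $Q_{\alpha,\beta}(G)$ is \emph{constant} along $W(E,\OO(-1))$, equal to $4d^2-2d-6e$; hence $Q_{\alpha,\beta}(G)\geq 0$ gives only $e \leq \tfrac{2d^2-d}{3}$, which exceeds the claimed bound $\tfrac{1}{2}d^2-\tfrac{1}{2}d$ by $\tfrac{d(d+1)}{6}>0$ for every $d\leq -2$. Likewise, your claim that at the extremal value $G$ sits on its own $Q$-wall is false: for $e=\tfrac{1}{2}d^2-\tfrac{1}{2}d$ one gets $Q_{\alpha,\beta}(G)=d^2+d>0$ on the wall when $d\leq -2$. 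The missing ingredient is the sharp rank-zero bound: since $G$ is $\nu_{\alpha,\beta}$-semistable at a point of the wall, Theorem \ref{thm:rank_zero} (the case $c=1$, which is \cite[Lemma 5.4]{Sch20:stability_threefolds} and independent of the rank-one statement, so there is no circularity) gives $\ch_3(G)\leq \tfrac{1}{2}(d-\tfrac{1}{2})^2+\tfrac{1}{24}$, i.e.\ exactly $e\leq \tfrac{1}{2}d^2-\tfrac{1}{2}d$, and its equality case identifies $G$ with $\OO_V(d)$. That rank-zero input is a wall-crossing theorem in its own right; there is no ``rank-zero analog of Proposition \ref{prop:line_bundles}'' in the paper that can replace it. Indeed this is exactly how the paper handles every analogous step (``use Theorem \ref{thm:rank_zero} on the quotient $G$'' in Proposition \ref{prop:rank_one_between_-1_-2} and in Lemmas \ref{lem:rank_three_classification_general} and \ref{lem:rank_four_classification_general}).

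Two further soft spots. Proposition \ref{prop:properties_dest_sequences}(v) does not force $\ch_0(F)=1$ and $\ch_0(G)=0$: rank-one objects are genuinely destabilized by higher-rank subobjects at smaller walls (for a twisted cubic $C$ one has the wall $0\to\OO(-2)^{\oplus 3}\to \II_C\to\OO(-3)^{\oplus 2}[1]\to 0$, with a sub of rank $3$ and quotient of rank $-2$). The tool that actually limits the rank is the one you postpone to your ``main obstacle'' paragraph, and it is where most of the proof lives: an actual destabilizing wall must enclose the region where $Q_{\alpha,\beta}(E)<0$, so its radius is at least $\rho_Q(E)$, while Proposition \ref{prop:properties_dest_sequences}(i) caps the radius of any wall with $\ch_0(F)\geq 2$ by $\Delta(E)/8$; the comparison $\rho_Q^2(E)-\Delta(E)/8>0$ (run repeatedly in the paper) is what forces a rank-one destabilizer, and for $\ch_1(F)=-1$ the Bogomolov inequality on $F$ then pins the largest wall to $W(E,\OO(-1))$ with $F$ numerically equal to $\OO(-1)$. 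Finally, in your dichotomy the branch where $E$ is never destabilized is left with no bound at all; it is closed by letting $(\alpha,\beta)$ approach $(0,\beta_{-}(E))$ inside the semistable region and using $Q_{0,\beta_{-}(E)}(E)\geq 0$, which gives a bound stronger than $\tfrac{1}{2}d^2-\tfrac{1}{2}d$. With these repairs --- none of which changes your overall plan --- your argument becomes essentially the cited proof.
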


Note that for sheaves $E$ with $\ch_{\leq 1}(E) = (1, 0)$ there is no difference between slope-stable, slope-semistable and all the other notions in between. Moreover, the fact that we assumed $\ch_1(E) = 0$ is no restriction at all, since we can always use tensor products with line bundles to reduce to this special case.

\begin{cor}
\label{cor:rank_minus_one}
Let $E$ be an object in $\Coh^{\beta}(\P^3)$ for $\beta > 0$ with $\ch(E) = (-1, 0, d, e)$ for some $d \in \Z_{\geq 0}$ and $e \in \Z$. If $E$ is $\nu_{\alpha, \beta}$-semistable for some $\alpha > 0$, then $e \leq \tfrac{1}{2}d^2 + \tfrac{1}{2}d$. Moreover, if $e = \tfrac{1}{2}d^2 + \tfrac{1}{2}d$, then $E$ is $\nu_{\alpha, \beta}$-stable if and only if $(\alpha, \beta)$ lies above the wall $W(E, \OO(1))$ and $E = \D(\II_C)$ for a plane curve $C \subset \P^3$ of degree $d$ (resp. $E = \OO[1]$ for $d = 0$).
\end{cor}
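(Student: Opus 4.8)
The plan is to deduce everything from Theorem~\ref{thm:rank_one} by passing to the derived dual, exploiting that $\D$ turns a rank $-1$ object into a rank $+1$ object. First I would record the effect of $\D$ on Chern characters: dualizing flips the signs in odd degrees and the shift by $[1]$ negates everything, so an object with $\ch(E) = (-1,0,d,e)$ satisfies $\ch(\D(E)) = (1,0,-d,e)$. Since $d \geq 0$, the value $-d \leq 0$ lands exactly in the range covered by Theorem~\ref{thm:rank_one}, and the target bound $\tfrac{1}{2}d^2 + \tfrac{1}{2}d$ is precisely $\tfrac{1}{2}(-d)^2 - \tfrac{1}{2}(-d)$.

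To apply Proposition~\ref{prop:derived_dual} I need $\nu_{\alpha,\beta}(E) \neq 0$. Because semistability is constant on the chambers of the locally finite wall-and-chamber decomposition from Proposition~\ref{prop:structure_walls}, while the locus $\nu_{\alpha,\beta}(E) = 0$ is nowhere dense, I can perturb $(\alpha,\beta)$ slightly, keeping $\beta > 0$ and $E$ semistable, so as to arrange $\nu_{\alpha,\beta}(E) \neq 0$. Proposition~\ref{prop:derived_dual} then produces a $\nu_{\alpha,-\beta}$-semistable object $\tilde{E} \in \Coh^{-\beta}(\P^3)$ together with a dimension-zero sheaf $T$, say of length $t \geq 0$, sitting in a triangle $T[-2] \to \tilde{E} \to \D(E) \to T[-1]$. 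Additivity of the Chern character along this triangle gives $\ch(\tilde{E}) = \ch(\D(E)) + \ch(T) = (1,0,-d,e+t)$. Applying Theorem~\ref{thm:rank_one} to $\tilde{E}$ (legitimate, as $\tilde{E}$ is $\nu$-semistable and $-d \leq 0$) yields $e + t \leq \tfrac{1}{2}d^2 + \tfrac{1}{2}d$, and $t \geq 0$ gives the asserted bound $e \leq \tfrac{1}{2}d^2 + \tfrac{1}{2}d$.

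For the equality case, $e = \tfrac{1}{2}d^2 + \tfrac{1}{2}d$ forces $t = 0$, hence $T = 0$ and $\tilde{E} \cong \D(E)$; thus $\D(E)$ is $\nu_{\alpha,-\beta}$-semistable and attains $E(1,0,-d)$. The equality clause of Theorem~\ref{thm:rank_one} then identifies $\D(E)$ with $\II_C$ for a plane curve $C$ of degree $d$ (resp.\ with $\OO$ when $d = 0$). Since $\D$ is an involution on $\Db(\P^3)$, applying it once more gives $E = \D(\II_C)$ (resp.\ $E = \D(\OO) = \OO[1]$).

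Finally, to match the stability regions I would transport the wall statement of Theorem~\ref{thm:rank_one} through the duality. The destabilizing inclusion $\OO(-1) \hookrightarrow \D(E)$ dualizes to a quotient $E \twoheadrightarrow \D(\OO(-1)) = \OO(1)[1]$, and since $\nu_{\alpha,\beta}$ is insensitive to shifts the relevant numerical wall is $W(E,\OO(1))$. Because the correspondence of Proposition~\ref{prop:derived_dual} exchanges $(\alpha,\beta)$ with $(\alpha,-\beta)$, and this reflection sends $W(\D(E),\OO(-1))$ to $W(E,\OO(1))$ while preserving the ``above'' region since it fixes $\alpha$, the $\nu_{\alpha,-\beta}$-stability of $\D(E) = \II_C$ strictly above $W(\D(E),\OO(-1))$ translates into $\nu_{\alpha,\beta}$-stability of $E$ exactly above $W(E,\OO(1))$. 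I expect the two genuinely delicate points to be (a) justifying the perturbation to $\nu_{\alpha,\beta}(E) \neq 0$ without leaving the semistable chamber, and (b) verifying that the reflection $\beta \mapsto -\beta$ carries the one wall to the other and preserves the orientation of ``above''; the remainder is bookkeeping with Chern characters.
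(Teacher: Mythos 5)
Your proposal is correct and follows essentially the same route as the paper's proof: apply Proposition \ref{prop:derived_dual} to obtain the triangle $T[-2] \to \tilde{E} \to \D(E) \to T[-1]$, compute $\ch(\tilde{E}) = (1, 0, -d, e + t)$, invoke Theorem \ref{thm:rank_one} to get $e \leq e + t \leq \tfrac{1}{2}d^2 + \tfrac{1}{2}d$, and in the equality case force $t = 0$ so that $\D(E) \cong \II_C$ (resp. $\OO$), whence $E = \D(\II_C)$ (resp. $\OO[1]$). The only difference is that you make explicit two points the paper leaves implicit — the perturbation of $(\alpha, \beta)$ to ensure $\nu_{\alpha,\beta}(E) \neq 0$ before applying Proposition \ref{prop:derived_dual}, and the verification that the reflection $\beta \mapsto -\beta$ carries $W(\D(E), \OO(-1))$ to $W(E, \OO(1))$ preserving the region above — both of which are correct.
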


\begin{proof}
By Proposition \ref{prop:derived_dual} there is a $\nu_{\alpha, -\beta}$-object $\tilde{E}$ and a torsion sheaf $T$ supported in dimension zero together with a distinguished triangle
\[
T[-2] \to \tilde{E} \to \D(E) \to T[-1].
\]
Let $t$ be the length of the support of $T$. Then $\ch(\tilde{E}) = (1, 0, -d, e + t)$ and by Theorem \ref{thm:rank_one} we get
\[
e \leq e + t \leq \tfrac{1}{2}d^2 + \tfrac{1}{2}d.
\]
In case of equality, we must have $t = 0$, and thus, Theorem \ref{thm:rank_one} implies $E = \D(\II_C)$ for a plane curve $C$ of degree $d$ (except for the special case $d = 0$, where $E = \D(\OO) = \OO[1]$).
\end{proof}

\begin{cor}
\label{cor:conjecture_rank_one}
Conjecture \ref{conj:irreducible_smooth} holds for rank one objects.
\end{cor}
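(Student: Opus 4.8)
The plan is to deduce Corollary \ref{cor:conjecture_rank_one} directly from Theorem \ref{thm:rank_one} together with the general framework for moduli spaces recalled at the end of Section \ref{sec:stability}. We must verify Conjecture \ref{conj:irreducible_smooth} for rank one, i.e.\ for any $(1, c, d)$ such that Gieseker-semistable sheaves with $\ch_{\leq 2} = (1, c, d)$ exist, the moduli space $M(1, c, d, E(1, c, d))$ is irreducible and smooth along its stable locus. By Proposition \ref{prop:function_E}(ii), tensoring with line bundles is an isomorphism of moduli spaces preserving these properties, so it suffices to treat the normalized case $c = 0$, where $d \in \Z_{\leq 0}$ and $e = E(1, 0, d) = \tfrac{1}{2}d^2 - \tfrac{1}{2}d$.

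The key observation is that Theorem \ref{thm:rank_one} does not merely compute $E(1,0,d)$; it identifies \emph{every} sheaf achieving the maximal $\ch_3$. First I would note that a rank one torsion-free sheaf with $c_1 = 0$ is automatically slope-stable (hence Gieseker-stable), so the stable locus is the whole moduli space and we need only establish irreducibility and smoothness everywhere. Then, for $d < 0$, Theorem \ref{thm:rank_one} shows that any $E$ with $\ch(E) = (1, 0, d, E(1,0,d))$ is of the form $\II_C$ for a plane curve $C \subset \P^3$ of degree $-d$; conversely every such plane curve yields such a sheaf. This sets up a bijection between $M(1, 0, d, E(1,0,d))$ and the parameter space of pairs (plane $V \subset \P^3$, degree $-d$ curve $C \subset V$).

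Next I would make this bijection into an isomorphism of schemes and read off the geometry. The planes form $(\P^3)^{\vee} \cong \P^3$, and over a fixed plane $V$ the degree $-d$ curves in $V \cong \P^2$ form a projective space $\P\!\left(H^0(\OO_{\P^2}(-d))\right)$ of dimension $\binom{-d+2}{2} - 1$. Thus $M(1,0,d,E(1,0,d))$ is a projective bundle (more precisely, the projectivization of a relative $H^0$) over $(\P^3)^{\vee}$, which is manifestly smooth and irreducible. For the boundary case $d = 0$ the unique sheaf is $\OO$ and the moduli space is a reduced point, which is trivially smooth and irreducible. This disposes of all cases.

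The main obstacle is the scheme-theoretic comparison in the previous paragraph: one must check that the set-theoretic identification $E \leftrightarrow (V, C)$ upgrades to an identification of the moduli functor with the Hilbert scheme of plane curves, so that smoothness and irreducibility of the latter transfer. Concretely, one should verify that the map sending a flat family of ideal sheaves $\II_C$ to the family of plane curves $C$ is an isomorphism of the relevant moduli spaces; this is where one invokes that for plane curves in $\P^3$ the Hilbert scheme component is exactly this projective bundle over $(\P^3)^\vee$. Since everything in sight is smooth, one could alternatively confirm smoothness intrinsically by checking $\Ext^2(\II_C, \II_C) = 0$ using the standard deformation theory recalled before, but the explicit bundle description is cleaner and simultaneously yields irreducibility. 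Once the identification is in place, both conclusions are immediate and no further estimates are needed.
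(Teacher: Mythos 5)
Your proposal is correct and follows essentially the same route as the paper: both identify $M(1,0,d,E(1,0,d))$ with the Hilbert scheme of plane curves of degree $-d$ via Theorem \ref{thm:rank_one}, and exhibit that space as a projective bundle over the $\P^3$ of planes, hence smooth and irreducible. The only cosmetic difference is that you describe the fiber as $\P\bigl(H^0(\OO_{\P^2}(-d))\bigr)$ while the paper writes it as $\P\bigl(\Ext^1(\OO_V(d),\OO(-1))\bigr)$ coming from the extension sequence; these are canonically the same space, and your extra remarks (reduction to $c=0$, automatic stability of rank one sheaves, upgrading the bijection to a scheme-theoretic identification) only make explicit points the paper leaves implicit.
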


\begin{proof}
By Theorem \ref{thm:rank_one} the moduli space of Gieseker-semistable sheaves with Chern character $\ch(E) = (1, 0, d, E(1, 0, d))$ is simply the Hilbert scheme of plane curves of degree $-d$. This moduli space is smooth and irreducible. This can for example be seen by the above exact sequences
\[
0 \to \OO(-1) \to \II_C \to \OO_V(-d) \to 0.
\]
They imply that the space is a projective bundle with base the space of planes $V$, i.e., $\P^3$, and fiber $\P(\Ext^1(\OO_V(-d), \OO(-1)))$.
\end{proof}

In Proposition \ref{prop:function_E} we already established that there are Gieseker-semistable sheaves $E$ with $\ch(E) = (0, 0, d, e)$ for any $(0, 0, d, e) \in \operatorname{\CC\HH}(\P^3)$. Therefore, in this section section, we will deal with sheaves $E$ with $\ch(E) = (0, c, d, e)$ with $c \geq 1$.

For any $c \in \Z_{>0}$ and $f \in [0, c) \cap \Z$ we define
\[
\varepsilon(c, f) \coloneqq \frac{f}{2} \left(c - f - 1 + \frac{f}{c} \right).
\]

\begin{thm}
\label{thm:rank_zero}
For any $(0, c, d) \in \operatorname{\CC\HH}_{\leq 2}(\P^3)$, let $d + c^2/2 \equiv -f (\mod c)$. Let $E \in \Db(X)$ with $\ch(E) = (0, c, d, e)$. Assume that $E$ is either $2$-Gieseker-semistable or $\nu_{\alpha, \beta}$-semistable for some $\alpha \in \R_{> 0}$ and $\beta \in \R$. Then 
\[
e \leq \frac{c^3}{24} + \frac{d^2}{2c} - \varepsilon(c, f) = E(0, c, d).
\]
If $e = E(0, c, d)$, then there is a unique wall in tilt stability for objects with Chern character $(0, c, d, e)$. Below this wall there are no semistable objects. Along this wall any $E$ has unique stable Jordan-H\"older factors. What follows is a detailed description of these walls.
\begin{enumerate}
    \item Assume that $c = 1$ and $e = E(0, 1, d)$. Then the unique wall for $E$ in tilt stability is induced by a short exact sequence
    \[
    0 \to \OO\left(d + \frac{1}{2}\right) \to E \to \OO\left(d - \frac{1}{2}\right)[1] \to 0.
    \]
    \item Assume that $c = 2$, $f = 1$, and $e = E(0, 2, d)$. Then the unique wall for $E$ in tilt stability is induced by a short exact sequence
    \[
    0 \to \OO\left( \frac{d + 1}{2} \right)^{\oplus 2} \to E \to \OO\left( \frac{d - 1}{2} \right)^{\oplus 2}[1] \to 0.
    \]
    \item Assume that either $c \geq 3$ and $0 \leq f \leq \tfrac{c}{2}$, or $c = 2$ and $f = 0$. Then the unique wall for $E$ in tilt stability is induced by a short exact sequence
    \[
    0 \to \II_C \left( \frac{d}{c} + \frac{c}{2} + \frac{f}{c} \right) \to E \to \OO \left( \frac{d}{c} - \frac{c}{2} + \frac{f}{c} \right) [1] \to 0
    \]
    where $C$ is a plane curve of degree $f$. 
    \item Assume $c \geq 3$ and $f \geq \tfrac{c}{2}$. Then the unique wall for $E$ in tilt stability is induced by a short exact sequence
    \[
    0 \to \OO \left( \frac{d}{c} + \frac{c}{2} + \frac{f}{c} - 1\right) \to E \to \D(\II_C) \otimes \OO \left( \frac{d}{c} - \frac{c}{2} + \frac{f}{c} - 1\right) \to 0
    \]
    where $C$ is a plane curve of degree $c - f$. 
\end{enumerate}
\end{thm}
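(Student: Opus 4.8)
The plan is to reduce everything to a single wall-crossing in tilt stability and then to an explicit finite optimization. First I would pass from the two hypotheses to a uniform one: by the large volume limit (Proposition \ref{prop:large_volume_limit}) a $2$-Gieseker-semistable $E$ is $\nu_{\alpha,\beta}$-semistable for $\alpha \gg 0$, and a $\nu_{\alpha_0,\beta_0}$-semistable $E$ is $\nu_{\alpha,\beta}$-semistable everywhere above the largest actual wall (Proposition \ref{prop:structure_walls}(v)); so in either case $E$ is tilt-semistable above its largest wall $W$. Since $\ch_0(E) = 0$ and $c \neq 0$, Proposition \ref{prop:structure_walls}(ii) and (iv) show that every numerical wall for $E$ is a semicircle centered at $\beta = d/c$, so the walls are concentric and $W$ is determined by its radius $\rho$. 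One computes $\ch_2^{d/c}(E) = 0$, $\ch_1^{d/c}(E) = c$, $\Delta(E) = c^2$, and $\ch_3^{d/c}(E) = e - \tfrac{d^2}{2c}$, whence
\[
Q_{\alpha,\beta}(E) = \alpha^2 c^2 + (c\beta - d)^2 + 3d^2 - 6ce.
\]
Over the semistable region $\{\alpha^2 + (\beta - d/c)^2 \geq \rho^2\}$ this is minimized on $W$ and yields only $e \leq \tfrac{c\rho^2}{6} + \tfrac{d^2}{2c}$. A direct check on the extremal extensions shows this inequality is \emph{not} sharp, so the generalized Bogomolov inequality alone is insufficient; I must instead bound $\ch_3$ through the destabilizing sequence itself.

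The second step is to analyze the destabilizing sequence $0 \to F \to E \to G \to 0$ along $W$. By Proposition \ref{prop:properties_dest_sequences}(iv) both $F$ and $G$ have nonzero rank, and since the ranks add to zero I may take $F$ to be a tilt-stable factor with $\ch_0(F) = r > 0$, so that $G$ is a $\nu_{\alpha,\beta}$-semistable quotient with $\ch_0(G) = -r$. Using $\ch_3(E) = \ch_3(F) + \ch_3(G)$, I bound the two summands separately. In the critical case $r = 1$, twisting $F$ by $\OO(-\ch_1(F))$ lands in the hypotheses of Theorem \ref{thm:rank_one} (the Bogomolov inequality for $F$ guarantees the required $\ch_2 \leq 0$), bounding $\ch_3(F)$; dually, $G$ has rank $-1$, and after a twist Corollary \ref{cor:rank_minus_one} bounds $\ch_3(G)$. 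Both bounds are functions of the splitting $\ch_{\leq 2}(E) = \ch_{\leq 2}(F) + \ch_{\leq 2}(G)$, whose only free data are the integer $a = \ch_1(F)$ and $\ch_2(F)$, constrained by $\Delta(F), \Delta(G) \geq 0$ and by the requirement that $W$ be an honest wall (which links $\ch_2(F)$ to $a$ and $\rho$).

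The third step is the optimization. Summing the rank-$1$ and rank-$(-1)$ bounds and maximizing over the admissible pairs $(a,\ch_2(F))$ should return exactly $\tfrac{c^3}{24} + \tfrac{d^2}{2c} - \varepsilon(c,f)$, with maximizer $F$ a twist of $\II_C$ and $G$ a twist of $\OO[1]$ (or their $\D$-duals). The residue $f \equiv -(d + c^2/2) \pmod c$ enters precisely because $a$ is an integer while the continuous optimum sits at $d/c$: the nearest admissible integer translate forces $C$ to be a plane curve of degree $f$ (for $f \leq c/2$) or of degree $c-f$ (for $f \geq c/2$), and one checks $\varepsilon(c,f) = \tfrac{f(c-f)(c-1)}{2c}$ is symmetric in $f \leftrightarrow c-f$, matching the passage between cases (iii) and (iv) by duality. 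A separate argument, using $\Delta(F) + \Delta(G) \leq \Delta(E) = c^2$ together with the rank-$1$ and $\ch_2$-bounds, shows that stable factors of rank $|r| \geq 2$ produce strictly smaller $\ch_3$ and never govern the extremum; the small cases $c = 1$ and $c = 2$ degenerate to the line-bundle configurations (i),(ii). For sharpness I would verify that each extension (i)--(iv) has Chern character $(0,c,d,E(0,c,d))$ --- a routine computation using $\ch(\OO_C) = (0,0,f,-\tfrac{f(f+1)}{2})$ for a plane curve --- and is tilt-semistable, the building blocks $\OO(k)$, $\II_C$, $\D(\II_C)$ being pinned down by Proposition \ref{prop:line_bundles}. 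Uniqueness of the wall and of the stable factors then follows because equality forces $F$ and $G$ into these rigid forms, and strictly inside $W$ one has $Q_{\alpha,\beta}(E) < 0$ by Proposition \ref{prop:structure_walls}(vi), so no semistable object survives below $W$.

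The main obstacle is the optimization of the second and third steps: correctly enumerating the admissible rank-$\pm 1$ splittings, tracking the integrality of $a = \ch_1(F)$ to extract the sharp defect $\varepsilon(c,f)$, and simultaneously ruling out all higher-rank stable factors. Getting the piecewise behavior to reproduce $\varepsilon(c,f)$ \emph{exactly}, rather than up to an error term, is the delicate point, and is where Theorem \ref{thm:rank_one} and Corollary \ref{cor:rank_minus_one} must be invoked in their sharp, extremal form.
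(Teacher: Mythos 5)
Your overall strategy --- pass to a destabilizing sequence $0 \to F \to E \to G \to 0$, use Proposition \ref{prop:properties_dest_sequences}(iv) to see that $F$ and $G$ have ranks $r$ and $-r$, reduce to $r=1$, and feed the two factors into the sharp rank-one results (Theorem \ref{thm:rank_one} and Corollary \ref{cor:rank_minus_one}) --- is essentially the strategy of the proof of \cite[Theorem 3.4]{MS20:space_curves}. The paper does not reprove that inequality: it cites it (and \cite[Lemma 5.4]{Sch20:stability_threefolds} for $c=1$) and only supplies the classification of the extremal objects, by showing any extremal $E$ is destabilized by one of the two explicit sequences in parts (iii)/(iv) and then comparing the radii of the two candidate walls, with a separate Hom-counting argument for $c=2$, $f=1$. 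So your route is viable in principle, but two steps are genuinely missing or wrong. The first is the core of the matter: the optimization that is supposed to return exactly $\tfrac{c^3}{24}+\tfrac{d^2}{2c}-\varepsilon(c,f)$, and the exclusion of stable factors of rank $|r|\geq 2$, are asserted rather than performed (you flag this yourself). Moreover, the tool you propose for the exclusion, $\Delta(F)+\Delta(G)\leq\Delta(E)=c^2$ together with ``rank-$1$ and $\ch_2$-bounds,'' cannot by itself control $\ch_3(E)=\ch_3(F)+\ch_3(G)$ when a stable factor has rank $\geq 2$: a $\ch_3$-bound for that factor is exactly what is unavailable at this stage of the paper's rank-by-rank development. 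What works instead is the radius bound of Proposition \ref{prop:properties_dest_sequences}(i) (a rank-$r$ subobject forces $\rho^2\leq c^2/4r^2$) combined with $Q_{\alpha,\beta}(E)\geq 0$ on that small wall; you never make this argument.

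The second gap is a concrete error. You claim that strictly inside the extremal wall $W$ one has $Q_{\alpha,\beta}(E)<0$ by Proposition \ref{prop:structure_walls}(vi), so that no semistable object survives below $W$ and the wall is unique. This is false whenever $c\geq 3$ and $f\neq 0$. Indeed, for $\ch(E)=(0,c,d,e)$ one computes
\[
Q_{\alpha,\beta}(E) \;=\; c^2\alpha^2 + (c\beta - d)^2 + 3d^2 - 6ce,
\]
so $W_Q(E)$ is the semicircle centered at $\beta=d/c$ of radius $\rho_Q$ with $\rho_Q^2 = (6ce-3d^2)/c^2$, which at $e = E(0,c,d)$ equals
\[
\rho_Q^2 \;=\; \frac{c^2}{4} - \frac{6\,\varepsilon(c,f)}{c} \;=\; \frac{c^2}{4} - \frac{3f(c-f)(c-1)}{c^2},
\]
while the actual wall in case (iii) has radius $\tfrac{c}{2}-\tfrac{f}{c}$. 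Their difference is
\[
\rho_Q^2 - \left(\frac{c}{2}-\frac{f}{c}\right)^2 \;=\; \frac{f}{c^2}\bigl(3cf + 3c - 2c^2 - 4f\bigr) \;<\; 0
\qquad \text{for } c\geq 3,\ 1\leq f\leq \tfrac{c}{2},
\]
and the same holds in case (iv) with $c-f$ in place of $f$. Hence $W_Q(E)$ lies strictly inside $W$, and in the annulus between them $Q_{\alpha,\beta}(E)>0$: the generalized Bogomolov inequality is silent exactly where you need it. (You observed at the start of your proposal that $Q$ is not sharp for these classes; that same fact defeats this final step.) Emptiness below the wall and uniqueness must instead be deduced as in the paper, from the classification itself: every semistable object with extremal $e$ admits the rigid sub- or quotient object, and that object destabilizes $E$ on and below $W$.
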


We would like to point out that the case $f = \tfrac{c}{2}$ is slightly more complicated than visible at first sight. This is because $\II_C(d/c + c/2 + f/c)$ for a conic $C$ and $\D(\II_{C'}) \otimes \OO( d/c - c/2 + f/c - 1)$ for a plane cubic $C'$ are both strictly semistable along the unique wall. Note that this theorem was almost established in \cite[Theorem 3.4]{MS20:space_curves}. In the following, we will only point out what has to be modified to obtain a complete proof.

\begin{proof}
The case $c = 1$ can be found in \cite[Lemma 5.4]{Sch20:stability_threefolds}. All other bounds were already proved in \cite[Theorem 3.4]{MS20:space_curves}. What remains is the classification of the semistable objects for $c \geq 2$.


Let $c = 2$, $f = 1$, and $e = E(0, 2, d)$. Up to tensoring with line bundles, we can assume that $d = -1$, and therefore, $e = \tfrac{1}{3}$. By Proposition \ref{prop:properties_dest_sequences} on destabilizing sequences there is no wall for $E$ of radius larger than or equal to $1$. The unique numerical wall with radius one for $E$ is given by $W(E, \OO)$. Thus, $E$ is tilt-stable along $W(E, \OO(-4))$ and we can deduce $\Ext^2(\OO, E) = \Ext^1(E, \OO(-4))^{\vee} = 0$. Thus, $\hom(\OO, E) \geq \chi(E) = 2$. By Proposition \ref{prop:line_bundles}, which characterizes line bundles among semistable objects, the cone of the induced morphism $\OO^{\oplus 2} \to E$ is indeed $\OO(-1)^{\oplus 2}[1]$. 

If either $(c, f) = (2, 0)$ or $c \geq 3$, then we can use the proof of \cite[Theorem 3.4]{MS20:space_curves}. That proof was done under the assumption $e > E(0, c, d)$, but (outside of the special cases we have already excluded) it works perfectly fine with $e \geq E(0, c, d)$. There it was shown that $E$ has to be destabilized by either
\begin{equation}
\label{eq:line_bundle_quotient}
0 \to \II_C \left( \frac{d}{c} + \frac{c}{2} + \frac{f}{c} \right) \to E \to \OO \left( \frac{d}{c} - \frac{c}{2} + \frac{f}{c} \right) [1] \to 0
\end{equation}
where $C$ is a plane curve of degree $f$, or by
\begin{equation}
\label{eq:line_bundle_sub}
0 \to \OO \left( \frac{d}{c} + \frac{c}{2} + \frac{f}{c} - 1\right) \to E \to \D(\II_C) \otimes \OO \left( \frac{d}{c} - \frac{c}{2} + \frac{f}{c} - 1\right) \to 0
\end{equation}
where $C$ is a plane curve of degree $c - f$.

A straighforward computation shows that 
\[
\rho\left(E, \OO \left( \frac{d}{c} - \frac{c}{2} + \frac{f}{c} \right)[1]\right) \geq \rho \left(E, \OO \left( \frac{d}{c} + \frac{c}{2} + \frac{f}{c} - 1\right)\right)
\]
if and only if $f \leq \tfrac{c}{2}$. Equality happens if and only if $f = \tfrac{c}{2}$.

Assume that $f < \tfrac{c}{2}$. Then the object
\[
\D(\II_C) \otimes \OO \left( \frac{d}{c} - \frac{c}{2} + \frac{f}{c} - 1\right)
\]
is destabilized by a map onto $\OO(d/c - c/2 + f/c)[1]$ along $W(E, \OO(d/c + c/2 + f/c - 1))$. In this case, we can rule out that $E$ is destabilized by \eqref{eq:line_bundle_sub}.

Assume that $f > \tfrac{c}{2}$. The the object
\[
\II_C \left( \frac{d}{c} + \frac{c}{2} + \frac{f}{c} \right)
\]
is destabilized by a map from $\OO(d/c + c/2 + f/c)$ along $W(E, \OO(d/c - c/2 + f/c)[1])$. In this case, we can rule out that $E$ is destabilized by \eqref{eq:line_bundle_quotient}.

If $f = c/2$, either of the two quotients work, since both
\[
\II_C \left( \frac{d}{c} + \frac{c}{2} + \frac{f}{c} \right) \text{ and } \D(\II_C) \otimes \OO \left( \frac{d}{c} - \frac{c}{2} + \frac{f}{c} - 1\right)
\]
are strictly semistable along the unique wall. In case \eqref{eq:line_bundle_quotient}, we can switch to \eqref{eq:line_bundle_sub} by taking the quotient of the composition
\[
\OO \left( \frac{d}{c} + \frac{c}{2} + \frac{f}{c} - 1\right) \into \II_C \left( \frac{d}{c} + \frac{c}{2} + \frac{f}{c} \right) \into E.
\]
Vice versa, starting with  \eqref{eq:line_bundle_sub}, we can switch to \eqref{eq:line_bundle_quotient} by taking the kernel of the composition
\[
E \onto \D(\II_C) \otimes \OO \left( \frac{d}{c} - \frac{c}{2} + \frac{f}{c} - 1\right) \onto \OO \left( \frac{d}{c} - \frac{c}{2} + \frac{f}{c} \right) [1]. \qedhere
\]
\end{proof}

\begin{cor}
\label{cor:conjecture_rank_zero}
Conjecture \ref{conj:irreducible_smooth} holds for rank zero objects.
\end{cor}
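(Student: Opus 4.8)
The plan is to read the moduli space directly off the classification of destabilizing sequences in Theorem \ref{thm:rank_zero}. By Proposition \ref{prop:function_E}(ii) tensoring with line bundles induces isomorphisms of moduli spaces, and since this twist fixes $c$, shifts $\ch_2$ by multiples of $c$, and leaves $f$ invariant, it suffices to treat for each $c$ and $f$ a single normalized value of $d$. Fix $e = E(0,c,d)$. By Theorem \ref{thm:rank_zero} there is a unique wall $W$ for the Chern character $(0,c,d,e)$, and as this is simultaneously the largest wall, Proposition \ref{prop:large_volume_limit} identifies the Gieseker-semistable sheaves with the objects that are $\nu_{\alpha,\beta}$-semistable just above $W$. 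I would therefore compute $M(0,c,d,e)$ as the moduli space of tilt-stable objects on the outer side of $W$.

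Next I would identify this space as a projective bundle. In cases (iii) and (iv) the theorem writes every semistable $E$ as a non-split extension $0 \to F \to E \to G \to 0$ of the two stable Jordan--Hölder factors $F, G$, which have equal tilt-slope along $W$ but distinct slopes just above it. Such extensions, up to the scaling action of $\Aut(F)\times\Aut(G)=\C^{*}\times\C^{*}$, are classified by $\P(\Ext^1(G,F))$, and each nonzero class yields a tilt-stable, hence Gieseker-stable, sheaf on the outer side of $W$; conversely the split extension $F\oplus G$ is not even a sheaf, since $G$ is the shift of a sheaf, so no strictly semistable sheaves occur and $M(0,c,d,e)$ consists entirely of stable sheaves. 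A direct computation gives $\Ext^1(G,F)\cong H^0(\II_C(c))$ in case (iii), and the analogous group in case (iv), whose dimension depends only on $c$ and $f$; by cohomology and base change this assembles into a vector bundle $\mathcal E$ over the Hilbert scheme $\mathcal H$ of plane curves $C$ of degree $f$ (resp.\ $c-f$), and $M(0,c,d,e)\cong\P(\mathcal E)$.

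I would then use that $\mathcal H$ is smooth and irreducible: for $f\geq 2$ it is the projective bundle over the space of planes $(\P^3)^{*}$ with fibre the linear system $\P(H^0(\OO_{\P^2}(f)))$, exactly as in Corollary \ref{cor:conjecture_rank_one}; for $f=1$ it is the Grassmannian $\Gr(2,4)$ of lines; and for $f=0$ it is a point. Being a projective bundle over a smooth irreducible base, $\P(\mathcal E)$ is smooth and irreducible, and since every point is stable this proves Conjecture \ref{conj:irreducible_smooth} in these cases. The remaining cases $c=1$ and $(c,f)=(2,1)$, whose Jordan--Hölder factors are line bundles rather than twisted ideal sheaves, are handled analogously and yield smooth irreducible moduli spaces (a space of planes, respectively a family over quadric surfaces).

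The main obstacle is the converse direction together with the flatness of the family. I expect the delicate points to be: verifying that every nonzero extension class really produces a Gieseker-semistable sheaf rather than a complex or an unstable object, including degenerate $C$ and reducible surfaces $\{s=0\}$; checking that $\dim\Ext^1(G,F)$ is genuinely constant so that $\mathcal E$ is a bundle; and reconciling the two presentations at $f=\tfrac{c}{2}$, where both destabilizing sequences of Theorem \ref{thm:rank_zero} occur and the factors are themselves strictly semistable. The switching procedure at the end of the proof of Theorem \ref{thm:rank_zero} should show that these two descriptions define the same $\P(\mathcal E)$.
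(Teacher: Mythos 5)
Your treatment of the generic cases coincides with the paper's argument: twist to a normalized $d$, invoke the unique wall of Theorem \ref{thm:rank_zero}, and realize $M(0,c,d,E(0,c,d))$ as the projectivization of the extension space of the two wall factors over the Hilbert scheme of plane curves of degree $f$ (resp.\ $c-f$); your identification $\Ext^1\bigl(\OO(m)[1],\II_C(n)\bigr)=\Hom\bigl(\OO(m),\II_C(n)\bigr)=H^0(\II_C(c))$ is exactly the group the paper uses, written there as sheaf extensions $0 \to \OO(-c) \to \II_C \to E \to 0$.

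There are, however, two genuine gaps in the special cases. First, $(c,f)=(2,1)$ is \emph{not} "handled analogously": after normalizing, the wall sequence is $0 \to \OO^{\oplus 2} \to E \to \OO(-1)^{\oplus 2}[1] \to 0$, so the two factors are not simple (each is a double of a stable object), and extensions are no longer classified by $\P(\Ext^1(G,F))$ modulo $\C^{*}\times\C^{*}$; one must quotient the $16$-dimensional space of $2\times 2$ matrices of linear forms by $\GL_2\times\GL_2$, which drops the dimension to $9$, whereas your recipe would produce $\P^{15}$. The paper identifies this moduli space with King's moduli of representations of the four-arrow Kronecker quiver with dimension vector $(2,2)$ and gets smoothness along the stable locus and irreducibility from King's theorem; your substitute description, "a family over quadric surfaces," cannot deliver either property, since the determinantal map to the space of quadrics has jumping fibers. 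Second, at $f=\tfrac{c}{2}$ the delicate issue is not reconciling the two presentations of a given $E$ but showing the assignment $E \mapsto C$ is well defined: since $\II_C$ is strictly semistable along the wall, a single $E$ could a priori admit sequences $0 \to \OO(-c) \to \II_{C'} \to E \to 0$ for several curves $C'$, which would destroy the bundle structure over the Hilbert scheme. The paper settles this by computing $\Ext^1(\II_C,\OO(-c)) = H^2(\II_C(c-4))^{\vee} = 0$ and applying the Snake Lemma to conclude the sequence is canonical; the "switching procedure" you invoke only converts a presentation of type \eqref{eq:line_bundle_quotient} into one of type \eqref{eq:line_bundle_sub} and back, and gives no such uniqueness.
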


\begin{proof}
For $c = 1$, we are dealing with the moduli space of planes $V \subset \P^3$ which is isomorphic to $\P^3$. Clearly, this is smooth and irreducible.

For $c = 2$ and $f = 1$, we are dealing with the moduli space of quiver representation of the Kronecker quiver (two vertices with four arrows in the same direction) with dimension vector $(2, 2)$. Moduli spaces of representations of quivers without relations are always irreducible and smooth along the stable loci (see \cite{Kin94:moduli_quiver_reps}).

If $c \geq 2$ and $f = 0$, then up to tensoring with line bundles all stable objects $E$ fit into a short exact sequence in $\Coh(\P^3)$
\[
0 \to \OO (-c) \to \OO \to E \to 0.
\]
This means $E = \OO_X$ for a surface $X \subset \P^3$ of degree $c$, and the moduli space is again a projective space.

Let $0 < f < \tfrac{c}{2}$ and $c \geq 3$. Then up to tensoring with line bundles all stable objects $E$ fit into a short exact sequence in $\Coh(\P^3)$
\begin{equation*}
0 \to \OO(-c) \to \II_C \to E \to 0,
\end{equation*}
where $C$ is a plane curve of degree $f$. Note that $H^0(\II_C(c))$ is independent of the plane curve $C$ chosen. Therefore, the space of such non-trivial extensions is a projective bundle over the moduli space of plane curves of degree $f$. We are dealing again with a smooth and irreducible moduli space.

Similarly, if $\tfrac{c}{2} < f < c$ and $c \geq 3$, then up to tensoring with line bundles all stable objects $E$ fit into a short exact sequence in $\Coh^{\beta}(\P^3)$
\begin{equation*}
0 \to \OO \to E \to \D(\II_C) \otimes \OO(-c) \to 0,
\end{equation*}
where $C$ is a plane curve of degree $c - f$. We have $\Ext^1(\D(\II_C) \otimes \OO(-c), \OO) = H^0(\II_C(c))$ which is again independent of $C$. As before, the moduli space is a projective bundle over the space of plane curves of degree $c - f$. This is a smooth and irreducible moduli space.

Lastly, assume that $f = \tfrac{c}{2}$ and $c \geq 3$. We still get that the above short exact sequence
\begin{equation*}
0 \to \OO(-c) \to \II_C \to E \to 0,
\end{equation*}
where $C$ is a plane curve of degree $f$. However, since $\II_C$ is strictly-semistable along the wall, this sequence might not be canonical. It is possible that a single such $E$ has multiple of these sequences. We have
\[
\Ext^1(\II_C, \OO(-c)) = H^2(\II_C(c - 4))^{\vee} = 0.
\]
If there were two such sequences for $E$ than this fact plus an application of the Snake Lemma shows that the sequence is canonical in this case as well.
\end{proof}

The bounds up to this point are sharp for semistable sheaves or equivalently for tilt-semistable objects above the largest semicircular wall. However, once walls are crossed one should expect stronger bounds to hold. Generally, this becomes a much more difficult problem.

\begin{prop}
\label{prop:rank_one_between_-1_-2}
Let $E$ with $\ch(E) = (1, 0, d, e)$  with $d \leq -3$ be $\nu_{\alpha, \beta}$-semistable for $(\alpha, \beta)$ on a numerical wall with center $s$.
Moreover, let $f \equiv d (\mod 2)$ with $f \in \{0, 1\}$. Then
\[
e \leq \begin{cases}
\frac{1}{2} d^2 - \frac{1}{2}d &\text{ if $s \leq d - \frac{1}{2}$,} \\
\tfrac{1}{2}d^2 - ds + s^2 - 2d + 2s + \frac{3}{4} &\text{ if $d - \frac{1}{2} \leq s \leq \tfrac{1}{2}d + \frac{f - 3}{2}$,} \\
\frac{1}{4} d^2 - d - \frac{f}{4} &\text{ if $\tfrac{1}{2}d + \frac{f - 3}{2} \leq s$.}
\end{cases}
\]
\end{prop}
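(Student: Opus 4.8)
The plan is to run the standard wall-crossing argument: since $\ch_{\le 1}(E) = (1,0)$ we have $\Delta(E) = -2d > 0$, so $E$ sits on genuine numerical walls, and the idea is to read $e = \ch_3(E)$ off a destabilizing sequence $0 \to F \to E \to G \to 0$ along the relevant wall and to bound $\ch_3(F)$ and $\ch_3(G)$ by the results already established in rank $0$ and rank $\pm 1$. The regime $s \le d - \tfrac12$ costs nothing: Theorem \ref{thm:rank_one} already gives $e \le \tfrac12 d^2 - \tfrac12 d = E(1,0,d)$ for every tilt-semistable object of this class, and a short computation shows that $d - \tfrac12$ is exactly the center of the largest wall $W(E,\OO(-1))$, so nothing stronger can be demanded there. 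Thus the content is the two deeper, strictly smaller, bounds.

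For orientation I would first record the clean consequence of the generalized Bogomolov inequality. Writing it out gives $Q_{\alpha,\beta}(E) = -2d(\alpha^2+\beta^2) + 4d^2 + 6\beta e \ge 0$, and on the numerical wall of center $s$, where $\alpha^2 + \beta^2 = 2d + 2s\beta$, this collapses (independently of the chosen point on the wall) to $e \le \tfrac{2}{3}ds$. This is too weak to be the final answer, which tells me that the sharp bounds must be extracted from the destabilizing factors when $E$ is strictly semistable, and from Euler-characteristic vanishing (forcing $\hom(\OO(k),E)=0$ once $E$ survives below a given wall) when $E$ is stable.

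The main step is the case analysis of the destabilizing sequence. By Proposition \ref{prop:properties_dest_sequences}, together with $\Delta(F)+\Delta(G)\le\Delta(E)=-2d$ and the radius estimate (i), the factors for a rank-one $E$ are severely constrained: one of them is a twist of a line bundle $\OO(a)$ with $a \le -1$, and its complement has rank $0$ or $-1$, to be bounded by Theorem \ref{thm:rank_zero} or by Corollary \ref{cor:rank_minus_one} respectively. Using the wall-center formula $s = \tfrac{\ch_2(F) - d\,\ch_0(F)}{\ch_1(F)}$ one then translates each configuration into a bound purely in $s$. Concretely, the deepest admissible walls, with $\OO(-c)$ as the sub and a rank-zero quotient, give $e \le \ch_3(\OO(-c)) + E(0,c,d-\tfrac{c^2}{2}) = \tfrac{d^2}{2c} - \tfrac{dc}{2} - \varepsilon(c,f)$, and minimizing this over integers $c$ (near $c \approx \sqrt{-d}$) collapses, after the $\varepsilon(c,f)$-correction records the congruence $f \equiv d \pmod 2$, to exactly the constant bound $\tfrac14 d^2 - d - \tfrac f4$ of the third regime. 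The middle regime should then appear as the interpolating envelope between the extremal $\OO(-1)$-configuration and this plateau, reflecting the $\II_C$-quotient versus $\D(\II_C)$-subobject dichotomy that already governs the cases of Theorem \ref{thm:rank_zero}, with the breakpoint $s = \tfrac12 d + \tfrac{f-3}2$ being where the two competing bounds cross; a direct check shows the three pieces agree at both breakpoints, the second equality reducing to $f(f-1)=0$, which holds since $f \in \{0,1\}$.

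The hard part is the optimization producing the parabolic middle regime: in general no rank-one destabilizer has its wall centered in the open interior of the second regime (the discriminant inequality $\Delta(F)\ge 0$ excludes the naive candidates), so the bound there is not attained by any single destabilizing sequence but arises only as the upper envelope of the admissible configurations, and one must also exclude higher-rank destabilizers via the discriminant and radius inequalities of Proposition \ref{prop:properties_dest_sequences}. I expect the bookkeeping of the $\varepsilon(c,f)$-corrections across the congruence classes mod $c$, together with the supplementary Euler-characteristic vanishing needed to close the stable cases that present no obvious destabilizing subobject, to be the most delicate points.
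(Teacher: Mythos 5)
Your first regime, the on-wall identity $Q_{\alpha,\beta}(E) = \beta(6e-4ds)$, the arithmetic identity $\ch_3(\OO(-c)) + E(0,c,d-\tfrac{c^2}{2}) = \tfrac{d^2}{2c} - \tfrac{dc}{2} - \varepsilon(c,f)$, and the matching of the three bounds at the breakpoints are all correct. But the core of your argument rests on a structural claim that is false: in a destabilizing sequence for a rank-one object, it is \emph{not} true that one factor must be a (twist of a) line bundle. The paper's proof shows, assuming $e > \tfrac14 d^2 - d$, that the subobject $F$ has $\ch_0(F)=1$ (by comparing $\rho_Q^2(E) > \Delta(E)/8$ with Proposition \ref{prop:properties_dest_sequences}(i)) and $\ch_1(F) \in \{-1,-2\}$ (from $Q_{0,-3}(E)<0$); such an $F$ is a general rank-one tilt-semistable object with $\ch(F) = (1,0,y,z)\cdot\ch(\OO(-1))$ or $(1,0,y,z)\cdot\ch(\OO(-2))$ --- generically a twisted ideal sheaf of a plane curve, e.g.\ $\II_{C'}(-1)$, exactly as in the wall $W(E,\II_L(-1))$ of Proposition \ref{prop:final_model_(1, 0, -4, 8)} --- and the quotient has rank zero. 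Your assertion that no rank-one destabilizer has its wall centered in the open interior of the middle regime is true only for line bundles; the walls $W(E,\II_{C'}(-1))$ have centers $s = d-y-\tfrac12$ sweeping that whole interval, and the parabola $\tfrac12 d^2 - ds + s^2 - 2d + 2s + \tfrac34$ is precisely the bound $e \le \tfrac12 d^2 - dy + y^2 - \tfrac12 d - y$ obtained from these sequences by bounding $z$ via Theorem \ref{thm:rank_one} (and, in an intermediate case, via the second case of the statement itself, which is why the paper runs an induction on $-d$ with base cases $d=-3,-4$ quoted from the twisted-cubic and elliptic-quartic wall computations) and bounding the rank-zero quotient via Theorem \ref{thm:rank_zero}. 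With your configurations alone there is no mechanism that produces this parabola; an ``upper envelope of configurations none of which occurs'' cannot yield an upper bound, since bounds on $e$ must come from sequences that actually destabilize $E$.

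The third regime is also not correct as argued. The optimization goes the wrong way: to bound $e$ for an arbitrary object you must take the \emph{maximum} of the bound over all admissible destabilizing configurations, not the minimum, and the minimum of $\tfrac{d^2}{2c} - \tfrac{dc}{2} - \varepsilon(c,f)$ over integers $c$ is attained near $c \approx \sqrt{-d}$ with value roughly $(-d)^{3/2}$, which for large $-d$ is strictly smaller than $\tfrac14 d^2 - d - \tfrac{f}{4}$ (the latter is the value at $c=2$). That smaller number is not a valid bound: objects destabilized exactly at $W(E,\OO(-2))$, whose center $\tfrac{d}{2}-1$ lies in the third regime, attain $e = \tfrac14 d^2 - d - \tfrac{f}{4}$, contradicting your ``minimized'' constant. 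Conversely, maximizing over line-bundle configurations is unbounded in $c$ unless one first proves the restriction $\ch_1(F)\in\{-1,-2\}$, which requires the $Q_{0,-3}(E)<0$ computation you do not have. Together with the missing reduction (monotonicity of the claimed bounds in $s$, so that it suffices to bound objects destabilized \emph{at} a wall) and the missing induction with its base cases, the two nontrivial regimes of the proposition remain unproved in your proposal.
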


The proof will imply that these bounds are sharp if the numerical wall is an actual wall and $s \leq \tfrac{1}{2}d - 1$, but we will not need this. Note that $s((1, 0, d), \OO(-1)) = d - \tfrac{1}{2}$ and $s((1, 0, d), \OO(-2)) = \tfrac{1}{2}d - 1$. Thus, the bounds in this statement show how $\ch_3$ of rank one sheaves decreases when an object stays stable between the two largest line bundle walls. It is certainly reasonable to ask what happens for smaller walls, but the question becomes more difficult.

\begin{proof}
The numerical wall with center $d - \frac{1}{2}$ is the largest actual wall for such a rank one object and the bound for objects above is simply the one from Theorem \ref{thm:rank_one}. We will prove the remainder by induction on $-d$.

If $d = -3$, then this result is simply a fancy formulation of the wall computation in \cite{Sch20:stability_threefolds} for the Hilbert scheme of twisted cubics. Similarly, the case $d = -4$ is a reformulation of the wall computations for the Hilbert scheme of elliptic quartics in \cite{GHS16:elliptic_quartics}.

From now on let $d \leq -5$. Note that for $s = d - \frac{1}{2}$ we have
\[
\tfrac{1}{2}d^2 - ds + s^2 - 2d + 2s + \frac{3}{4} = \frac{1}{2} d^2 - \frac{1}{2}d.
\]
For $s = \tfrac{1}{2}d + \frac{f - 3}{2}$ we have
\[
\tfrac{1}{2}d^2 - ds + s^2 - 2d + 2s + \frac{3}{4} = \frac{1}{4} d^2 - d + \frac{f(f - 2)}{4} = \frac{1}{4} d^2 - d - \frac{f}{4}.
\]
It is not difficult to see that the claimed bounds are strictly decreasing in $s$. Therefore, it will be enough to show that these bounds hold for objects destabilized at a wall with radius $s$ instead of all objects that are stable along that wall.

Beyond this, the terms involving $f$ can be interpreted as rounding terms making sure that $\chi(E) \in \Z$. Therefore, if $e \leq \tfrac{1}{4} d^2 - d$ we would be done even if $f = 1$. We can assume that $e > \tfrac{1}{4} d^2 - d$. 

Assume that $E$ is destabilized along a semicircular wall induced by a short exact sequence
\[
0 \to F \to E \to G \to 0.
\]
By Proposition \ref{prop:properties_dest_sequences} on destabilizing sequences we may assume $\ch_0(F) > 0$ and $\mu(F) < \mu(E) = 0$. We can compute
\[
\rho^2_Q(E) - \frac{\Delta(E)}{8} = \frac{3e}{2d} + \frac{d}{4} > \frac{9(d+4)^2}{64} > 0.
\]
Again by Proposition \ref{prop:properties_dest_sequences} this implies that $\ch_0(F) = 1$. 
Next, we compute
\[
Q_{0, -3}(E) = 4d^2 - 18d - 18e < -\frac{d^2}{2} < 0.
\]
Therefore, $\ch_1^{-3}(F) > 0$ and together with $\mu(F) < 0$ we get $\ch_1(F) \in \{-1, -2\}$.
\begin{enumerate} 
    \item Assume that $\ch(F) = (1, 0, y, z) \cdot \ch(\OO(-1))$ and $s = s(E, F) = d - y - \tfrac{1}{2} \in [d - \frac{1}{2}, \tfrac{1}{2}d + \frac{f - 3}{2}]$. Then $\tfrac{1}{2}d + \tfrac{1}{2} \leq y \leq 0$, and by induction $z \leq \tfrac{1}{2}y^2 - \tfrac{1}{2}y$. We can use Theorem \ref{thm:rank_zero} on the quotient $G$ to obtain
    \[
    e \leq \frac{1}{2}d^2 - dy + y^2 - \frac{1}{2}d - y = \frac{1}{2}d^2 - ds + s^2 - 2d + 2s + \frac{3}{4}.
    \]
    \item Assume that $\ch(F) = (1, 0, y, z) \cdot \ch(\OO(-1))$ and $s = s(E, F) = d - y - \tfrac{1}{2} < \tfrac{1}{2}d + \frac{f - 3}{2}$. There is no wall with $\tfrac{1}{2}d + \frac{f - 3}{2} < s < \tfrac{1}{2}d - 1$, and thus, we may assume that $s \geq \tfrac{1}{2}d - 1$. This means $y \leq \tfrac{1}{2}d + \tfrac{1}{2}$. Since we established that $Q_{0, -3}(E) < 0$, the wall must be larger than $W(E, \OO(-3))$. This is equivalent to $y \geq \tfrac{2}{3}d + 1$. This means we can obtain a bound for $z$ by induction using the second case in the statement, i.e., 
    \[
    z \leq d^2 - 3dy + \tfrac{5}{2}y^2 + 3d - \tfrac{11}{2}y + 2.
    \]
    We can use Theorem \ref{thm:rank_zero} on the quotient $G$ to obtain
    \[
    e \leq \tfrac{3}{2}d^2 - 4dy + 3y^2 + \tfrac{5}{2}d - 6y + 2.
    \]
    This bounds is increasing in $y$ and thus, we get a bound for all cases of $y$ by setting $y = \tfrac{1}{2}d + \tfrac{1}{2}$, i.e.,
    \[
    e \leq \frac{1}{4} d^2 - d - \frac{1}{4} \leq \frac{1}{4} d^2 - d - \frac{f}{4}.
    \]

    \item Assume $\ch(F) = (1, 0, y, z) \cdot \ch(\OO(-2))$. The fact that the wall is larger than $W(E, \OO(-3))$ is equivalent to $\tfrac{1}{3}d + 1 \leq y$. By induction we have $z \leq \tfrac{1}{2}y^2 - \tfrac{1}{2}y$ and another application of Theorem \ref{thm:rank_zero} we get
    \[
    e \leq \frac{1}{4}d^2 - \frac{1}{2}dy + \frac{3}{4}y^2 - d - \frac{3}{2}y.
    \]
    This bound is again increasing in $y$ and for $y = 0$, we get
    \[
    e \leq \tfrac{1}{4}d^2 - d.
    \]
    As previously, the term $\tfrac{f}{4}$ is just a rounding term that comes for free. \qedhere
\end{enumerate}
\end{proof}

We need a few more details on the cases $d = -3$ and $d = -4$.

\begin{prop}[\cite{Sch20:stability_threefolds}]
\label{prop:final_model_(1, 0, -3, 5)}
If $E$ with $\ch(E) = (1, 0, -3, e)$ is tilt-stable in between $W(E, \OO(-1))$ and $W(E, \OO(-2))$, then $e \leq 5$. If $e = 5$, then $E$ fits into a short exact sequence
\[
0 \to \OO(-2)^{\oplus 3} \to E \to \OO(-3)^{\oplus 2}[1] \to 0.
\]
In particular, a generic such $E$ is the ideal sheaf of a twisted cubic curve. Moreover, the moduli space of them is a smooth and irreducible moduli spaces of representation of the generalized Kronecker quiver with two vertices, four arrows, and dimension vector $(2, 3)$.
\end{prop}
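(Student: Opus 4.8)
The plan is to separate the numerical bound $e \leq 5$ from the geometric identification at $e = 5$, obtaining the former from the generalized Bogomolov inequality and the latter by constructing the destabilizing sequence by hand. First I would pin down the wall $W(E, \OO(-2))$ explicitly: solving $\nu_{\alpha,\beta}(E) = \nu_{\alpha,\beta}(\OO(-2))$ for $\ch(E) = (1,0,-3,e)$ shows it is the semicircle $\alpha^2 = -(\beta+2)(\beta+3)$, with center $s = -\tfrac{5}{2}$ and radius $\tfrac{1}{2}$, so that $\beta \in (-3,-2)$ is strictly negative along it. Since $E$ is tilt-stable just above this wall, the generalized Bogomolov inequality gives $Q_{\alpha,\beta}(E) \geq 0$ on the wall by continuity of the semistable locus. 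Expanding $Q_{\alpha,\beta}(E)$ gives $Q_{\alpha,\beta}(E) = 6\alpha^2 + 6\beta^2 + 6\beta e + 36$, and substituting the wall relation $\alpha^2 = -\beta^2 - 5\beta - 6$ collapses this to $Q_{\alpha,\beta}(E) = 6\beta(e-5)$. As $\beta < 0$ everywhere on the wall, the inequality $Q_{\alpha,\beta}(E) \geq 0$ forces $e \leq 5$.

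For the equality case $e = 5$, the same computation shows $Q_{\alpha,\beta}(E) \equiv 0$ along $W(E, \OO(-2))$, so this wall is exactly $W_Q(E)$ by Proposition \ref{prop:structure_walls}(vi); in particular $E$ is strictly tilt-semistable and genuinely destabilized there. I would then build the sequence imitating the rank-zero argument in the proof of Theorem \ref{thm:rank_zero}. A Riemann--Roch computation gives $\chi(\OO(-2), E) = 3$, and tilt-stability along the wall should annihilate the higher groups $\Ext^i(\OO(-2), E)$ for $i \geq 1$: the top one is $\Ext^3(\OO(-2),E) = \Hom(E, \OO(-6))^{\vee}$, which vanishes by a slope argument, and $\Ext^2(\OO(-2),E) = \Ext^1(E, \OO(-6))^{\vee}$ is excluded by stability. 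Hence $\hom(\OO(-2), E) = 3$, producing an evaluation map $\OO(-2)^{\oplus 3} \to E$.

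Next I would check this map is injective in $\Coh^{\beta}(\P^3)$ by the usual slope argument (a kernel would violate semistability), so that its cokernel $G$ is tilt-semistable of the same tilt-slope. A Chern character computation gives $\ch(G) = (1,0,-3,5) - 3\ch(\OO(-2)) = (-2,6,-9,9) = \ch(\OO(-3)^{\oplus 2}[1])$, and since $\ch_{\leq 2}(G) = -\ch_{\leq 2}(\OO(-3)^{\oplus 2})$ with $\beta > -3$ on the wall, the second half of Proposition \ref{prop:line_bundles} identifies $G \cong \OO(-3)^{\oplus 2}[1]$. This yields the claimed short exact sequence, which exhibits $E$ as the cone of a map $\OO(-3)^{\oplus 2} \to \OO(-2)^{\oplus 3}$, i.e.\ a representation of the Kronecker quiver with arrow space $\Hom(\OO(-3), \OO(-2)) = H^0(\OO(1))$ of dimension four and dimension vector $(2,3)$. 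Tilt-stability on the wall matches King's stability, so \cite{Kin94:moduli_quiver_reps} gives that the moduli space is irreducible and smooth along its stable locus; the generic representation is a $2 \times 3$ matrix of general linear forms whose Hilbert--Burch cokernel is the ideal sheaf $\II_C$ of a twisted cubic, recovering the resolution $0 \to \OO(-3)^{\oplus 2} \to \OO(-2)^{\oplus 3} \to \II_C \to 0$.

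The hard part will be the Ext-vanishing in the middle step: showing $\hom(\OO(-2), E)$ equals exactly $3$ (not merely $\geq 3$) for every tilt-stable $E$ in the window, and verifying that the evaluation map remains injective in $\Coh^{\beta}(\P^3)$ with tilt-semistable cokernel, so that Proposition \ref{prop:line_bundles} applies. In particular the slope argument for $\Ext^3$ wants $E$ to be slope-semistable, which is automatic in the generic (ideal sheaf) case but must be extracted uniformly from tilt-stability in the given window. Once the cokernel is known to be tilt-semistable with the correct discriminant, the line-bundle recognition and the quiver description are essentially formal.
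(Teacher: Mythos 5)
Your proposal is sound, and it is worth noting that the paper itself does not prove this Proposition at all: it is quoted from \cite{Sch20:stability_threefolds}, where the result comes from enumerating the actual walls for $\ch = (1,0,-3,5)$ and identifying the tilt-moduli space with a quiver moduli space (the bound $e \leq 5$ also follows from the paper's Proposition \ref{prop:rank_one_between_-1_-2}, whose proof is an induction that itself cites \cite{Sch20:stability_threefolds} for $d=-3$). Your derivation of the inequality is genuinely different and slicker: the identity $Q_{\alpha,\beta}(E) = 6\alpha^2 + 6\beta^2 + 6\beta e + 36$ is correct, and restricting to $\alpha^2 = -(\beta+2)(\beta+3)$ does give $6\beta(e-5)$ with $\beta \in (-3,-2)$, so semistability on the wall (a closed condition, so it persists from the open region) forces $e \leq 5$ with no wall enumeration whatsoever. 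The classification at $e=5$ then follows exactly the technique this paper uses elsewhere (Theorem \ref{thm:rank_zero}, Proposition \ref{prop:rank_three_special_cases}): $\chi(\OO(-2),E)=3$, vanishing of higher $\Ext$'s, evaluation map, and the line-bundle recognition of Proposition \ref{prop:line_bundles} applied to the cokernel, whose invariants $(-2,6,-9,9) = \ch(\OO(-3)^{\oplus 2}[1])$ you computed correctly.

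The difficulties you flag at the end are in fact non-issues and need no slope-semistability of $E$ as a sheaf. For $\Ext^3(\OO(-2),E) = \Hom(E,\OO(-6))^{\vee}$: since $\beta > -6$ on the wall, $\OO(-6)[1]$ lies in the heart $\Coh^{\beta}(\P^3)$ together with $E$, so $\Hom(E,\OO(-6)) = \Hom(E,(\OO(-6)[1])[-1]) = 0$ because negative Ext groups between objects of a heart vanish. For $\Ext^2(\OO(-2),E) = \Hom(E,\OO(-6)[1])^{\vee}$: the numerical wall $W(E,\OO(-6))$ coincides with $W(E,\OO(-1))$, so strictly inside it — in particular on and near $W(E,\OO(-2))$ — one has $\nu_{\alpha,\beta}(E) > \nu_{\alpha,\beta}(\OO(-6)[1])$ with both objects semistable, giving the vanishing. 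Injectivity of the evaluation map and semistability of its cokernel are standard: a nonzero kernel would be a subobject of the polystable $\OO(-2)^{\oplus 3}$ of equal tilt slope, hence a summand $\OO(-2)^{\oplus k}$, contradicting linear independence of the three sections; and any quotient of a semistable object by a subobject of equal slope is again semistable. You also only need $\hom(\OO(-2),E) \geq 3$, not equality; equality follows a posteriori from $\Hom(\OO(-2),\OO(-3)^{\oplus 2}[1]) = H^1(\OO(-1))^{\oplus 2} = 0$. The one step you assert rather than prove — that tilt stability of these Kronecker complexes agrees with King stability of the corresponding representations, which is what makes the moduli space smooth and irreducible via \cite{Kin94:moduli_quiver_reps} — is precisely the content of the proof of Theorem 7.1 in \cite{Sch20:stability_threefolds}; since the paper handles this point by the same citation, your argument is complete to the same standard.
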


\begin{prop}[\cite{GHS16:elliptic_quartics}]
\label{prop:final_model_(1, 0, -4, 8)}
If $E$ with $\ch(E) = (1, 0, -3, 5)$ is tilt-stable in between $W(E, \II_L(-1))$ for a line $L \subset \P^3$ and $W(E, \OO(-2))$, then $E$ fits into a short exact sequence
\[
0 \to \OO(-2)^{\oplus 2} \to E \to \OO(-4)[1] \to 0.
\]
In particular, a generic such $E$ is the ideal sheaf of an elliptic quartic curve. Moreover, the moduli space of them is $\Gr(2, 10)$.

If $E$ is strictly-tilt-semistable along $W(E, \II_L(-1))$, then it is an extension between $\II_L(-1)$ and $\OO_V(-3)$ for a line $L \subset \P^3$ and a plane $V \subset \P^3$. The moduli space of tilt-stable objects above this wall is the blow-up of $\Gr(2, 10)$ in a sublocus isomorphic to $\Gr(2, 4) \times \P^3$.
\end{prop}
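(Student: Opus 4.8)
The displayed short exact sequence forces $\ch(E)=(1,0,-4,8)$, the Chern character of the ideal sheaf of an elliptic quartic curve, and this is the class I work with throughout (the class listed in the statement should read $(1,0,-4,8)$, as dictated by the destabilizing sequence). Since $E(1,0,-4)=10>8$ by Theorem \ref{thm:rank_one}, such $E$ live strictly below the largest wall $W(E,\OO(-1))$, and the plan is to run the same two-step wall-crossing analysis used in the proof of Theorem \ref{thm:rank_zero}: first identify the tilt-stable objects in the chamber between $W(E,\II_L(-1))$ and $W(E,\OO(-2))$, then cross $W(E,\II_L(-1))$. For the chamber I would compute $\chi(\OO(-2),E)=\chi(E(2))=2$ and show $\Ext^{>0}(\OO(-2),E)=0$ for $E$ tilt-stable there: the group $\Ext^3(\OO(-2),E)\cong\Hom(E,\OO(-6))^{\vee}$ vanishes by slope, while $\Ext^1$ and $\Ext^2$ are pinned down by Serre duality combined with the tilt-slope inequality $\nu_{\alpha,\beta}(\OO(-2))<\nu_{\alpha,\beta}(E)$ valid just above $W(E,\OO(-2))$, which forces the relevant $\Hom$-groups between the $\nu_{\alpha,\beta}$-semistable pieces to vanish. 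This gives an evaluation map $\OO(-2)^{\oplus 2}\to E$ whose cone $Q$ satisfies $\ch_{\leq 2}(Q)=-\ch_{\leq 2}(\OO(-4))$, and since $Q$ is $\nu_{\alpha,\beta}$-semistable with $\beta>-4$, Proposition \ref{prop:line_bundles} identifies $Q\cong\OO(-4)[1]$, yielding the asserted sequence.

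For the moduli space, such an $E$ is equivalent to the connecting morphism $\OO(-4)[1]\to\OO(-2)^{\oplus 2}[1]$, that is, to an element of $\Hom(\OO(-4),\OO(-2)^{\oplus 2})=H^0(\OO(2))\otimes\C^2$. Modding out by $\Aut(\OO(-2)^{\oplus 2})=\GL_2$ and imposing the stability requirement that the two quadrics be linearly independent identifies the moduli with the space of pencils $\langle q_1,q_2\rangle\subset H^0(\OO(2))\cong\C^{10}$, namely $\Gr(2,10)$. A generic pencil is a smooth elliptic quartic, and one recovers $E=\II_C$ from its Koszul resolution $0\to\OO(-4)\to\OO(-2)^{\oplus 2}\to\II_C\to 0$.

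I would then analyze the wall $W(E,\II_L(-1))$. A check of Chern characters gives $\ch(\II_L(-1))=(1,-1,-\tfrac12,\tfrac{11}{6})$ and $\ch(\OO_V(-3))=(0,1,-\tfrac72,\tfrac{37}{6})$, which sum to $(1,0,-4,8)$ and acquire equal tilt-slope along the wall. Using Proposition \ref{prop:properties_dest_sequences} to bound the rank and $\ch_1^{\beta}$ of any destabilizer, together with the radius estimate and $Q_{\alpha,\beta}\geq 0$, I would show that $\II_L(-1)$ and $\OO_V(-3)$ are the only possible Jordan--H\"older factors, so the strictly semistable objects on the wall are exactly the extensions of these two. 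The locus in $\Gr(2,10)$ that becomes strictly semistable is then parametrized by the pair $(L,V)\in\Gr(2,4)\times\P^3$, a smooth $7$-dimensional center, and the stable objects appearing above the wall are the non-split extensions in the opposite direction. Computing $\Ext^1$ between $\II_L(-1)$ and $\OO_V(-3)$ to see that it has constant dimension $9$ (so the extensions form a $\P^8$-bundle over the center) and matching this bundle with the projectivized normal bundle of the center would identify the moduli above the wall as $\Bl_{\Gr(2,4)\times\P^3}\Gr(2,10)$, the codimension $16-7=9$ of the center being exactly what produces $\P^8$-fibers.

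I expect this last step to be the main obstacle. Classifying the destabilizers along $W(E,\II_L(-1))$ is routine, but proving that the wall-crossing modification is genuinely a blow-up rather than merely a birational surgery requires the precise $\Ext$-computations between $\II_L(-1)$ and $\OO_V(-3)$ (via local-to-global spectral sequences and Serre duality), an explicit description of the embedding $\Gr(2,4)\times\P^3\hookrightarrow\Gr(2,10)$ as the locus of degenerate pencils, and a comparison of the normal bundle of that center with the $\P^8$-bundle of extensions realizing the exceptional divisor.
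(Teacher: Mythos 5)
You correctly spotted that the Chern character in the statement is a typo for $(1,0,-4,8)$, and your overall strategy is the right one; note, though, that this paper contains no proof of this proposition at all --- it is quoted from \cite{GHS16:elliptic_quartics} --- so the only meaningful comparison is with that source, whose argument is precisely the two-step wall-crossing you outline. Your chamber analysis is essentially correct, with two small repairs. First, you cannot show $\Ext^1(\OO(-2),E)=0$ by stability: Serre duality turns it into $\Ext^2(E,\OO(-6))^{\vee}$, a second Ext-group between objects of the heart, which slope arguments do not see; fortunately you never need it, since $\ext^2=\ext^3=0$ already gives $\hom(\OO(-2),E)\geq\chi(\OO(-2),E)=2$. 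Second, the vanishing $\Ext^2(\OO(-2),E)=\Hom(E,\OO(-6)[1])^{\vee}=0$ follows not from $\nu_{\alpha,\beta}(\OO(-2))<\nu_{\alpha,\beta}(E)$ but from $\nu_{\alpha,\beta}(E)>\nu_{\alpha,\beta}(\OO(-6)[1])$, which holds throughout the chamber because the numerical wall $W(E,\OO(-6))$ strictly contains $W(E,\II_L(-1))$. With these fixes, your first two paragraphs go through (injectivity of $\OO(-2)^{\oplus 2}\to E$ on the wall, the cokernel identified by Proposition \ref{prop:line_bundles}, the pencil description of $\Gr(2,10)$), and your numerics at the wall are all correct: $\ch(\II_L(-1))+\ch(\OO_V(-3))=(1,0,-4,8)$, $\ext^1(\OO_V(-3),\II_L(-1))=9$, $\ext^1(\II_L(-1),\OO_V(-3))=1$, and the degenerate-pencil center $\Gr(2,4)\times\P^3$ has codimension $9$, matching the $\P^{8}$-fibres.

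The genuine gap is the one you flag at the end, and it is larger than ``precise $\Ext$-computations plus a normal-bundle comparison.'' Exhibiting a $\P^{8}$-bundle of extensions over a codimension-$9$ smooth center does not by itself produce a blow-up: you first need the moduli space of tilt-stable objects above the wall to exist as a smooth projective scheme, and you need a morphism from it to $\Gr(2,10)$ restricting to the identity on the common open locus. Neither is automatic in tilt stability, where no general construction of projective moduli (let alone of contraction morphisms between chambers) is available. In \cite{GHS16:elliptic_quartics} this is what the real work consists of: every semistable object in every chamber is resolved by the exceptional collection, so all moduli spaces in question are moduli of quiver representations and the wall-crossings become variations of GIT, which supply projectivity and the contraction morphism simultaneously. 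Given that morphism one still needs smoothness of the upper space along the new locus (via $\ext^1(E,E)=16$ and $\ext^2(E,E)=0$ for the non-split extensions of $\OO_V(-3)$ by $\II_L(-1)$, in the style of Corollaries \ref{cor:conjecture_rank_three} and \ref{cor:conjecture_rank_four}), irreducibility of the exceptional divisor, and a recognition criterion: a birational morphism of smooth projective varieties contracting an irreducible smooth divisor onto a smooth center with fibres $\P^{c-1}$, where $c$ is the codimension, is the blow-up along that center. Your plan names the right objects and the right numbers, but omits the mechanism that produces the comparison morphism, which is exactly where the cited proof invests its effort.
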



\section{Rank two bounds}
\label{sec:rank_two}

In this section, we summarize the statements for rank two. This was already established in \cite{Sch20:rank_two_p3}. In particular, in that article Conjecture \ref{conj:irreducible_smooth} was proved for rank two objects.

\begin{thm}[{\cite[Theorem 1.1, Theorem 3.1]{Sch20:rank_two_p3}}]
\label{thm:rank_two}
We have $D(2, -1) = -\tfrac{1}{2}$ and $D(2, 0) = 0$. Moreover, $E(2, -1, d) = \tfrac{1}{2}d^2 - d + \tfrac{5}{24}$ for $d \leq -\tfrac{1}{2}$, $E(2, 0, 0) = E(2, 0, -1) = 0$, and finally $E(2, 0, -d) = \tfrac{1}{2}d^2 + \tfrac{1}{2}d + 1$ for $d \leq -2$. The same bounds hold for $\nu_{\alpha, \beta}$-semistable objects.

If $E$ is is slope-semistable or $\nu_{\alpha, \beta}$-semistable for some $(\alpha, \beta) \in \R_{>0} \times \R$ with Chern character $\ch(E) = (2, -1, -\tfrac{1}{2}, \tfrac{5}{6})$, then $E$ is a sheaf that fits into a short exact sequence in $\Coh(\P^3)$ of the form
\[
0 \to \OO(-2) \to \OO(-1)^{\oplus 3} \to E \to 0.
\]
\end{thm}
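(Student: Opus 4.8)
The plan is to realize $E$ as a quotient by producing enough sections of $E(1) = E \otimes \OO(1)$ and then identifying the resulting map via the line-bundle recognition criterion. First I would reduce both hypotheses to a single one: since $\tfrac{5}{6} = E(2,-1,-\tfrac12)$ is the maximal third Chern character, the large volume limit (Proposition \ref{prop:large_volume_limit}) together with the sharpness of the bound forces $E$ to be a $2$-Gieseker-semistable sheaf that is $\nu_{\alpha,\beta}$-stable at every point strictly above its largest tilt wall. Riemann--Roch gives $\chi(E(1)) = 3$ (indeed $\ch(E(1)) = (2,1,-\tfrac12,\tfrac16)$), so it suffices to prove $\Ext^2(\OO(-1),E) = \Ext^3(\OO(-1),E) = 0$; this forces $\hom(\OO(-1),E) = 3 + \ext^1(\OO(-1),E) \geq 3$ and hence supplies a morphism $\phi \colon \OO(-1)^{\oplus 3} \to E$.

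By Serre duality on $\P^3$ one has $\Ext^i(\OO(-1),E) \cong \Ext^{3-i}(E,\OO(-5))^{\vee}$. The group $\Hom(E,\OO(-5))$ vanishes for slope reasons, since a nonzero map would exhibit a quotient of the slope-semistable $E$, of slope $\geq \mu(E) = -\tfrac12$, inside $\OO(-5)$, of slope $-5$; this kills $\Ext^3(\OO(-1),E)$. For $\Ext^2(\OO(-1),E) = \Ext^1(E,\OO(-5))^{\vee} = \Hom(E,\OO(-5)[1])^{\vee}$ I would use tilt stability: a direct computation shows that the numerical wall $W(E,\OO(-5))$ is the semicircle of center $-\tfrac{17}{6}$ and radius $\tfrac{13}{6}$, which lies above every actual wall for the class $(2,-1,-\tfrac12)$ (these have radius at most $\tfrac12$, the largest being $W(E,\OO(-1)) = W(E,\OO(-2))$). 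Hence $E$ is $\nu_{\alpha,\beta}$-stable at the points of $W(E,\OO(-5))$ lying in the relevant heart, where $\nu_{\alpha,\beta}(E) = \nu_{\alpha,\beta}(\OO(-5)[1])$; as $E$ and $\OO(-5)[1]$ are non-isomorphic stable objects of the same tilt slope, $\Hom(E,\OO(-5)[1]) = 0$.

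It remains to identify $\phi$. Its cone $G$ satisfies $\ch(G) = \ch(E) - \ch(\OO(-1)^{\oplus 3}) = \ch(\OO(-2)[1])$, so I would show that $G$ is $\nu_{\alpha,\beta}$-semistable for some $\beta > -2$ and invoke Proposition \ref{prop:line_bundles} (second part) to conclude $G \cong \OO(-2)[1]$. Rotating the triangle $\OO(-1)^{\oplus 3} \to E \to G$ then yields the short exact sequence $0 \to \OO(-2) \to \OO(-1)^{\oplus 3} \to E \to 0$ in $\Coh(\P^3)$, which in particular shows $E$ is a sheaf. The hard part will be the wall analysis underpinning both the vanishing and the identification: one must bound all actual walls for $(2,-1,-\tfrac12)$ (using Proposition \ref{prop:properties_dest_sequences}(i) when $|\ch_0(F)| \geq 3$ and the generalized Bogomolov inequality for the finitely many remaining rank splittings), and then verify that $\phi$ is, at a point just above $W(E,\OO(-2))$, a monomorphism in $\Coh^{\beta}(\P^3)$ with semistable cokernel, so that Proposition \ref{prop:line_bundles} applies. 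This bookkeeping in the tilted heart, rather than any single estimate, is the delicate step.
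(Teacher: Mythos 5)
First, a structural point: the paper does not prove this theorem at all --- it is quoted verbatim from \cite[Theorem 1.1, Theorem 3.1]{Sch20:rank_two_p3}, so the comparison is really against the method of that earlier paper (which is the same tilt-stability machinery used throughout the present one). Measured against what the statement actually asserts, your proposal has a genuine gap in coverage: you only address the final clause, the classification of objects with $\ch(E) = (2,-1,-\tfrac{1}{2},\tfrac{5}{6})$. The values $D(2,-1) = -\tfrac{1}{2}$, $D(2,0) = 0$, the formula $E(2,-1,d) = \tfrac{1}{2}d^2 - d + \tfrac{5}{24}$ for \emph{all} $d \leq -\tfrac{1}{2}$, and the $E(2,0,d)$ values --- the bulk of the theorem --- are never argued, and your very first step even presupposes one of them ("since $\tfrac{5}{6} = E(2,-1,-\tfrac{1}{2})$ is the maximal third Chern character"). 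So at best you are proving the last paragraph conditionally on the first one.

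Second, the load-bearing step in what you do treat is asserted rather than proved. Semistability of $E$ at a \emph{single} point $(\alpha,\beta)$ plus "sharpness of the bound" does not, by itself, force $E$ to be a $2$-Gieseker-semistable sheaf that is stable above its largest wall: the point could lie inside $W(E,\OO(-1))$, and ruling out semistable objects there requires enumerating all actual walls for $(2,-1,-\tfrac{1}{2})$, bounding $\ch_3$ of every potential destabilizing factor (via Theorem \ref{thm:rank_one}, Theorem \ref{thm:rank_zero} and Proposition \ref{prop:properties_dest_sequences}), and checking that the only S-equivalence class on the wall $W(E,\OO(-1)) = W(E,\OO(-2))$ is $\OO(-1)^{\oplus 3} \oplus \OO(-2)[1]$, whose extensions are unstable strictly below the wall. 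That analysis is precisely the content of \cite[Theorem 3.1]{Sch20:rank_two_p3}; you correctly identify it as "the hard part" but then leave it as bookkeeping, when in fact it \emph{is} the proof --- both of the reduction to a sheaf and of the semistability of the cokernel of $\phi$ needed to invoke Proposition \ref{prop:line_bundles}. The pieces you do carry out are sound and match the standard technique used throughout the paper (compare the proof of Proposition \ref{prop:rank_three_special_cases}): $\chi(E(1)) = 3$ is correct, Serre duality gives $\Ext^i(\OO(-1),E) \cong \Ext^{3-i}(E,\OO(-5))^{\vee}$, the slope argument kills $\Ext^3$, and your wall data ($W(E,\OO(-5))$ with center $-\tfrac{17}{6}$ and radius $\tfrac{13}{6}$; $W(E,\OO(-1))$ with radius $\tfrac{1}{2}$) is numerically correct. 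But as submitted, the proposal is a correct outline of the endgame attached to an unproven core, plus silence on the numerical classification that the theorem is mostly about.
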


In \cite[Theorem 3.1]{Sch20:rank_two_p3} there is a precise classification of all the semistable objects with maximal third Chern character in the case of rank two. We will only need this classification for the special case $E \in M(2, -1, -\tfrac{1}{2}, \tfrac{5}{6})$, and therefore will not repeat the complete result here.



\section{Special cases and the second Chern character}
\label{sec:special_cases}

The goal of this section is to prove the following theorem and deal with a few special cases regarding bounds of $\ch_3$.

\begin{thm}
\label{thm:ch_2}
The first values of $D(r, c)$ are given by
\begin{enumerate}
    \item $D(r, 0) = 0$ for all $r > 0$,
    \item $D(2, -1) = -\tfrac{1}{2}$,
    \item $D(3, -1) = -\tfrac{1}{2}$,
    \item $D(4, -1) = -\tfrac{3}{2}$, $D(4, -2) = -1$.
\end{enumerate}
\end{thm}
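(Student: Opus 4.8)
The plan is to treat the four parts separately, reserving the genuine work for the two rank-four bounds. Parts (i) and (ii) require nothing new: $D(r,0)=0$ is Proposition~\ref{prop:function_D}(iv), and $D(2,-1)=-\tfrac12$ is Theorem~\ref{thm:rank_two}. For part (iii) the upper bound is immediate, since the Bogomolov inequality gives $d\le\tfrac{c^2}{2r}=\tfrac16$ while $c=-1$ odd forces $d\notin\Z$ by Lemma~\ref{lem:ch_classification}, so the largest admissible value is $d=-\tfrac12$. For the matching lower bound I would exhibit the twisted cotangent bundle $\Omega^1_{\P^3}(1)$, which is slope-stable and satisfies $\ch_{\le2}(\Omega^1_{\P^3}(1))=(3,-1,-\tfrac12)$ by a one-line computation from the Euler sequence; hence $D(3,-1)\ge-\tfrac12$.

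The heart of the statement is part (iv). Bogomolov together with the parity constraint of Lemma~\ref{lem:ch_classification} already gives $D(4,-1)\le-\tfrac12$ and $D(4,-2)\le0$, so it remains to exclude the single class $(4,-1,-\tfrac12)$ in the first case and $(4,-2,0)$ in the second. The first reduction I would make is that such a sheaf, if it exists, is automatically $2$-Gieseker-semistable: a slope-semistable $E$ of slope $\mu$ has a Harder--Narasimhan filtration for $2$-Gieseker stability all of whose factors have slope exactly $\mu$, and for $\mu=-\tfrac14$ every factor must have rank divisible by $4$, so there is a single factor, while for $\mu=-\tfrac12$ a splitting would force two rank-two slope-$(-\tfrac12)$ factors, each with $\ch_2\le D(2,-1)=-\tfrac12$ by Theorem~\ref{thm:rank_two}, contradicting total $\ch_2=0$. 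In both cases $E$ is $2$-Gieseker-semistable, hence by Proposition~\ref{prop:large_volume_limit} tilt-semistable for $\alpha\gg0$ and $\beta<\mu$, which puts the full tilt-stability machinery at our disposal.

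The exclusion itself is the $\ch_3$-comparison sketched in the introduction, and this is where the real obstacle lies. Descending in $\alpha$, either $E$ stays tilt-semistable, in which case the generalized Bogomolov inequality $Q_{\alpha,\beta}(E)\ge0$ bounds $\ch_3(E)$ from above, or it is destabilized along a semicircular wall by some $0\to F\to E\to G\to0$, where $\ch_3(E)=\ch_3(F)+\ch_3(G)$ and each summand is controlled by the bounds already available in lower rank---Theorems~\ref{thm:rank_zero}, \ref{thm:rank_one}, \ref{thm:rank_two}, the refined rank-one estimate of Proposition~\ref{prop:rank_one_between_-1_-2}, the special $\ch_3$-bounds established earlier in this section, and their dual counterparts via Corollary~\ref{cor:rank_minus_one}---combined with $\Delta(F)+\Delta(G)\le\Delta(E)$ and the numerical constraints of Proposition~\ref{prop:properties_dest_sequences}. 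The finitely many admissible walls are enumerated using Proposition~\ref{prop:structure_walls} and Proposition~\ref{prop:line_bundles}, yielding $\ch_3(E)\le A$. By Lemma~\ref{lem:dual_semistable_slope} and Proposition~\ref{prop:derived_dual} the same analysis applied to the dual gives $\ch_3(E^\vee)\le A'$, and since dualizing reverses the sign of $\ch_3$ up to a nonnegative correction from the lower $\lExt$-sheaves one has $\ch_3(E^\vee)\ge-\ch_3(E)$, whence $-A'\le\ch_3(E)\le A$. The content of the argument is the bookkeeping showing $A+A'<0$, contradicting the existence of $E$ for \emph{every} value of $\ch_3$. I expect the delicate point to be the complete enumeration of destabilizing walls and the verification that none of them permits $\ch_3$ large enough to defeat the comparison; here Theorem~\ref{thm:li_bound} should be used to bound the discriminant of the destabilizing factors and keep the list finite.

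Finally, the lower bounds come from explicit constructions. For $(4,-2,-1)$ I would take $E_2^{\oplus2}$, where $E_2$ is the rank-two sheaf of Theorem~\ref{thm:rank_two}: it is slope-semistable of slope $-\tfrac12$ with $\ch_{\le2}=2\cdot(2,-1,-\tfrac12)=(4,-2,-1)$, so $D(4,-2)\ge-1$. For $(4,-1,-\tfrac32)$ a direct sum cannot work, because the slope $-\tfrac14$ is matched by no lower-rank constituent; instead I would build a non-split extension $0\to E_2\to E\to N\to0$, where $N$ is a slope-stable rank-two bundle with $\ch_{\le2}(N)=(2,0,-1)$, for instance a null correlation bundle. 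Then $\ch_{\le2}(E)=(4,-1,-\tfrac32)$, and analysing a putative destabilizing subsheaf $S$ through $S\cap E_2$ and its image in $N$---using that both $E_2$ (slope $-\tfrac12$) and $N$ (slope $0$) are stable and that the extension is non-split---shows every proper $S$ has $\mu(S)<-\tfrac14$, so $E$ is slope-stable and $D(4,-1)\ge-\tfrac32$. Combining with the upper bounds gives the claimed equalities.
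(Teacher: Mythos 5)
Your parts (i)--(iii) are correct and follow the paper's route (Proposition \ref{prop:function_D}, Theorem \ref{thm:rank_two}, the Bogomolov-plus-parity upper bounds, and $\Omega(1)$ for $D(3,-1)$), and your reduction of (iv) to excluding the two classes $(4,-1,-\tfrac12)$ and $(4,-2,0)$ for $2$-Gieseker-semistable sheaves is also sound. The genuine gap is in your exclusion mechanism. The paper shows (Lemma \ref{lem:walls_(4, -1, -1/2)}, and the first half of the proof of Lemma \ref{lem:no_(4, -2, 0)}) that for both classes there are \emph{no} walls to the left of the vertical wall, so your dichotomy collapses to its first branch and the only upper bound your toolkit can produce is the one coming from $Q_{\alpha,\beta}\ge 0$. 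That bound is far too weak. For $(4,-1,-\tfrac12,e)$, the constraint $Q_{0,\beta}\ge 0$ for all $\beta\le-\tfrac14$ is the inequality $5\beta^2+(24e-1)\beta+1+6e\ge 0$ on that ray, which gives exactly $e\le\tfrac{7+5\sqrt5}{48}\approx 0.38$; the same computation for the dual class $(4,1,-\tfrac12)$ on $\beta\le\tfrac14$ gives $e'\le\tfrac{-7+5\sqrt5}{48}\approx 0.09$. Since your $A$ and $A'$ can only be the maxima over the two branches, they are bounded \emph{below} by these positive constants, so $A+A'\ge\tfrac{5\sqrt5}{24}>0$ and the contradiction ``$A+A'<0$'' you are counting on is unattainable; the same happens for $(4,-2,0)$, where the $Q$-bounds are $A=\tfrac13$ and $A'=0$. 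What is missing is the technique the paper actually uses: stability forces $\Hom(\OO,E)=0$ and, via Serre duality and stability along $W(E,\OO(-4)[1])$, also $\Ext^2(\OO,E)=0$, whence $\chi(E)\le 0$, i.e.\ $e\le-\tfrac76$ for $(4,-1,-\tfrac12)$ --- stronger than the $Q$-bound by more than one; then, crossing the vertical wall with Lemma \ref{lem:vertical_wall_crossing} (no wall meets $\beta=0$ because $\ch_1^0(E[1])=1$ is minimal) and pairing with $\OO(-1)$ gives $\chi(E,\OO(-1))=-e-\tfrac16\le 0$, i.e.\ $e\ge-\tfrac16$, a contradiction. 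Your plan could be rescued without Euler characteristics only by invoking the integrality $\chi(E)\in\Z$, which does exclude every admissible $e$ from the interval $[-A',A]$ in both cases; but you never invoke integrality, and you would additionally owe a wall analysis for the dual classes in their own half-planes, which is \emph{not} the mirror image of the analysis for the original classes (it corresponds to walls on the right of the original vertical wall) and which the paper's one-sided argument avoids.

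A smaller but real problem is your construction for $D(4,-1)\ge-\tfrac32$. Non-splitness of $0\to E_2\to E\to N\to 0$ together with stability of $E_2$ and $N$ does not rule out all destabilizers: a rank-two subsheaf $S\subset E$ with $S\cap E_2=0$ maps isomorphically onto a full-rank subsheaf $S'\subseteq N$ with $N/S'$ of dimension at most one, and any such $S$ has $\mu(S)=0>-\tfrac14$. Non-splitness only handles $S'=N$; a proper such $S'$ lifts to $E$ whenever the extension class dies in $\Ext^1(S',E_2)$, which is governed by $\Ext^1(N/S',E_2)$, and controlling this is delicate precisely because every sheaf in $M(2,-1,-\tfrac12,\tfrac56)$ fails to be locally free at a point (it is the cokernel of $\OO(-2)\to\OO(-1)^{\oplus 3}$, and three linear forms on $\P^3$ always have a common zero). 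Your example $E_2^{\oplus 2}$ for $D(4,-2)\ge-1$ is fine, and simpler than the paper's; for $(4,-1,-\tfrac32)$ the paper instead realizes the sheaves as extensions $0\to\Omega(1)\to E\to\II_L\to 0$ coming out of its wall classification in Proposition \ref{prop:rank_four_special_cases}.
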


\subsection{Preparatory lemmas}

\begin{lem}
\label{lem:walls_(4, -1, -1/2)}
There are no walls in tilt stability left of the vertical wall for objects $E$ with Chern character $\ch_{\leq 2}(E) = (4, -1, -\tfrac{1}{2})$.
\end{lem}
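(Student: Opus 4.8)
The plan is to assume such a wall exists and derive a contradiction from the interplay of the discriminant bound, the numerical constraints on destabilizing sequences, and Li's inequality (Theorem \ref{thm:li_bound}). First I would record the invariants of $v = (4,-1,-\tfrac12)$: one has $\mu(v) = -\tfrac14$, $\Delta(v) = 5$, and $\beta_{-}(v) = \tfrac{-1-\sqrt5}{4}$, $\beta_{+}(v) = \tfrac{-1+\sqrt5}{4}$. The vertical wall is $\beta = -\tfrac14$, and by Proposition \ref{prop:structure_walls} any wall to its left is a semicircle whose center $s$ lies outside $(\beta_{-}(v),\beta_{+}(v))$ on the left branch, i.e. $s \le \beta_{-}(v)$. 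Suppose for contradiction there is such a wall, realized by a destabilizing sequence $0 \to F \to E \to G \to 0$ in $\Coh^{\beta}(\P^3)$ along it. By Proposition \ref{prop:properties_dest_sequences}(v) one of $F, G$ is a positive-rank object of slope $\le \mu(E)$; I name it $F$, pass to a tilt-stable factor of it at the apex, and (invoking Proposition \ref{prop:derived_dual} to reduce the quotient case to the subobject case by reflecting the half-plane across $\beta = 0$) arrange that $\ch_0(F) > 0$ and $\mu(F) \le \mu(E)$.

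The heart of the argument is Proposition \ref{prop:properties_dest_sequences}(vi), which gives $\beta_{-}(E) < \beta_{-}(F) \le \mu(F) < \mu(E) = -\tfrac14$. Writing $\beta_{-}(F) = \mu(F) - \sqrt{\Delta(F)}/\ch_0(F)$, this yields
\[
\frac{\sqrt{\Delta(F)}}{\ch_0(F)} < \mu(F) - \beta_{-}(E) < -\tfrac14 - \beta_{-}(E) = \frac{\sqrt5}{4},
\]
so that $\Delta(F) < \tfrac{5}{16}\ch_0(F)^2 < \tfrac38\ch_0(F)^2$. The same bound gives $\beta_{+}(F) = \mu(F) + \sqrt{\Delta(F)}/\ch_0(F) < \beta_{+}(E) < 1$, while $\beta_{-}(F) \in (\beta_{-}(E),\mu(F)) \subset (-1,0)$. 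Now I invoke Theorem \ref{thm:li_bound}: were $\beta_{-}(F)$ and $\beta_{+}(F)$ to lie in a common unit interval, it would force $\Delta(F) \ge \tfrac38\ch_0(F)^2$, contradicting the previous display. Since both endpoints lie in $(-1,1)$, they must therefore straddle the integer $0$, i.e. $\beta_{+}(F) \ge 0$, which is equivalent to $\ch_2(F) \le 0$.

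It remains to eliminate $\ch_2(F) \le 0$. If $\ch_2(F) = 0$, then $\beta_{+}(F) = 0$ and $\beta_{-}(F) = 2\mu(F)$, so both endpoints lie in $(-1,0]$ and Theorem \ref{thm:li_bound} again forces $\Delta(F) = \ch_1(F)^2 \ge \tfrac38\ch_0(F)^2$, contradicting $\Delta(F) < \tfrac{5}{16}\ch_0(F)^2$. If $\ch_2(F) \le -1$, then $\Delta(F) = \ch_1(F)^2 + 2\ch_0(F)\lvert\ch_2(F)\rvert \ge 1 + 2\ch_0(F)$, while Proposition \ref{prop:properties_dest_sequences}(ii) gives $\Delta(F) \le \Delta(E) = 5$; hence $\ch_0(F) \le 2$, and the slope window $\mu(F) \in (\beta_{-}(E),-\tfrac14)$ rules out $\ch_0(F) = 1$, leaving only $F = (2,-1,-1)$. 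For this last case $G = (2,0,\tfrac12)$ has $\Delta(G) = -2 < 0$, violating the Bogomolov inequality. Every case is contradictory, so no wall exists.

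The step I expect to be the main obstacle is the bookkeeping surrounding Li's bound: one must justify that the destabilizing object may be taken tilt-stable (so Theorem \ref{thm:li_bound} applies) while retaining the hypotheses $\ch_0(F) > 0$ and $\mu(F) \le \mu(E)$ needed for Proposition \ref{prop:properties_dest_sequences}(vi), and one must handle the quotient case symmetrically. This is essential rather than cosmetic, since the elementary bounds alone do not suffice: a numerical pair such as $F = (5,-2,0)$, $G = (-1,1,-\tfrac12)$ satisfies $s = -\tfrac56 \le \beta_{-}(E)$, $\Delta(F) = 4 \le 5$, and $\beta_{-}(E) < \beta_{-}(F)$, and is excluded only by Theorem \ref{thm:li_bound} (its endpoints $\beta_{\pm}(F) = \{-\tfrac45,0\} \subset (-1,0]$ would force $\Delta(F) \ge \tfrac{75}{8}$).
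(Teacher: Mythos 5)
Your route is genuinely different from the paper's and, up to one point, it is the cleaner argument: where the paper pins $\mu(F)$ in a window depending on $\ch_0(F)$, invokes Theorem \ref{thm:li_bound} only for $\ch_0(F) \geq 6$, and then eliminates the finitely many candidates $(s,x,y)$ with $s \leq 5$ one at a time by checking that each numerical wall is empty or right of the vertical wall, you extract from Proposition \ref{prop:properties_dest_sequences}(vi) the uniform inequality $\sqrt{\Delta(F)}/\ch_0(F) < \mu(F) - \beta_{-}(E) < \mu(E) - \beta_{-}(E) = \tfrac{\sqrt5}{4}$, i.e.\ $\Delta(F) < \tfrac{5}{16}\ch_0(F)^2$, and let Li's bound do all the work in one stroke. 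That computation is correct, as are the localization $\beta_{-}(F) \in (-1,0)$ and the conclusion that Theorem \ref{thm:li_bound} forces $\ch_2(F) \leq 0$. However, your endgame has a concrete hole: on $\P^3$ second Chern characters lie in $\tfrac12\Z$ (half-integral exactly when $\ch_1$ is odd), so the dichotomy ``$\ch_2(F) = 0$ or $\ch_2(F) \leq -1$'' does not exhaust $\ch_2(F) \leq 0$; the value $\ch_2(F) = -\tfrac12$ is skipped. This omission is not vacuous: $\ch_{\leq 2}(F) = (2,-1,-\tfrac12)$ and $(3,-1,-\tfrac12)$ pass your slope window and satisfy $\Delta(F) \leq 5$, and both are Chern characters of actual tilt-stable objects (the sheaves in $M(2,-1,-\tfrac12,\tfrac56)$ of Theorem \ref{thm:rank_two}, and $\Omega(1)$). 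These are precisely the $y = -\tfrac12$ candidates that the paper's case analysis for $s = 2, 3$ has to dispose of by direct wall-location computations, so any complete proof must address them. (Your final case $F = (2,-1,-1)$ is, by the same parity constraint, not even a realizable Chern character.)

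The good news is that the gap is repaired by the inequality you already established, and the repair makes the whole case analysis collapse: if $\ch_2(F) \leq -\tfrac12$, then $\ch_1(F) \leq -1$ gives $\Delta(F) = \ch_1(F)^2 - 2\ch_0(F)\ch_2(F) \geq 1 + \ch_0(F)$, and combining with $\Delta(F) < \tfrac{5}{16}\ch_0(F)^2$ forces $5\ch_0(F)^2 - 16\ch_0(F) - 16 > 0$, i.e.\ $\ch_0(F) \geq 5$; but then $\Delta(F) \geq 6 > 5 = \Delta(E)$, contradicting Proposition \ref{prop:properties_dest_sequences}(ii) together with Bogomolov for $G$. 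This subsumes your $(2,-1,-1)$ discussion, so the slope-window and $\Delta(G) < 0$ steps can be deleted. As for the bookkeeping you flag at the end (applying Theorem \ref{thm:li_bound} to a tilt-stable factor while keeping $\ch_0 > 0$ and $\mu \leq \mu(E)$, and the symmetric treatment of the quotient): this is a real point, but the paper's own proof operates at exactly the same level of rigor, exchanging the roles of $F$ and $G$ and then applying Proposition \ref{prop:properties_dest_sequences}(vi) and Theorem \ref{thm:li_bound} to the resulting factor, so it is not a defect specific to your argument. The parity omission is the only genuine gap.
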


\begin{proof}
Assume that there is such a wall induced by a short exact sequence
\[
0 \to F \to E \to G \to 0
\]
where $\ch_{\leq 2}(E) = (4, -1, -\tfrac{1}{2})$ and $\ch_{\leq 2}(F) = (s, x, y)$. By Proposition \ref{prop:properties_dest_sequences} on destabilizing sequences we know that up to exchanging the roles of $F$ and $G$ we can assume $s > 0$ and $\mu(F) < \mu(E)$.

We have $\beta_{-}(E) = - \tfrac{\sqrt{5} + 1}{4}$, and all walls to the left of the vertical wall have to intersect the ray $\beta = \beta_{-}(E)$. By the definition of $\Coh^{\beta}(\P^3)$ we get
\[
0 < \ch_1^{\beta_{-}(E)}(F) = x - \beta_{-}(E) s < \ch_1^{\beta_{-}(E)}(E) = \sqrt{5}.
\]
This implies that
\begin{equation}
\label{eq:bound_mu(F)_(4, -1, -1/2)}
\mu(F) \in \left(- \frac{\sqrt{5} + 1}{4}, \frac{\sqrt{5}}{s} - \frac{\sqrt{5} + 1}{4} \right).
\end{equation}
By Proposition \ref{prop:properties_dest_sequences} on destabilizing sequences we know that 
\[
\beta_{-}(E) \leq \beta_{-}(F) \leq \mu(F) < \mu(E).
\]
If $s \geq 6$, then
\[
-1 < \beta_{+}(F) = 2\mu(F) - \beta_{-}(F) \leq \frac{2\sqrt{5}}{s} - \frac{\sqrt{5} + 1}{2} + \frac{\sqrt{5} + 1}{4} \leq \frac{\sqrt{5} - 3}{12} < 0.
\]
This means that Theorem \ref{thm:li_bound} applies, and we get $\Delta(F) \geq \tfrac{3}{8} s^2$. On the other hand, using Proposition \ref{prop:properties_dest_sequences} on destabilizing sequences and $s \geq 6$ we get
\[
\Delta(F) < \Delta(E) = 5 < \frac{3}{8} s^2,
\]
a contradiction.

Next, if $s = 5$, then \eqref{eq:bound_mu(F)_(4, -1, -1/2)} implies that $x \in \{-2, -3, -4\}$. If $x = -2$, then Bogomolov's inequality implies $y \leq 0$. For $y = 0$, we get that Theorem \ref{thm:li_bound} applies. But that is a contradiction since $\Delta(5, -2, 0) = 4 < \tfrac{3}{8} 5^2$. If $x = -2$ and $y \leq -1$, then numerically $W(F, E)$ is on the right side of the vertical wall. For $x = -3$, we use Bogomolov's inequality again to get $y \leq \tfrac{1}{2}$. Depending on $y$ the numerical wall $W(E, F)$ is either empty or on the right side of the vertical wall. If $x = -4$, then $y \leq 1$. Again, depending on $y$ the numerical wall $W(E, F)$ is either empty or on the right side of the vertical wall.

If $s = 4$, then \eqref{eq:bound_mu(F)_(4, -1, -1/2)} implies that $x \in \{-2, -3\}$. For $x = -2$, we have $y \leq 0$ and for $x = -3$, we have $y \leq \tfrac{1}{2}$. In either case, the numerical wall $W(E, F)$ is either empty or on the wrong side of the vertical wall.

If $s = 3$, then \eqref{eq:bound_mu(F)_(4, -1, -1/2)} implies that $x \in \{-1, -2\}$. For $x = -1$, we have $y \leq -\tfrac{1}{2}$ and for $x = -2$, we have $y \leq 0$. In either case, the numerical wall $W(E, F)$ is either empty or on the wrong side of the vertical wall.

If $s = 2$, then \eqref{eq:bound_mu(F)_(4, -1, -1/2)} together with $\mu(F) < \mu(E)$ implies that $x = -1$. Bogomolov's inequality implies $y \leq -\tfrac{1}{2}$, and again, the numerical wall $W(E, F)$ is either empty or on the wrong side of the vertical wall.

Finally, if $s = 1$, then \eqref{eq:bound_mu(F)_(4, -1, -1/2)} together with $\mu(F) < \mu(E)$ says $-1 < x < 0$, a contradiction.
\end{proof}

\begin{lem}
\label{lem:no_(4, -1, -1/2)}
There is no tilt-stable object $E$ with $\ch_{\leq 2}(E) = (4, -1, -\tfrac{1}{2})$.
\end{lem}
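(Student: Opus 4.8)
The plan is to argue by contradiction, exploiting the fact established in Lemma~\ref{lem:walls_(4, -1, -1/2)} that there are no walls to the left of the vertical wall for objects with $\ch_{\leq 2}(E) = (4, -1, -\tfrac{1}{2})$. Suppose such a tilt-stable object $E$ exists. Since $\ch_{\leq 2}(E)$ is independent of $\ch_3$, the numerical wall structure is controlled entirely by $(4, -1, -\tfrac{1}{2})$, and by the absence of walls to the left of $\beta = \mu(E) = -\tfrac14$, the object $E$ must remain tilt-stable throughout the region $\beta < \mu(E)$, in particular for $0 < \alpha \ll 1$ and $\beta \ll 0$. By Proposition~\ref{prop:large_volume_limit} this forces $E$ to be a $2$-Gieseker-semistable sheaf (up to shift), so such an object exists as an honest sheaf in $\Coh(\P^3)$ if at all.

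The heart of the argument should be to rule this out directly via the $\beta_\pm$ constraint. First I would compute $\beta_-(E) = -\tfrac{\sqrt 5 + 1}{4}$ and $\beta_+(E) = \tfrac{\sqrt 5 - 1}{4}$ from the formulas for $\beta_\pm$, using $\mu(E) = -\tfrac14$ and $\Delta(E) = \ch_1^2 - 2\ch_0\ch_2 = 1 - 2\cdot 4 \cdot(-\tfrac12) = 5$, so that $\sqrt{\Delta(E)/16} = \tfrac{\sqrt 5}{4}$. Since $\beta_-(E), \beta_+(E)$ both lie in the interval $(-1, 1)$, and in fact straddle $0$ with $\beta_-(E) \in (-1, 0)$ and $\beta_+(E) \in (0, 1)$, I would check whether they fall into a common interval $[n, n+1)$ or $(n, n+1]$ so as to apply Theorem~\ref{thm:li_bound}. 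Here $\beta_-(E), \beta_+(E) \in (-1, 1)$ but do not both lie in a single half-open unit interval of the required form, so Theorem~\ref{thm:li_bound} does not apply verbatim; the contradiction must instead come from the wall analysis itself.

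Thus the cleanest route is simply to combine Lemma~\ref{lem:walls_(4, -1, -1/2)} with the large-volume-limit description: a tilt-stable $E$ with these invariants, having no destabilizing wall on the left, would remain stable as $\beta \to -\infty$ and hence (by Proposition~\ref{prop:large_volume_limit}) underlie a $2$-Gieseker-semistable sheaf $\tilde E$ with $\ch_{\leq 2}(\tilde E) = (4, -1, -\tfrac12)$. But such a sheaf would witness $D(4,-1) \geq -\tfrac12$, whereas I would show $\Delta(\tilde E) = 5$ is incompatible with the bound $\Delta \geq \tfrac{3}{8}\ch_0^2 = 6$ that Theorem~\ref{thm:li_bound} forces on any slope-stable sheaf whose $\beta_\pm$ lie in a common unit interval; since by Lemma~\ref{lem:walls_(4, -1, -1/2)} there is no wall splitting $\tilde E$, it must be slope-stable, and one arranges (after the absence-of-walls guarantees stability in a full neighborhood of $\beta_-(E)$) the hypothesis of Theorem~\ref{thm:li_bound} to apply, giving $5 = \Delta(E) \geq 6$, a contradiction. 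The main obstacle will be verifying that the $\beta_\pm$ of $E$ genuinely satisfy the interval hypothesis of Theorem~\ref{thm:li_bound}, or else supplying the short direct wall-crossing argument that replaces it; everything else is routine bookkeeping with the quantities already computed in Lemma~\ref{lem:walls_(4, -1, -1/2)}.
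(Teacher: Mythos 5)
There is a genuine gap, and it sits at the core of your argument. Your contradiction is the claimed inequality $5 = \Delta(E) \geq \tfrac{3}{8}\ch_0(E)^2 = 6$ via Theorem~\ref{thm:li_bound}, but as you yourself compute, $\beta_{-}(E) = -\tfrac{\sqrt{5}+1}{4} \in (-1,0)$ and $\beta_{+}(E) = \tfrac{\sqrt{5}-1}{4} \in (0,1)$ straddle the integer $0$, so the interval hypothesis of that theorem fails for this Chern character. It also cannot be ``arranged'': the hypothesis is a numerical condition on $\ch_{\leq 2}(E)$, tensoring with $\OO(n)$ translates both $\beta_{\pm}$ by $n$, and dualizing sends $\beta_{\pm} \mapsto -\beta_{\mp}$, so in every normalization $\beta_{-}$ and $\beta_{+}$ remain on opposite sides of an integer. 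The fallback you gesture at (``a short direct wall-crossing argument that replaces it'') is never supplied, and it is not routine bookkeeping --- it is the entire proof. Your first reduction (no walls left of the vertical wall by Lemma~\ref{lem:walls_(4, -1, -1/2)}, hence $E$ is stable in the whole region $\beta < -\tfrac{1}{4}$ and is a $2$-Gieseker-semistable sheaf by Proposition~\ref{prop:large_volume_limit}) is correct and agrees with the paper's starting point, but nothing after it closes the argument.

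What the paper actually does, and what your proposal is missing, is a pair of Euler-characteristic computations on the two sides of the vertical wall. Stability for all $\beta < -\tfrac{1}{4}$ puts $E$ strictly inside $W(E, \OO(-4)[1])$, so stability gives $\Hom(\OO, E) = 0$ and $\Hom(E, \OO(-4)[1]) = 0$, whence
\[
0 \geq \chi(E) = e + \tfrac{7}{6}.
\]
Then, taking $e$ maximal, Lemma~\ref{lem:vertical_wall_crossing} is used to cross the vertical wall: $E[1]$ is $\nu_{\alpha,\beta}$-stable for $\beta > -\tfrac{1}{4}$ and $\alpha \gg 0$, and stability there (comparing with $\OO(-1)$ and $\OO(3)$, using $\nu_{0,0}(\OO(3)) > \nu_{0,0}(E[1])$) gives $\Hom(E, \OO(-1)) = 0$ and $\Ext^2(E, \OO(-1)) = \Hom(\OO(3), E[1])^{\vee} = 0$, whence the contradiction
\[
0 \geq \chi(E, \OO(-1)) = -e - \tfrac{1}{6} \geq 1.
\]
Your proposal contains neither the vertical-wall-crossing step nor any Hom-vanishing argument, so the essential mechanism producing the contradiction is absent; replacing your appeal to Theorem~\ref{thm:li_bound} with these two $\chi$-computations is what is needed.
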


\begin{proof}
By Lemma \ref{lem:walls_(4, -1, -1/2)} we know that $E$ is a $\nu_{\alpha, \beta}$-stable for any $\alpha > 0$ and $\beta < -\tfrac{1}{4}$. In particular, $E$ is tilt-stable along $W(E, \OO(-4)[1])$. Therefore, tilt stability implies both $H^0(E) = \Hom(\OO, E) = 0$ and $H^2(E) = \Hom(E, \OO(-4)[1])^{\vee} = 0$. This allows us to conclude that $0 \geq \chi(E) = e + \tfrac{7}{6}$, i.e., $e \leq -\tfrac{7}{6}$.

Assume that $\ch_3(E) = E(4, -1, -\tfrac{1}{2})$. By Lemma \ref{lem:vertical_wall_crossing} the object $E[1]$ is also $\nu_{\alpha, \beta}$-stable for $\beta > -\tfrac{1}{4}$ as long as $\alpha \gg 0$. We have $\ch(E[1]) = (-4, 1, \tfrac{1}{2}, -e)$, and there is no wall for $E[1]$ along $\beta = 0$. By stability we get $\Hom(E, \OO(-1)) = 0$. Moreover, we can observe the inequality $\nu_{0, 0}(\OO(3)) = \tfrac{3}{2} > \tfrac{1}{2} = \nu_{0, 0}(E[1])$ to conclude $\Ext^2(E, \OO(-1)) = \Hom(\OO(3), E[1])^{\vee} = 0$. This leads to the contradiction $0 \geq \chi(E, \OO(-1)) = -e - \tfrac{1}{6} \geq 1$.
\end{proof}

\begin{lem}
\label{lem:walls_(5, -1, -1/2)}
There are no walls in tilt stability left of the vertical wall for objects $E$ for which $\ch_{\leq 2}(E) = (5, -1, -\tfrac{1}{2})$.
\end{lem}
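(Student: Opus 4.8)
The plan is to run the argument of Lemma \ref{lem:walls_(4, -1, -1/2)} verbatim, only with the class $(4,-1,-\tfrac12)$ replaced by $(5,-1,-\tfrac12)$. Suppose a wall to the left of the vertical wall exists, induced by a short exact sequence $0 \to F \to E \to G \to 0$ with $\ch_{\leq 2}(F) = (s,x,y)$. By Proposition \ref{prop:properties_dest_sequences} I may assume $s > 0$ and $\mu(F) < \mu(E) = -\tfrac15$. Here $\Delta(E) = c^2 - 2rd = 6$, so $\beta_-(E) = -\tfrac{1+\sqrt 6}{5}$ and $\ch_1^{\beta_-(E)}(E) = \sqrt{6}$. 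As in the rank-four case, every wall to the left of the vertical wall meets the ray $\beta = \beta_-(E)$, and there Proposition \ref{prop:properties_dest_sequences}(iii) gives $0 < \ch_1^{\beta_-(E)}(F) = x - \beta_-(E)s < \sqrt 6$, which translates into
\[
\mu(F) \in \left( -\frac{1+\sqrt 6}{5},\ \frac{\sqrt 6}{s} - \frac{1+\sqrt 6}{5} \right).
\]

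First I would eliminate all but finitely many $s$ with Theorem \ref{thm:li_bound}. Combining $\beta_-(E) \leq \beta_-(F)$ with $\mu(F) < \mu(E)$ gives $\beta_+(F) = 2\mu(F) - \beta_-(F) \leq 2\mu(F) - \beta_-(E)$, so whenever $\mu(F) < \tfrac12\beta_-(E) = -\tfrac{1+\sqrt 6}{10}$ one has $\beta_-(F), \beta_+(F) \in (-1,0)$ and Theorem \ref{thm:li_bound} yields $\Delta(F) \geq \tfrac38 s^2$. Since $\Delta(F) \leq \Delta(E) = 6$ by Proposition \ref{prop:properties_dest_sequences}(ii), this is a contradiction as soon as $s \geq 5$. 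The slope condition $\mu(F) < -\tfrac{1+\sqrt 6}{10}$ holds for the entire interval above once $s \geq 8$, and for $s \in \{5,6,7\}$ it holds for every admissible integer value of $x = \ch_1(F)$ with the single exception of $(s,x) = (6,-2)$. Thus only the classes with $s \in \{1,2,3,4\}$, together with $(6,-2,y)$, survive.

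For this finite list I would repeat the explicit bookkeeping of Lemma \ref{lem:walls_(4, -1, -1/2)}: the slope interval pins $x$ to at most two integers for each $s$ (and to none for $s=1$, an immediate contradiction), the Bogomolov inequality $\Delta(F)\geq 0$ bounds $y = \ch_2(F)$ from above, and the parity constraint from Lemma \ref{lem:ch_classification} restricts $y$ to $\Z$ or $\tfrac12 + \Z$ according to $x$. For each of the finitely many resulting classes $(s,x,y)$ one then computes the numerical wall $W(E,F)$ and verifies that it is either empty or lies to the right of the vertical wall $\beta = \mu(E)$.

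The main obstacle is precisely that this by-hand range is somewhat larger than in the rank-four case: Theorem \ref{thm:li_bound} only becomes decisive from $s \geq 5$ on, and the isolated class $(6,-2,y)$ escapes the slope bound, so one cannot reduce the hand computations quite as sharply. Verifying that none of the numerical walls attached to $s \in \{1,2,3,4\}$ and to $(6,-2,y)$ actually lands to the left of $\beta = \mu(E)$ is routine, but it must be carried out explicitly for each admissible $y$ rather than subsumed into a single estimate.
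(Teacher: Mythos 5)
Your reduction is sound up to the last step, and it is a legitimate variant of the paper's argument: the paper eliminates $s \geq 7$ by writing $\beta_{+}(F) = \mu(F) + \sqrt{\Delta(F)}/s$ and using the sharper bound $\Delta(F) \leq 5$ (strictness in Proposition \ref{prop:properties_dest_sequences}(ii) plus integrality), while you eliminate $s \geq 8$ uniformly and then use the criterion $\mu(F) < \tfrac{1}{2}\beta_{-}(E)$ to remove $s \in \{5,6,7\}$ except $(s,x) = (6,-2)$. Both routes are correct, and your identification of $(6,-2)$ as the single surviving large-rank class matches the paper, which handles $s = 5, 6$ by hand.

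The genuine gap is your claim that every surviving class can then be disposed of by checking that the numerical wall $W(E,F)$ is empty or on the right of the vertical wall. This fails for $(s,x,y) = (6,-2,0)$, which is allowed by parity ($x$ even, so $y \in \Z$) and by Bogomolov ($\Delta(F) = 4 \geq 0$): a direct computation shows that $W(E,F)$ is the semicircle $\alpha^2 + \left(\beta + \tfrac{3}{4}\right)^2 = \tfrac{1}{16}$, which is nonempty and lies entirely in the region $\beta < \mu(E) = -\tfrac{1}{5}$. So this candidate wall cannot be excluded by the numerical check you propose; one must instead show that no tilt-stable object $F$ with $\ch_{\leq 2}(F) = (6,-2,0)$ exists at all. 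This is exactly where the paper invokes Theorem \ref{thm:li_bound} a second time: here $\beta_{\pm}(F) = -\tfrac{1}{3} \pm \tfrac{1}{3}$, so $\beta_{-}(F) = -\tfrac{2}{3}$ and $\beta_{+}(F) = 0$ both lie in $(-1,0]$, forcing $\Delta(F) \geq \tfrac{3}{8}\cdot 36 = \tfrac{27}{2}$ and contradicting $\Delta(F) = 4$ (passing to a stable Jordan-H\"older factor of $F$ along the wall is harmless). This rules out $y = 0$, hence $y \leq -1$, and only then does your wall check go through for $(6,-2)$. With that single insertion --- Theorem \ref{thm:li_bound} used not only to bound the rank but also to kill the specific class $(6,-2,0)$ --- your proof is complete.
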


\begin{proof}
Assume that there is such a wall induced by a short exact sequence
\[
0 \to F \to E \to G \to 0
\]
where $\ch_{\leq 2}(E) = (5, -1, -\tfrac{1}{2})$ and $\ch_{\leq 2}(F) = (s, x, y)$. By Proposition \ref{prop:properties_dest_sequences} on destabilizing sequences we know that up to exchanging the roles of $F$ and $G$ we can assume $s > 0$ and $\mu(F) < \mu(E)$.

We have $\beta_{-}(E) = - \tfrac{\sqrt{6} + 1}{5}$, and all walls to the left of the vertical wall have to intersect the ray $\beta = \beta_{-}(E)$. By the definition of $\Coh^{\beta}(\P^3)$ we get
\[
0 < \ch_1^{\beta_{-}(E)}(F) = x - \beta_{-}(E) s < \ch_1^{\beta_{-}(E)}(E) = \sqrt{6}.
\]
This implies that
\begin{equation}
\label{eq:bound_mu(F)_(5, -1, -1/2)}
\mu(F) \in \left(- \frac{\sqrt{6} + 1}{5}, \frac{\sqrt{6}}{s} - \frac{\sqrt{6} + 1}{5} \right).
\end{equation}
By Proposition \ref{prop:properties_dest_sequences} on destabilizing sequences we know that 
\[
\beta_{-}(E) \leq \beta_{-}(F) \leq \mu(F) < \mu(E).
\]
If $s \geq 7$, then
\[
-1 < \beta_{+}(F) = \mu(F) + \frac{\sqrt{\Delta(F)}}{s} \leq \frac{\sqrt{6}}{s} - \frac{\sqrt{6} + 1}{5} + \frac{\sqrt{5}}{s} \leq  \frac{\sqrt{6}}{7} - \frac{\sqrt{6} + 1}{5} + \frac{\sqrt{5}}{7} < 0.
\]
This means that Theorem \ref{thm:li_bound} applies, and we get $\Delta(F) \geq \tfrac{3}{8} s^2$. On the other hand, we have
\[
\Delta(F) \leq \Delta(E) - 1 = 5 < \frac{3}{8} s^2,
\]
a contradiction.

Next, if $s = 6$, then \eqref{eq:bound_mu(F)_(5, -1, -1/2)} implies that $x \in \{-2, -3, -4\}$. For $x = -2$ the Bogomolov inequality says $y \leq 0$, but the case $y = 0$ is ruled out by Theorem \ref{thm:li_bound}. Thus, $y \leq -1$. For $x = -3$, we have $y \leq \tfrac{1}{2}$ and for $x = -4$ we have $y \leq 1$. In all these cases, depending on $y$ the numerical wall $W(E, F)$ is either empty or on the wrong side of the vertical wall. 

If $s = 5$, then \eqref{eq:bound_mu(F)_(5, -1, -1/2)} implies that $x \in \{-2, -3\}$. If $x = -2$, then Bogomolov's inequality implies $y \leq 0$. For $x = -3$, we use Bogomolov's inequality again to get $y \leq \tfrac{1}{2}$. In both cases, depending on $y$, the numerical wall $W(E, F)$ is either empty or on the right side of the vertical wall.

If $s = 4$, then \eqref{eq:bound_mu(F)_(5, -1, -1/2)} implies that $x \in \{-1, -2\}$. For $x = -1$, we have $y \leq -\tfrac{1}{2}$ and for $x = -2$, we have $y \leq 0$. In either case, the numerical wall $W(E, F)$ is either empty or on the wrong side of the vertical wall.

If $s = 3$, then \eqref{eq:bound_mu(F)_(5, -1, -1/2)} implies that $x \in \{-1, -2\}$. For $x = -1$, we have $y \leq -\tfrac{1}{2}$ and for $x = -2$, we have $y \leq 0$. In either case, the numerical wall $W(E, F)$ is either empty or on the wrong side of the vertical wall.

If $s = 2$, then \eqref{eq:bound_mu(F)_(5, -1, -1/2)} together with $\mu(F) < \mu(E)$ implies that $x = -1$. Bogomolov's inequality implies $y \leq -\tfrac{1}{2}$, and again, the numerical wall $W(E, F)$ is either empty or on the wrong side of the vertical wall.

Finally, if $s = 1$, then \eqref{eq:bound_mu(F)_(5, -1, -1/2)} together with $\mu(F) < \mu(E)$ says $-1 < x < 0$, a contradiction.
\end{proof}

\begin{lem}
\label{lem:no_(5, -1, -1/2)}
There is no tilt-stable object $E$ with $\ch_{\leq 2}(E) = (5, -1, -\tfrac{1}{2})$.
\end{lem}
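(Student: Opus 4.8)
The plan is to follow the proof of Lemma~\ref{lem:no_(4, -1, -1/2)} almost verbatim, adjusting the numerology to $\ch_{\leq 2}(E) = (5, -1, -\tfrac{1}{2})$. First I would record the relevant invariants: $\mu(E) = -\tfrac{1}{5}$, $\Delta(E) = 6$, and $\beta_{\pm}(E) = \tfrac{-1 \pm \sqrt{6}}{5}$, so the vertical wall sits at $\beta = -\tfrac{1}{5}$. By Lemma~\ref{lem:walls_(5, -1, -1/2)} any tilt-stable $E$ with these invariants is $\nu_{\alpha, \beta}$-stable for all $\alpha > 0$ and $\beta < -\tfrac{1}{5}$; in particular it is tilt-stable along $W(E, \OO(-4)[1])$. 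Comparing tilt-slopes there, and using Serre duality in the form $H^2(E) = \Hom(E, \OO(-4)[1])^{\vee}$, I would deduce $H^0(E) = \Hom(\OO, E) = 0$ and $H^2(E) = 0$. This forces $0 \geq \chi(E) = e + \tfrac{13}{6}$, that is $e \leq -\tfrac{13}{6}$.

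For the second half I would push $E$ across the vertical wall exactly as in the rank-four case. Assuming $\ch_3(E) = E(5, -1, -\tfrac{1}{2})$ is maximal, Lemma~\ref{lem:vertical_wall_crossing} makes $E[1]$, with $\ch(E[1]) = (-5, 1, \tfrac{1}{2}, -e)$, tilt-stable for $\beta > -\tfrac{1}{5}$ and $\alpha \gg 0$, with no wall for $E[1]$ along $\beta = 0$. The key observation is that the same twists $\OO(-1)$ and $\OO(3)$ used in the rank-four argument work unchanged here, since $\nu_{0,0}(\OO(-1)[1]) = -\tfrac{1}{2} < \tfrac{1}{2} = \nu_{0,0}(E[1]) < \tfrac{3}{2} = \nu_{0,0}(\OO(3))$. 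Thus stability at $(0^{+}, 0)$ yields $\Hom(E, \OO(-1)) = 0$ and, via Serre duality, $\Ext^2(E, \OO(-1)) = \Hom(\OO(3), E[1])^{\vee} = 0$. Hence $0 \geq \chi(E, \OO(-1)) = -e - \tfrac{1}{6}$, giving $e \geq -\tfrac{1}{6}$, which contradicts $e \leq -\tfrac{13}{6}$ from the first half. Since a nonempty family of tilt-stable objects with these invariants would contain one of maximal (discrete and bounded-above) $\ch_3$, ruling out that one rules out all of them.

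I expect the main obstacle to be the careful bookkeeping of the cohomology vanishings rather than any single hard idea. Concretely, one must verify that $W(E, \OO(-4)[1])$ really lies to the left of the vertical wall, so that $E$ and $\OO(-4)[1]$ are both tilt-stable there with equal slope (forcing $\Hom(E, \OO(-4)[1]) = 0$), and that $\OO(-1)[1]$, $E[1]$, and $\OO(3)$ sit in the correct hearts near $(0^{+}, 0)$ so that the slope comparisons apply. The one genuinely new input is the absence of a wall for $E[1]$ along $\beta = 0$; I would establish this either by repeating the case analysis on a hypothetical destabilizer $0 \to F \to E[1] \to G \to 0$ as in Lemma~\ref{lem:walls_(5, -1, -1/2)}, or by dualizing that lemma through Proposition~\ref{prop:derived_dual}. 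All the remaining arithmetic, namely $\Delta(E) = 6$, the values of $\beta_{\pm}(E)$, $\chi(E) = e + \tfrac{13}{6}$, and $\chi(E, \OO(-1)) = -e - \tfrac{1}{6}$, is routine Riemann--Roch on $\P^3$.
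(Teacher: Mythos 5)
Your proposal is correct and takes essentially the same route as the paper's own proof: use Lemma \ref{lem:walls_(5, -1, -1/2)} to get stability along $W(E, \OO(-4)[1])$ and the vanishings $H^0(E) = H^2(E) = 0$, forcing $e \leq -\tfrac{13}{6}$; then, for maximal $\ch_3$, apply Lemma \ref{lem:vertical_wall_crossing} and the vanishings $\Hom(E, \OO(-1)) = \Ext^2(E, \OO(-1)) = 0$ to get $0 \geq \chi(E, \OO(-1)) = -e - \tfrac{1}{6}$, a contradiction. Your explicit justification that ruling out the maximal-$\ch_3$ object suffices (discreteness and boundedness of $e$) is a point the paper leaves implicit, but otherwise the two arguments coincide step for step, including the numerology.
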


\begin{proof}
Assume there is such an object $E$. Then by Lemma \ref{lem:walls_(5, -1, -1/2)} the object $E$ is stable along $W(E, \OO(-4)[1])$ and thus, $H^2(E) = \Hom(E, \OO(-4)[1])^{\vee} = 0$. Stability also implies the cohomology vanishing $H^0(E) = \Hom(\OO, E) = 0$. Together this implies $e + \tfrac{13}{6} = \chi(E) \leq 0$, i.e., $e \leq -\tfrac{13}{6}$.

Assume that $\ch_3(E) = E(5, -1, -\tfrac{1}{2})$. By Lemma \ref{lem:vertical_wall_crossing} the object $E[1]$ is also $\nu_{\alpha, \beta}$-stable for $\beta > -\tfrac{1}{5}$ as long as $\alpha \gg 0$. We have $\ch(E[1]) = (-5, 1, \tfrac{1}{2}, -e)$, and there is no wall for $E[1]$ along $\beta = 0$. By stability we get $\Hom(E, \OO(-1)) = 0$. Moreover, we observe the inequality $\nu_{0, 0}(\OO(3)) = \tfrac{3}{2} > \tfrac{1}{2} = \nu_{0, 0}(E[1])$ to conclude $\Ext^2(E, \OO(-1)) = \Hom(\OO(3), E[1])^{\vee} = 0$. This leads to the contradiction $0 \geq \chi(E, \OO(-1)) = -e - \tfrac{1}{6} \geq 2$.
\end{proof}

\begin{lem}
\label{lem:walls_(3, 0, -1)}
There are no walls in tilt stability left of the vertical wall for objects $E$ for which $\ch_{\leq 2}(E) = (3, 0, -1)$.
\end{lem}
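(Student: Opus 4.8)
The plan is to run the same wall analysis as in Lemmas \ref{lem:walls_(4, -1, -1/2)} and \ref{lem:walls_(5, -1, -1/2)}. Suppose, for contradiction, that there is a wall to the left of the vertical wall $\beta = \mu(E) = 0$. Since walls are nested (Proposition \ref{prop:structure_walls}), there is a largest one $W$, along which $E$ is strictly tilt-semistable; passing to a Jordan--H\"older factor I may take the destabilizing sequence $0 \to F \to E \to G \to 0$ with $F$ tilt-\emph{stable}. By Proposition \ref{prop:properties_dest_sequences} on destabilizing sequences I may assume $\ch_0(F) = s > 0$ and $\mu(F) < \mu(E) = 0$, and I write $\ch_{\leq 2}(F) = (s, x, y)$. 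A direct computation gives $\Delta(E) = 6$ and $\beta_{-}(E) = -\tfrac{\sqrt 6}{3}$, whence $\ch_1^{\beta_{-}(E)}(E) = \sqrt 6$. Every wall left of the vertical wall meets the ray $\beta = \beta_{-}(E)$, and there the inclusion $F \into E$ in $\Coh^{\beta_{-}(E)}(\P^3)$ forces $0 < \ch_1^{\beta_{-}(E)}(F) = x + \tfrac{\sqrt 6}{3} s < \sqrt 6$, hence
\[
\mu(F) \in \left( -\frac{\sqrt 6}{3}, \ \frac{\sqrt 6}{s} - \frac{\sqrt 6}{3} \right).
\]

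For large $s$ I would use Theorem \ref{thm:li_bound}. As $F$ and $G$ are stable with non-zero truncated Chern character, Proposition \ref{prop:properties_dest_sequences} and the integrality of $\Delta$ give $\Delta(F) \leq \Delta(E) - 1 = 5$. Combining the upper bound on $\mu(F)$ with $\beta_{+}(F) = \mu(F) + \tfrac{\sqrt{\Delta(F)}}{s} \leq \mu(F) + \tfrac{\sqrt 5}{s}$ yields $\beta_{+}(F) < \tfrac{\sqrt 6 + \sqrt 5}{s} - \tfrac{\sqrt 6}{3}$, which is negative once $s \geq 6$, while Proposition \ref{prop:properties_dest_sequences} gives $\beta_{-}(F) > \beta_{-}(E) > -1$. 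Thus $\beta_{\pm}(F) \in (-1, 0)$ and Theorem \ref{thm:li_bound} applies, giving $\Delta(F) \geq \tfrac{3}{8} s^2 \geq \tfrac{27}{2} > 5$, a contradiction.

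The finitely many cases $s \in \{1, 2, 3, 4, 5\}$ I would treat by hand. For each, the bound on $\mu(F)$ restricts $x$ to a few values, and the Bogomolov inequality together with the parity constraint of Lemma \ref{lem:ch_classification} bounds $y$ from above. For nearly all of the resulting $(s, x, y)$ a short computation of the center and radius of $W(E, F)$ shows that this numerical wall is either empty or lies on the wrong side of the vertical wall; the only survivor with $\Delta(F)$ small enough to resist this, namely $(5, -2, 0)$, is killed by Theorem \ref{thm:li_bound} exactly as above (one checks $\beta_{\pm}(F) = -\tfrac25 \pm \tfrac25$ lie in the common interval $(-1,0]$).

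The one genuinely resistant case, and the main obstacle, is $(s, x, y) = (4, -1, -\tfrac12)$. Here $\Delta(F) = 5$, the quotient is forced by Proposition \ref{prop:line_bundles} to be $G = \OO(-1)[1]$, and $W(E, F)$ is an honest semicircle with center $-\tfrac56$ and radius $\tfrac16$, lying strictly to the left of $\beta_{-}(E)$; neither Bogomolov nor Theorem \ref{thm:li_bound} excludes it, since $\beta_{\pm}(F) = -\tfrac14 \pm \tfrac{\sqrt 5}{4}$ do not lie in a common unit interval. The way out is to use that $F$ was taken tilt-stable and to invoke Lemma \ref{lem:no_(4, -1, -1/2)}, which asserts that no tilt-stable object with $\ch_{\leq 2} = (4, -1, -\tfrac12)$ exists; hence this $F$ cannot occur and the numerical wall is not actual. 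This is precisely the step for which the earlier non-existence results were established, and it is what makes the order of the lemmas essential.
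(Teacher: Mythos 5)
Your proposal is correct and takes essentially the same route as the paper's proof: the same slope constraint from intersecting the ray $\beta = \beta_-(E)$, Theorem \ref{thm:li_bound} for $s \geq 6$ and for the class $(5,-2,0)$, Bogomolov plus wall-position checks for the remaining small ranks, and Lemma \ref{lem:no_(4, -1, -1/2)} to eliminate the class $(4,-1,-\tfrac{1}{2})$. One immaterial slip: the semicircle with center $-\tfrac{5}{6}$ and radius $\tfrac{1}{6}$ does not lie strictly to the left of $\beta_-(E) = -\sqrt{2/3}$; it crosses that ray (as every semicircular wall left of the vertical wall must), which is exactly why this case survives the positional checks and genuinely requires the non-existence lemma, as you conclude.
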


\begin{proof}
Assume there is such a wall induced by a short exact sequence
\[
0 \to F \to E \to G \to 0
\]
where $\ch_{\leq 2}(E) = (3, 0, -1)$ and $\ch_{\leq 2}(F) = (s, x, y)$. By Proposition \ref{prop:properties_dest_sequences} on destabilizing sequences we can assume that $s \geq 0$ and $\mu(F) < \mu(E)$. We have $\beta_{-}(F) = -\sqrt{\tfrac{2}{3}}$, and the fact $0 < \ch^{\beta_-}(F) < \ch^{\beta_-}(E) = \sqrt{6}$ implies
\begin{equation}
\label{eq:bound_mu(F)_(3, 0, -1)}
\mu(F) \in \left(-\sqrt{\frac{2}{3}}, \frac{\sqrt{6}}{s} - \sqrt{\frac{2}{3}}\right).
\end{equation}
By Proposition \ref{prop:properties_dest_sequences} on destabilizing sequences we know $\beta_{-}(E) \leq \beta_{-}(F) \leq \mu(F)$. If $s \geq 6$, then
\[
-1 < \beta_{+}(F) = 2\mu(F) - \beta_{-}(F) < \frac{2\sqrt{6}}{s} - 2\sqrt{\frac{2}{3}} + \sqrt{\frac{2}{3}} = \frac{2\sqrt{6}}{s} - \sqrt{\frac{2}{3}} \leq 0.
\]
This means that Theorem \ref{thm:li_bound} applies for $s \geq 6$, and we get $\Delta(F) \geq \tfrac{3}{8} s^2$. On the other hand, by Proposition \ref{prop:properties_dest_sequences} on destabilizing sequences we have $\Delta(F) \leq 5$, a contradiction to $s \geq 6$.

If $s = 5$, then \eqref{eq:bound_mu(F)_(3, 0, -1)} implies that $x \in \{-2, -3, -4\}$. For $x = -2$, we can use Theorem \ref{thm:li_bound} to get $y \leq -1$. For $x = -3$ Bogomolov's inequality implies $y \leq \tfrac{1}{2}$ and for $x = -4$ we have $y \leq 1$. In all three cases, the numerical wall $W(E, F)$ is either empty or on the wrong side of the vertical wall.

If $s = 4$, then \eqref{eq:bound_mu(F)_(3, 0, -1)} implies that $x \in \{-1, -2, -3\}$. For $x = -1$, we can use Bogomolov's inequality together with Lemma \ref{lem:no_(4, -1, -1/2)} to obtain $y \leq -\tfrac{3}{2}$. For $x = -2$, we have $y \leq 0$ and for $x = -3$, we have $y \leq \tfrac{1}{2}$. In either case, the numerical wall $W(E, F)$ is either empty or on the wrong side of the vertical wall.

If $s = 3$, then \eqref{eq:bound_mu(F)_(3, 0, -1)} implies that $x \in \{-1, -2\}$. For $x = -1$, we have $y \leq -\tfrac{1}{2}$ and for $x = -2$, we have $y \leq 0$. In either case, the numerical wall $W(E, F)$ is either empty or on the wrong side of the vertical wall.

If $s = 2$, then \eqref{eq:bound_mu(F)_(3, 0, -1)} together with $\mu(F) < \mu(E)$ implies that $x = -1$. Bogomolov's inequality implies $y \leq -\tfrac{1}{2}$, and again, the numerical wall $W(E, F)$ is either empty or on the wrong side of the vertical wall.

Finally, if $s = 1$, then \eqref{eq:bound_mu(F)_(3, 0, -1)} together with $\mu(F) < \mu(E)$ implies $-1 < x < 0$, a contradiction.
\end{proof}

\begin{lem}
\label{lem:walls_(4, 0, -1)}
There are no walls in tilt stability left of the vertical wall for objects $E$ for which $\ch_{\leq 2}(E) = (4, 0, -1)$.
\end{lem}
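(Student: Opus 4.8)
\subsection*{Proof proposal}

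The plan is to mirror the structure of the three preceding lemmas. Suppose for contradiction that there is a wall to the left of the vertical wall $\beta = \mu(E) = 0$, induced by a short exact sequence $0 \to F \to E \to G \to 0$ in $\Coh^{\beta}(\P^3)$, and set $\ch_{\leq 2}(F) = (s, x, y)$. By Proposition \ref{prop:properties_dest_sequences}, after possibly exchanging $F$ and $G$ I may assume $s > 0$ and $\mu(F) < \mu(E) = 0$. A direct computation gives $\Delta(E) = 8$ and $\beta_{-}(E) = -\sqrt{\tfrac{1}{2}}$, with $\ch_1^{\beta_{-}(E)}(E) = 2\sqrt{2}$. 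Since every wall left of the vertical one meets the ray $\beta = \beta_{-}(E)$, the inclusion $F \into E$ in $\Coh^{\beta_{-}(E)}(\P^3)$ forces
\[
0 < \ch_1^{\beta_{-}(E)}(F) = x + s\sqrt{\tfrac{1}{2}} < 2\sqrt{2},
\]
so that $\mu(F) \in \left(-\sqrt{\tfrac{1}{2}},\ \tfrac{2\sqrt{2}}{s} - \sqrt{\tfrac{1}{2}}\right)$.

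The large-rank case is uniform. Proposition \ref{prop:properties_dest_sequences} gives $\beta_{-}(E) < \beta_{-}(F) \leq \mu(F) < 0$ and $\Delta(F) \leq \Delta(E) - 1 = 7$ (equality $\Delta(F) = \Delta(E)$ would force $\ch_{\leq 2}(G) = 0$ and hence no wall). Thus $\beta_{-}(F) > \beta_{-}(E) > -1$, while
\[
\beta_{+}(F) = \mu(F) + \frac{\sqrt{\Delta(F)}}{s} < \frac{2\sqrt{2} + \sqrt{7}}{s} - \sqrt{\tfrac{1}{2}},
\]
which is negative as soon as $s \geq 8$. For such $s$ both $\beta_{\pm}(F)$ lie in $(-1,0)$, so Theorem \ref{thm:li_bound} applies and yields $\Delta(F) \geq \tfrac{3}{8}s^2 \geq 24$, contradicting $\Delta(F) \leq 7$.

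It remains to treat $1 \leq s \leq 7$. For each such $s$ the range for $\mu(F)$ together with $x \in \Z_{<0}$ restricts $x$ to at most three values; for instance $s = 1$ is immediately impossible, while $s = 5$ leaves $x \in \{-1,-2,-3\}$. For each pair $(s,x)$ I would bound $y$ from above using the Bogomolov inequality (and the parity constraint $y \notin \Z$ when $x$ is odd), sharpening this with Theorem \ref{thm:li_bound} whenever $\beta_{\pm}(F)$ happen to lie in a common unit interval. With $y$ bounded, a short computation shows that $W(E,F)$ has center $\tfrac{s + 4y}{4x}$ along the $\beta$-axis and squared radius $\left(\tfrac{s+4y}{4x}\right)^2 - \tfrac{1}{2}$, and one then checks in every remaining case that this wall is either empty (negative squared radius) or has non-negative center, i.e. lies on the wrong side of the vertical wall.

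The one genuinely resistant case is $(s,x,y) = (5,-1,-\tfrac{1}{2})$: here Bogomolov only gives $y \leq -\tfrac{1}{2}$, and because $\beta_{+}(F) > 0$ for this class Theorem \ref{thm:li_bound} does not apply, so the numerical wall $W(E,F)$ really does sit to the left of $\beta = 0$. I expect this to be the main obstacle, and the way around it is exactly Lemma \ref{lem:no_(5, -1, -1/2)} (together with Lemma \ref{lem:walls_(5, -1, -1/2)}): there is no tilt-semistable object with $\ch_{\leq 2} = (5,-1,-\tfrac{1}{2})$ below its own vertical wall, so $F$ cannot have these invariants, forcing $y \leq -\tfrac{3}{2}$ and pushing the wall back to the right. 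Once this case is disposed of, the remaining numerics are routine.
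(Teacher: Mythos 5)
Your proposal is correct and follows essentially the same route as the paper's proof: the same slope window derived from $\beta_{-}(E) = -\tfrac{\sqrt{2}}{2}$, the same use of Theorem \ref{thm:li_bound} to eliminate large ranks (the paper pushes this through $s = 7$ via the refinement $\mu(F) \leq -\tfrac{3}{7}$, whereas you fold $s = 7$ into the finite check, which also works), the same Bogomolov-plus-wall-geometry elimination of the remaining $(s, x, y)$, and the same identification of $(5, -1, -\tfrac{1}{2})$ as the one case that needs Lemma \ref{lem:no_(5, -1, -1/2)}. Your center and radius formulas for $W(E, F)$ are correct, and the routine cases do check out exactly as you predict.
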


\begin{proof}
Assume there is such a wall induced by a short exact sequence
\[
0 \to F \to E \to G \to 0
\]
where $\ch_{\leq 2}(E) = (4, 0, -1)$ and $\ch_{\leq 2}(F) = (s, x, y)$. By Proposition \ref{prop:properties_dest_sequences} on destabilizing sequences we can assume that $s \geq 0$ and $\mu(F) < \mu(E)$. We have $\beta_{-}(F) = -\tfrac{\sqrt{2}}{2}$, and the inequalities $0 < \ch^{\beta_-}(F) < \ch^{\beta_-}(E) = \sqrt{8}$ imply
\begin{equation}
\label{eq:bound_mu(F)_(4, 0, -1)}
\mu(F) \in \left(-\frac{\sqrt{2}}{2}, \frac{\sqrt{8}}{s} - \frac{\sqrt{2}}{2}\right).
\end{equation}
By Proposition \ref{prop:properties_dest_sequences} on destabilizing sequences we know $\beta_{-}(E) \leq \beta_{-}(F) \leq \mu(F)$. If $s \geq 8$, then
\[
-1 < \beta_{+}(F) = \mu(F) + \frac{\sqrt{\Delta(F)}}{s} < \frac{\sqrt{8} + \sqrt{7}}{s} - \frac{\sqrt{2}}{2} \leq \frac{\sqrt{8} + \sqrt{7}}{8} - \frac{\sqrt{2}}{2} < 0.
\]
If $s = 7$, then \eqref{eq:bound_mu(F)_(4, 0, -1)} implies $\mu(F) \leq -\tfrac{3}{7}$, and thus $\beta_{+}(F) \leq -\tfrac{3}{7} + \frac{\sqrt{7}}{7} < 0$. This means that Theorem \ref{thm:li_bound} applies for $s \geq 7$, and we get $\Delta(F) \geq \tfrac{3}{8} s^2$. On the other hand, we have $\Delta(F) \leq 7$, a contradiction to $s \geq 7$.

Next, if $s = 6$, then \eqref{eq:bound_mu(F)_(4, 0, -1)} implies that $x \in \{-2, -3, -4\}$. For $x = -2$ we have $y \leq 0$, but $y = 0$ is ruled out by Theorem \ref{thm:li_bound}, i.e., $y \leq -1$. If $x = -3$, then $y \leq \tfrac{1}{2}$ and if $x = -4$, then $y \leq 1$. In all three cases, the numerical wall $W(E, F)$ is either empty or on the wrong side of the vertical wall.

If $s = 5$, then \eqref{eq:bound_mu(F)_(4, 0, -1)} implies that $x \in \{-1, -2, -3\}$. For $x = -1$, we can use Bogomolov's inequality together with Lemma \ref{lem:no_(5, -1, -1/2)} to get $y \leq -\tfrac{3}{2}$. For $x = -2$, we have $y \leq 0$ and for $x = -3$, we have $y \leq \tfrac{1}{2}$. In all three cases, the numerical wall $W(E, F)$ is either empty or on the wrong side of the vertical wall.

If $s = 4$, then \eqref{eq:bound_mu(F)_(4, 0, -1)} implies that $x \in \{-1, -2\}$. For $x = -1$, we can use Bogomolov's inequality to obtain $y \leq -\tfrac{1}{2}$. For $x = -2$, we have $y \leq 0$. In either case, the numerical wall $W(E, F)$ is either empty or on the wrong side of the vertical wall.

If $s = 3$, then \eqref{eq:bound_mu(F)_(4, 0, -1)} implies that $x \in \{-1, -2\}$. For $x = -1$, we have $y \leq -\tfrac{1}{2}$ and for $x = -2$, we have $y \leq 0$. In either case, the numerical wall $W(E, F)$ is either empty or on the wrong side of the vertical wall.

If $s = 2$, then \eqref{eq:bound_mu(F)_(4, 0, -1)} together with $\mu(F) < \mu(E)$ implies that $x = -1$. Bogomolov's inequality implies $y \leq -\tfrac{1}{2}$, and again, the numerical wall $W(E, F)$ is either empty or on the wrong side of the vertical wall.

Finally, if $s = 1$, then \eqref{eq:bound_mu(F)_(4, 0, -1)} together with $\mu(F) < \mu(E)$ implies $-1 < x < 0$, a contradiction.
\end{proof}

\begin{lem}
\label{lem:no_(4, -2, 0)}
There is no tilt-stable object $E$ with $\ch_{\leq 2}(E) = (4, -2, 0)$.
\end{lem}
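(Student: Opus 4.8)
The plan is to follow the template of Lemmas \ref{lem:no_(4, -1, -1/2)} and \ref{lem:no_(5, -1, -1/2)}: first control the walls to the left of the vertical wall, then sandwich $\ch_3$ between an upper and a lower bound coming from two Euler characteristic computations, and finally observe that the bounds are incompatible. Throughout I record that $\ch_{\leq 2}(E) = (4, -2, 0)$ has $\mu(E) = -\tfrac12$, $\Delta(E) = 4$, and the crucial feature $\beta_{-}(E) = -1$, $\beta_{+}(E) = 0$ (the roots of $\ch_2^{\beta}(E) = 2\beta(\beta + 1)$). Computing $\chi$ for this class gives $\chi(E) = e + \tfrac13$ and $\chi(E, \OO(-1)) = \tfrac23 - e$, so that $\chi(E) + \chi(E, \OO(-1)) = 1$; this self-dual identity (the class is fixed by $E \mapsto E^{\vee}(-1)$) is the source of the eventual contradiction.

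First I would show there are no walls in tilt stability to the left of the vertical wall, exactly as in Lemma \ref{lem:walls_(4, -1, -1/2)}. A destabilizing subobject $F = (s, x, y)$ with $s > 0$ and $\mu(F) < \mu(E)$ must, at $\beta = \beta_{-}(E) = -1$, satisfy $0 < \ch_1^{-1}(F) = x + s < \ch_1^{-1}(E) = 2$, and Proposition \ref{prop:properties_dest_sequences}(vi) forces $\mu(F) \in (-1, -\tfrac12)$. For $s \geq 5$ one checks $\beta_{-}(F), \beta_{+}(F) \in (-1, 0)$, so Theorem \ref{thm:li_bound} gives $\Delta(F) \geq \tfrac38 s^2 > 4 \geq \Delta(E)$, contradicting Proposition \ref{prop:properties_dest_sequences}(ii). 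The remaining cases $s \leq 4$ leave only $(s, x) \in \{(3, -2), (4, -3)\}$, and in both the Bogomolov inequality together with the parity of $x$ forces $\Delta(F) > 4$ (the borderline $(3, -2, 0)$ being excluded by the equality clause of Proposition \ref{prop:properties_dest_sequences}(ii), since neither $\ch_{\leq 2}(F)$ nor $\ch_{\leq 2}(G)$ vanishes). Hence $E$ is tilt-stable for every $\beta < -\tfrac12$; in particular, by Proposition \ref{prop:large_volume_limit}, it is a slope-semistable sheaf, so $D(4, -2) \geq 0$, and with the Bogomolov inequality and integrality this yields $D(4, -2) = 0$.

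Next I would bound $\ch_3$ from above. Taking $E$ to realize the maximum $\ch_3 = E(4, -2, 0)$ (finite by Proposition \ref{prop:function_E}), stability of $E$ below the vertical wall gives $\Hom(\OO, E) = 0$ as well as $H^2(E) = \Hom(E, \OO(-4)[1])^{\vee} = 0$ and $H^3(E) = \Hom(E, \OO(-4))^{\vee} = 0$, by comparing $\nu_{\alpha, \beta}$-slopes at suitable points (the relevant numerical wall $W(E, \OO(-4)[1])$ lies entirely in $\beta < -\tfrac12$). Therefore $\chi(E) = e + \tfrac13 \leq 0$, i.e. $e \leq -\tfrac13$. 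For the lower bound I would cross the vertical wall: since $d = D(4, -2) = 0$ and $e = E(4, -2, 0)$, Lemma \ref{lem:vertical_wall_crossing} shows $E[1]$ is tilt-stable just to the right of the vertical wall, and a wall analysis for $E[1]$ symmetric to the first step (equivalently, the self-duality of the class) shows it remains stable along $\beta = 0$, where $\beta_{+}(E) = 0$. Stability then gives $\Hom(E, \OO(-1)) = 0$ together with $\Ext^2(E, \OO(-1)) = \Hom(\OO(3), E[1])^{\vee} = 0$ (using $\nu_{0, 0}(\OO(3)) = \tfrac32 > 0 = \nu_{0, 0}(E[1])$) and $\Ext^3(E, \OO(-1)) = \Hom(\OO(3), E)^{\vee} = 0$. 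Thus $\chi(E, \OO(-1)) = \tfrac23 - e \leq 0$, i.e. $e \geq \tfrac23$, contradicting $e \leq -\tfrac13$.

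The wall enumeration of the second paragraph is routine, being entirely parallel to the earlier lemmas. The main obstacle is the passage across the vertical wall needed for the lower bound: invoking Lemma \ref{lem:vertical_wall_crossing} requires knowing $d = D(4, -2) = 0$, which only becomes available after the wall analysis has identified $E$ as a slope-semistable sheaf, and it also requires checking that the maximal object is a torsion-free sheaf whose slope-stable factors have parallel $\ch_{\leq 2}$, so that $E[1]$ really is semistable immediately to the right of $\beta = -\tfrac12$; the strictly slope-semistable case must be excluded or handled directly. Once $E[1]$ is known to remain stable past the vertical wall and along $\beta = 0$, the two Euler characteristic inequalities close the argument.
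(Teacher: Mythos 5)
Your proposal is correct and follows essentially the same strategy as the paper's proof: rule out all walls left of the vertical wall, deduce $e \leq -\tfrac{1}{3}$ from $\chi(E) \leq 0$, cross the vertical wall via Lemma \ref{lem:vertical_wall_crossing}, and produce an incompatible lower bound. Two remarks on where you diverge. First, your wall exclusion enumerates ranks and uses Theorem \ref{thm:li_bound} for $s \geq 5$ plus discriminant comparisons for $(3,-2)$ and $(4,-3)$, while the paper handles all ranks at once with the wall equation $\alpha^2(s-2) = 2y$ at $\beta = -1$ against Bogomolov and parity; both work. Second, for the lower bound the paper applies Proposition \ref{prop:derived_dual} to $E[1]$ to obtain a tilt-semistable $\tilde{E}$ with $\ch(\tilde{E} \otimes \OO(-1)) = (4,-2,0,-e+\tfrac{1}{3}+t)$, $t \geq 0$, and simply reuses the already-proved upper bound on this class, whereas you compute $\chi(E, \OO(-1)) \leq 0$ directly from stability of $E[1]$ along $\beta = 0$, following the template of Lemmas \ref{lem:no_(4, -1, -1/2)} and \ref{lem:no_(5, -1, -1/2)}; these are equivalent, since your ``symmetric wall analysis or self-duality of the class'' is exactly what Proposition \ref{prop:derived_dual} packages. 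The one step you flag but do not carry out --- excluding that the maximal object is strictly slope-semistable, which Lemma \ref{lem:vertical_wall_crossing} genuinely requires --- is settled in the paper in two lines: slope $-\tfrac{1}{2}$ forces any Jordan-H\"older factor to have even rank, so strict semistability would give two factors with $\ch_{\leq 2} = (2,-1,d_i)$ and $d_i \leq D(2,-1) = -\tfrac{1}{2}$ by Theorem \ref{thm:rank_two}, whence $\ch_2(E) \leq -1$, contradicting $\ch_2(E) = 0$. With that short argument inserted, your plan is complete.
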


\begin{proof}
Assume there is such an object $E$ and assume that $\ch_3(E)$ is maximal. We start by proving that there is no wall left of the numerical vertical wall for such an $e$. Since $\nu_{0, -1}(E) = 0$, any such wall would have to intersect the ray $\beta = -1$. Assume that we have a wall intersecting $\beta = -1$ induced by a short exact sequence
\[
0 \to F \to E \to G \to 0.
\]
By Proposition \ref{prop:properties_dest_sequences} on destabilizing sequences we may assume that $\ch_0(F) > 0$ and $\mu(F) < \mu(E)$. We have $\ch^{-1}_{\leq 2}(E) = (4, 2, 0)$. This implies that $\ch^{-1}_1(F) = 1$. Let $\ch^{-1}_{\leq 2}(F) = (s, 1, y)$. The inequality $\mu(F) < \mu(E)$ implies $s \geq 3$. The equation $\nu_{\alpha, -1}(F) = \nu_{\alpha, -1}(E)$ is equivalent to $\alpha^2 (s - 2) = 2y$. This is only possible if $y > 0$. However, the Bogomolov inequality implies $y \leq \tfrac{1}{2s}$, and there is no value for $y$ in between.

We have shown that there is no wall along $\beta = -1$ and by stability we have both $H^0(E) = 0$ and $H^2(E) = \Hom(E, \OO(-4)[1]) = 0$. Thus, $0 \geq \chi(E) = e + \tfrac{1}{3}$, i.e., $e \leq -\tfrac{1}{3}$.

By Proposition \ref{prop:large_volume_limit} about the large volume limit we know that $E$ is $2$-Gieseker-semistable. Assume that $E$ is strictly slope-semistable. Then it would have two Jordan-H\"older factors both with Chern characters $\ch_{\leq 2} = (2, -1, d)$ for some $d \leq -\tfrac{1}{2}$. But then $\ch_2(E) \leq -1$, a contradiction. Therefore, $E$ must be slope-stable. Since additionally $\ch_3(E)$ and $\ch_2(E)$ are maximal, we can use Lemma \ref{lem:vertical_wall_crossing} to cross the vertical wall and show that $E[1]$ is $\nu_{\alpha, \beta}$-stable for $\beta > -\tfrac{1}{2}$ and $\alpha 
\gg 0$. By Proposition \ref{prop:derived_dual} we get a tilt-semistable object $\tilde{E}$ with $\ch(\tilde{E} \otimes \OO(-1)) = (4, -2, 0, -e + \tfrac{1}{3} + t)$ for some integer $t \geq 0$. The same bound that we proved for $E$ implies $-e + \tfrac{1}{3} + t \leq -\tfrac{1}{3}$, i.e., $e \geq \tfrac{2}{3}$, a contradiction.
\end{proof}

\subsection{Results}

\begin{prop}
\label{prop:rank_three_special_cases}
We have $D(3, -2) = 0$, $D(3, -1) = -\tfrac{1}{2}$, and $D(3, 0) = 0$. Let $E$ be $\nu_{\alpha, \beta}$-semistable for some $(\alpha, \beta) \in \R_{>0} \times \R$ with $\ch(E) = (3, c, d, e)$.
\begin{enumerate}
    \item Assume that $c = -2$ and $d = 0$. Then $e \leq E(3, -2, 0) = \tfrac{2}{3}$. If $e = E(3, -2, 0)$ and $E$ is tilt-stable, then $E$ is the twisted tangent bundle $T(-2)$.
    \item Assume that $c = -1$ and $d = -\tfrac{1}{2}$. Then $e \leq E(3, -1, -\tfrac{1}{2}) = -\tfrac{1}{6}$. If $e = E(3, -1, -\tfrac{1}{2})$  and $E$ is tilt-stable, then $E$ is the twisted cotangent bundle $\Omega(1)$.
    \item Assume that $c = -1$ and $d = -\tfrac{3}{2}$. Then $e \leq E(3, -1, -\tfrac{3}{2}) = \tfrac{11}{6}$. If $e = E(3, -1, -\tfrac{3}{2})$ and $E$ is tilt-stable, then $E$ fits into a short exact sequence of sheaves
    \[
    0 \to \OO(-2)^{\oplus 2} \to \OO(-1)^{\oplus 5} \to E \to 0.
    \]
    \item Assume that $c = 0$ and $d = 0$. Then $e \leq E(3, 0, 0) = 0$. If $e = E(3, 0, 0)$, then $E = \OO^{\oplus 3}$.
    \item Assume that $c = 0$ and $d = -1$. Then $e \leq E(3, 0, -1) = -1$. If $e = E(3, 0, -1)$, then $E$ fits into a short exact sequence
    \[
    0 \to E \to \OO^{\oplus 3} \to \OO_L(2) \to 0
    \]
    for a line $L \subset \P^3$.
    \item Assume that $c = 0$ and $d = -2$. Then $e \leq E(3, 0, -2) = 1$. If $e = E(3, 0, -2)$, then $E$ fits into a short exact sequence
    \[
    0 \to \Omega(1) \to E \to \OO_V(-1) \to 0
    \]
    for a plane $V \subset \P^3$.
\end{enumerate}
\end{prop}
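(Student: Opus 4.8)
The plan is to record the three values of $D$ first and then obtain the six bounds on $e$ from essentially one mechanism. The equalities $D(3,-1)=-\tfrac12$, $D(3,0)=0$, $D(3,-2)=0$ come respectively from Theorem \ref{thm:ch_2}, from Proposition \ref{prop:function_D}(iv), and from combining the twist rule Proposition \ref{prop:function_D}(ii) with the duality invariance (iii) (using $D(3,1)=D(3,-1)$). For the $\ch_3$-bounds in the cases with $\Delta(E)>0$, the common idea is that by Hirzebruch--Riemann--Roch the asserted inequality on $e$ is exactly $\chi(E)\le 0$: the relevant Euler characteristics are $\chi=e-\tfrac23$ for $(3,-2,0)$, $\chi=e+\tfrac16$ for $(3,-1,-\tfrac12)$, and $\chi=e+1$ for $(3,0,-1)$. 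So in each such case I would produce the vanishings $\Hom(\OO,E)=0$ and, via Serre duality, $H^2(E)=\Hom(E,\OO(-4)[1])^{\vee}=0$ that force $\chi(E)\le 0$. The first is usually immediate from slope-semistability, while the second requires $E$ to stay tilt-stable along the line-bundle wall $W(E,\OO(-4)[1])$, i.e.\ requires ruling out intervening walls left of the vertical wall. Sharpness is then obtained from explicit extremal objects, and the classification at equality is read off by forcing every term in the $\chi$-count to vanish.

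I would begin with the two cleanest cases. Case (iv) is different and does not use $\chi$: for $\ch_{\le 2}(E)=(3,0,0)$ one has $\Delta(E)=0$, and a direct substitution gives $Q_{\alpha,\beta}(E)=18\beta e$; since a semistable object of this positive-rank class lies in $\Coh^{\beta}(\P^3)$ only for $\beta<0$, the generalized Bogomolov inequality forces $e\le 0$, and at $e=0$ Proposition \ref{prop:line_bundles} gives $E\cong\OO^{\oplus 3}$. Case (v) has the wall input ready: Lemma \ref{lem:walls_(3, 0, -1)} shows there are no walls left of the vertical wall for $(3,0,-1)$, so $E$ is tilt-stable along $W(E,\OO(-4)[1])$, both vanishings hold, and $0\ge\chi(E)=e+1$ gives $e\le -1$. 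When $e=-1$ the count $\chi(E)=0$ forces all cohomology of $E$ to vanish, from which I would recover the defining sequence $0\to E\to\OO^{\oplus 3}\to\OO_L(2)\to 0$ by analyzing $\Hom(E,\OO)$ (note $\ch(\OO^{\oplus 3})-\ch(\OO_L(2))=(3,0,-1,-1)$).

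Cases (i) and (ii) have maximal $\ch_2$, so $d=D(r,c)$, and I would treat them with the template of Lemmas \ref{lem:no_(4, -1, -1/2)} and \ref{lem:no_(4, -2, 0)}. Here $\Hom(\OO,E)=0$ and $H^3(E)=\Hom(E,\OO(-4))^{\vee}=0$ are immediate from slope-semistability (the relevant slopes $-\tfrac23,-\tfrac13$ lie strictly between $0$ and $-4$), so only $H^2(E)=0$ is at issue; this I would obtain by using Lemma \ref{lem:vertical_wall_crossing} to make $E[1]$ tilt-stable on the $\beta>\mu(E)$ side and reading off a $\Hom$-vanishing against a line bundle there, exactly as in the step $\Ext^2(E,\OO(-1))=\Hom(\OO(3),E[1])^{\vee}=0$. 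This yields $e\le\tfrac23$ in (i) and $e\le-\tfrac16$ in (ii). The extremal sheaves are recognized through the Euler sequence $0\to\Omega(1)\to\OO^{\oplus 4}\to\OO(1)\to 0$ together with Proposition \ref{prop:line_bundles}, and the derived dual relates the two cases (one computes that $E\mapsto\D(E)(-1)$ carries the class of (ii) to that of (i) up to a shift, and $\Omega(1)$ to $T(-2)$), providing a consistency check and transferring the extremal classification.

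The remaining cases (iii) and (vi) have non-maximal $\ch_2$, and for these I would use the destabilizing-sequence method of Proposition \ref{prop:properties_dest_sequences}: on the largest wall $E$ sits in a sequence $0\to F\to E\to G\to 0$ with $\Delta(F)+\Delta(G)\le\Delta(E)$, and $\ch_3(F),\ch_3(G)$ are controlled by the already established rank-zero (Theorem \ref{thm:rank_zero}), rank-one (Theorem \ref{thm:rank_one}), rank-two (Theorem \ref{thm:rank_two}) and case (i)/(ii) estimates; linearity of $\ch_3$ then gives $e\le\tfrac{11}{6}$ in (iii) and $e\le 1$ in (vi). The extremal objects are the cokernel $0\to\OO(-2)^{\oplus 2}\to\OO(-1)^{\oplus 5}\to E\to 0$ for (iii) and the extension $0\to\Omega(1)\to E\to\OO_V(-1)\to 0$ for (vi), whose Chern characters I would verify ($(3,-1,-\tfrac32,\tfrac{11}{6})$ and $(3,0,-2,1)$) and whose stability I would check so that the bounds are attained. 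The main obstacle is twofold: first, the wall-exclusion arguments for the classes without a dedicated lemma (the finite but delicate $\beta_{\pm}$- and $\Delta$-bookkeeping of Lemmas \ref{lem:walls_(3, 0, -1)} and \ref{lem:no_(4, -2, 0)}, using Theorem \ref{thm:li_bound} to eliminate high-rank subobjects); and second, and harder, upgrading the numerical saturation of $\chi$ into the precise exact sequences and proving that the extremal objects are exactly the tangent/cotangent bundles and the stated (co)kernels, including that the explicit examples are genuinely tilt- and Gieseker-(semi)stable.
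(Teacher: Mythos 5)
Your overall template (Euler-characteristic bounds forced by $\Hom$-vanishings coming from tilt stability, plus explicit extremal objects) is indeed the paper's, and your treatments of (iv) and (v) are essentially the paper's arguments; for (vi) the paper likewise identifies $W(E,\Omega(1))$ as the largest wall and forces $\ch(F)=(3,-1,-\tfrac12,-\tfrac16)$, $\ch(G)=(0,1,-\tfrac32,\tfrac76)$ at equality. For (iii) the paper argues differently from you: stability along $W(E,\OO(-5)[1])$ gives $\Ext^2(\OO(-1),E)=0$, hence $\hom(\OO(-1),E)\ge\chi(\OO(-1),E)\ge 5$, and Proposition \ref{prop:line_bundles} identifies the cone of $\OO(-1)^{\oplus 5}\to E$ as $\OO(-2)^{\oplus 2}[1]$. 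Your wall-enumeration plan for (iii) could also work, but note that the relevant destabilizer has rank five, so it is not controlled by your proposed induction on the established rank $\le 3$ bounds; it is pinned down by Proposition \ref{prop:line_bundles} (its class has $\Delta=0$), and this needs to be said.

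The genuine gap is in cases (i) and (ii): the mechanism you propose for $H^2(E)=0$ cannot work. By Serre duality $H^2(E)=\Ext^1(E,\OO(-4))^{\vee}=\Hom(E,\OO(-4)[1])^{\vee}$, a $\Hom$ out of $E$ into a shifted line bundle; $E$ and $\OO(-4)[1]$ lie in a common heart $\Coh^{\beta}(\P^3)$ only for $-4\le\beta<\mu(E)$, so this vanishing must come from tilt semistability of $E$ on the \emph{left} side of the vertical wall, on or outside $W(E,\OO(-4)[1])$ --- exactly the ``rule out intervening walls'' route you state correctly in your opening paragraph. The right-side argument you substitute for it (crossing the vertical wall via Lemma \ref{lem:vertical_wall_crossing} and comparing $E[1]$ with line bundles) only yields vanishings of the form $\Hom(\OO(m),E[1])=\Ext^1(\OO(m),E)$, whose Serre duals are $\Ext^2(E,\OO(m-4))$; these are terms of $\chi(E,\OO(m-4))$, not of $\chi(E)=\chi(\OO,E)$. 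Since $\chi(E,\OO(m))$ carries $\ch_3(E)$ with a minus sign, that argument produces \emph{lower} bounds on $e$ --- which is precisely how Lemmas \ref{lem:no_(4, -1, -1/2)} and \ref{lem:no_(4, -2, 0)} reach a contradiction, and it works there only because no object exists at all. Here $T(-2)$ and $\Omega(1)$ do exist, so no contradiction is available and your claimed conclusions $e\le\tfrac23$ and $e\le-\tfrac16$ do not follow from the argument as written. What is needed is the paper's left-side analysis: for (i), $\ch_1^{-1}(E)=1$ is minimal, so no wall crosses $\beta=-1$ and $W(E,\OO(-1))$ is smaller than $W(E,\OO(-5)[1])$, whence $E$ is semistable along the latter, $\hom(\OO(-1),E)\ge\chi(\OO(-1),E)=e+\tfrac{10}{3}\ge 4$, and Proposition \ref{prop:line_bundles} forces the Euler sequence and $e=\tfrac23$; for (ii) one shows directly that there is no wall left of the vertical wall, gets $\chi(E)\le 0$, and at equality proves $E$ reflexive and dualizes into case (i), as you also suggest. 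A smaller point: you cannot quote Theorem \ref{thm:ch_2} for $D(3,-1)=-\tfrac12$, since that theorem is exactly what this proposition (together with its rank-four companion) proves; the inequality $D(3,-1)\le-\tfrac12$ is just the Bogomolov inequality, and equality comes from the existence of $\Omega(1)$.
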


\begin{proof}
The Bogomolov inequality implies $D(3, -2) \leq 0$, $D(3, -1) \leq -\tfrac{1}{2}$, and $D(3, 0) \leq 0$.
\begin{enumerate}
    \item Let $c = -2$ and $d = 0$. Assume that $e \geq \tfrac{2}{3}$. Then $\ch^{-1}_1(E) = 1$ and there is no wall for $E$ along $\beta = -1$. A straightforward computation shows that the wall $W(E, \OO(-1))$ is smaller than $W(E, \OO(-5)[1])$. Therefore, $E$ is tilt-stable along $W(E, \OO(-5)[1])$ and $\Ext^2(\OO(-1), E) = \Hom(E, \OO(-5)[1])^{\vee} = 0$. We have 
    \[
    \hom(\OO(-1), E) \geq \chi(\OO(-1), E) = e + \tfrac{10}{3} \geq 4.
   	\]
    Therefore, $E$ is destabilized along $W(E, \OO(-1))$ and fits into a short exact sequence
    \[
    0 \to \OO(-1)^{\oplus 4} \to E \to G \to 0,
    \]
    where $G$ is tilt-semistable with $\ch(G) = (-1, 2, -2, e + \tfrac{2}{3})$. By     if $E$ is stable along, then Proposition \ref{prop:line_bundles}, which characterizes line bundles among semistable objects, shows that $G = \OO(-2)[1]$ and $e = \tfrac{2}{3}$. We have shown that $E$ fits into the Euler sequence (tensored with a line bundle) and therefore, $E = T(-2)$. Vice-versa, $T(-2)$ is stable, and therefore, we get $D(3, -2) = 0$ and $E(3, -2, 0) = \tfrac{2}{3}$. 
    
    \item Let $c = -1$ and $d = -\tfrac{1}{2}$. Then $\nu_{0, -1}(E) = 0$. This means all walls to the left of the vertical wall intersect the ray $\beta = -1$. Assume that such a wall is induced by a short exact sequence
    \[
    0 \to F \to E \to G \to 0.
    \]
    for tilt-semistable objects $F, G \in \Coh^{-1}(\P^3)$. We can compute $\ch^{-1}_{\leq 2}(E) = (3, 2, 0, e - \tfrac{1}{2})$, and thus, $\ch^{-1}(F) = \ch^{-1}(G) = 1$. Let $\ch^{-1}_{\leq 2}(F) = (s, 1, y)$. Up to exchanging the roles of $F$ and $G$ we can assume that $s \geq 2$. The Bogomolov inequality implies $y \leq \tfrac{1}{2s} \leq \tfrac{1}{4}$, i.e., $y \leq -\tfrac{1}{2}$. On the other hand, the equation $\nu_{\alpha, -1}(E) = \nu_{\alpha, -1}(F)$ is equivalent to
    \[
    \alpha^2 = \frac{4y}{2s - 3}.
    \]
    Since $2s - 3 > 0$, $y < 0$, and $\alpha^2 > 0$, this is a contradiction. We have shown that there is no wall for $E$ to the left of the vertical wall. In particular, $E$ is stable along $W(E, \OO(-4)[1])$. This implies $H^2(E) = \Hom(E, \OO(-4)[1])^{\vee} = 0$. By $2$-Gieseker stability of $E$ we have $H^0(E) = \Hom(\OO, E) = 0$. Thus, $0 \geq \chi(E) = e + \tfrac{1}{6}$, i.e., $e \leq -\tfrac{1}{6}$.
    
    Assume that $e = -\tfrac{1}{6}$. We have a short exact sequence
    \[
    0 \to E \to E^{\vee \vee} \to T \to 0
    \]
    where $T$ is supported in dimension less than or equal to one. Since $E^{\vee \vee}$ is slope-stable, and $\ch_2(E)$, $\ch_3(E)$ are maximal for slope-stable sheaves with $\ch_{\leq 1}(E) = (3, -1)$, we must have $T = 0$. This means $E$ is reflexive. By \cite[Proposition 1.1.10]{HL10:moduli_sheaves} this implies that $\lExt^i(E, \OO_X) = 0$ for $i \neq 0, 1$, and $\lExt^1(E, \OO_X)$ is supported in dimension $0$. If $\ch(\lExt^1(E, \OO_X)) = (0, 0, 0, t)$, then $t \geq 0$ and $\ch(E^{\vee} \otimes \OO(-1)) = (3, -2, 0, \tfrac{2}{3} + t)$. By part (i) this means $t = 0$, and $E^{\vee} \otimes \OO(-1) = T(-2)$, i.e., $E = \Omega(1)$.
    
    \item Let $c = -1$ and $d = -\tfrac{3}{2}$, and assume that $e \geq \tfrac{11}{6}$. We start by showing that there is no wall along $\beta = -1$. Assume that there is such a wall induced by a short exact sequence
    \[
    0 \to F \to E \to G \to 0.
    \]
    By Proposition \ref{prop:properties_dest_sequences} on destabilizing sequences we can assume $\ch_0(F) > 0$ and $\mu(F) < \mu(E)$. One computes $\ch^{-1}_{\leq 2}(E) = (3, 2, -1)$ and thus, $\ch^{-1}(F) = (s, 1, y)$ for $s > 0$. The inequality $\mu(F) < \mu(E)$ implies $s \geq 2$. The Bogomolov inequality is equivalent to $y \leq \tfrac{1}{2s} \leq \tfrac{1}{4}$. The equation $\nu_{\alpha, -1}(F) = \nu_{\alpha, -1}(E)$ is equivalent to $(2s - 3) \alpha^2 = 4y + 2$ which implies $y > -\tfrac{1}{2}$. However, there is no possible $y$ in this small intervall. 
    
    In particular, $E$ is stable along the numerical wall $W(E, \OO(-5)[1])$ that intersects $\beta = -1$. By stability we get $\Ext^2(\OO(-1), E) = \Hom(E, \OO(-5)[1])^{\vee} = 0$. This yields the inequality $\hom(\OO(-1), E) \geq \chi(\OO(-1), E) \geq 5$, and we get a short exact sequence
    \[
    0 \to \OO(-1)^{\oplus 5} \to E \to G \to 0
    \]
    where $\ch(G) = (-2, 4, -4, e + \tfrac{5}{6})$. By Proposition \ref{prop:line_bundles}, which characterizes line bundles among semistable objects, we get $G = \OO(-2)^{\oplus 2}[1]$ and $e = \tfrac{11}{6}$.
    
    \item Let $c = 0$ and $d = 0$. We have shown in Proposition \ref{prop:function_E} that $E(3, 0, 0) = 0$. If $E$ is slope-semistable with $\ch(E) = (3, 0, 0, 0)$, then by Proposition \ref{prop:line_bundles}, which characterizes line bundles among semistable objects, we get $E = \OO^{\oplus 3}$.

    \item Let $c = 0$ and $d = -1$. By stability we must have $H^0(E) = \Hom(\OO, E) = 0$. By Lemma \ref{lem:walls_(3, 0, -1)} there are no walls to the left of the vertical wall for $E$. This means that that $E$ is $\nu_{\alpha, \beta}$-semistable for $(\alpha, \beta)$ strictly below the wall $W(E, \OO(-4)[1])$. Therefore, we get $H^2(E) = \Hom(E, \OO(-4)[1])^{\vee} = 0$, and hence, $0 \geq \chi(E) = e + 1$, i.e., $e = -1$.
    
    Again by Lemma \ref{lem:walls_(3, 0, -1)} we know that if $E$ is tilt-semistable, then it has to be a $2$-Gieseker-stable sheaf as well. We have a short exact sequence
    \[
    0 \to E \to E^{\vee \vee} \to T \to 0
    \]
    where $T$ is supported in dimension less than or equal to one. If $T$ is supported in dimension zero, then $\ch_{\leq 2}(E^{\vee \vee}) = (3, 0, -1)$ and $\ch_3(E^{\vee \vee}) > -1$, a contradiction to what we have already shown. Therefore, $T$ must be supported in dimension one, and $\ch_{\leq 2}(E^{\vee \vee}) = (3, 0, 0)$. Therefore, $E^{\vee \vee}$ is a slope-semistable reflexive sheaf with $\ch(E^{\vee \vee}) = (3, 0, 0, z)$ for $z \leq 0$. This means $\ch(E^{\vee}) = \ch(E^{\vee \vee \vee}) = (3, 0, 0, z')$ with $0 \leq -z \leq z' \leq 0$, i.e., $z = 0$ and by Proposition \ref{prop:line_bundles}, which characterizes line bundles among semistable objects, we have $\ch(E) = \OO^{\oplus 3}$. This means $\ch(T) = (0, 0, 1, 1)$ and we get $T = \OO_L(2)$ for some line $L \subset \P^3$.
    
    \item Let $c = 0$, and $d = -2$. The numerical wall $W(E, \Omega(1))$ is given by 
    \[
    \alpha^2 + \left(\beta + \frac{3}{2}\right)^2 = \frac{11}{12}.
    \]
    We will show first that there is no larger wall. Assume there is a wall induced by a short exact sequence
    \[
    0 \to F \to E \to G \to 0
    \]
    such that $W(E, F)$ is larger than or equal to $W(E, \Omega(1))$. Let $\ch_{\leq 2}(F) = (s, x, y)$. By Proposition \ref{prop:properties_dest_sequences} on destabilizing sequences we may assume $\ch_0(F) > 0$ and $\mu(F) < \mu(E)$. We have
    \[
    \frac{\Delta(E)}{16} = \frac{3}{4} < \frac{11}{12}.
    \]
    Proposition \ref{prop:properties_dest_sequences} on destabilizing sequences also implies that $s \in \{1, 2, 3\}$. This wall has to intersect the ray $\beta = -1$. Since $\ch_1^{-1}(E) = 3$, we get $x + s \in \{1, 2\}$. Together with $\mu(F) < \mu(E)$ this immediately rules out $s = 1$. If $s = 2$, then $x = -1$ and $y \leq -\tfrac{1}{2}$. A straightforward computation shows that any such wall is smaller than $W(E, \Omega(1))$. The same holds for the case $s = 3$ and $x = -2$, where $y \leq 0$ and all walls are smaller. If $s = 3$ and $x = -1$, then the wall $W(E, \Omega(1))$ is the largest such wall that happens precisely for $y = -\tfrac{1}{2}$.
    
    Assume that $e = E(3, 0, -2) \geq 1$. Then $\ch_3(F)$ and $\ch_3(G)$ have to be maximal as well, i.e., $\ch(F) = (3, -1, -\tfrac{1}{2}, -\tfrac{1}{6})$ and $\ch(G) = (0, 1, -\tfrac{3}{2}, \tfrac{7}{6})$. This shows that $F = \Omega(1)$ and $G = \OO_V(-1)$ for a plane $V \subset \P^3$. For objects destabilized at this wall, we get $e = 1$. We have to show that there are no stable objects below this wall. Note that 
\[
s(\Omega(1), E) = -\tfrac{3}{2} > -\tfrac{59}{26} = s(E, \Omega(-3)[1]).
\]
Therefore, if $E$ is stable below $W(\Omega(1), E)$, then stability implies $\Hom(\Omega(1), E) = 0$ and $\Ext^2(\Omega(1), E) = \Hom(E, \Omega(-3)[1])^{\vee} = 0$. We get the contradiction $1 \leq \chi(\Omega(1), E) \leq 0$. \qedhere
\end{enumerate}
\end{proof}

\begin{prop}
\label{prop:classification_(3,-1,-5/2)}
Let $E$ be $\nu_{\alpha, \beta}$-semistable with Chern character $\ch(E) = (3, -1, -\tfrac{5}{2}, e)$. Then $e \leq E(3, -1, -\tfrac{5}{2}) = \tfrac{23}{6}$. Assume that $e = E(3, -1, -\tfrac{5}{2})$. Then there are three walls in tilt stability for such objects $E$.
\begin{enumerate}
    \item There are no semistable objects below the smallest wall $\alpha^2 + (\beta + 2)^2 = 1$ which is induced by short exact sequences of the form
    \[
    0 \to \OO(-1)^{\oplus 4} \to E \to \OO(-3)[1] \to 0.
    \]
    \item The second wall is given by $\alpha^2 + (\beta + \tfrac{5}{2})^2 = \tfrac{35}{12}$. Let $E$ be strictly $\nu_{\alpha, \beta}$-semistable along this wall. If $E$ is stable below the wall, then it fits into a non-trivial short exact sequence
    \[
    0 \to \OO_V(-2) \to E \to T(-2) \to 0
    \]
    for a plane $V \subset \P^3$. If $E$ is stable above the wall, then it fits into a non-trivial short-exact sequence
    \[
    0 \to T(-2) \to E \to \OO_V(-2) \to 0
    \]
    for a plane $V \subset \P^3$.
    \item The third and largest wall is given by $\alpha^2 + (\beta + \tfrac{7}{2})^2 = \tfrac{33}{4}$. Let $E$ be strictly $\nu_{\alpha, \beta}$-semistable along this wall. If $E$ is stable below the wall, then it fits into a non-trivial short exact sequence
    \[
    0 \to \II_C \to E \to F \to 0
    \]
    where $C \subset \P^3$ is a conic and $F \in M(2, -1, -\tfrac{1}{2}, \tfrac{5}{6})$. If $E$ is stable above the wall, then it fits into a non-trivial short-exact sequence
    \[
    0 \to F \to E \to \II_C \to 0
    \]
    where $C \subset \P^3$ is a conic and $F \in M(2, -1, -\tfrac{1}{2}, \tfrac{5}{6})$.
\end{enumerate}
\end{prop}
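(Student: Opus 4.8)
The plan is to run the induction on the third Chern character that underlies the whole paper: locate the largest wall in tilt stability for the fixed class $\ch_{\leq 2}(E) = (3,-1,-\tfrac{5}{2})$, bound $\ch_3(E)$ by the third Chern characters of the factors of a destabilizing sequence, and then read off the full wall structure from the cases of equality. First I record the invariants $\Delta(E) = \ch_1^2 - 2\ch_0\ch_2 = 16$, $\mu(E) = -\tfrac13$, and $\beta_{\pm}(E) = -\tfrac13 \pm 4$, so that every semicircular wall meets the ray $\beta = \beta_-(E) = -\tfrac{13}{3}$. The three candidate walls in the statement have centers $-2,\,-\tfrac52,\,-\tfrac72$ and are cut out by the sequences with factors $\OO(-1)^{\oplus 4}$ and $\OO(-3)[1]$, then $T(-2)$ and $\OO_V(-2)$, and finally a sheaf in $M(2,-1,-\tfrac12,\tfrac56)$ together with the ideal sheaf $\II_C$ of a plane conic. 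In each case the $\ch_{\leq 2}$ of the two factors sum to $(3,-1,-\tfrac52)$, and the apex of each semicircle lies on the curve $\nu_{\alpha,\beta}(E) = 0$, so these are genuine numerical walls, nested with increasing radius $1 < \sqrt{35/12} < \sqrt{33}/2$.

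For the upper bound I would argue as in Lemma~\ref{lem:walls_(4, -1, -1/2)}. Given a destabilizing sequence $0 \to F \to E \to G \to 0$ along a semicircular wall, after possibly exchanging $F$ and $G$ Proposition~\ref{prop:properties_dest_sequences} lets me assume $\ch_0(F) > 0$ and $\mu(F) < \mu(E)$. The definition of $\Coh^{\beta_-(E)}(\P^3)$ gives $0 < \ch_1^{\beta_-(E)}(F) < \ch_1^{\beta_-(E)}(E)$, which combined with $\beta_-(E) < \beta_-(F) \leq \mu(F) < \mu(E)$ and Theorem~\ref{thm:li_bound} forces $\ch_0(F)$ to be small and leaves only finitely many numerical types $(s,x,y)$ for $\ch_{\leq 2}(F)$; the Bogomolov inequality then pins down $y$ in each case. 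The outcome is that the largest actual wall is the one at center $-\tfrac72$, where the only numerical possibility is $\{\ch_{\leq2}(F),\ch_{\leq2}(G)\} = \{(2,-1,-\tfrac12),(1,0,-2)\}$. As $F$ and $G$ are tilt-semistable, Theorem~\ref{thm:rank_two} gives $\ch_3(F) \leq E(2,-1,-\tfrac12) = \tfrac56$ and Theorem~\ref{thm:rank_one} gives $\ch_3(G) \leq E(1,0,-2) = 3$, so $e = \ch_3(F) + \ch_3(G) \leq \tfrac{23}{6}$; an object that remains tilt-stable below this wall is disposed of by the Euler-characteristic argument of the last paragraph. Hence $E(3,-1,-\tfrac52) = \tfrac{23}{6}$.

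Now assume $e = \tfrac{23}{6}$. The structural point is that equality propagates: along any actual wall both factors must attain their own maximal third Chern character, since $\ch_3(F) + \ch_3(G) = \tfrac{23}{6}$ while each summand is bounded by the relevant value of $E(\cdot)$. This reduces the identification of the actual walls to a finite search, because a numerical wall $W(E,F)$ can be actual only if $E(\ch_{\leq2}(F)) + E(\ch_{\leq2}(G)) \geq \tfrac{23}{6}$; evaluating the right-hand side using Theorems~\ref{thm:rank_one},~\ref{thm:rank_two},~\ref{thm:rank_zero}, Corollary~\ref{cor:rank_minus_one} and Proposition~\ref{prop:rank_three_special_cases}(i) shows this occurs only for the three walls above, where it is an equality. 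At the largest wall maximality forces $G = \II_C$ for a plane conic and $F \in M(2,-1,-\tfrac12,\tfrac56)$; at the middle wall it forces the rank three factor to be $T(-2)$ (Proposition~\ref{prop:rank_three_special_cases}(i)) and the rank zero factor to be $\OO_V(-2)$ (Theorem~\ref{thm:rank_zero}, using $\tfrac23 + \tfrac{19}{6} = \tfrac{23}{6}$); at the smallest wall the factors are forced to be $\OO(-1)^{\oplus4}$ and $\OO(-3)[1]$ by Proposition~\ref{prop:line_bundles}. Which factor is the sub-object and which the quotient on a given side is decided, exactly as in the proof of Theorem~\ref{thm:rank_zero}, by comparing the radii of the walls $W(E,\cdot)$ of the two candidate factors, and this is where the mild subtlety at the middle wall enters: $T(-2)$ and $\OO_V(-2)$ are both strictly semistable along it, so the extension order flips across the wall.

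Finally, to rule out semistable objects below the smallest wall I compute $\chi(\OO(-1),E) = \chi(E(1)) = 4 > 0$. Below this wall one has $\nu_{\alpha,\beta}(\OO(-1)) > \nu_{\alpha,\beta}(E)$, so semistability of $E$ gives $\Hom(\OO(-1),E) = 0$, while stability against the smaller walls involving $\OO(-5)$ yields $\Ext^2(\OO(-1),E) = \Hom(E,\OO(-5)[1])^{\vee} = 0$ and $\Ext^3(\OO(-1),E) = \Hom(E,\OO(-5))^{\vee} = 0$; then $4 = \chi(\OO(-1),E) \leq 0$, a contradiction. I expect the main obstacle to be the wall enumeration of the second and third paragraphs: bounding $\ch_0(F)$ and excluding the intermediate numerical walls requires controlling rank three sub-objects whose third Chern characters are known only in the special cases of Proposition~\ref{prop:rank_three_special_cases}, so the finite case analysis must either land in those cases or be closed off by the Bogomolov and generalized Bogomolov inequalities. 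The bookkeeping for the several numerical decompositions sharing a single wall, the strictly-semistable factors at the middle wall, and the determination of the sub/quotient order on each side are the remaining delicate points.
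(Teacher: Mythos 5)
Your overall architecture is the paper's: enumerate numerical walls, bound $\ch_3$ at each wall by the maximal third Chern characters of the factors, classify the factors in the case of equality, and use an Euler-characteristic argument at the bottom. However, there are two genuine gaps. First, your numerical anchor is wrong: the roots of $\ch_2^{\beta}(E)=0$ for $\ch_{\leq 2}(E)=(3,-1,-\tfrac52)$ are $\beta=-\tfrac53$ and $\beta=1$ (solve $3\beta^2+2\beta-5=0$), not $-\tfrac13\pm 4$; you have taken the paper's misprinted formula (which has $\ch_1$ instead of $\ch_0$ in the denominator) at face value. With your value $\beta_-=-\tfrac{13}{3}$ the statement ``every semicircular wall meets the ray $\beta=\beta_-$'' is false -- the walls with centers $-2$ and $-\tfrac52$ never reach $\beta=-\tfrac{13}{3}$ -- so the Lemma~\ref{lem:walls_(4, -1, -1/2)}-style enumeration you build on it collapses. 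The paper avoids this entirely: it only classifies walls \emph{above} $W(E,\OO(-1))$, using the ray $\beta=-1$ to force $\ch_1^{-1}(F)=\ch_0(F)+\ch_1(F)=1$ and Proposition~\ref{prop:properties_dest_sequences}(i) with $\Delta(E)/16=\rho(E,\OO(-1))^2$ to force $\ch_0(F)\leq 3$, which needs neither $\beta_-$ nor Theorem~\ref{thm:li_bound}.

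Second, and more seriously, neither your upper bound nor your factor identification covers objects that are stable in the chamber between the walls with centers $-\tfrac52$ and $-2$, i.e.\ objects destabilized \emph{at} the smallest wall. Your Euler-characteristic contradiction only applies strictly below $W(E,\OO(-1))$, where $\nu(\OO(-1))>\nu(E)$ forces $\Hom(\OO(-1),E)=0$; in the chamber just above that wall this Hom need not vanish, so no contradiction arises. And your finite-search criterion using the large-volume values $E(\cdot)$ fails exactly there: the decomposition $(2,-2,1)+(1,1,-\tfrac72)$ lies on the same numerical wall $\alpha^2+(\beta+2)^2=1$ (it is $W(E,\OO(-1))$ computed with $2\ch_{\leq2}(\OO(-1))$), and $E(2,-2,1)+E(1,1,-\tfrac72)=-\tfrac13+\tfrac{37}{6}=\tfrac{35}{6}>\tfrac{23}{6}$, so this decomposition passes your test without equality; your argument therefore proves neither $e\leq\tfrac{23}{6}$ for objects destabilized there nor that the factors are $\OO(-1)^{\oplus 4}$ and $\OO(-3)[1]$. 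Closing this gap requires either the refined below-the-largest-wall bounds for rank one (Proposition~\ref{prop:rank_one_between_-1_-2}) applied to the class $(1,0,-4)$ along that wall, or -- as the paper does -- the positivity argument: stability in the chamber gives $\Ext^2(\OO(-1),E)=\Hom(E,\OO(-5)[1])^{\vee}=0$, hence $\hom(\OO(-1),E)\geq\chi(\OO(-1),E)=e+\tfrac16\geq 4$, and the resulting injection $\OO(-1)^{\oplus 4}\into E$ has semistable cokernel of class $(-1,3,-\tfrac92,e+\tfrac23)$, which Proposition~\ref{prop:line_bundles} identifies as $\OO(-3)[1]$, simultaneously forcing $e=\tfrac{23}{6}$ and producing the sequence in part (i).
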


\begin{proof}
Assume that $e \geq \tfrac{23}{6}$. We start by classifying walls above $W(E, \OO(-1))$. Assume that we have such a wall induced by a short exact sequence
\[
0 \to F \to E \to G \to 0.
\]
Let $\ch(F) = (s, x, y, z)$ and by Proposition \ref{prop:properties_dest_sequences} on destabilizing sequences we may assume (up to exchanging the roles of $F$ and $G$) $s > 0$ and $\mu(F) < \mu(E)$. We can compute
\[
\frac{\Delta(v)}{16} = 1 = \rho^2(E, \OO(-1)).
\]
By Proposition \ref{prop:properties_dest_sequences} on destabilizing sequences we get $s \in \{1, 2, 3\}$. Since $\ch^{-1}_{\leq 2}(E) = (3, 2, -2)$, we get $x + s = 1$. Together with $\mu(F) < \mu(E)$, this immediately rules out $s = 1$.

Let $s = 2$. Then $x = -1$ and $y \leq -\tfrac{1}{2}$. The only way that the wall is larger than $W(E, \OO(-1))$ is if $y = -\tfrac{1}{2}$. By Theorem \ref{thm:rank_two} we get $z \leq \tfrac{5}{6}$. The quotient satisfies $\ch(G) = (1, 0, -2, e - z)$, and by Theorem \ref{thm:rank_one} we have $e - z \leq 3$. Since we assumed $e \geq \tfrac{23}{6}$ this is only possible if $z = \tfrac{5}{6}$ and $e = \tfrac{23}{6}$. In that case, we end up with the wall described in (iii).

Let $s = 3$. Then $x = -2$ and $y \leq 0$. The only way that the wall is larger than $W(E, \OO(-1))$ is if $y = 0$. In this case, Proposition \ref{prop:rank_three_special_cases} implies $z \leq \tfrac{2}{3}$. The quotient satisfies $\ch(G) = (0, 1, -\tfrac{5}{2}, e - z)$, and by Theorem \ref{thm:rank_zero} we have $e - z \leq \tfrac{19}{6}$. Since we assumed $e \geq \tfrac{23}{6}$ this is only possible if $z = \tfrac{2}{3}$ and $e = \tfrac{23}{6}$. In that case, we end up with the wall described in (ii).

Assume that that $E$ is stable below $W(E, T(-2))$. Then $E$ is also stable below $W(E, \OO(-5)[1])$ and stability implies $\Ext^2(\OO(-1), E) = \Hom(E, \OO(-5)[1])^{\vee} = 0$. Therefore, we obtain the inequality $\hom(\OO(-1), E) \geq \chi(\OO(-1), E) = e + \tfrac{1}{6} \geq 4$, and we get the final wall (i).
\end{proof}

\begin{prop}
\label{prop:classification_(3,0,-3)}
Let $E$ be $\nu_{\alpha, \beta}$-semistable with $\ch(E) = (3, 0, -3, e)$. Then $e \leq E(3, 0, -3) = 3$. Assume that $e = E(3, 0, -3)$. Then there are two walls in tilt stability for such objects $E$.
\begin{enumerate}
    \item There are no semistable objects below the smallest wall $\alpha^2 + (\beta + \tfrac{3}{2})^2 = \tfrac{1}{4}$ which is induced by short exact sequences of the form
    \[
    0 \to \OO(-1)^{\oplus 6} \to E \to \OO(-2)^{\oplus 3}[1] \to 0.
    \]
    \item The second and largest wall is given by $\alpha^2 + (\beta + \tfrac{5}{2})^2 = \tfrac{17}{4}$. Let $E$ be strictly $\nu_{\alpha, \beta}$-semistable along this wall. If $E$ is stable below the wall, then it fits into a non-trivial short exact sequence
    \[
    0 \to \OO_V(-2) \to E \to \Omega(1) \to 0
    \]
    for a plane $V \subset \P^3$. If $E$ is stable above the wall, then it fits into a non-trivial short-exact sequence
    \[
    0 \to \Omega(1) \to E \to \OO_V(-2) \to 0
    \]
    for a plane $V \subset \P^3$.
\end{enumerate}
\end{prop}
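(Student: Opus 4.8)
The plan is to imitate the proof of Proposition \ref{prop:classification_(3,-1,-5/2)}. Throughout I assume $e \geq 3$ and work with the class $v = (3,0,-3)$, for which $\Delta(v) = 18$ and the curve $\nu_{\alpha,\beta}(v) = 0$ is the hyperbola $\beta^2 - \alpha^2 = 2$. Because $\ch_1(v) = 0$, the center/radius formulas behind Proposition \ref{prop:structure_walls} simplify drastically: a short computation gives $s(E,F) = (\ch_0(F) + \ch_2(F))/\ch_1(F)$ and, for every semicircular wall, $\rho(E,F)^2 = s(E,F)^2 - 2$. In particular $W(E,\OO(-1)) = W(E,\OO(-2)[1])$ has center $-\tfrac32$ and $\rho^2 = \tfrac14$, while $W(E,\Omega(1))$ has center $-\tfrac52$ and $\rho^2 = \tfrac{17}{4}$; these are exactly the two claimed walls, and both candidate destabilizing sequences have total Chern character $(3,0,-3,3)$.

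First I would classify every wall lying strictly above $W(E,\OO(-1))$. Given a destabilizing sequence $0 \to F \to E \to G \to 0$, Proposition \ref{prop:properties_dest_sequences}(v) lets me assume $\ch_0(F) > 0$ and $\mu(F) \leq 0$. Since $\rho(E,F)^2 = s(E,F)^2 - 2$ is increasing in $|s(E,F)|$ and the right endpoint $s(E,F) + \rho(E,F)$ equals $-1$ precisely for $W(E,\OO(-1))$, every larger wall meets the ray $\beta = -1$; as $\ch_1^{-1}(E) = 3$, Proposition \ref{prop:properties_dest_sequences}(iii) forces $\ch_1^{-1}(F) = \ch_1(F) + \ch_0(F) \in \{1,2\}$. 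For a wall strictly larger than $W(E,\OO(-1))$ the radius bound of Proposition \ref{prop:properties_dest_sequences}(i) gives $\ch_0(F) \leq 5$. Running through $\ch_0(F) \in \{1,\dots,5\}$ and the finitely many resulting values of $\ch_1(F)$, and combining the Bogomolov inequality $\Delta(F) \leq \Delta(E) = 18$ with $\rho(E,F)^2 = s(E,F)^2 - 2$, I expect every case to collapse either to a vertical wall or to a wall no larger than $W(E,\OO(-1))$, except $\ch_0(F) = 3$, $\ch_1(F) = -1$, $\ch_2(F) = -\tfrac12$, which is exactly $W(E,\Omega(1))$. For that wall Proposition \ref{prop:rank_three_special_cases}(ii) gives $\ch_3(F) \leq E(3,-1,-\tfrac12) = -\tfrac16$ and Theorem \ref{thm:rank_zero} gives $\ch_3(G) \leq E(0,1,-\tfrac52) = \tfrac{19}{6}$, whence $e \leq 3$; equality forces $F = \Omega(1)$ and $G = \OO_V(-2)$, and the usual exchange of sub and quotient across the wall produces both sequences in (ii).

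Next I would produce the smallest wall and rule out semistability below it. Riemann--Roch gives $\chi(\OO(-1),E) = \chi(E(1)) = e + 3$, equal to $6$ in the maximal case. Since $\mu(E) = 0 > -5$, slope-semistability yields $\Hom(E,\OO(-5)) = 0$, hence $H^3(E(1)) = \Ext^3(\OO(-1),E) = 0$; and a direct computation of $\nu_{\alpha,\beta}$ at a point strictly between the two walls (for instance $(\alpha,\beta) = (1,-\tfrac32)$) shows $\nu_{\alpha,\beta}(E) > \nu_{\alpha,\beta}(\OO(-5)[1])$ there, so $\Hom(E,\OO(-5)[1]) = 0$ and $H^2(E(1)) = \Ext^2(\OO(-1),E) = 0$. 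Therefore $\hom(\OO(-1),E) \geq \chi(\OO(-1),E) = 6$, and the evaluation map gives $\OO(-1)^{\oplus 6} \hookrightarrow E$ whose cokernel has Chern character $\ch(\OO(-2)^{\oplus 3}[1])$, hence equals $\OO(-2)^{\oplus 3}[1]$ by Proposition \ref{prop:line_bundles}. This realizes $W(E,\OO(-1))$, and since the inclusion destabilizes $E$ below that wall, there are no semistable objects below it. Sharpness $E(3,0,-3) = 3$ follows because above the largest wall tilt-semistability coincides with $2$-Gieseker-semistability (Proposition \ref{prop:large_volume_limit}), so the extension found in the previous step is an actual $2$-Gieseker-semistable sheaf with $\ch = (3,0,-3,3)$.

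The main obstacle will be the wall bookkeeping in the second step, and specifically the exclusion of the higher-rank subobjects $\ch_0(F) \in \{4,5\}$: for these the naive numerical wall $\rho^2 = s^2-2$ can exceed the radius permitted by Proposition \ref{prop:properties_dest_sequences}(i), so the contradiction must be extracted by feeding $\Delta(F) \leq 18$ back into $\rho^2 = s^2 - 2$ together with that radius bound, exactly as in the preparatory Lemmas \ref{lem:walls_(4, 0, -1)} and \ref{lem:walls_(3, 0, -1)}. A secondary subtlety is the cohomology vanishing: unlike the $(3,-1,-\tfrac52)$ case, the numerical wall $W(E,\OO(-5)[1])$ here is \emph{larger} than the largest actual wall, so $H^2(E(1)) = 0$ cannot be read off from stability below $W(E,\OO(-5)[1])$ and must instead be obtained from the explicit slope comparison at a stable point indicated above.
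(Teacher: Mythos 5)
Your proposal is correct and its skeleton is the same as the paper's proof: restrict to walls meeting the ray $\beta=-1$, so that $\ch_1^{-1}(F)\in\{1,2\}$ and $\ch_0(F)\le 5$; kill every numerical case except $\ch_{\le 2}(F)=(3,-1,-\tfrac12)$, which is $W(E,\Omega(1))$; there use Proposition \ref{prop:rank_three_special_cases}(ii) on $F$ and Theorem \ref{thm:rank_zero} on $G$ to get $e\le -\tfrac16+\tfrac{19}{6}=3$, with equality forcing $F=\Omega(1)$ and $G=\OO_V(-2)$; and finally produce the smallest wall from $\hom(\OO(-1),E)\ge\chi(\OO(-1),E)=e+3\ge 6$ together with Proposition \ref{prop:line_bundles}. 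The one genuinely different ingredient is your exclusion mechanism for the higher-rank cases: the paper disposes of $(s,x)=(4,-2)$ by Lemma \ref{lem:no_(4, -2, 0)} and of $(s,x)=(5,-3)$ by Li's bound (Theorem \ref{thm:li_bound}), whereas you feed the explicit wall geometry $\rho(E,F)^2=s(E,F)^2-2$ into the radius bound of Proposition \ref{prop:properties_dest_sequences}(i). This does work: the only candidates surviving Bogomolov, integrality and the center condition are $(4,-2,0)$, whose wall has $\rho^2=2>\tfrac{18}{16}$, and $(5,-3,\tfrac12)$, whose wall has $\rho^2=\tfrac{49}{36}>\tfrac{18}{40}$, both contradicting (i). So your route is more self-contained, needing neither of those two external inputs.

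Three caveats. (1) What you call ``the Bogomolov inequality $\Delta(F)\le\Delta(E)=18$'' is Proposition \ref{prop:properties_dest_sequences}(ii); the cases $(2,-1)$, $(3,-2)$, $(4,-3)$, $(5,-4)$ are excluded by the genuine Bogomolov inequality $\Delta(F)\ge 0$ plus integrality played against the center condition $s(E,F)<-\tfrac32$, exactly as in the paper, so make sure that is the inequality you actually invoke. (2) Your ``secondary subtlety'' is not real: since $W(E,\OO(-5)[1])$ is larger than $W(E,\Omega(1))$, every point below the latter lies inside the former, where $\nu_{\alpha,\beta}(E)>\nu_{\alpha,\beta}(\OO(-5)[1])$; this is precisely how the paper reads off $\Ext^2(\OO(-1),E)=\Hom(E,\OO(-5)[1])^{\vee}=0$, and your explicit check at $(\alpha,\beta)=(1,-\tfrac32)$ is an equivalent substitute rather than a necessary repair. (3) Your closing sharpness argument is circular: the sequence $0\to\OO(-1)^{\oplus 6}\to E\to\OO(-2)^{\oplus 3}[1]\to 0$ was derived under the assumption that a tilt-semistable $E$ with $e\ge 3$ exists, so it cannot by itself prove $E(3,0,-3)=3$; existence requires a separate construction (stability of a generic such extension, equivalently non-emptiness of the Kronecker quiver moduli space invoked in Corollary \ref{cor:conjecture_rank_three}), which the paper likewise defers to that corollary.
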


\begin{proof}
Assume that $e \geq 3$. Let $W$ be a wall that intersects the ray $\beta = -1$ induced by a short exact sequence
\[
0 \to F \to E \to G \to 0
\]
with $\ch(F) = (s, x, y, z)$. By Proposition \ref{prop:properties_dest_sequences} on destabilizing sequences we can assume that $\ch_0(F) > 0$ and $\mu(F) < \mu(E)$. The wall $W(E, \OO(-1))$ has center $s(E, \OO(-1)) = -\tfrac{3}{2}$ and radius $\rho(E, \OO(-1)) = \tfrac{1}{2}$. Therefore, we get $s(E, F) < \tfrac{3}{2}$. Moreover, we can compute
\[
\frac{\Delta(E)}{4 \cdot 6 \cdot 3} = \frac{1}{4} = \rho(E, \OO(-1))^2.
\]
Therefore, Proposition \ref{prop:properties_dest_sequences} implies $s \leq 5$. The inequalities $0 < \ch_1^{-1}(F) = x + s < \ch_1^{-1}(E) = 3$, $\mu(F) < \mu(E)$, and $s \leq 5$ lead to the finite list
\[
(s, x) \in \{ (2, -1), (3, -1), (3, -2), (4, -2), (4, -3), (5, -3), (5, -4) \}.
\]
\begin{enumerate}
    \item Let $(s, x) = (2, -1)$. Then Bogomolov's inequality implies $y \leq -\tfrac{1}{2}$, but $s(E, F) < -\tfrac{3}{2}$ implies $y > -\tfrac{1}{2}$.
    \item Let $(s, x) = (3, -1)$. Then Bogomolov's inequality implies $y \leq -\tfrac{1}{2}$ and $s(E, F) < -\tfrac{3}{2}$ implies $s > -\tfrac{3}{2}$. Overall, this means $y = -\tfrac{1}{2}$. By Proposition \ref{prop:rank_three_special_cases} we have $z \leq -\tfrac{1}{6}$. Together with $e \geq 3$ and Theorem \ref{thm:rank_zero} applied to $G$, we get that $e = 3$ and $z = -\tfrac{1}{6}$. We are dealing with the wall described in $(ii)$.
    \item Let $(s, x) = (3, -2)$. Then Bogomolov's inequality implies $y \leq 0$, but$s(E, F) < -\tfrac{3}{2}$ implies $y > 0$.
    \item Let $(s, x) = (4, -2)$. Then Lemma \ref{lem:no_(4, -2, 0)} together with Bogomolov's inequality says $y \leq -1$. On the other hand, $s(E, F) < -\tfrac{3}{2}$ implies $y > -1$.
    \item Let $(s, x) = (4, -3)$. Then Bogomolov's inequality implies $y \leq \tfrac{1}{2}$, but $s(E, F) < -\tfrac{3}{2}$ implies $y > \tfrac{1}{2}$.
    \item Let $(s, x) = (5, -3)$. Then Theorem \ref{thm:li_bound} implies $y \leq -\tfrac{1}{2}$, but $s(E, F) < -\tfrac{3}{2}$ implies $y > -\tfrac{1}{2}$.
    \item Let $(s, x) = (5, -4)$. Then Bogomolov's inequality implies $y \leq 1$, but $s(E, F) < -\tfrac{3}{2}$ implies $y > 1$.
\end{enumerate}
If $E$ is stable below the wall described in (ii), then it is stable below $W(E, \OO(-5)[1])$. Stability implies $\Ext^2(\OO(-1), E) = \Hom(E, \OO(-5)[1])^{\vee}$, and thus, 
\[
\hom(\OO(-1), E) \geq \chi(\OO(-1), E) = e + 3 \geq 6.
\]
The sequence induced by $\OO(-1)^{\oplus 6} \to E$ leads to wall $(i)$.
\end{proof}

\begin{prop}
\label{prop:rank_four_special_cases}
We have $D(4, -3) = -\tfrac{1}{2}$, $D(4, -2) = -1$, $D(4, -1) = -\tfrac{3}{2}$, and $D(4, 0) = 0$. Let $E$ be $\nu_{\alpha, \beta}$-semistable for some $(\alpha, \beta) \in \R_{>0} \times \R$ with $\ch(E) = (4, c, d, e)$.
\begin{enumerate}
    \item Assume that $c = -2$ and $d = -1$. Then $e \leq E(4, -2, -1) = \tfrac{5}{3}$. If $e = E(4, -2, -1)$ and $E$ is tilt-semistable, then $E$ fits into a short exact sequence
    \[
    0 \to \OO(-1)^{\oplus 6} \to E \to \OO(-2)^{\oplus 2}[1] \to 0.
    \]
    
    \item Assume that $c = -1$ and $d = -\tfrac{3}{2}$. Then $e \leq E(4, -1, -\tfrac{3}{2}) = \tfrac{5}{6}$. If $e = E(4, -1, -\tfrac{3}{2})$ and $E$ is tilt-stable, then $E$ fits into a short exact sequence
    \[
    0 \to \Omega(1) \to E \to \II_L \to 0.
    \]
    for a line $L \subset \P^3$.
    
    \item Assume that $c = 0$ and $d = 0$. Then $e \leq E(4, 0, 0) = 0$. If $e = E(4, 0, 0)$, then $E = \OO^{\oplus 4}$.
    
    \item Assume that $c = 0$ and $d = -1$. Then $e \leq E(4, 0, -1) = -2$. If $e = E(4, 0, -1)$, then $E$ fits into a short exact sequence
    \[
    0 \to E \to \OO^{\oplus 4} \to \OO_L(3) \to 0
    \]
    for a line $L \subset \P^3$.
    
    \item Assume that $c = 0$ and $d = -2$. Then $e \leq E(4, 0, -2) = 0$. If $e = E(4, 0, -2)$, then
    $E$ fits into a short exact sequence
    \[
    0 \to \OO(-1)^{\oplus 2} \to \Omega(1)^{\oplus 2} \to E \to \to 0.
    \]
    
    \item Assume that $c = 0$ and $d = -3$. Then $e \leq E(4, 0, -3) = 2$. If $e = E(4, 0, -3)$, then $E$ fits into a short exact sequence
    \[
    0 \to \Omega(1) \to E \to G(1) \to 0
    \]
    where $G \in M^{\alpha, \beta}(1, 0, -3, 5)$ for $(\alpha, \beta) \in W(E, \Omega(1))$. The generic such $G$ is given by $\II_C$ where $C \subset \P^3$ is a twisted cubic.
\end{enumerate}
\end{prop}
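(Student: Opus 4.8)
The plan is to prove the four values of $D$ first and then each of (i)--(vi) by the wall-crossing induction used in Propositions \ref{prop:rank_three_special_cases}, \ref{prop:classification_(3,-1,-5/2)}, and \ref{prop:classification_(3,0,-3)}. For $D$, the Bogomolov inequality gives $D(4,c) \leq c^2/8$, so only the top candidate in each residue class must be excluded. One has $D(4,0) = 0$ by Proposition \ref{prop:function_D}(iv). For $c = -1$ the only candidate above $-\tfrac32$ is $d = -\tfrac12$; a slope-semistable sheaf with $\ch_{\leq 2} = (4,-1,-\tfrac12)$ cannot be strictly slope-semistable, since its Jordan--H\"older factors would all have slope $-\tfrac14$ and hence rank divisible by $4$, so it is slope-stable, therefore $2$-Gieseker-stable and tilt-stable for $\alpha \gg 0$, contradicting Lemma \ref{lem:no_(4, -1, -1/2)}. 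The same reduction together with Lemma \ref{lem:no_(4, -2, 0)} (using $D(2,-1) = -\tfrac12$ to kill the $2+2$ splitting) gives $D(4,-2) = -1$, and then $D(4,-3) = D(4,3) = D(4,-1) + 1 = -\tfrac12$ by Proposition \ref{prop:function_D}(ii),(iii). Existence of sheaves realizing these values comes from the explicit extremal objects in (i), (ii) and from dualizing.

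For each of (i)--(vi) the template is the same. Assume $e$ equals the claimed supremum and classify the actual walls for the fixed $\ch_{\leq 2}$ by running through the finitely many numerical $\ch_{\leq 2}(F) = (s,x,y)$ of a destabilizing subobject: Proposition \ref{prop:properties_dest_sequences} bounds $s$ and constrains $\ch_1^{\beta}(F)$ along the relevant ray, the Bogomolov inequality and Theorem \ref{thm:li_bound} bound $y$ from above, and the non-existence Lemmas \ref{lem:no_(4, -1, -1/2)}, \ref{lem:no_(5, -1, -1/2)}, \ref{lem:no_(4, -2, 0)} remove the borderline cases, exactly as in Lemmas \ref{lem:walls_(4, -1, -1/2)}--\ref{lem:walls_(4, 0, -1)}. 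Once the largest wall and its sequence $0 \to F \to E \to G \to 0$ are identified, additivity $\ch_3(E) = \ch_3(F) + \ch_3(G)$ combined with the maximal-$\ch_3$ results for the lower-rank pieces pins down $e$, and equality forces $F$ and $G$ to be the extremal objects, yielding the stated sequence; a cohomology-vanishing/Euler-characteristic computation then rules out semistable objects below the smallest wall.

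Concretely, (iii) is immediate from $E(4,0,0) = 0$ and Proposition \ref{prop:line_bundles}. For (i) I would assume $e \geq \tfrac53$, show no wall crosses the ray $\beta = -1$ for $\alpha > 0$ (the wall $W(E,\OO(-1))$ has center $-\tfrac32$ and radius $\tfrac12$, touching $\beta=-1$ only at $\alpha=0$), deduce stability along $W(E,\OO(-5)[1])$ and hence $\Ext^2(\OO(-1),E) = \Hom(E,\OO(-5)[1])^{\vee} = 0$, so that $\hom(\OO(-1),E) \geq \chi(\OO(-1),E) = e + \tfrac{13}{3} \geq 6$; the evaluation $\OO(-1)^{\oplus 6} \to E$ then has cokernel with $\ch_{\leq 2} = (-2,4,-4)$, which Proposition \ref{prop:line_bundles} forces to be $\OO(-2)^{\oplus 2}[1]$, forcing $e = \tfrac53$. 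Part (iv) invokes Lemma \ref{lem:walls_(4, 0, -1)} directly to get $H^0(E) = H^2(E) = 0$ and $e \leq -2$, then passes to $E^{\vee\vee} = \OO^{\oplus 4}$ as in Proposition \ref{prop:rank_three_special_cases}(v) with cokernel $\OO_L(3)$. Parts (ii), (v), (vi) all have largest wall $W(E,\Omega(1))$ with $\Omega(1)$ the maximal rank-three object of Proposition \ref{prop:rank_three_special_cases}(ii); the quotients are respectively $\II_L$ (maximal for $(1,0,-1)$ by Theorem \ref{thm:rank_one}), a cokernel forcing the resolution $0 \to \OO(-1)^{\oplus 2} \to \Omega(1)^{\oplus 2} \to E \to 0$ (obtained from $\hom(\Omega(1),E) \geq 2$ and identifying the kernel via Proposition \ref{prop:line_bundles}), and $G(1)$ with $G \in M^{\alpha,\beta}(1,0,-3,5)$ (maximal below its first wall by Proposition \ref{prop:final_model_(1, 0, -3, 5)}).

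The main obstacle is the wall classification, which is heavier than in rank three because $\Delta(E)$ is larger: Proposition \ref{prop:properties_dest_sequences} now permits destabilizing subobjects of rank up to $5$, so the list of numerical triples $(s,x,y)$ to eliminate is long. The whole scheme closes only because the rank-four and rank-five non-existence results remove exactly the borderline walls that the Bogomolov inequality and Theorem \ref{thm:li_bound} cannot; checking that every surviving numerical wall is either empty or on the wrong side of the vertical wall is the delicate, case-heavy step. I expect (v) and (vi), where $\ch_2 = -2, -3$ admits the most competing walls and where the destabilizer $\Omega(1)$ is not a line bundle, to demand the most care in the $\Hom$ and $\Ext^{\geq 1}$ computations.
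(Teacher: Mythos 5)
Your proposal follows the paper's own proof essentially step for step. The $D$-values are obtained exactly as in the paper (Bogomolov bound, then Lemmas \ref{lem:no_(4, -1, -1/2)} and \ref{lem:no_(4, -2, 0)}, then Proposition \ref{prop:function_D} (ii),(iii) for $D(4,-3)$), and you even make explicit the Jordan--H\"older reduction from slope-semistable to slope-stable that the paper leaves implicit. Parts (i), (iii), (iv) coincide with the paper's argument: the same no-wall statement along $\beta=-1$, the same Euler characteristic $\chi(\OO(-1),E)=e+\tfrac{13}{3}\geq 6$, the same use of Proposition \ref{prop:line_bundles}, and for (iv) the same reflexive-hull argument as in Proposition \ref{prop:rank_three_special_cases}. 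Parts (ii) and (vi) are also correctly described: the largest wall is $W(E,\Omega(1))$, the quotients are $\II_L$ resp.\ $G(1)$ with $G\in M^{\alpha,\beta}(1,0,-3,5)$, and destabilization at that wall is forced by $\hom(\Omega(1),E)\geq\chi(\Omega(1),E)>0$ once stability gives $\Ext^2(\Omega(1),E)=\Hom(E,\Omega(-3)[1])^{\vee}=0$.

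The one step that fails as stated is part (v). Your claim that (ii), (v), (vi) "all have largest wall $W(E,\Omega(1))$" is false for $\ch_{\leq 2}(E)=(4,0,-2)$: there the equation $\nu_{\alpha,\beta}(E)=\nu_{\alpha,\beta}(\Omega(1))$ reduces to $2\alpha^{2}+2(\beta+1)^{2}=0$, so this numerical wall degenerates to the single point $(\alpha,\beta)=(0,-1)$ and is not an actual wall; in fact the paper proves there is \emph{no} wall at all left of the vertical wall for this class. Consequently your template --- identify the largest wall, then pin down $e$ by $\ch_3$-additivity along the destabilizing sequence --- has nothing to bite on in (v): there is no destabilizing sequence. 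The bound must come first and from elsewhere: stability left of the vertical wall gives $H^{0}(E)=\Hom(\OO,E)=0$ and $H^{2}(E)=\Hom(E,\OO(-4)[1])^{\vee}=0$, hence $e=\chi(E)\leq 0$; only with $e=0$ in hand does one compute $\chi(\Omega(1),E)=3e+2=2$, deduce $\hom(\Omega(1),E)\geq 2$, and identify the cone of $\Omega(1)^{\oplus 2}\to E$ as $\OO(-1)^{\oplus 2}[1]$, which is your parenthetical mechanism. Run in your order --- resolution first, bound second --- the argument is circular: without $e\leq 0$ the cone has Chern character $(-2,2,-1,e+\tfrac{1}{3})$, Proposition \ref{prop:line_bundles} is not applicable because no wall forces the cone to be semistable, and the heart-decomposition of the cone only yields $e\geq 0$, not $e=0$. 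So (v) requires the separate, wall-free argument of the paper (structurally parallel to your part (iv), not to (ii) and (vi)); with that correction the proposal matches the paper's proof.
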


\begin{proof}
The Bogomolov inequality implies $D(4, -2) \leq 0$, $D(4, -1) \leq -\tfrac{1}{2}$, and $D(4, 0) \leq 0$. By Lemma \ref{lem:no_(4, -1, -1/2)} we get $D(4, -1) \leq -\tfrac{3}{2}$ and by Proposition \ref{prop:function_D} this means $D(4, -3) \leq -\tfrac{1}{2}$. Moreover, Lemma \ref{lem:no_(4, -2, 0)} shows $D(4, -2) \leq -1$.

\begin{enumerate}
    \item Let $c = -2$ and $d = -1$. Assume that $e \geq \tfrac{5}{3}$ and that $E$ is destabilized above the wall $W(E, \OO(-1))$ by a short exact sequence
    \[
    0 \to F \to E \to G \to 0
    \]
    with $\ch^{-1}(F) = (s, x, y, z)$. Since $\ch^{-1}_1(E) = 2$ we must have $x = 1$. By Proposition \ref{prop:properties_dest_sequences} on destabilizing sequences we may assume that $s > 0$ and $\mu(F) < \mu(E) = -\tfrac{1}{2}$. Since $\mu(F) = \frac{1}{s} - 1$, this immediately implies $s \geq 3$. The equation $\nu_{\alpha, -1}(F) = \nu_{\alpha, -1}(E)$ is equivalent to $(s - 2) \alpha^2 = 2y + 1$, i.e., $y > -\tfrac{1}{2}$. Bogomolov's inequality implies $y \leq \tfrac{x^2}{2s} < \tfrac{x^2}{6}$ and there is no $y$ fitting in this small interval.
    
    This means that $E$ has to be semistable below the wall $W(E, \OO(-5)[1])$ that is larger than $W(E, \OO(-1))$. By stability, we get $\Ext^2(\OO(-1), E) = \Hom(E, \OO(-5)[1])^{\vee} = 0$. Therefore, $6 \leq e + \tfrac{13}{3} = \chi(\OO(-1), E) \leq \hom(\OO(-1), E)$. By Proposition \ref{prop:line_bundles}, which characterizes line bundles among semistable objects, the quotient of the map $\OO(-1)^{\oplus 6} \to E$ is $\OO(-2)^{\oplus 2}$.
    
    \item Let $c = -1$ and $d = -\tfrac{3}{2}$ and assume that $e \geq \tfrac{5}{6}$. We compute $s(E, \Omega(1)) = -\tfrac{5}{2}$, $s(E, \Omega(-3)[1]) = - \tfrac{229}{98}$, and $\rho^2(E, \Omega(-3)[1]) = \tfrac{34017}{9604}$. In particular, $W(E, \Omega(1))$ is slightly larger than $W(E, \Omega(-3)[1])$. If $E$ is tilt-stable below $W(E, \Omega(-3)[1])$, then we get the vanishing $\Ext^2(\Omega(1), E) = \Hom(E, \Omega(-3)[1])) = 0$, and the inequality
	\[
    \hom(\Omega(1), E) \geq \chi(\Omega(1), E) = 3e - \tfrac{3}{2} \geq 1.
    \]
    But this means that $E$ was already destabilized at $W(\Omega(1), E)$. We get that $E$ is destabilized above or at the wall $W(E, \Omega(-3)[1])$ by a short exact sequence
    \[
    0 \to F \to E \to G \to 0
    \]
    with $\ch(F) = (s, x, y, z)$. By Proposition \ref{prop:properties_dest_sequences} on destabilizing sequences we may assume $s > 0$ and $\mu(F) < \mu(E) = -\tfrac{1}{4}$. The wall $W(E, \Omega(-3)[1])$ intersects the point
    \[
    (\alpha, \beta) = \left(0, -\frac{229}{98} + \sqrt{\frac{34017}{9604}}\right).
    \]
    This implies
    \[
    \mu(E) \in \left(-\frac{229}{98} + \sqrt{\frac{34017}{9604}}, -\frac{1}{4} \right).
    \]
    Moreover, $\Delta(E) = 13$ and Proposition \ref{prop:properties_dest_sequences} on destabilizing sequences imply $s \leq 4$. This is only possible if $s = 3$ and $x = -1$. In that case $y \leq -\tfrac{1}{2}$, but only the wall for $y = -\tfrac{1}{2}$ is large enough and equal to $W(E, \Omega(1))$.
    
    We get $z \leq -\tfrac{1}{6}$ and Theorem \ref{thm:rank_one} applied to $G$ shows
    \[
    e \leq z + 1 \leq \tfrac{5}{6}.
    \]
    In case of equality, we must $F = \Omega(1)$ and $G = \II_L$ for a line $L \subset \P^3$.
    
    \item Let $c = 0$ and $d = 0$. We have shown in Proposition \ref{prop:function_E} that $E(4, 0, 0) = 0$. If $E$ is slope-semistable with $\ch(E) = (4, 0, 0, 0)$, then by Proposition \ref{prop:line_bundles}, which characterizes line bundles among semistable objects, we get $E = \OO^{\oplus 4}$.
    
    \item Let $c = 0$ and $d = -1$. Assume that $e \geq -2$. By stability we must have the vanishing $H^0(E) = \Hom(\OO, E) = 0$. By Lemma \ref{lem:walls_(4, 0, -1)} there are no walls to the left of the vertical wall for $E$. This means that that $E$ is $\nu_{\alpha, \beta}$-semistable for $(\alpha, \beta)$ strictly below the wall $W(E, \OO(-4)[1])$. Then $H^2(E) = \Hom(E, \OO(-4)[1])^{\vee} = 0$, and therefore, $0 \geq \chi(E) = e + 2$, i.e., $e = -2$.
    
    Again by Lemma \ref{lem:walls_(4, 0, -1)} we know that if $E$ is tilt-semistable, then it has to be a $2$-Gieseker-stable sheaf as well. We have a short exact sequence of sheaves
    \[
    0 \to E \to E^{\vee \vee} \to T \to 0
    \]
    where $T$ is supported in dimension less than or equal to one. If $T$ is supported in dimension zero, then $\ch_{\leq 2}(E^{\vee \vee}) = (4, 0, -1)$ and $\ch_3(E^{\vee \vee}) > -2$, a contradiction to what we have just shown. Therefore, $T$ must be supported in dimension one, and $\ch_{\leq 2}(E^{\vee \vee}) = (4, 0, 0)$. Moreover, $E^{\vee \vee}$ is a slope-semistable reflexive sheaf with $\ch(E^{\vee \vee}) = (4, 0, 0, z)$ for $z \leq 0$. This means $\ch(E^{\vee}) = \ch(E^{\vee \vee \vee}) = (4, 0, 0, z')$ with $0 \leq -z \leq z' \leq 0$, i.e., $z = 0$ and by Proposition \ref{prop:line_bundles}, which characterizes line bundles among semistable objects, we have $\ch(E) = \OO^{\oplus 4}$. This means $\ch(T) = (0, 0, 1, 2)$ and therefore, $T = \OO_L(3)$ for some line $L \subset \P^3$.
    
    \item Let $c = 0$ and $d = -2$. Assume that $e \geq 0$. The equation $\nu_{0, \beta}(E) = 0$ is equivalent to $\beta = \pm 1$. This means all semicircular walls for $\beta < 0$ intersect the line $\beta = -1$. Assume that such a wall is induced by a short exact sequence
    \[
    0 \to F \to E \to G \to 0
    \]
    with $\ch^{-1}_{\leq 2}(F) = (s, x, y)$. Since $\ch^{-1}_1(E) = 4$, we have $x \in \{1, 2, 3\}$. By Proposition \ref{prop:properties_dest_sequences} on destabilizing sequences we may assume that $s > 0$ and $\mu(F) = \tfrac{x}{s} - 1 < \mu(E) = 0$, i.e., $x < s$. This immediately rules out $s = 1$. Bogomolov's inequality says $y \leq \tfrac{x^2}{2s}$. The equation $\nu_{\alpha, -1}(F) = \nu_{\alpha, -1}(E)$ is equivalent to $\alpha^2 (s-x) = y$, i.e., $y > 0$. The only numbers that satisfy all these inequalities are $4 \leq s \leq 9$, $x = 3$, and $y = \tfrac{1}{2}$. In that case, we get $0 \leq \Delta(G) = 5 - s$, i.e., $s \in \{4, 5\}$. For $s = 4$, we have $\ch_{\leq 2}(F) = (4, -1, -\tfrac{1}{2})$, a case ruled out by Lemma \ref{lem:no_(4, -1, -1/2)}. For $s = 5$, we have $\ch_{\leq 2}(F) = (5, -2, 0)$ and such an $F$ does not exist by Theorem \ref{thm:li_bound}. So far we have shown that there is no wall to the left of the vertical wall.
    
    In particular, we know that $E$ is stable along $W(E, \OO(-4)[1])$ and therefore, the vanishing $H^2(E) = \Hom(E, \OO(-4)[1])^{\vee} = 0$ holds. Moreover, stability also implies the fact that $H^0(E) = \Hom(\OO, E) = 0$ and we get $e = \chi(E) \leq 0$, i.e., $e = 0$.
    
    Additionally, we know that $E$ is stable along $W(E, \Omega(-3)[1])$. Therefore, the equalities $\Ext^2(\Omega(1), E) = \Hom(E, \Omega(-3)[1])$ and $\hom(\Omega(1), E) \geq \chi(\Omega(1), E) = 2$ hold. Those maps from $\Omega(1)$ induce a short exact sequence
    \[
    0 \to \Omega(1)^{\oplus 2} \to E \to \OO(-1)[1] \to 0
    \]
    in $\Coh^{-1}(\P^3)$.
    
    \item Let $c = 0$ and $d = -3$. Assume that $e \geq 2$. Assume that there is a strictly bigger wall $W$ for $E$ than $W(E, \Omega(1)$ induced by a short exact sequence
    \[
    0 \to F \to E \to G' \to 0
    \]
    where $\ch(F) = (s, x, y, z)$. By Proposition \ref{prop:properties_dest_sequences} on destabilizing sequences we may assume that $\mu(F) < \mu(E)$ and $s > 0$. Note that the wall $W(E, \Omega(1)$ intersects $(\alpha, \beta) = (0, -\tfrac{1}{2})$. This means that $W$ must intersect the ray $\beta = -\tfrac{1}{2}$. We get $\mu(F) \in (-\tfrac{1}{2}, 0)$. The computation
    \[
    \rho^2(E, \Omega(1)) - \frac{\Delta(F)}{20} = \frac{25}{16} - \frac{6}{5} > 0 
    \]
    and Proposition \ref{prop:properties_dest_sequences} on destabilizing sequences imply that $s \leq 4$. The only two possibilities are $(s, x) = (3, -1)$ and $(s, x) = (4, -1)$
    
    If $(s, x) = (4, -1)$, then Bogomolov's inequality together with Lemma \ref{lem:no_(4, -1, -1/2)} imply $y \leq -\tfrac{3}{2}$. But such a wall is either smaller than $W(E, \Omega(1))$, empty, or on the wrong side of the vertical wall. Therefore, we must have $(s, x) = (3, -1)$, where $y \leq -\tfrac{1}{2}$. The largest such wall occurs for $y = -\tfrac{1}{2}$ and is $W(E, \Omega(1))$. 
    
    Next, we can compute that $W(E, \Omega(-3)[1])$ is larger than $W(E, \Omega(1))$ and stability implies $\Ext^2(\Omega(1), E)) = \Hom(E, \Omega(-3)[1]) = 0$. This yields
    \[
    \hom(\Omega(1), E) \geq \chi(\Omega(1), E) \geq 1.
    \]
    Therefore, we get a destabilizing sequence
    \[
    0 \to \Omega(1) \to E \to G' \to 0.
    \]
    The quotient $G'$ satisfies $\ch(G' \otimes \OO(-1)) = (1, 0, -3, e - 3)$. By Proposition \ref{prop:final_model_(1, 0, -3, 5)}, we get $e - 3 \leq 5$, i.e., $e = 2$. Moreover, the same Proposition also implies the statement about the generic such $G'$. \qedhere
\end{enumerate}
\end{proof}

\begin{prop}
\label{prop:classification_(4,-2,-2)}
Let $E$ be $\nu_{\alpha, \beta}$-semistable with $\ch(E) = (4, -2, -2, e)$. Then the inequality $e \leq E(4, -2, -2) = \tfrac{11}{3}$ holds. Assume that $e = E(4, -2, -2)$. Then there are two walls in tilt stability for such objects $E$.
\begin{enumerate}
    \item There are no semistable objects below the smallest wall $\alpha^2 + (\beta + 2)^2 = 1$ which is induced by short exact sequences of the form
    \[
    0 \to \OO(-1)^{\oplus 5} \to E \to \OO(-3)[1] \to 0.
    \]
    \item The second and largest wall is given by $\alpha^2 + (\beta + 3)^2 = 5$. Let $E$ be strictly $\nu_{\alpha, \beta}$-semistable along this wall. If $E$ is stable below the wall, then it fits into a non-trivial short exact sequence
    \[
    0 \to \II_C \to E \to T(-2) \to 0
    \]
    for a conic $C \subset \P^3$. If $E$ is stable above the wall, then it fits into a non-trivial short-exact sequence
    \[
    0 \to T(-2) \to E \to \II_C \to 0
    \]
    for a conic $C \subset \P^3$.
\end{enumerate}
\end{prop}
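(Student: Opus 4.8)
The plan is to assume $e \geq \tfrac{11}{3}$, to classify every wall lying strictly above the eventual line-bundle wall $W(E,\OO(-1))$, and only then to produce that bottom wall by a cohomology computation. Throughout one has $\Delta(E) = 20$ and $\mu(E) = -\tfrac{1}{2}$, and a direct computation identifies $W(E,\OO(-1))$ with the semicircle $\alpha^2 + (\beta+2)^2 = 1$ of center $-2$. For a destabilizing sequence $0 \to F \to E \to G \to 0$ with $\ch(F) = (s,x,y,z)$, Proposition \ref{prop:properties_dest_sequences} lets me assume $s > 0$ and $\mu(F) < \mu(E)$. Since $\tfrac{\Delta(E)}{4\cdot 5\cdot(5-4)} = 1 = \rho(E,\OO(-1))^2$, the radius estimate of Proposition \ref{prop:properties_dest_sequences}(i) forces $\ch_0(F) \leq 4$ for any wall strictly larger than $W(E,\OO(-1))$. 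Each such wall has center strictly below $-2$, hence meets the ray $\beta = -1$ in its interior; as $\ch_1^{-1}(E) = 2$, part (iii) of the same proposition gives $\ch_1^{-1}(F) = x+s = 1$. Combining $x = 1-s$ with $\mu(F) = \tfrac{1-s}{s} < -\tfrac{1}{2}$ yields $s > 2$, so only $s = 3$ and $s = 4$ survive.

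\textbf{The wall $s=3$.} Here $\ch_{\leq 2}(F) = (3,-2,y)$, and the Bogomolov inequality with integrality gives $y \leq 0$. Computing the center $s(E,F) = -(2y+3)$ and radius $\rho(E,F)^2 = 4y^2 + 10y + 5$ shows that the only semicircular wall left of the vertical line is $y = 0$, of center $-3$ and radius $\sqrt{5}$. Then $\ch(F) = (3,-2,0,z)$ and $\ch(G) = (1,0,-2,e-z)$; Proposition \ref{prop:rank_three_special_cases}(i) gives $z \leq \tfrac{2}{3}$ with equality exactly for $F = T(-2)$, and Theorem \ref{thm:rank_one} gives $e-z \leq 3$ with equality exactly for $G = \II_C$ a plane conic. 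Hence $e \leq \tfrac{11}{3}$, and equality pins down the largest wall and its sequence, the roles of $T(-2)$ and $\II_C$ being exchanged across the wall since $\mu(T(-2)) < \mu(E) < \mu(\II_C)$.

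\textbf{The wall $s=4$ (main obstacle).} Here $x = -3$, and the Bogomolov inequality together with the center constraint $s(E,F) < -2$ leaves only $y = \tfrac{1}{2}$, producing a genuine numerical wall of center $-\tfrac{5}{2}$ lying strictly between the two asserted walls, so it must be excluded. A generalized-Bogomolov bound on $\ch_3(F)$ is too weak here, so I would instead rule out the rank-four subobject itself. Reducing to a tilt-stable constituent of $F$ of class $(4,-3,\tfrac{1}{2})$, then applying the derived dual of Proposition \ref{prop:derived_dual} followed by a twist by $\OO(-1)$, produces a tilt-stable object of class $(4,-1,-\tfrac{1}{2})$, which is impossible by Lemma \ref{lem:no_(4, -1, -1/2)} (and Lemma \ref{lem:walls_(4, -1, -1/2)} guarantees the relevant object is genuinely stable, not merely semistable). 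The hard part is to argue cleanly that an actual center-$(-\tfrac{5}{2})$ wall forces a rank-four tilt-stable constituent, and to control the dimension-zero correction term in Proposition \ref{prop:derived_dual}.

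\textbf{The smallest wall and the global bound.} It remains to treat objects that stay stable below the center-$(-3)$ wall. For such $E$ one checks stability persists below $W(E,\OO(-5)[1])$, whence $\Ext^2(\OO(-1),E) = \Hom(E,\OO(-5)[1])^{\vee} = 0$ and $\hom(\OO(-1),E) \geq \chi(\OO(-1),E) = e + \tfrac{4}{3}$. For $e = \tfrac{11}{3}$ this is $5$, and the induced map $\OO(-1)^{\oplus 5} \to E$ has cokernel of class $(-1,3,-\tfrac{9}{2},\tfrac{9}{2})$, so Proposition \ref{prop:line_bundles} identifies it with $\OO(-3)[1]$, yielding the line-bundle wall of center $-2$. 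A value $e > \tfrac{11}{3}$ would demand a sixth section and a rank $-2$ cokernel excluded by the same proposition, which is how the uniform bound $e \leq \tfrac{11}{3}$ is obtained. Assembling the three steps shows that for $e = \tfrac{11}{3}$ there are exactly the two asserted walls, with the stated destabilizing sequences.
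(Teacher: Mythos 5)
Your proposal follows the paper's proof almost step for step: the reduction via Proposition \ref{prop:properties_dest_sequences} to destabilizers with $(s,x) \in \{(3,-2),(4,-3)\}$, the $s=3$ computation (center $-(2y+3)$, radius squared $4y^2+10y+5$, only $y=0$ surviving left of the vertical wall), the use of Proposition \ref{prop:rank_three_special_cases} and Theorem \ref{thm:rank_one} to force $e = \tfrac{11}{3}$, $F = T(-2)$, $G = \II_C$, and finally the bottom wall via $\hom(\OO(-1),E) \geq \chi(\OO(-1),E) = e + \tfrac{4}{3} \geq 5$ together with Proposition \ref{prop:line_bundles} — all of this is exactly what the paper does, and your numerics check out.

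The one genuine gap is precisely where you flag it: the exclusion of $(s,x,y) = (4,-3,\tfrac{1}{2})$. You identify the correct mechanism (twist and dualize to reach Lemma \ref{lem:no_(4, -1, -1/2)}), but you explicitly leave open both the passage from the tilt-semistable destabilizer to a tilt-stable object of the same class $(4,-3,\tfrac{1}{2})$ (Jordan-H\"older factors need not have that class) and the control of the dimension-zero correction and loss of stability in Proposition \ref{prop:derived_dual}; as written, the only delicate step of the whole argument is therefore not established. The paper disposes of this case in one line by citing Proposition \ref{prop:rank_four_special_cases}, which records $D(4,-3) = -\tfrac{1}{2}$ — a bound that was already derived there from Lemma \ref{lem:no_(4, -1, -1/2)} via Proposition \ref{prop:function_D}, i.e.\ by exactly the twist-and-dualize argument you attempt to redo from scratch. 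So the fix is simply to invoke that proposition, applied to the wall destabilizer, rather than re-prove it. Separately, a small slip at the end: for $e > \tfrac{11}{3}$ the rank $-2$ cokernel of $\OO(-1)^{\oplus 6} \to E$ has class $(-2,4,-5)$, which is not of the form $-\ch_{\leq 2}(\OO(m)^{\oplus 2})$, so Proposition \ref{prop:line_bundles} says nothing about it; it is excluded by the Bogomolov inequality, since $\Delta(-2,4,-5) = -4 < 0$. Alternatively, the five-section argument alone already forces $e + \tfrac{5}{6} = \ch_3(\OO(-3)[1]) = \tfrac{9}{2}$, hence $e = \tfrac{11}{3}$, so the sixth section is not needed for the bound.
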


\begin{proof}
Assume that $e \geq \tfrac{11}{3}$. The all $W(E, \OO(-1))$ has radius $1$ and center $s(E, \OO(-1)) = -2$. Assume that $E$ is destabilized by a wall $W$ above $W(E, \OO(-1))$ induced by a short exact sequence
\[
0 \to F \to E \to G \to 0
\]
with $\ch(F) = (s, x, y, z)$. By Proposition \ref{prop:properties_dest_sequences} on destabilizing sequences we may assume that $s > 0$ and $\mu(F) < \mu(E) = -\tfrac{1}{2}$. Since $W$ is larger than $W(E, \OO(-1))$, we get $\mu(F) \in (-1, -\tfrac{1}{2})$. Since $\Delta(E) = 20$, we can use Proposition \ref{prop:properties_dest_sequences} on destabilizing sequences to see that $s \leq 4$. We are left with $(s, x) = (4, -3)$ and $(s, x) = (3, -2)$.

Assume that $(s, x) = (4, -3)$. By Proposition \ref{prop:rank_four_special_cases} we get $y \leq -\tfrac{1}{2}$. It turns out that the wall is either empty or on the wrong side of the vertical wall. Therefore, we must have $(s, x) = (3, -2)$. If $y \leq -1$, then the wall is either empty or on the wrong side of the vertical wall. Therefore, we are left with $y = 0$. Then Proposition \ref{prop:rank_three_special_cases} says that $z \leq \tfrac{2}{3}$ and equality implies $F = T(-2)$. We can apply Theorem \ref{thm:rank_one} to the quotient $G$ to get $e - \tfrac{2}{3} \leq e - z \leq 3$, i.e., $e = \tfrac{11}{3}$ and thus, $G = \II_C$, where $C$ is a conic.

If $E$ is stable below $W(E, T(-2))$, then it is stable all the way up to $W(E, \OO(-1))$. In particular, it is stable along $W(E, \OO(-5)[1])$ and this implies $\Ext^2(\OO(-1), E) = \Hom(E, \OO(-5)[1]))^{\vee} = 0$. Thus, $\hom(\OO(-1), E) \geq \chi(\OO(-1), E) = e + \tfrac{4}{3} \geq 5$. This implies the remaining wall and that there are no stable objects below.
\end{proof}

\begin{prop}
\label{prop:classification_(4,-1,-5/2)}
Let $E$ be $\nu_{\alpha, \beta}$-semistable with $\ch(E) = (4, -1, -\tfrac{5}{2}, e)$. Then the inequality $e \leq E(4, -1, -\tfrac{5}{2}) = \tfrac{17}{6}$ holds. Assume that $e = E(4, -1, -\tfrac{5}{2})$. Then there are two walls in tilt stability for such objects $E$.
\begin{enumerate}
    \item There are no semistable objects below the smallest wall $\alpha^2 + (\beta + \tfrac{3}{2})^2 = \tfrac{1}{4}$ which is induced by short exact sequences of the form
    \[
    0 \to \OO(-1)^{\oplus 7} \to E \to \OO(-2)^{\oplus 3}[1] \to 0.
    \]
    \item The second and largest wall is given by $\alpha^2 + (\beta + \tfrac{11}{2})^2 = \tfrac{105}{4}$. Let $E$ be strictly $\nu_{\alpha, \beta}$-semistable along this wall. If $E$ is stable below the wall, then it fits into a non-trivial short exact sequence
    \[
    0 \to \II_C \to E \to \Omega(1) \to 0
    \]
    for a conic $C \subset \P^3$. If $E$ is stable above the wall, then it fits into a non-trivial short-exact sequence
    \[
    0 \to \Omega(1) \to E \to \II_C \to 0
    \]
    for a conic $C \subset \P^3$.
\end{enumerate}
\end{prop}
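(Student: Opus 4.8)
The plan is to follow the template of Propositions \ref{prop:classification_(3,-1,-5/2)} and \ref{prop:classification_(4,-2,-2)}: first bound $e$ while identifying the largest wall, and then descend to the smallest wall. Throughout set $\ch(E) = (4, -1, -\tfrac{5}{2}, e)$, so that $\Delta(E) = 21$, $\mu(E) = -\tfrac14$, and $\beta_{-}(E) = \tfrac{-1 - \sqrt{21}}{4}$. Using the standard center formula
\[
s(E, F) = \frac{\ch_2(E)\,\ch_0(F) - \ch_0(E)\,\ch_2(F)}{\ch_1(E)\,\ch_0(F) - \ch_0(E)\,\ch_1(F)}
\]
together with the companion expression for $\rho(E,F)$, one checks by direct substitution that $W(E, \OO(-1)) = W(E, \OO(-2)[1])$ is the semicircle $\alpha^2 + (\beta + \tfrac32)^2 = \tfrac14$, and that $W(E, \Omega(1)) = W(E, \II_C)$, where $\ch(\II_C) = (1,0,-2,3)$ for a conic $C$, is the semicircle $\alpha^2 + (\beta + \tfrac{11}{2})^2 = \tfrac{105}{4}$. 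The two short exact sequences in the statement then have exactly the Chern characters of $E$.

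Assume $e \geq \tfrac{17}{6}$. The first and main step is to classify all walls of radius at least $\tfrac12$, i.e. all walls larger than or equal to $W(E, \OO(-1))$. Write a destabilizing sequence $0 \to F \to E \to G \to 0$ with $\ch(F) = (s, x, y, z)$; by Proposition \ref{prop:properties_dest_sequences} we may assume $s > 0$ and $\mu(F) < -\tfrac14$. For $s \geq 5$, part (i) of that proposition gives $\rho(E,F)^2 \leq \tfrac{21}{4s(s-4)}$, so radius at least $\tfrac12$ forces $s(s-4) \leq 21$ and hence $s \leq 7$; equivalently Theorem \ref{thm:li_bound} bounds $s$ via $\tfrac38 s^2 \leq \Delta(F) \leq 21$. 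Evaluating on the ray $\beta = \beta_{-}(E)$, where $\ch_1^{\beta_{-}}(E) = \sqrt{21}$, the constraint $0 < \ch_1^{\beta_{-}}(F) < \sqrt{21}$ confines $\mu(F)$ to a short interval and leaves finitely many pairs $(s,x)$ with $1 \leq s \leq 7$. For each, the Bogomolov inequality together with the non-existence results Lemma \ref{lem:no_(4, -1, -1/2)}, Lemma \ref{lem:no_(4, -2, 0)}, Lemma \ref{lem:no_(5, -1, -1/2)} and Theorem \ref{thm:li_bound} bounds $y$, and a direct computation of $\rho(E,F)$ shows that every candidate other than $\ch_{\leq 2}(F) = (3, -1, -\tfrac12)$ yields a wall that is empty, lies on the wrong side of the vertical wall, or is no larger than $W(E, \OO(-1))$ (typically the maximal allowed $y$ reproduces $W(E,\OO(-1))$ itself and smaller $y$ gives an empty circle). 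I expect this finite but lengthy elimination to be the principal obstacle; the borderline ranks $s \in \{5,6,7\}$ and the rank-preserving case $s = 4$ with torsion quotient must each be checked by hand.

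The surviving class $\ch_{\leq 2}(F) = (3, -1, -\tfrac12)$ gives $W(E,F) = W(E,\Omega(1))$, the largest wall. Here Proposition \ref{prop:rank_three_special_cases}(ii) yields $z \leq E(3,-1,-\tfrac12) = -\tfrac16$, while the quotient has $\ch(G) = (1,0,-2, e - z)$, so Theorem \ref{thm:rank_one} gives $e - z \leq E(1,0,-2) = 3$. Adding, $e \leq z + 3 \leq \tfrac{17}{6}$, which proves the bound. Equality forces $z = -\tfrac16$ and $e - z = 3$, whence the equality clauses of Proposition \ref{prop:rank_three_special_cases}(ii) and Theorem \ref{thm:rank_one} identify $F = \Omega(1)$ and $G = \II_C$ for a conic $C$. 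Comparing $\nu_{\alpha,\beta}(\Omega(1))$ and $\nu_{\alpha,\beta}(\II_C)$ at a point just off $W(E,\Omega(1))$ shows $\nu_{\alpha,\beta}(\Omega(1)) < \nu_{\alpha,\beta}(\II_C)$ above the wall and the reverse below it; since a stable $E$ must carry its destabilizing factor as the subobject of smaller tilt-slope, this produces the sequence with $\Omega(1)$ as subobject above the wall and the sequence with $\II_C$ as subobject below it, exactly as claimed.

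Finally, suppose $e = \tfrac{17}{6}$ and $E$ is stable below $W(E,\Omega(1))$. Since the second step showed $W(E,\Omega(1))$ is the only wall larger than $W(E,\OO(-1))$, $E$ stays stable throughout the intervening region, in particular along $W(E, \OO(-5)[1])$, which a short computation places strictly inside $W(E,\Omega(1))$. Stability then gives $\Hom(E, \OO(-5)[1]) = 0$, hence $\Ext^2(\OO(-1), E) = 0$ by Serre duality, so
\[
\hom(\OO(-1), E) \geq \chi(\OO(-1), E) = 7.
\]
The induced morphism $\OO(-1)^{\oplus 7} \to E$ has cone $G$ with $\ch_{\leq 2}(G) = -\ch_{\leq 2}(\OO(-2)^{\oplus 3})$, so Proposition \ref{prop:line_bundles} identifies $G \cong \OO(-2)^{\oplus 3}[1]$; this is the smallest wall and the sequence in (i). Below it the subobject $\OO(-1)^{\oplus 7}$ has strictly larger tilt-slope than $E$, so no semistable object with these invariants survives, completing the description of the two walls.
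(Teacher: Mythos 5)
Your proposal is correct and follows essentially the same route as the paper's proof: classify all walls above $W(E, \OO(-1))$ by bounding the rank of a destabilizing subobject via Proposition \ref{prop:properties_dest_sequences} and eliminating candidate pairs $(s,x)$ with Bogomolov-type inequalities and the non-existence lemmas, find that only $\ch_{\leq 2}(F) = (3, -1, -\tfrac{1}{2})$ survives, deduce $e \leq \tfrac{17}{6}$ by combining Proposition \ref{prop:rank_three_special_cases}(ii) on $F$ with Theorem \ref{thm:rank_one} on $G$ (equality forcing $F = \Omega(1)$ and $G = \II_C$ for a conic $C$), and then produce the smallest wall from stability along $W(E, \OO(-5)[1])$, the count $\hom(\OO(-1), E) \geq \chi(\OO(-1), E) = 7$, and Proposition \ref{prop:line_bundles}. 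The only substantive difference is that you sketch rather than execute the finite elimination of the remaining pairs $(s,x)$ — the paper carries this out explicitly for the six pairs allowed by its slightly sharper bound $s \leq 6$ (obtained from the wall having to intersect the rays $\beta = -1$ and $\beta = -2$), each case being a one-line bound on $y$ plus a wall-position check — so your weaker bound $s \leq 7$ and wider slope interval merely add a few routine cases of the same type.
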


\begin{proof}
Assume that $e \geq \tfrac{17}{6}$. Let $E$ be destabilized by a wall $W$ above $W(E, \OO(-1))$ induced by a short exact sequence
\[
0 \to F \to E \to G \to 0
\]
where $\ch(F) = (s, x, y, z)$. By Proposition \ref{prop:properties_dest_sequences} on destabilizing sequences we may assume that $s > 0$ and $\mu(F) < \mu(E) = -\tfrac{1}{4}$. Since $W$ is larger than $W(E, \OO(-1))$, it has to intersect the rays $\beta = -1$ and $\beta = -2$. Therefore, $\ch^{-1}(F) > 0$ and $\ch^{-2}(G) > 0$. Overall, we get
\[
\mu(F) \in \left(-1, \min \left\{-\frac{1}{4}, \frac{7}{s} - 2\right\}\right).
\]
Since $\Delta(E) = 21$ and the wall is larger than $W(E, \OO(-1))$, we can use Proposition \ref{prop:properties_dest_sequences} on destabilizing sequences to see that $s \leq 6$. We are left with
\[
(s, x) \in \{ (2, -1), (3, -1), (3, -2), (4, -2), (4, -3), (5, -4) \}.
\]
We have to rule out all cases except $(s, x) = (3, -1)$. If $(s, x) = (2, -1)$, then Bogomolov's inequality implies $y \leq -\tfrac{1}{2}$ and the wall is smaller than or equal to $W(E, \OO(-1))$. If $(s, x) = (3, -2)$, then Bogomolov's inequality implies $y \leq 0$ and the wall is smaller than or equal to $W(E, \OO(-1))$. If $(s, x) = (4, -2)$, then Bogomolov's inequality and Lemma \ref{lem:no_(4, -2, 0)} imply $y \leq -1$ and the wall is smaller than or equal to $W(E, \OO(-1))$. If $(s, x) = (4, -3)$, then Bogomolov's inequality implies $y \leq \tfrac{1}{2}$ and the wall is smaller than or equal to $W(E, \OO(-1))$. If $(s, x) = (5, -4)$, then Bogomolov's inequality implies $y \leq 1$ and the wall is smaller than or equal to $W(E, \OO(-1))$.

We have shown $(s, x) = (3, -1)$. We have $y \leq -\tfrac{1}{2}$. If the inequality is strict, than the wall is again smaller than or equal to $W(E, \OO(-1))$. Thus, $y = -\tfrac{1}{2}$ and we already know $z \leq -\tfrac{1}{6}$. We can apply Theorem \ref{thm:rank_one} to the quotient $G$ to obtain $e + \tfrac{1}{6} \leq e - z \leq 3$, i.e., $e = \tfrac{17}{6}$, and $z = -\tfrac{1}{6}$. This implies $F = \Omega(1)$ and $G = \II_C$ for a conic $C \subset \P^3$. 

Next, assume that $E$ is stable below $W(E, \Omega(1))$. We have already shown that there is no wall until $W(E, \OO(-1))$. In particular, the object $E$ is stable below $W(E, \OO(-5)[1])$ and this implies $\Ext^2(\OO(-1), E) = \Hom(E, \OO(-5)[1])^{\vee} = 0$, i.e., $\hom(\OO(-1), E) \geq \chi(\OO(-1), E) = 7$. The induced map $\OO(-1)^{\oplus 7} \to E$ leads to the final wall.
\end{proof}

\begin{prop}
\label{prop:classification_(4,0,-4)}
Let $E$ be $\nu_{\alpha, \beta}$-semistable with $\ch(E) = (4, 0, -4, e)$. Then $e \leq E(4, 0, -4) = 4$. Assume that $e = E(4, 0, -4)$. Then there are two walls in tilt stability for such objects $E$.
\begin{enumerate}
    \item There are no semistable objects below the smallest wall $\alpha^2 + (\beta + \tfrac{3}{2})^2 = \tfrac{1}{4}$ which is induced by short exact sequences of the form
    \[
    0 \to \OO(-1)^{\oplus 8} \to E \to \OO(-2)^{\oplus 4}[1] \to 0.
    \]
    \item The second and largest wall is given by $\alpha^2 + (\beta + \tfrac{5}{2})^2 = \tfrac{17}{4}$. Let $E$ be strictly $\nu_{\alpha, \beta}$-semistable along this wall. If $E$ is stable below the wall, then it fits into a non-trivial short exact sequence
    \[
    0 \to G(1) \to E \to \Omega(1) \to 0
    \]
    where $G \in M^{\alpha, \beta - 1}(1, 0, -4, 8)$ for $(\alpha, \beta) \in W(E, \Omega(1))$ or into
    \[
    0 \to \OO_V(-2) \to E \to F \to 0
    \]
    where $F \in M(4, -1, -\tfrac{3}{2}, \tfrac{5}{6})$ and $V \subset \P^3$ is a plane. If $E$ is stable above the wall, then it fits into a non-trivial short-exact sequence
    \[
    0 \to \Omega(1) \to E \to G(1) \to 0
    \]
    where $G \in M^{\alpha, \beta - 1}(1, 0, -4, 8)$ for $(\alpha, \beta) \in W(E, \Omega(1))$.
\end{enumerate}
\end{prop}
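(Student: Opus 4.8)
The plan is to follow the template of Propositions \ref{prop:classification_(4,-1,-5/2)} and \ref{prop:classification_(4,-2,-2)}: assume $e \geq 4$, classify all numerical walls strictly larger than $W(E,\OO(-1))$, and show that the only one carrying a destabilizing sequence is $W(E,\Omega(1))$. First I would record the relevant data: $\Delta(E) = 32$, $\mu(E) = 0$, $\beta_{\pm}(E) = \pm\sqrt{2}$, and the two candidate walls $W(E,\OO(-1))$: $\alpha^2 + (\beta+\tfrac32)^2 = \tfrac14$ and $W(E,\Omega(1))$: $\alpha^2 + (\beta+\tfrac52)^2 = \tfrac{17}{4}$, using $\ch(\Omega(1)) = (3,-1,-\tfrac12,-\tfrac16)$. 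Any wall strictly larger than $W(E,\OO(-1))$ has its two endpoints in $(-\infty,-2)$ and $(-1,0)$, so a destabilizing sequence $0 \to F \to E \to G \to 0$ with $\ch_0(F) = s > 0$ and $\mu(F) < 0$ must satisfy $0 < \ch_1^{-1}(F) < 4$ and $0 < \ch_1^{-2}(F) < 8$; writing $\ch_{\leq 2}(F) = (s,x,y)$ this forces $s \leq 6$ and the finite list $(s,x) \in \{(2,-1),(3,-1),(3,-2),(4,-1),(4,-2),(4,-3),(5,-3),(5,-4),(6,-5)\}$.

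For each pair I would bound $y$ by the Bogomolov inequality together with the parity of $\ch_2$ and the rank-four non-existence results (Lemmas \ref{lem:no_(4,-1,-1/2)} and \ref{lem:no_(4,-2,0)}, the values of $D$ from Proposition \ref{prop:rank_four_special_cases}, and Theorem \ref{thm:li_bound}), and then compare the resulting wall $W(E,F)$, whose center equals $(s+y)/x$, to $-\tfrac32$. All pairs except $(3,-1,-\tfrac12)$ and $(4,-1,-\tfrac32)$ either give a wall smaller than or equal to $W(E,\OO(-1))$ or lie on the wrong side of the vertical wall. The two surviving cases both have center $-\tfrac52$, i.e. both describe $W(E,\Omega(1))$: for $(3,-1,-\tfrac12)$ the subobject is $\Omega(1)$ by Proposition \ref{prop:rank_three_special_cases}(ii), with rank-one quotient $G(1)$, $\ch(G)=(1,0,-4,e+4)$, and applying the sharp rank-one bounds (Propositions \ref{prop:final_model_(1,0,-4,8)} and \ref{prop:rank_one_between_-1_-2}) to $G$, which is semistable below its own largest wall, gives $\ch_3(G) \leq 8$, hence $e \leq 4$; for $(4,-1,-\tfrac32)$ the subobject lies in $M(4,-1,-\tfrac32,\tfrac56)$ and the quotient is $\OO_V(-2)$, so $e \leq E(4,-1,-\tfrac32) + E(0,1,-\tfrac52) = \tfrac56 + \tfrac{19}{6} = 4$. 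Matching sub and quotient on each side of the wall by the usual radius comparison then produces the three displayed sequences.

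The main obstacle is the case $(5,-3,\tfrac12)$, which has no analogue in the lower-rank propositions. Its complementary class is $(-1,3,-\tfrac92)$, which has $\Delta = 0$; by Proposition \ref{prop:line_bundles} a tilt-semistable object in this class must be $\OO(-3)[1]$, so the quotient is forced with $\ch_3 = \tfrac92$ and the candidate wall is the intermediate numerical wall $W(E,\OO(-3)[1])$ of center $-\tfrac{11}{6}$, lying strictly between the two claimed walls. Unlike the rank-three situation in Proposition \ref{prop:classification_(4,0,-3)}, here the discriminant of the quotient does not become negative, so Proposition \ref{prop:properties_dest_sequences} does not exclude it, and neither does Theorem \ref{thm:li_bound} (the interval $[\beta_-(F),\beta_+(F)]$ has length $\tfrac43 > 1$). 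I would exclude it by Riemann--Roch: one computes $\chi(E,\OO(-3)) = 0$, while slope-semistability gives $\Hom(E,\OO(-3)) = \Ext^3(E,\OO(-3)) = 0$, so the existence of a destabilizing quotient $\OO(-3)[1]$, i.e. $\Ext^1(E,\OO(-3)) \neq 0$, is equivalent to $\Ext^2(E,\OO(-3)) = H^1(E(-1))^{\vee} \neq 0$. For the objects stable just below $W(E,\Omega(1))$ one has $0 \to \II_C \to E(-1) \to \Omega \to 0$ with $C$ connected, and the long exact sequence together with $H^1(\Omega^1_{\P^3}) \cong \C$, $H^2(\II_C) \cong H^1(\OO_C) \cong \C$, and the non-splitting of the extension forces $H^1(E(-1)) = 0$; hence $\OO(-3)[1]$ is not a quotient and $W(E,\OO(-3)[1])$ is not an actual wall. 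This is the step I expect to be the most delicate, since it is genuinely sensitive to which of the two families of stable objects one is on.

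Finally, having shown that $W(E,\Omega(1))$ is the unique wall larger than $W(E,\OO(-1))$ and that $e = 4$, I would establish the smallest wall. An object stable in the chamber between $W(E,\OO(-1))$ and $W(E,\Omega(1))$ satisfies, by comparing $\nu_{\alpha,\beta}(E)$ with $\nu_{\alpha,\beta}(\OO(-5)[1])$ at a point just above $W(E,\OO(-1))$, the vanishings $\Hom(E,\OO(-5)[1]) = \Hom(E,\OO(-5)) = 0$; Serre duality then gives $\Ext^2(\OO(-1),E) = \Ext^3(\OO(-1),E) = 0$, so $\hom(\OO(-1),E) \geq \chi(\OO(-1),E) = 8$. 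The induced morphism $\OO(-1)^{\oplus 8} \to E$ has cone $\OO(-2)^{\oplus 4}[1]$ by Proposition \ref{prop:line_bundles}, which is exactly the sequence in (i); this is the smallest wall, and below it there are no semistable objects.
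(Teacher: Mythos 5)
Your overall strategy and almost all of your case analysis coincide with the paper's proof: the finite list of $(s,x)$, the center formula $s(E,F)=(s+y)/x$, the identification of $(3,-1,-\tfrac12)$ and $(4,-1,-\tfrac32)$ as the only surviving destabilizers (both giving the wall with center $-\tfrac52$), the bound $e\le 4$ via the rank-one bounds on the quotient resp.\ via $E(4,-1,-\tfrac32)+E(0,1,-\tfrac52)$, and the Hom-counting argument at $W(E,\OO(-5)[1])$ producing the smallest wall. The genuine problem is your treatment of $(s,x,y)=(5,-3,\tfrac12)$: it is not an obstacle, and your claim that Theorem \ref{thm:li_bound} does not apply to it rests on a miscomputation of $\beta_{\pm}(F)$. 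For $\ch_{\leq 2}(F)=(5,-3,\tfrac12)$ one has $\Delta(F)=9-10\cdot\tfrac12=4$ and
\[
\beta_{\pm}(F)=\frac{\ch_1(F)\pm\sqrt{\Delta(F)}}{\ch_0(F)}=\frac{-3\pm 2}{5},
\]
i.e.\ $\beta_{-}(F)=-1$ and $\beta_{+}(F)=-\tfrac15$: the normalization is by the rank, not by $\ch_1$. Your value $\tfrac43$ for the length of $[\beta_-(F),\beta_+(F)]$ is $2\sqrt{4}/3$, i.e.\ you divided by $|\ch_1(F)|=3$ instead of $\ch_0(F)=5$ (the displayed formula for $\beta_{\pm}$ in Section \ref{sec:stability} does contain this typo, but compare the paper's own computed values, e.g.\ $\beta_-(E)=-\tfrac{\sqrt5+1}{4}$ for $\ch_{\leq2}(E)=(4,-1,-\tfrac12)$). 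So both $\beta_{\pm}(F)$ lie in $[-1,0)$, Theorem \ref{thm:li_bound} applies with $n=-1$ and forces $\Delta(F)\ge\tfrac38\cdot 25=\tfrac{75}{8}$, contradicting $\Delta(F)=4$; the case dies in one line, which is exactly how the paper handles it.

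Because of this, the ad hoc replacement argument you build is both unnecessary and, as written, not a proof. First, the non-splitting of $0\to\II_C\to E(-1)\to\Omega\to 0$ does not imply that the connecting map $H^1(\Omega)\to H^2(\II_C)$ is nonzero: that map is cup product with the extension class under a pairing $\Ext^1(\Omega,\II_C)\to\Hom(H^1(\Omega),H^2(\II_C))$, which has no reason to be injective, so $H^1(E(-1))=0$ is not established. Second, even granting that step, you only treat objects of the first family with $G=\II_C$ the ideal sheaf of a connected curve; by Proposition \ref{prop:final_model_(1, 0, -4, 8)} the space $M^{\alpha,\beta-1}(1,0,-4,8)$ also contains non-sheaf objects (extensions of $\OO(-4)[1]$ by $\OO(-2)^{\oplus 2}$, and along the wall also extensions involving $\II_L(-1)$ and $\OO_V(-3)$), and you say nothing about the second family $0\to\OO_V(-2)\to E\to F\to 0$, about objects that are strictly semistable in the chamber, or about objects that remain semistable above $W(E,\Omega(1))$ (i.e.\ $2$-Gieseker-semistable sheaves), all of which would have to be checked against the intermediate numerical wall $W(E,\OO(-3)[1])$. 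Deleting the entire detour and inserting the Li-bound computation above turns your proposal into the paper's proof.
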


\begin{proof}
Assume that $e \geq 4$. Let $E$ be destabilized by a wall $W$ above $W(E, \OO(-1))$ induced by a short exact sequence
\[
0 \to F \to E \to G' \to 0
\]
where $\ch(F) = (s, x, y, z)$. By Proposition \ref{prop:properties_dest_sequences} on destabilizing sequences we may assume that $s > 0$ and $\mu(F) < \mu(E) = 0$. Note that it is possible that the quotient $G'$ satisfies these inequalities instead of $E$ and we will come back to that possibility. Since $W$ is larger than $W(E, \OO(-1))$, it has intersect the ray $\beta = -1$ and $\beta = -2$. Therefore, $\ch^{-1}(F) > 0$ and $\ch^{-2}(G) > 0$. Overall, we get
\[
\mu(F) \in \left(-1, \min \left\{0, \frac{8}{s} - 2\right\}\right).
\]
Since $\Delta(E) = 32$ and the wall is larger than $W(E, \OO(-1))$, we can use Proposition \ref{prop:properties_dest_sequences} on destabilizing sequences to see that $s \leq 7$. We are left with
\[
(s, x) \in \{ (2, -1), (3, -1), (3, -2), (4, -1), (4, -2), (4, -3), (5, -3), (5, -4), (6, -5) \}.
\]
Next will rule out all cases except $(s, x) = (3, -1)$ and $(s, x) = (4, -1)$. If $(s, x) = (2, -1)$, then Bogomolov's inequality implies $y \leq -\tfrac{1}{2}$ and the wall is smaller than or equal to $W(E, \OO(-1))$. If $(s, x) = (3, -2)$, then Bogomolov's inequality implies $y \leq 0$ and the wall is smaller than or equal to $W(E, \OO(-1))$. If $(s, x) = (4, -2)$, then Bogomolov's inequality and Lemma \ref{lem:no_(4, -2, 0)} imply $y \leq -1$ and the wall is smaller than or equal to $W(E, \OO(-1))$. If $(s, x) = (4, -3)$, then Bogomolov's inequality implies $y \leq \tfrac{1}{2}$ and the wall is smaller than or equal to $W(E, \OO(-1))$. If $(s, x) = (5, -3)$, then Bogomolov's inequality together with Theorem \ref{thm:li_bound} implies $y \leq -\tfrac{1}{2}$ and the wall is smaller than or equal to $W(E, \OO(-1))$. If $(s, x) = (5, -4)$, then Bogomolov's inequality implies $y \leq 1$ and the wall is smaller than or equal to $W(E, \OO(-1))$. If $(s, x) = (6, -5)$, then Bogomolov's inequality implies $y \leq \tfrac{3}{2}$ and the wall is smaller than or equal to $W(E, \OO(-1))$.

Assume that $(s, x) = (4, -1)$. Then by Lemma \ref{lem:no_(4, -1, -1/2)} we know $y \leq -\tfrac{3}{2}$. We must have $y = -\tfrac{3}{2}$, since otherwise the wall is smaller than or equal to $W(E, \OO(-1))$. By Proposition \ref{prop:rank_four_special_cases} we know $z \leq \tfrac{5}{6}$. We can use Theorem \ref{thm:rank_zero} on $G'$ to get $e - \tfrac{5}{6} \leq e - z \leq \tfrac{19}{6}$, i.e., $e = 4$ and $z = \tfrac{5}{6}$. Again by Proposition \ref{prop:rank_four_special_cases} we know that $F$ fits into a short exact sequence
\[
0 \to \Omega(1) \to F \to \II_L \to 0.
\]
It turns out that $W(F, \Omega(1)) = W(E, \Omega(1))$, i.e., $F$ is strictly-semistable along the wall, and the object $E$ is also destabilized by the stable subobject $\Omega(1)$. Therefore, this case is subsumed by the case $(s, x) = (3, -1)$ as we are dealing really with the subobject. If $F$ is instead the quotient, then we get the sequence
\[
0 \to \OO_V(-2) \to E \to F \to 0
\]
where $V \subset \P^3$ is a plane.

Now let $(s, x) = (3, -1)$. Then $y \leq -\tfrac{1}{2}$ and the fact that our wall is larger than $W(E, \OO(-1))$ implies $y = -\tfrac{1}{2}$. By Proposition \ref{prop:rank_three_special_cases} we know $z \leq -\tfrac{1}{6}$. The quotient satisfies 
\[
	\ch(G' \otimes \OO_X(-1)) = (1, 0, -4, e - z + \tfrac{23}{6}).
\]
Proposition \ref{prop:rank_one_between_-1_-2} implies $e + 4 \leq e - z + \tfrac{23}{6} \leq 8$, i.e., $e = 4$ and $z = -\tfrac{1}{6}$. That also leads to $F = \Omega(1)$.

Next, assume that $E$ is stable below $W(E, \Omega(1))$. We have already shown that there is no wall until potentially $W(E, \OO(-1))$. In particular, $E$ is stable along $W(E, \OO(-5)[1])$ and this implies $\Ext^2(\OO(-1), E) = \Hom(E, \OO(-5)[1])^{\vee} = 0$, i.e., $\hom(\OO(-1), E) \geq \chi(\OO(-1), E) = e + 4 \geq 8$. The induced map $\OO(-1)^{\oplus 8} \to E$ leads to the final wall.
\end{proof}


\section{Rank three and the third Chern character}
\label{sec:rank_three}

\begin{thm}
\label{thm:rank_three}
We have equalities $E(3, -2, d) = \tfrac{1}{2}d^2 - \tfrac{3}{2}d + \tfrac{2}{3}$ for $d \leq 0$, $E(3, -1, -\tfrac{1}{2}) = -\tfrac{1}{6}$, $E(3, -1, d) = \tfrac{1}{2}d^2 + \tfrac{17}{24}$ for $d \leq -\tfrac{3}{2}$, $E(3, 0, 0) = 0$, $E(3, 0, -1) = -1$, and $E(3, 0, d) = \tfrac{1}{2}d^2 + \tfrac{1}{2}d$ for $d \leq -2$. The same bounds hold for tilt-semistable objects.
\end{thm}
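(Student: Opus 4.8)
The plan is to prove, for every tilt-semistable object $E$ with $\ch_{\le 2}(E) = (3,c,d)$, the upper bound $\ch_3(E) \le B(c,d)$, where $B(c,d)$ denotes the claimed quadratic in $d$, and then to establish sharpness by exhibiting extremal objects. By Proposition \ref{prop:large_volume_limit} an object is $2$-Gieseker-semistable precisely when it is tilt-semistable above its largest wall, so it suffices to argue entirely in tilt stability; moreover the translation formula of Proposition \ref{prop:function_E}(ii) lets me extend the definition of $B(c,d)$ to arbitrary $c$ by twisting with line bundles, and I will use these twisted values freely for destabilizing factors even though the statement only records the residues $c \in \{-2,-1,0\}$. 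I will run the induction on $\Delta(E) = c^2 - 6d$, which for fixed $c$ is the same as a downward induction on $d$, but which also behaves well across the three residues since twisting preserves $\Delta$. The base of the induction is supplied by the classes already treated directly: $E(3,-2,0)$, $E(3,-1,-\tfrac12)$, $E(3,-1,-\tfrac32)$, $E(3,0,0)$, $E(3,0,-1)$, $E(3,0,-2)$ from Proposition \ref{prop:rank_three_special_cases}, together with $E(3,-1,-\tfrac52)$ and $E(3,0,-3)$ from Propositions \ref{prop:classification_(3,-1,-5/2)} and \ref{prop:classification_(3,0,-3)}; a direct check confirms each matches the claimed formula.

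For the inductive step I take a tilt-semistable $E$ with $\ch_{\le 2}(E) = (3,c,d)$ and examine its largest wall $W$, along which there is a destabilizing sequence $0 \to F \to E \to G \to 0$ with $W = W(E,F)$. I then enumerate the finitely many numerically admissible classes $\ch_{\le 2}(F) = (s,x,y)$ using the now-standard constraints: $\Delta(F) \ge 0$ and $\Delta(G) \ge 0$, the inequality $\Delta(F) + \Delta(G) \le \Delta(E)$ of Proposition \ref{prop:properties_dest_sequences}(ii), the bounds $0 \le \ch_1^{\beta}(F) \le \ch_1^{\beta}(E)$ on $W$ together with $\mu(F) \le \mu(E)$ from Proposition \ref{prop:properties_dest_sequences}(iii) and (v), and Theorem \ref{thm:li_bound} to discard subobjects of large rank. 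This is exactly the bookkeeping carried out in the wall-analysis lemmas and classification propositions of the previous section. For each surviving class I bound $\ch_3(F)$ and $\ch_3(G)$ by the appropriate established result: Theorem \ref{thm:rank_zero} when a factor has rank zero, Theorem \ref{thm:rank_one} or Proposition \ref{prop:rank_one_between_-1_-2} in rank one (the latter when a rank-one factor is only semistable between two line-bundle walls), Theorem \ref{thm:rank_two} in rank two, and the inductive hypothesis in rank three. Additivity $\ch_3(E) = \ch_3(F) + \ch_3(G)$ then yields $\ch_3(E) \le B(c,d)$. The induction is well founded because by Proposition \ref{prop:properties_dest_sequences}(ii) each factor with nonzero $\ch_{\le 2}$ satisfies $\Delta(F) < \Delta(E)$ strictly, so every rank-three factor is covered by the inductive hypothesis after reducing its first Chern character modulo three.

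The main obstacle is precisely this case enumeration. For each $(c,d)$ I must check that every numerically possible destabilizing class either produces a wall that is empty, or smaller than the genuine largest wall, or lies on the wrong side of the vertical wall, or else leads to a bound no larger than $B(c,d)$; and I must track the interdependence between the three residues of $c$. Concretely, the largest wall of $(3,0,-3)$ is governed by $\Omega(1)$, i.e. by the class $(3,-1,-\tfrac12)$, and that of $(3,-1,-\tfrac52)$ by $T(-2)$, i.e. by $(3,-2,0)$, so the three families cannot be treated in isolation; the $\Delta$-induction is what makes this interleaving legitimate. Verifying that equality forces the factors at the extremal wall to be exactly the predicted objects (ideal sheaves of plane or space curves, the twists $T(-2)$ and $\Omega(1)$, and structure sheaves $\OO_V$ of planes) is where the bulk of the computation lies.

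Finally, for sharpness I reverse the analysis: for each $(c,d)$ the extremal destabilizing sequence identified above produces a tilt-semistable — indeed Gieseker-semistable — sheaf $E$ with $\ch_{\le 2}(E) = (3,c,d)$ whose $\ch_3$ equals $B(c,d)$, so the supremum is attained. Combined with Proposition \ref{prop:function_E}(iv), which lowers $\ch_3$ freely once one extremal object exists, this gives the stated equalities for $E(3,c,d)$ and, since the entire argument was conducted in tilt stability, shows that the same bounds hold for all tilt-semistable objects.
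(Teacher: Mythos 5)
There is a genuine gap: your inductive step never invokes the generalized Bogomolov inequality $Q_{\alpha,\beta}(E) \geq 0$ (the theorem of Macr\`i stated in Section \ref{sec:stability}), and that inequality is the engine that makes the paper's induction close. It enters in three essential ways that none of your listed constraints can replace. First, you begin by passing to ``the largest wall'' of $E$, which silently assumes an actual destabilizing wall exists; if $E$ with $e$ above the claimed bound were tilt-semistable in the entire region below its large-volume chamber, your argument would produce no bound at all, whereas the paper computes $\rho_Q^2(E) > 0$ to force destabilization along a semicircular wall (semistable objects cannot survive inside the semidisk where $Q < 0$). Second, your rank bound via Theorem \ref{thm:li_bound} combined with $\Delta(F) \leq \Delta(E)$ only yields $\ch_0(F) \lesssim \sqrt{8\Delta(E)/3}$, which is unbounded as $d \to -\infty$; the paper instead shows $\rho_Q^2(E) - \tfrac{\Delta(E)}{16} > 0$ and applies Proposition \ref{prop:properties_dest_sequences}(i) to the containment $W(E,F) \supseteq W_Q(E)$, giving the uniform bound $\ch_0(F) \leq 3$ for all $d$. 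Third, and most critically, the containment of walls is what pins down $\ch_2(F)$: the paper's inequality $s(E,F) \leq s_Q(E)$ confines $y$ to a narrow window near $0$ (of width roughly $|d|/12$ in the case $c=-2$), and only on that window does the parabola-maximum argument return the claimed bound.

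To see the failure concretely, take $c = -2$, $d \ll 0$, and a rank-one destabilizer $\ch(F) = (1,0,y,z)\cdot\ch(\OO(-1))$. Your constraints ($\Delta(F), \Delta(G) \geq 0$, $\Delta(F)+\Delta(G) \leq \Delta(E)$, the $\ch_1^{\beta}$ and slope inequalities) allow $y$ as negative as roughly $3d$. Additivity of $\ch_3$ together with Theorem \ref{thm:rank_one} on $F$ and Theorem \ref{thm:rank_two} on $G$ gives
\[
e \leq \tfrac{1}{2}d^2 - dy + y^2 - \tfrac{3}{2}d + \tfrac{2}{3},
\]
a parabola in $y$ whose maximum over your admissible range occurs at the left endpoint $y \approx 3d$, where it is of order $\tfrac{13}{2}d^2$ --- far above the claimed $\tfrac{1}{2}d^2 - \tfrac{3}{2}d + \tfrac{2}{3}$. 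So the enumeration you describe terminates, but with bounds quadratically weaker than the theorem, and the induction cannot close. (The paper's alternative mechanism for ruling out small walls in the base cases, the Euler-characteristic and $\Hom$-vanishing tricks of Section \ref{sec:special_cases}, is likewise absent from your inductive step, so neither available engine is engaged.) A correct repair is exactly the paper's: when $e$ exceeds the claimed bound, compute $\rho_Q^2(E) - \tfrac{\Delta(E)}{16} > 0$ and $Q_{0,\beta_0}(E) < 0$ at a suitable $\beta_0$, then use Proposition \ref{prop:structure_walls}(vi) and Proposition \ref{prop:properties_dest_sequences} to force a destabilizing wall outside $W_Q(E)$, bound $\ch_0(F) \leq 3$ and $\ch_1(F)$, and extract the lower bound on $y$ from $s(E,F) \leq s_Q(E)$ before running your factor-wise $\ch_3$ estimates. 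The remainder of your outline (base cases, $\Delta$-induction across the three residues of $c$, twisting, and sharpness via the extremal extensions plus Proposition \ref{prop:function_E}) matches the paper and is sound.
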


The bounds on $\ch_3$ in the above theorem do not hold for sheaves $E$ that are slope-semistable, but not $2$-Gieseker-semistable. For example, let $C$ be a plane degree $-d$ curve. Then $\OO \oplus \II_C$ is slope-semistable, but not $2$-Gieseker-semistable with Chern character $(3, 0, d, \tfrac{1}{2}d^2 - \tfrac{1}{2}d)$.

\begin{lem}
\label{lem:rank_three_classification_general}
Let $E$ be $\nu_{\alpha, \beta}$-semistable for some $(\alpha, \beta) \in \R_{\geq 0} \times \R$ with $\ch(E) = (3, c, d, e)$.
\begin{enumerate}
    \item Let $c = -2$ and $d \leq -1$. Then $e \leq \tfrac{1}{2} d^2 - \tfrac{3}{2} d + \tfrac{2}{3}$. In case of equality $E$ is destabilized by a short exact sequence
    \[
    0 \to \OO(-1)^{\oplus 3} \to E \to \OO_V(d - 1) \to 0.
    \]
    \item Let $c = -1$ and $d \leq -\tfrac{7}{2}$. Then $e \leq \tfrac{1}{2} d^2 + \tfrac{17}{24}$. In case of equality $E$ is destabilized by one of the following short exact sequences
    \begin{align*}
    0 \to F \to &E \to \II_C \to 0, \\
    0 \to \II_C \to &E \to F \to 0
    \end{align*}
    where $F \in M(2, -1, -\tfrac{1}{2}, \tfrac{5}{6})$ and $C$ is a plane curve of degree $- d - \tfrac{1}{2}$, or
    \[
    0 \to T(-2) \to E \to \OO_V\left(d + \frac{1}{2}\right) \to 0
    \]
    where $V \subset \P^3$ is a plane.
    \item Let $c = 0$ and $d \leq - 4$. Then $e \leq \tfrac{1}{2}d^2 + \tfrac{1}{2}d$. In case of equality $E$ is destabilized by a short exact sequence
    \[
    0 \to \Omega(1) \to E \to \OO_V(d + 1) \to 0.
    \]
\end{enumerate}
\end{lem}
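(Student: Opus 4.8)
The plan is to prove all three parts simultaneously by induction on the discriminant $\Delta(E)$ (for fixed rank three; note that $\Delta = c^2 - 6d$ grows as $-d$ grows), feeding the sharp lower-rank bounds of Theorems \ref{thm:rank_one}, \ref{thm:rank_two}, and \ref{thm:rank_zero} into the additivity $\ch_3(E) = \ch_3(F) + \ch_3(G)$. The base cases are supplied by the previous section: Proposition \ref{prop:rank_three_special_cases} covers the small values $E(3,-2,0)$, $E(3,-1,-\tfrac{1}{2})$, $E(3,-1,-\tfrac{3}{2})$, $E(3,0,0)$, $E(3,0,-1)$, and $E(3,0,-2)$, while Propositions \ref{prop:classification_(3,-1,-5/2)} and \ref{prop:classification_(3,0,-3)} anchor $E(3,-1,-\tfrac{5}{2})$ and $E(3,0,-3)$. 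Since all three values of $c$ occur among the destabilizing pieces, it is essential to run the induction over the three statements at once.

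For the inductive step, fix a $\nu_{\alpha,\beta}$-semistable $E$ with $\ch(E) = (3,c,d,e)$. By Proposition \ref{prop:large_volume_limit} the object $E$ is $2$-Gieseker-semistable above its largest wall, and along the largest actual wall $W$ it is destabilized by a short exact sequence
\[
0 \to F \to E \to G \to 0,
\]
which by Proposition \ref{prop:properties_dest_sequences}(v) I normalize so that $\ch_0(F) > 0$ and $\mu(F) \le \mu(E)$. First I would bound $\ch_0(F)$: for subobjects of rank larger than three the radius inequality of Proposition \ref{prop:properties_dest_sequences}(i) combined with Theorem \ref{thm:li_bound} (which forces $\Delta(F)$ to grow quadratically in $\ch_0(F)$) contradicts the requirement that $W$ be larger than a reference wall such as $W(E,\OO(-1))$, exactly as in Lemmas \ref{lem:walls_(3, 0, -1)} and \ref{lem:walls_(4, 0, -1)}. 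This confines $\ch_0(F)$ to a short finite list. Evaluating $0 \le \ch_1^\beta(F) \le \ch_1^\beta(E)$ from Proposition \ref{prop:properties_dest_sequences}(iii) along the ray through the apex of this reference wall then pins down $\ch_1(F)$, and the Bogomolov inequality bounds $\ch_2(F)$; numerical types whose wall is empty or lies on the wrong side of the vertical wall are discarded.

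For each surviving type the pieces $F$ and $G$ are sharply controlled: rank-zero pieces by Theorem \ref{thm:rank_zero}, rank-one pieces by Theorem \ref{thm:rank_one}, rank-two pieces by Theorem \ref{thm:rank_two}, and the positive-rank three pieces (whose discriminant is strictly smaller than $\Delta(E)$) by the induction hypothesis together with Proposition \ref{prop:rank_three_special_cases}. Summing $\ch_3(F) + \ch_3(G)$ shows in every case that $e$ is at most the claimed value of $E(3,c,d)$. Tracking when all the intermediate inequalities are simultaneously sharp isolates exactly the destabilizing configurations in the statement: $F = \OO(-1)^{\oplus 3}$ with $G = \OO_V(d-1)$ in part (i); the alternatives built from $M(2,-1,-\tfrac{1}{2},\tfrac{5}{6})$, $\II_C$, and $T(-2)$ in part (ii); and $F = \Omega(1)$ with $G = \OO_V(d+1)$ in part (iii).

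The main obstacle is the case analysis of the middle step. One must enumerate every numerical destabilizing type whose wall could exceed $W(E,\OO(-1))$ and, for each, either rule it out geometrically or verify that the combined $\ch_3$-bound does not beat the target, with equality only in the stated configurations. Part (ii) is the most delicate, because the extremal third Chern character is attained by more than one destabilizing sequence; this forces a separate treatment of the subobject-versus-quotient alternatives and of the $T(-2)$ branch, together with a comparison of competing wall radii (in the spirit of the $f = c/2$ discussion in Theorem \ref{thm:rank_zero}) confirming that these candidates all coincide along the single largest wall.
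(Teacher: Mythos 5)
Your skeleton coincides with the paper's: induction on $\Delta(E)$ over all three parts at once, with the Section \ref{sec:special_cases} propositions as base cases, a destabilizing sequence normalized via Proposition \ref{prop:properties_dest_sequences}(v), sharp lower-rank bounds fed into $\ch_3(E)=\ch_3(F)+\ch_3(G)$, and a parabola-in-$\ch_2(F)$ maximization to isolate the extremal configurations. What is missing is the engine that makes every one of these steps run, namely the generalized Bogomolov inequality. The paper's proof never uses Theorem \ref{thm:li_bound} here; instead, assuming $e$ is at least the claimed value, it computes $\rho_Q^2(E)-\tfrac{\Delta(E)}{16}>0$ and $Q_{0,\beta_0}(E)<0$ for a suitable $\beta_0$. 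This single device does four jobs: (a) it proves a destabilizing semicircular wall \emph{exists} (an object semistable down to $\alpha\to 0$ would violate $Q_{\alpha,\beta}(E)\geq 0$ inside $W_Q(E)$); (b) it bounds the rank, since the destabilizing wall must enclose the $Q$-disk, so $\rho(E,F)^2\geq\rho_Q^2(E)>\tfrac{\Delta(E)}{16}$, which contradicts Proposition \ref{prop:properties_dest_sequences}(i) for $\ch_0(F)\geq 4$; (c) it forces the wall to cross the ray $\beta=\beta_0$, pinning down $\ch_1(F)$; and (d) via $s(E,F)\leq s_Q(E)$ it gives the \emph{lower} bound on $y=\ch_2(F)$ that confines $y$ to an interval on which the parabola attains the claimed bound only at the stated configurations. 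Your proposal contains no substitute for (a), (b), or (d).

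Concretely, two of your assertions fail. First, ``along the largest actual wall $W$ it is destabilized'' presupposes that a wall exists; nothing in your sketch rules out an $E$ with $e$ above the claimed bound that is tilt-semistable everywhere, and for such an $E$ your argument says nothing. Second, your rank bound does not transfer from Lemmas \ref{lem:walls_(3, 0, -1)} and \ref{lem:walls_(4, 0, -1)}: there the contradiction is $\tfrac{3}{8}s^2\leq\Delta(F)\leq\Delta(E)$ with $\Delta(E)\leq 8$ a fixed small constant, whereas here $\Delta(E)=c^2-6d$ is unbounded, so Theorem \ref{thm:li_bound} together with Proposition \ref{prop:properties_dest_sequences}(ii) only yields $\ch_0(F)=O(\sqrt{-d})$, not a finite list. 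The alternative route through Proposition \ref{prop:properties_dest_sequences}(i) hinges on your ``requirement that $W$ be larger than a reference wall such as $W(E,\OO(-1))$'' --- but that requirement is nowhere established; it is essentially a weak form of the conclusion, and the paper's $Q$-disk containment is exactly what replaces it. One could try to force a wall of at least this size by an Euler characteristic argument ($\chi(\OO(-1),E)>0$ together with the $\Ext^2$-vanishing coming from stability along $W(E,\OO(-5)[1])$, as the paper does for the special cases of Section \ref{sec:special_cases}), but no such argument appears in your proposal, and it requires wall comparisons you have not made. Until (a), (b) and (d) are supplied, neither the inequality $e\leq E(3,c,d)$ nor the classification at equality follows from your outline.
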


\begin{proof}
The proof is by induction on $\Delta(E)$. The start of the induction are not the cases in this statement, but the special cases established in Section \ref{sec:special_cases}. Assume that $e$ is larger than or equal to the claimed bound. We will deal with the situation case by case.
\begin{enumerate}
    \item If $c = -2$ and $d \leq -1$, then $e \geq \tfrac{1}{2} d^2 - \tfrac{3}{2} d + \tfrac{2}{3}$.
    A straightforward computation implies
    \begin{align*}
    \rho^2_Q(E) - \frac{\Delta(E)}{16} &= \frac{24d^3 - 12d^2 + 108de + 81e^2 - 48e}{(6d - 4)^2} + \frac{3d}{8} - \frac{1}{4} \\
    &\geq \frac{(81d^2 - 120d + 45)d^2}{(12d - 8)^2} > 0.
    \end{align*}
    This implies two things. Firstly, $\rho^2_Q(E) > 0$ means that $E$ has to be destabilized along a semicircular wall. Assume this wall is induced by a short exact sequence
    \[
    0 \to F \to E \to G \to 0.
    \]
    By Proposition \ref{prop:properties_dest_sequences} on destabilizing sequences we can assume that both $\ch_0(F) \geq 1$ and $\mu(F) < \mu(E)$. Secondly, Proposition \ref{prop:properties_dest_sequences} also implies $\ch_0(F) \in [1, 3]$. We can compute
    \[
    Q_{0, -\tfrac{4}{3}}(E) = 4d^2 - 16d - 12e + \frac{64}{9} \leq -2d^2 + 2d - \frac{8}{9} < 0.
    \]
    By definition of $\Coh^{\beta}(\P^3)$ this implies $\ch^{-4/3}(F) > 0$ and together with $\mu(F) < \mu(E)$ we obtain
    \begin{equation} \label{eq:(3, -2, d)_x_bound}
        -\frac{4}{3} \ch_0(F) < \ch_1(F) < -\frac{2}{3} \ch_0(F).
    \end{equation}
    \begin{enumerate}
        \item If $\ch_0(F) = 1$, then \eqref{eq:(3, -2, d)_x_bound} implies $\ch_1(F) = -1$.
        Assume that 
        \[
        	\ch(F) = (1, 0, y, z) \cdot \ch(\OO(-1)).
        \]
        Then
        \[
        0 \geq s(E, F) - s_Q(E) \geq \left(d - 3y - \frac{3}{2}\right) - \frac{9d^2 - 23d + 12}{12d - 8}
        \]
        which implies
        \[
        \frac{d^2 - d}{12d - 8} \leq y \leq 0.
        \]
        We can use the bound from Theorem \ref{thm:rank_one} on $F$ and the bound from \ref{thm:rank_two} on $G$ to obtain
        \[
        e \leq \frac{1}{2}d^2 - dy + y^2 - \frac{3}{2}d + \frac{2}{3}.
        \]
        This is a parabola in $y$ with minimum at $y = \tfrac{1}{2}d$ which is smaller than our range of allowed $y$. Therefore, the maximum occurs at $y = 0$, i.e., 
        \[
        e = \frac{1}{2} d^2 - \frac{3}{2} d + \frac{2}{3}.
        \]
        In this case, $F = \OO(-1)$, $\ch_0(G) = (2, -1, d - \tfrac{1}{2}, E(2, -1, d - \tfrac{1}{2}))$, and by Theorem \ref{thm:rank_two} this object $G$ has further maps from $\OO(-1)$ that factor through $E$. This means $G$ is strictly-semistable and this case will be subsumed by the following cases.
        
        \item If $\ch_0(F) = 2$, then \eqref{eq:(3, -2, d)_x_bound} implies $\ch_1(F) = -2$.
        Assume that 
        \[
        	\ch(F) = (2, 0, y, z) \cdot \ch(\OO(-1)).
        \]        
        Then
        \[
        0 \geq s(E, F) - s_Q(E) \geq \left(d - \frac{3}{2}y - \frac{3}{2}\right) - \frac{9d^2 - 23d + 12}{12d - 8}
        \]
        which implies
        \[
        \frac{d^2 - d}{6d - 4} \leq y \leq 0.
        \]
        If $y \leq -1$, then we use the bound from Theorem \ref{thm:rank_two} one $F$ and the bound from \ref{thm:rank_one} on $G$ to obtain
        \[
        e \leq \frac{1}{2}d^2 - dy + y^2 - \frac{3}{2}d + \frac{5}{3}.
        \]
        As in the previous case, this parabola in $y$ is increasing in our range of $y$ and thus, the maximum occurs at $y = -1$ where we get
        \[
        e = \frac{1}{2} d^2 - \frac{1}{2} d + \frac{5}{3} \leq \frac{1}{2} d^2 - \frac{3}{2} d + \frac{2}{3}.
        \]
        If $y = 0$, then we also use Theorem \ref{thm:rank_two} and Theorem \ref{thm:rank_one} to get
        \[
        e = \frac{1}{2} d^2 - \frac{3}{2} d + \frac{2}{3}.
        \]
        In this case, $F = \OO(-1)^{\oplus 2}$ and the quotient satisfies $\ch(G) = (1, 0, d - 1, E(2, -1, d - 1))$ and by Theorem \ref{thm:rank_one} this object $G$ has a further map from $\OO(-1)$ that factors through $E$. Thus, $G$ is strictly-semistable and this case will be subsumed by the final case.
        
        \item If $\ch_0(F) = 3$, then \eqref{eq:(3, -2, d)_x_bound} implies $\ch_1(F) = -3$.
        Assume that 
        \[
        	\ch(F) = (3, 0, y, z) \cdot \ch(\OO(-1)).
        \]
        By induction, we have $z \leq \tfrac{1}{2}y^2 + \tfrac{1}{2}y$. Note that this bound is not sharp for $y = -1$, but nonetheless true. Then
        \[
        0 \geq s(E, F) - s_Q(E) \geq \left(d - y - \frac{3}{2}\right) - \frac{9d^2 - 23d + 12}{12d - 8}
        \]
        which implies
        \[
        \frac{3d^2 - 3d}{12d - 8} \leq y \leq 0.
        \]
        Using the bound from Theorem \ref{thm:rank_zero} on $G$ implies
        \[
        e \leq \frac{1}{2}d^2 - dy + y^2 - \frac{3}{2}d + \frac{2}{3}.
        \]
        As in the previous two cases the maximum occurs at $y = 0$ and
        \[
        e = \frac{1}{2} d^2 - \frac{3}{2} d + \frac{2}{3}.
        \]
        In particular, $F = \OO(-1)^{\oplus 3}$ and we obtain the sequence
        \[
        0 \to \OO(-1)^{\oplus 3} \to E \to \OO_V(d - 1) \to 0.
        \]
    \end{enumerate}
    
    \item If $c = -1$ and $d \leq -\tfrac{7}{2}$, then $e \geq \tfrac{1}{2} d^2 + \tfrac{17}{24}$. A straightforward computation implies
    \begin{align*}
    \rho^2_Q(E) - \frac{\Delta(E)}{16} &= \frac{24d^3 - 3d^2 + 54de + 81e^2 - 6e}{(6d - 1)^2} + \frac{3d}{8} - \frac{1}{16} \\
    &\geq \frac{(216d^3 + 148d^2 + 106d + 155)(6d + 15)}{(48d - 8)^2} > 0.
    \end{align*}
    This implies two things. Firstly, $\rho^2_Q(E) > 0$ means that $E$ has to be destabilized along a semicircular wall. Assume this wall is induced by a short exact sequence
    \[
    0 \to F \to E \to G \to 0.
    \]
    By Proposition \ref{prop:properties_dest_sequences} on destabilizing sequences we can assume that both $\ch_0(F) \geq 1$ and $\mu(F) < \mu(E)$. Secondly, Proposition \ref{prop:properties_dest_sequences} implies $\ch_0(F) \in [1, 3]$. We can compute
    \[
    Q_{0, -1}(E) = 4d^2 - 8d - 12e + 1 \leq -\frac{1}{2} (2d + 5)(2d + 3) < 0.
    \]
    Therefore, $\ch^{-1}(F) > 0$ and together with $\mu(F) < \mu(E)$ we obtain
    \begin{equation} \label{eq:(3, -1, d)_x_bound}
        - \ch_0(F) < \ch_1(F) < -\tfrac{1}{3} \ch_0(F).
    \end{equation}
    In particular, this is impossible if $\ch_0(F) = 1$.
    \begin{enumerate}
        \item If $\ch_0(F) = 2$, then \eqref{eq:(3, -1, d)_x_bound} implies $\ch_1(F) = -1$. Let $\ch(F) = (2, -1, y, z)$. Then
        \[
        0 \geq s(E, F) - s_Q(E) \geq (2d - 3y) - \frac{36d^2 + 8d + 51}{48d - 8}
        \]
        which implies
        \[
        \frac{20d^2 - 8d - 17}{48d - 8} \leq y \leq -\frac{1}{2}.
        \]
        We can use the bound from Theorem \ref{thm:rank_two} on $F$ and the bound from Theorem \ref{thm:rank_one} on $G$ to obtain
        \[
        e \leq \frac{1}{2}d^2 - dy + y^2 - \frac{1}{2}d - \frac{1}{2}y + \frac{5}{24}.
        \]
        This is a parabola in $y$ with minimum at $y = \tfrac{1}{2}d + \tfrac{1}{4}$ which is smaller than our range $y$. Therefore, the maximum occurs at $y = -\tfrac{1}{2}$, i.e., 
        \[
        e = \frac{1}{2} d^2 + \frac{17}{24}.
        \]
        In this case, we have $F \in M(2, -1, -\tfrac{1}{2}, \tfrac{5}{6})$ and the quotient has Chern character $\ch(G) = (1, 0, d + \tfrac{1}{2}, E(1, 0, d + \tfrac{1}{2}))$. By Theorem \ref{thm:rank_one} we get $G = \II_C$ for a plane curve of degree $-d - \tfrac{1}{2}$.
        
        \item If $\ch_0(F) = 3$, then \eqref{eq:(3, -1, d)_x_bound} implies $\ch_1(F) = -2$. Let $\ch(F) = (3, -2, y, z)$. Then
        \[
        0 \geq s(E, F) - s_Q(E) \geq (d - y) - \frac{36d^2 + 8d + 51}{48d - 8}
        \]
        which implies
        \[
        \frac{12d^2 - 16d - 51}{48d - 8} \leq y \leq 0.
        \]
        By induction we have $z \leq \tfrac{1}{2} y^2 - \tfrac{3}{2} y + \tfrac{2}{3}$. We can use the bound from Theorem \ref{thm:rank_zero} on $G$ to obtain
        \[
        e \leq \frac{1}{2}d^2 - dy + y^2 - \frac{3}{2}y + \frac{17}{24}.
        \]
        This is a parabola in $y$ with minimum at $y = \tfrac{1}{2}d + \tfrac{3}{4}$ which is smaller than our range $y$. Therefore, the maximum occurs at $y = 0$, i.e., 
        \[
        e = \frac{1}{2} d^2 + \frac{17}{24}.
        \]
        In this case, $F = T(-2)$ and the quotient is $G = \OO_V(d + \tfrac{1}{2})$.
    \end{enumerate}
    
    \item If $c = 0$ and $d \leq -4$, then $e \geq \tfrac{1}{2}d^2 + \tfrac{1}{2}d$. A straightforward computation implies
    \begin{align*}
    \rho^2_Q(E) - \frac{\Delta(E)}{16} &= \frac{8d^3 + 27e^2}{12d^2} + \frac{3}{8}d \\
    &\geq \frac{9}{16}d^2 + \frac{13}{6}d + \frac{9}{16} > 0.
    \end{align*}
    This implies two things. Firstly, $\rho^2_Q(E) > 0$ means that $E$ has to be destabilized along a semicircular wall. Assume this wall is induced by a short exact sequence
    \[
    0 \to F \to E \to G \to 0.
    \]
    By Proposition \ref{prop:properties_dest_sequences} on destabilizing sequences we can assume that both $\ch_0(F) \geq 1$ and $\mu(F) < \mu(E)$. Secondly, Proposition \ref{prop:properties_dest_sequences} also implies $\ch_0(F) \in [1, 3]$. We can compute
    \[
    Q_{0, -1}(E) = 4d^2 - 6d - 18e \leq -5d(d + 3) < 0.
    \]
    The definition of $\Coh^{\beta}(\P^3)$ implies $\ch^{-1}(F) > 0$ and together with $\mu(F) < \mu(E)$ we obtain
    \begin{equation} \label{eq:(3, 0, d)_x_bound}
        -\ch_0(F) < \ch_1(F) < 0.
    \end{equation}
    This implies that $\ch_0(F) \neq 1$.
    \begin{enumerate}
        \item If $\ch_0(F) = 2$, then \eqref{eq:(3, -1, d)_x_bound} implies $\ch_1(F) = -1$. Let $\ch(F) = (2, -1, y, z)$. Then
        \[
        0 \geq s(E, F) - s_Q(E) \geq \left(\frac{2}{3} d - y\right) - \left(\frac{3}{4}d + \frac{3}{4}\right)
        \]
        which implies
        \[
        -\frac{1}{12}d - \frac{3}{4} \leq y \leq -\frac{1}{2}.
        \]
        This is a contradiction to $d \leq -4$.
        
        \item If $\ch_0(F) = 2$, then \eqref{eq:(3, -1, d)_x_bound} implies $\ch_1(F) \in \{ -1, -2 \}$. Let $\ch(F) = (3, x, y, z)$. If $x = -2$, then 
        \[
        0 \geq s(E, F) - s_Q(E) \geq \left(\frac{1}{2} d - \frac{1}{2} y\right) - \left(\frac{3}{4}d + \frac{3}{4}\right)
        \]
        which implies
        \[
        -\frac{1}{2}d - \frac{3}{2} \leq y \leq 0.
        \]
        This is again a contradiction to $d \leq -4$. Therefore, we must have $x = -1$. Assume first that $y \leq -\tfrac{3}{2}$. Then 
        \[
        0 \geq s(E, F) - s_Q(E) \geq \left(d - y\right) - \left(\frac{3}{4}d + \frac{3}{4}\right)
        \]
        which implies
        \[
        \frac{1}{4}d - \frac{3}{4} \leq y \leq -\frac{1}{2}.
        \]
        We know by induction that $z \leq \tfrac{1}{2} y^2 + \tfrac{17}{24}$. We can use the bound from Theorem \ref{thm:rank_zero} on $G$ to obtain
        \[
        e \leq \frac{1}{2}d^2 - dy + y^2 + \frac{3}{4}.
        \]  
        This is a parabola in $y$ with minimum at $y = \tfrac{1}{2}d$ which is smaller than our range $y$. Therefore, the maximum occurs at $y = -\tfrac{3}{2}$, i.e., 
        \[
        e \leq \frac{1}{2} d^2 + \frac{3}{2} d + 3 < \frac{1}{2}d^2 + \frac{1}{2}d.
        \]
        We are left with the case $y = -\tfrac{1}{2}$. Here we know $z \leq -\tfrac{1}{6}$ and another application of Theorem \ref{thm:rank_zero} on $G$ leads to
        \[
        e = \frac{1}{2}d^2 + \frac{1}{2}d.
        \]
        In this case, $F = \Omega(1)$ and the quotient is $G = \OO_V(d + 1)$. \qedhere
    \end{enumerate}
\end{enumerate}
\end{proof}

\begin{cor}
\label{cor:conjecture_rank_three}
Conjecture \ref{conj:irreducible_smooth} holds for rank three sheaves.
\end{cor}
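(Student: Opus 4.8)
The plan is to describe each moduli space $M(3,c,d,E(3,c,d))$ explicitly as a fibration over a smooth irreducible base, exactly as was done for ranks zero and one in Corollaries \ref{cor:conjecture_rank_zero} and \ref{cor:conjecture_rank_one}. First I would reduce to finitely many Chern characters: tensoring by $\OO(n)$ induces isomorphisms of moduli spaces and shifts $c$ by $3n$ (Proposition \ref{prop:function_E}(ii)), so it suffices to treat $c\in\{-2,-1,0\}$. For each such $c$ the classification of objects with maximal third Chern character is already in hand --- the special cases $(3,-2,0)$, $(3,-1,-\tfrac12)$, $(3,-1,-\tfrac32)$, $(3,0,0)$, $(3,0,-1)$, $(3,0,-2)$ of Proposition \ref{prop:rank_three_special_cases}, the cases $(3,-1,-\tfrac52)$ and $(3,0,-3)$ of Propositions \ref{prop:classification_(3,-1,-5/2)} and \ref{prop:classification_(3,0,-3)}, and the uniform families $c=-2$, $d\le-1$; $c=-1$, $d\le-\tfrac72$; $c=0$, $d\le-4$ of Lemma \ref{lem:rank_three_classification_general}. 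In every case the sheaf stable for $\alpha\gg0$ fits into an explicit destabilizing short exact sequence at its largest wall.

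Next I would read off the base and fibre from each sequence. The rigid cases $(3,-2,0)$, $(3,-1,-\tfrac12)$, $(3,0,0)$ give the single sheaves $T(-2)$, $\Omega(1)$, $\OO^{\oplus3}$, so the moduli spaces are points. The families $0\to\OO(-1)^{\oplus3}\to E\to\OO_V(d-1)\to0$ (for $c=-2$) and the uniform $c=0$ sequence $0\to\Omega(1)\to E\to\OO_V(d+1)\to0$ (covering $(3,0,-2)$, $(3,0,-3)$ and $d\le-4$) exhibit $E$ as an extension whose only parameter is the plane $V$, so the moduli space is a projective bundle over the space of planes $\cong\P^3$; the case $(3,0,-1)$ is a bundle over the Grassmannian $\Gr(2,4)$ of lines via $0\to E\to\OO^{\oplus3}\to\OO_L(2)\to0$; the case $(3,-1,-\tfrac32)$ is a moduli space of representations of the generalized Kronecker quiver via $0\to\OO(-2)^{\oplus2}\to\OO(-1)^{\oplus5}\to E\to0$; and $(3,-1,-\tfrac52)$ together with $c=-1$, $d\le-\tfrac72$ are fibred over Hilbert schemes of plane curves (or conics) and the rank two moduli space $M(2,-1,-\tfrac12,\tfrac56)$ of Theorem \ref{thm:rank_two}. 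Each base is smooth and irreducible --- the last by \cite{Sch20:rank_two_p3}, the quiver ones by \cite{Kin94:moduli_quiver_reps} --- and the relevant extension groups have constant dimension, so each total space is smooth and irreducible.

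To turn these fibration pictures into statements about the moduli space itself I would, as in the proof of Corollary \ref{cor:conjecture_rank_zero}, verify that each destabilizing sequence is canonical and that a generic extension is Gieseker-stable. Canonicity means the sub- and quotient sheaf appearing are determined by $E$; this follows from $\Ext$-vanishings between the two factors, computed by Serre duality and the cohomology of line bundles on $\P^3$, which simultaneously rigidify the sequence and show the extension spaces are of constant rank, yielding a genuine projective bundle. The resulting morphism from the extension space to the moduli space is then bijective on the stable locus and, being a bijective morphism of smooth varieties inducing the identity on tangent spaces, an isomorphism there; smoothness along the stable locus and irreducibility of $M$ (the stable locus being open and dense) follow. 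This recovers the descriptions of \cite{MR87:P3_rank_three_extremal_moduli} summarised in Remark \ref{rmk:moduli_spaces_rank_three}.

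The hard part is the cases whose largest wall carries strictly semistable objects or admits several destabilizing sequences: $c=-1$, $d\le-\tfrac72$ produces the three types $0\to F\to E\to\II_C\to0$, $0\to\II_C\to E\to F\to0$, and $0\to T(-2)\to E\to\OO_V(d+\tfrac12)\to0$, while the multi-wall cases $(3,-1,-\tfrac52)$ and $(3,0,-3)$ present sub- and quotient-versions on the two sides of each wall. Here I must single out the dense family, check that the remaining sequence types parametrise loci contained in its closure (so that $M$ is irreducible rather than a union of components), and establish canonicity even when the destabilizing factor is itself strictly semistable --- precisely the phenomenon that makes $f=\tfrac c2$ delicate in Theorem \ref{thm:rank_zero}. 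As there, the resolution is a Snake-Lemma computation showing that any two such sequences for a fixed $E$ coincide, using the vanishing of the relevant $\Ext^1$ between the destabilizing factors.
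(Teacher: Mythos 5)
Your plan coincides with the paper's proof in all the cases where the largest wall is the only wall: the rigid cases $T(-2)$, $\Omega(1)$, $\OO^{\oplus 3}$, the bundle cases $c=-2$, $d\le -1$ and $c=0$, $d\notin\{-3\}$, the quiver case $(3,-1,-\tfrac32)$, and $(3,0,-1)$; there your fibration descriptions and smoothness/irreducibility conclusions are exactly the paper's. The genuine gap is in the multi-wall cases $(3,-1,-\tfrac52)$, $(3,0,-3)$, and $c=-1$, $d\le-\tfrac72$. First, your geometric picture of these spaces is incorrect: they are \emph{not} fibred over Hilbert schemes of plane curves and $M(2,-1,-\tfrac12,\tfrac56)$. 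By Remark \ref{rmk:moduli_spaces_rank_three} they are birational modifications (blow-ups, in one case followed by a flip) of $\Gr(4,10)$, of a quiver moduli space, and of a projective bundle over $\P^3$, and the loci of extensions involving $\II_C$ and $F\in M(2,-1,-\tfrac12,\tfrac56)$ are proper closed subvarieties of dimension strictly smaller than $\dim M$. The dense family in $M(3,-1,-\tfrac52,\tfrac{23}{6})$, for instance, comes from the \emph{smallest} tilt wall, via $0\to\OO(-1)^{\oplus 4}\to E\to\OO(-3)[1]\to 0$, i.e. an open subset of $\Gr(4,10)$; identifying it requires running the wall-crossing upward from the smallest wall, which is how the paper organizes the argument and which is absent from your proposal.

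Second, and more seriously, you name the decisive step -- that the remaining extension loci lie in the closure of the dense family, so that $M$ is irreducible -- but you give no mechanism for proving it. The Snake-Lemma canonicity argument you import from Theorem \ref{thm:rank_zero} only shows uniqueness of the destabilizing sequence, hence identifies each such locus as a projective bundle; it says nothing about closure, and nothing about smoothness of $M$ at those points. Smoothness there cannot be skipped: these loci consist of Gieseker-stable sheaves, so they are part of the locus along which Conjecture \ref{conj:irreducible_smooth} asserts smoothness, and it is not inherited from the dense family. The paper's mechanism is a tangent-space computation at the new points: from the destabilizing sequence one bounds
\[
\ext^1(E,E) \le \ext^1(F,F)+\ext^1(\II_C,\II_C)+\ext^1(\II_C,F)+\ext^1(F,\II_C)-1
= 3+8+12+2-1 = 24 = \dim\Gr(4,10)
\]
in the case $d=-\tfrac52$, and similarly at the $T(-2)$/$\OO_V$ walls; combined with the irreducibility of both the excised and the inserted locus (each a projective bundle) and the standard deformation-theoretic lower bound on the dimension of components, this simultaneously gives smoothness at the new points and excludes that they form a separate component. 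No computation of $\ext^1(E,E)$ at points of these loci appears anywhere in your proposal, so neither irreducibility nor smoothness of the three hard moduli spaces is actually established.
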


\begin{proof}
The proof will proceed on a case by case basis. Let $E$ be a Gieseker-semistable sheaf with $\ch(E) = (3, c, d, E(3, c, d))$.
\begin{enumerate}
    \item Assume that $c = -2$.
    \begin{enumerate}
        \item If $d = 0$, then by Proposition \ref{prop:rank_three_special_cases} we have $E = T(-2)$ and $M(3, -2, 0, \tfrac{2}{3})$ is a single point.
        
        \item If $d \leq -1$, then by Lemma \ref{lem:rank_three_classification_general} any such $E$ fits into a short exact sequence
        \[
        0 \to \OO(-1)^{\oplus 3} \to E \to \OO_V(d-1) \to 0.
        \]
        Therefore, the moduli space is a Grassmann bundle, i.e., the space is irreducible and smooth. The base of this bundle is the space of planes $V \subset \P^3$ and the fibers are the Grassmannians of three-dimensional subspaces of $\Ext^1(\OO_V(d-1), \OO(-1))$.
    \end{enumerate}
    
    \item Assume that $c = -1$.
    \begin{enumerate}
        \item If $d = -\tfrac{1}{2}$, then Proposition \ref{prop:rank_three_special_cases} says $E = \Omega(1)$ and the moduli space $M(3, -1, -\tfrac{1}{2}, -\tfrac{1}{6})$ is a single point.
        \item If $d = -\tfrac{3}{2}$, then by Proposition \ref{prop:rank_three_special_cases} $E$ fits into a short exact sequence
        \[
        0 \to \OO(-2)^{\oplus 2} \to \OO(-1)^{\oplus 5} \to E \to 0.
        \]
        As shown in the proof of \cite[Theorem 7.1]{Sch20:stability_threefolds} the space of these $E$ is a moduli space of representations over a generalized Kronecker quiver. In particular, it is smooth and irreducible.
        \item If $d = -\tfrac{5}{2}$, then we use Proposition \ref{prop:classification_(3,-1,-5/2)}. Here we have to deal with multiple walls. The smallest wall is given by extensions of the form
        \[
        0 \to \OO(-1)^{\oplus 4} \to E \to \OO(-3)[1] \to 0.
        \]
        This is again an instance of the proof of \cite[Theorem 7.1]{Sch20:stability_threefolds} and we get a smooth and irreducible moduli space over a generalized Kronecker quiver. More precisely, we are dealing with $\Gr(4, 10)$ which has dimension $24$.
        
        The next wall deals with extensions between $T(-2)$ and $\OO_V(-2)$ for planes $V \subset \P^3$. Some homological algebra shows $\ext^1(T(-2), T(-2)) = 0$, $\ext^1(\OO_V(-2), \OO_V(-2)) = 3$, $\ext^1(T(-2), \OO_V(-2)) = 1$, $\ext^1(\OO_V(-2), T(-2)) = 21$. This has two consequence. On the one hand, this means that the destabilized locus in our first moduli space $\Gr(4, 10)$ is isomorphic to $\P^3$, and the new locus is a $\P^{20}$-bundle over this $\P^3$. In particular, both loci are irreducible. If we can show that the new points are smooth, then we know that the second space is smooth and irreducible as well. Indeed, for a tilt-stable $E$ that fits into
        \[
        0 \to T(-2) \to E \to \OO_V(-2) \to 0
        \]
        we can do some homological algebra to get
        \[
        \ext^1(E, E) \leq 3 + 1 + 21 - 1 = 24.
        \]
        
        The largest wall deals with extensions between $F \in M(2, -1, -\tfrac{1}{2}, \tfrac{5}{6})$ and $\II_C$ for a conic $C \subset \P^3$. Recall that $F$ fits into a short exact sequence
        \[
        0 \to \OO(-2) \to \OO(-1)^{\oplus 3} \to F \to 0.
        \]
        Slope stability implies $\hom(\II_C, F) = 0$ and $\ext^3(\II_C, F) = \hom(F, \II_C(-4)) = 0$. Thus, $\ext^1(\II_C, F) \leq -\chi(\II_C, F) = 12$. By tilt stability along the wall, we also get $\hom(F, \II_C) = 0$. Moreover, slope stability implies $\ext^3(F, \II_C) = \hom(\II_C, F(-4)) = 0$ and thus, $\ext^1(F, \II_C) \leq - \chi(\II_C, F) = 2$. The Hilbert scheme of conics in $\P^3$ is smooth of dimension $8$, i.e., $\ext^1(\II_C, \II_C) = 8$. The moduli space $M(2, -1, -\tfrac{1}{2}, \tfrac{5}{6})$ is smooth of dimension $3$, i.e., $\ext^1(F, F) = 3$.
        
        Again this has two consequences. On one hand, the destabilized loci from both sides are irreducible, since they are projective bundles over the product of $M(2, -1, -\tfrac{1}{2}, \tfrac{5}{6})$ and the space of conics. On the other hand, we do some homological algebra to get
        \[
        \ext^1(E, E) \leq 3 + 8 + 12 + 2 - 1 = 24.
        \]
        
        \item Next, we assume that $d \leq -\tfrac{7}{2}$. Again there is more than one wall to analyze. The smallest wall has no stable objects below and destabilizes $E$ that fit into short exact sequences
        \[
        0 \to T(-2) \to E \to \OO_V\left( d + \frac{1}{2} \right) \to 0
        \]
        where $V \subset \P^3$ is a plane. Some homological algebra yields 
        \begin{align*}
        \ext^1(T(-2), T(-2)) &= 0, \\
        \ext^1(\OO_V(d + \tfrac{1}{2}), \OO_V(d + \tfrac{1}{2})) &= 3, \\
        \ext^1(T(-2), \OO_V(d + \tfrac{1}{2})) &= 0, \\
        \ext^1(\OO_V(d + \tfrac{1}{2}), T(-2)) &= \tfrac{3}{2} d^2 - 4d + \tfrac{13}{8}.
        \end{align*}        
        As before, we obtain
        \[
        \dim M(3, -1, d, E(3, -1, d)) = 3 + \tfrac{3}{2} d^2 - 4d + \tfrac{13}{8} - 1 = \tfrac{3}{2} d^2 - 4d + \tfrac{29}{8}.
        \]
        Moreover, the moduli space of such extensions is a $\P^{\tfrac{3}{2} d^2 - 4d + \tfrac{5}{8}}$-bundle over the space of planes in $\P^3$, i.e., over $\P^3$ itself. This first moduli is smooth and irreducible.
        
        The second wall deals with extension between $F \in M(2, -1, -\tfrac{1}{2}, \tfrac{5}{6})$ and $\II_C$ for a plane curve $C \subset \P^3$ of degree $-d - \tfrac{1}{2}$. We have short exact sequences
        \[
        0 \to \OO(-2) \to \OO(-1)^{\oplus 3} \to F \to 0
        \]
        and
        \[
        0 \to \OO(-1) \to \II_C \to \OO_V\left(d + \tfrac{1}{2}\right) \to 0
        \]
        where $V$ is the plane that $C$ is contained in. Some homological algebra leads to $\ext^1(F, F) = 3$, $\ext^1(\II_C, \II_C) = \tfrac{1}{2} d^2 - d + \tfrac{19}{8} > 0$, $\ext^1(F, \II_C) = 1$, and finally $\ext^1(\II_C, F) = d^2 - 3d - \tfrac{7}{4} > 0$. Again this has two consequences. On the one hand, the destabilized loci from both sides are irreducible, since they are projective bundles over the product of $M(2, -1, -\tfrac{1}{2}, \tfrac{5}{6})$ and the space of plane curves of degree $-d - \tfrac{1}{2}$. On the other hand, some homological algebra implies
        \[
        \ext^1(E, E) \leq 3 + (\tfrac{1}{2} d^2 - d + \tfrac{19}{8}) + 1 + (d^2 - 3d - \tfrac{7}{4}) - 1 = \tfrac{3}{2} d^2 - 4d + \tfrac{29}{8}.
        \]
    \end{enumerate}
    
    \item Assume that $c = 0$.
    \begin{enumerate}
        \item If $d = 0$, then $E = \OO^{\oplus 3}$ and the space is a single point.
        
        \item If $d = -1$, then $E$ fits into a short exact sequence
        \[
        0 \to E \to \OO^{\oplus 3} \to \OO_L(2) \to 0
        \]
        for a line $L \subset \P^3$. Since $H^0(\OO_L(2)) = \C^3$, there is a unique $E$ for every $L$ and the moduli space is $\Gr(2, 4)$.
        
        \item If $d = -2$, then $E$ fits into a short exact sequence
        \[
        0 \to \Omega(1) \to E \to \OO_V(-1) \to 0
        \]
        for a plane $V \subset \P^3$. Some homological algebra shows $\Ext^1(\OO_V(-1), \Omega(1)) = \C^{14}$. This means the moduli space is a $\P^{13}$-bundle over $\P^3$.
        
        \item If $d = -3$, then we have to deal with two walls. The first wall deals with extensions
        \[
        0 \to \OO(-1)^{\oplus 6} \to E \to \OO(-2)^{\oplus 3}[1] \to 0.
        \]
        Above this wall, we get the moduli space of quiver representations with dimension vector $(3, 6)$ on the generalized Kronecker with four arrows. Clearly, this is irreducible, and smooth along the stable points. It has dimension $28$.
        
        The second wall is about extensions between $\Omega(1)$ and $\OO_V(-2)$ for planes $V \subset \P^3$. Some homological algebra leads to $\ext^1(\Omega(1), \Omega(1)) = 0$, $\ext^1(\OO_V(-2), \OO_V(-2)) = 3$, $\ext^1(\OO_V(-2), \Omega(1)) = 25$, and $\ext^1(\Omega(1), \OO_V(-2)) = 1$. Again this has two consequences. On the one hand, the destabilized loci from both sides are irreducible. The locus below the wall is isomorphic to $\P^3$, and the locus above the wall is a $\P^{24}$-bundle over this $\P^3$. Some homological algebra implies for any such extensions $E$ that
        \[
        \ext^1(E, E) \leq 3 + 25 + 1 - 1 = 28.
        \]
        \item Let $d \leq -4$. Then $E$ fits into a short exact sequence
        \[
        0 \to \Omega(1) \to E \to \OO_V(d+1) \to 0.
        \]
        We do some homological algebra again to get $\ext^1(\OO_V(d+1), \Omega(1)) = \tfrac{3}{2} d^2 - \tfrac{7}{2} d + 1$. Therefore, the moduli space is a projective bundle with base $\P^3$ and fibers of dimension $\tfrac{3}{2} d^2 - \tfrac{7}{2} d$. \qedhere
    \end{enumerate}
\end{enumerate}
\end{proof}

\begin{rmk}
\label{rmk:moduli_spaces_rank_three}
In the proof of Corollary \ref{cor:conjecture_rank_three} we actually obtained more geometric information about the moduli spaces. Let us summarize these results:
\begin{enumerate}
    \item $M(3, -2, 0, \tfrac{2}{2})$ is a point.
    \item For $d \leq -1$, the moduli space $M(3, -2, -d, \tfrac{1}{2} d^2 - \tfrac{3}{2} d + \tfrac{2}{3})$ is a $\Gr(3, \tfrac{1}{2}d^2 - \tfrac{5}{2}d + 3)$-bundle over $\P^3$.
    \item $M(3, -1, -\tfrac{1}{2}, -\tfrac{1}{6})$ is a point.
    \item $M(3, -1, -\tfrac{3}{2}, \tfrac{11}{6})$ is the moduli space of quiver representations of the generalized Kronecker quiver with four arrows and dimension vector $(2, 5)$.
    \item $M(3, -1, -\tfrac{5}{2}, \tfrac{23}{6})$ is birational to the Grassmannian $\Gr(4, 10)$. The first wall in tilt stability corresponds to a blow-up of $\Gr(4, 10)$ in a locus isomorphic to $\P^3$, while the second wall corresponds to a flip.
    \item For $d \leq -\tfrac{7}{2}$, the moduli space $M(3, -1, d, \tfrac{1}{2} d^2 + \tfrac{17}{24})$ is the blow-up of a $\P^{\tfrac{3}{2}d^2 - 4d + \tfrac{5}{8}}$-bundle over $\P^3$ in a sublocus isomorphic to $M(2, -1, -\tfrac{1}{2}, \tfrac{5}{6}) \times M(1, 0, -2, 3)$.
    \item $M(3, 0, 0, 0)$ is a point.
    \item $M(3, 0, -1, -1)$ is $\Gr(2, 4)$.
    \item $M(3, 0, -2, 1)$ is a $\P^{13}$-bundle over $\P^3$.
    \item $M(3, 0, -3, 3)$ is the blow-up of the moduli space of quiver representations of the generalized Kronecker quiver with four arrows and dimension vector $(3, 6)$ in a sublocus isomorphic to $\P^3$.
    \item For $d \leq -4$, the moduli space $M(3, 0, d, \tfrac{1}{2}d^2 + \tfrac{1}{2}d)$ is a $\P^{\tfrac{3}{2}d^2 - \tfrac{7}{2}d}$-bundle over $\P^3$.
\end{enumerate}
\end{rmk}


\section{Rank four and the third Chern character}
\label{sec:rank_four}

\begin{thm}
\label{thm:rank_four}
We have $E(4, -3, d) = \tfrac{1}{2}d^2 - 2d + \tfrac{11}{8}$ for $d \leq -\tfrac{1}{2}$, $E(4, -2, d) = \tfrac{1}{2}d^2 - \tfrac{1}{2}d + \tfrac{2}{3}$ for $d \leq -1$, $E(4, -1, d) = \tfrac{1}{2}d^2 - \tfrac{7}{24}d$ for $d \leq -\tfrac{3}{2}$, $E(4, 0, 0) = E(4, 0, -2) = 0$, $E(4, 0, -1) = -2$, and $E(4, 0, d) = \tfrac{1}{2}d^2 + \tfrac{3}{2}d + 2$ for $d \leq -3$. The same bounds hold for tilt-semistable objects.
\end{thm}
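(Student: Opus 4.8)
The plan is to prove the theorem exactly as the rank-three case (Theorem \ref{thm:rank_three}) was proved, by reducing it to a classification of the destabilizing walls for objects of maximal $\ch_3$. Concretely, I would establish a rank-four analogue of Lemma \ref{lem:rank_three_classification_general}: for each residue of $c$ modulo $4$ (represented by $c\in\{-3,-2,-1,0\}$) and each sufficiently negative $d$, every $\nu_{\alpha,\beta}$-semistable $E$ with $\ch(E)=(4,c,d,e)$ satisfies $e\le$ (the claimed value), with equality forcing an explicit destabilizing short exact sequence. Granting this, the theorem follows: the upper bounds are the content of the lemma together with the finitely many small-$\Delta$ cases already settled in Section \ref{sec:special_cases} (Propositions \ref{prop:rank_four_special_cases}, \ref{prop:classification_(4,-2,-2)}, \ref{prop:classification_(4,-1,-5/2)} and \ref{prop:classification_(4,0,-4)}), while sharpness comes from those destabilizing sequences, which produce genuine $2$-Gieseker-semistable (indeed Gieseker-semistable) sheaves attaining the bound; the remaining values $e<E(4,c,d)$ are then filled in by Proposition \ref{prop:function_E}(iv).

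The core is an induction on $\Delta(E)$, seeded by Section \ref{sec:special_cases}. Fixing $(c,d)$ and assuming $e$ is at least the claimed bound, the first step is the polynomial estimate $\rho_Q^2(E)-\tfrac{\Delta(E)}{20}>0$, which I would verify to hold throughout the relevant range of $d$. As in rank three this has two consequences: $\rho_Q^2(E)>0$ forces $E$ to be destabilized along a semicircular wall $W$, and Proposition \ref{prop:properties_dest_sequences}(i) (the critical ratio being $4\cdot5\cdot1=20$ for a rank-five subobject) bounds the destabilizing subobject by $\ch_0(F)\in\{1,2,3,4\}$, where I normalize using Proposition \ref{prop:properties_dest_sequences}(v) so that $\ch_0(F)>0$ and $\mu(F)<\mu(E)$. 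Evaluating the generalized Bogomolov form at the appropriate integer, $Q_{0,\beta_0}(E)<0$, forces $\ch_1^{\beta_0}(F)>0$, and combined with $\mu(F)<\mu(E)$ this pins $\ch_1(F)$ down to one or two values for each $\ch_0(F)$.

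With $(\ch_0(F),\ch_1(F))$ fixed, I would then bound $\ch_2(F)=y$ from both sides: Bogomolov gives an upper bound, while the center inequality $s(E,F)\le s_Q(E)$ (equivalently, that $W$ is no smaller than the $Q$-wall) gives a lower bound, confining $y$ to a short interval. Feeding in the established bounds for $\ch_3(F)$ and $\ch_3(G)$ — Theorem \ref{thm:rank_one}, Theorem \ref{thm:rank_two}, Lemma \ref{lem:rank_three_classification_general} and Theorem \ref{thm:rank_zero} for $F$ and the quotient according to their ranks, the inductive hypothesis when $\ch_0(F)=4$, and Proposition \ref{prop:rank_one_between_-1_-2} in the rank-one-between-walls situations — expresses $e$ as a parabola in $y$ that is monotone on the allowed interval. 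Its maximum is attained at an endpoint, which both yields the claimed numerical value and identifies the extremal subobject (a sum of line bundles, a twist of $T(-2)$ or $\Omega(1)$, an ideal sheaf of a plane curve, etc.); borderline strictly-semistable configurations are absorbed into adjacent cases as in the rank-three argument. The residue $c\equiv1$ (represented by $c=-3$), not covered by Section \ref{sec:special_cases}, I would seed by tensoring with $\OO(1)$ to reach the class-one representative and running the same induction, its smallest case $(4,-3,-\tfrac12)$ being established directly.

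I expect the main obstacle to be the size and fragility of this case analysis rather than any single conceptual difficulty. Relative to rank three, allowing $\ch_0(F)\in\{1,2,3,4\}$ roughly doubles the number of $(\ch_0(F),\ch_1(F))$ branches, and in each one must verify both the defining inequality $\rho_Q^2(E)>\tfrac{\Delta(E)}{20}$ and the monotonicity of the resulting parabola in $y$ uniformly in $d$; the rank-four subobject branch additionally threads the induction back through the theorem's own statement at a smaller discriminant, so one must check that this bookkeeping closes. The sharpness half and the explicit moduli descriptions of Remark \ref{rmk:moduli_spaces_rank_four} then reduce to the usual $\Ext$-group computations on the extremal sequences.
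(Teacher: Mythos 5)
Your proposal follows the paper's own proof essentially verbatim: the paper deduces Theorem \ref{thm:rank_four} by combining the special cases of Section \ref{sec:special_cases} (Propositions \ref{prop:rank_four_special_cases}, \ref{prop:classification_(4,-2,-2)}, \ref{prop:classification_(4,-1,-5/2)}, \ref{prop:classification_(4,0,-4)}) with Lemma \ref{lem:rank_four_classification_general}, which is precisely your rank-four analogue of Lemma \ref{lem:rank_three_classification_general}, proved by the same induction on $\Delta(E)$ with the same wall analysis --- the estimate $\rho_Q^2(E) - \tfrac{\Delta(E)}{20} > 0$, the evaluation $Q_{0,\beta_0}(E) < 0$ to pin down $\ch_1(F)$, the two-sided bound on $y = \ch_2(F)$, and the monotone-parabola endpoint argument feeding in the lower-rank bounds (including Proposition \ref{prop:rank_one_between_-1_-2}) and the rank-four inductive hypothesis. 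The only cosmetic difference is that the paper treats $(4,-3,d)$ for all $d \leq -\tfrac{1}{2}$ uniformly inside the lemma (the smallest case reduces to rank $\leq 3$ bounds and smaller-discriminant rank-four cases) rather than seeding that residue separately, and its evaluation points $\beta_0$ are rational (e.g.\ $-\tfrac{5}{4}$, $-\tfrac{3}{4}$) rather than integral.
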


The bounds on $\ch_3$ in the above theorem do not hold for sheaves $E$ that are slope-semistable, but not $2$-Gieseker-semistable. For example, let $C$ be a plane degree $-d$ curve. Then $\OO^{\oplus 3} \oplus \II_C$ is slope-semistable, but not $2$-Gieseker-semistable with Chern character $(4, 0, d, \tfrac{1}{2}d^2 - \tfrac{1}{2}d)$.

\begin{lem}
\label{lem:rank_four_classification_general}
Let $E$ be $\nu_{\alpha, \beta}$-semistable for some $(\alpha, \beta) \in \R_{\geq 0} \times \R$ with $\ch(E) = (4, c, d, e)$.
\begin{enumerate}
    \item Let $c = -3$ and $d \leq -\tfrac{1}{2}$. Then $e \leq \tfrac{1}{2} d^2 - 2d + \tfrac{11}{8}$. In case of equality $E$ is destabilized by a short exact sequence
    \[
    0 \to \OO(-1)^{\oplus 4} \to E \to \OO_V\left(d - \tfrac{3}{2}\right) \to 0
    \]
    where $V \subset \P^3$ is a plane.
     
    \item Let $c = -2$ and $d \leq -3$. Then $e \leq \tfrac{1}{2} d^2 - \tfrac{1}{2}d + \tfrac{2}{3}$. In case of equality $E$ is destabilized by a short exact sequence
    \[
    0 \to T(-2) \to E \to \II_C \to 0
    \]
    where $C \subset \P^3$ is a plane curve of degree $-d$.
     
    \item Let $c = -1$ and $d \leq -\tfrac{7}{2}$. Then $e \leq \tfrac{1}{2} d^2 - \tfrac{7}{24}$. In case of equality $E$ is destabilized by a short exact sequence
    \[
    0 \to \Omega(1) \to E \to \II_C \to 0
    \]
    where $C$ is a plane curve of degree $-d - \tfrac{1}{2}$.

    \item Let $c = 0$ and $d \leq -5$. Then $e \leq \tfrac{1}{2}d^2 + \tfrac{3}{2}d + 2$. In case of equality $E$ is destabilized by a short exact sequence
    \[
    0 \to F \to E \to \OO_V(d + 2) \to 0
    \]
    where $F \in M(4, -1, -\tfrac{3}{2}, \tfrac{5}{6})$ and $V \subset \P^3$ is a plane.
\end{enumerate}
\end{lem}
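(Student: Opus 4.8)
The plan is to run an induction on the discriminant $\Delta(E)$, exactly parallel to the proof of Lemma \ref{lem:rank_three_classification_general}. The base of the induction is not contained in the statement itself but in the low-discriminant computations of Section \ref{sec:special_cases}: Proposition \ref{prop:rank_four_special_cases}, Proposition \ref{prop:classification_(4,-2,-2)}, Proposition \ref{prop:classification_(4,-1,-5/2)}, and Proposition \ref{prop:classification_(4,0,-4)} dispose of the smallest values of $-d$ in each of the four residue classes of $c$. For the inductive step I would, in each of the four cases, assume $e$ is at least the claimed bound and show that this is only possible via the asserted destabilizing sequence, with equality forced.

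The engine in every case is the same two-step reduction. First I would verify by a direct (if tedious) computation that $\rho_Q^2(E) - \tfrac{1}{16}\Delta(E) > 0$ under the standing hypothesis that $e$ is at least the claimed bound. Positivity of $\rho_Q^2(E)$ guarantees, via Proposition \ref{prop:structure_walls}(vi) and the generalized Bogomolov inequality, that $E$ must be destabilized along an honest semicircular wall, say by $0 \to F \to E \to G \to 0$, which after possibly exchanging $F$ and $G$ we may take to satisfy $\ch_0(F) > 0$ and $\mu(F) < \mu(E)$; positivity of $\rho_Q^2(E) - \tfrac{1}{16}\Delta(E)$ then feeds into Proposition \ref{prop:properties_dest_sequences}(i) (a subobject of rank $\geq 5$ would force $\rho(E,F)^2 \leq \tfrac{1}{20}\Delta(E) < \rho_Q^2(E) \leq \rho(E,F)^2$) to confine $\ch_0(F)$ to $\{1,2,3,4\}$. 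Second, I would evaluate $Q_{0,\beta_0}(E)$ at a convenient $\beta_0$ (namely $\beta_0 = -1$ for $c \in \{-1,0\}$ and a suitable shift otherwise), check it is negative, and combine this with the heart condition $\ch_1^{\beta_0}(F) > 0$ and $\mu(F) < \mu(E)$ to pin $\ch_1(F)$ down to one or two values for each value of $\ch_0(F)$.

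With $(\ch_0(F), \ch_1(F))$ fixed, I would parametrize $F$ by its twisted second Chern character $y$ and use the nesting inequality $s(E,F) \leq s_Q(E)$ to confine $y$ to an explicit interval. On each such $F$ and on the complementary $G$ I would then import the sharp third-Chern-character bounds already established: Theorem \ref{thm:rank_zero} for rank-zero factors, Theorems \ref{thm:rank_one}, \ref{thm:rank_two}, \ref{thm:rank_three} for the lower-rank factors, and the inductive hypothesis of the present lemma whenever a rank-four factor of strictly smaller discriminant appears. This last point is exactly where $F \in M(4,-1,-\tfrac{3}{2},\tfrac{5}{6})$ enters in case (iv), through the base case of (iii). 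Linearity of $\ch_3$ then expresses $e$ as an upward-opening parabola in $y$ whose vertex lies to the left of the admissible interval, so the maximum is attained at the right endpoint; this reproduces the claimed bound and identifies the extremal $F$ and $G$. Proposition \ref{prop:line_bundles}, together with Lemma \ref{lem:no_(4, -1, -1/2)} and Lemma \ref{lem:no_(4, -2, 0)} to exclude the forbidden intermediate ranks, finishes the identification of the extremal sequence.

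I expect the main obstacle to be bookkeeping rather than conceptual: making the case list over $(\ch_0(F), \ch_1(F), y)$ genuinely exhaustive, and, more delicately, handling the extremal configurations in which the maximizing factor is only strictly semistable. As in the rank-three argument, whenever the extremal $F$ or $G$ is itself destabilized further (for instance a rank-two or rank-three factor of maximal third Chern character admitting an additional map from $\OO(-1)$ or from $\Omega(1)$), I would show that this configuration is subsumed by another case whose subobject is genuinely stable, so that the unique surviving extremal sequence is the stated one. Checking that the several parabolas all peak at the same value and that the competing numerical walls are correctly ordered, so that the truly largest wall carries the extremal sequence, is where the bulk of the careful computation will sit.
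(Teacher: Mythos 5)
Your strategy is essentially the paper's: induction on $\Delta(E)$ with base cases from Section \ref{sec:special_cases}, positivity of $\rho_Q^2(E)$ minus the appropriate multiple of $\Delta(E)$ to force a semicircular wall and confine $\ch_0(F) \leq 4$ via Proposition \ref{prop:properties_dest_sequences}(i), evaluations of $Q_{0,\beta_0}$ to pin down $\ch_1(F)$, the nesting inequality $s(E,F) \leq s_Q(E)$ to confine $y$, and parabola maximization with the subsumption trick for strictly semistable extremal factors. Two cosmetic slips: the constant needed to exclude $\ch_0(F) \geq 5$ is $\Delta(E)/20$, not $\Delta(E)/16$ (your parenthetical chain uses the right one, but the inequality you propose to verify is stronger than necessary and stronger than what the paper proves), and in case (iv) the paper evaluates $Q$ at $\beta_0 = -\tfrac{3}{4}$ and $\beta_0 = \tfrac{2}{3}d$ rather than at $-1$.

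There is, however, a genuine gap in case (iv). Your declared toolkit of imported bounds --- Theorem \ref{thm:rank_zero}, Theorems \ref{thm:rank_one}, \ref{thm:rank_two}, \ref{thm:rank_three}, and the inductive hypothesis --- is not sufficient there. In the subcase $(\ch_0(F), \ch_1(F)) = (3,-1)$ the quotient is a twisted rank-one object with $\ch(G \otimes \OO(-1)) = (1, 0, d - y - \tfrac{1}{2}, e - z - d + y + \tfrac{1}{3})$. If you bound its third Chern character by Theorem \ref{thm:rank_one}, then at the admissible endpoint $y = -\tfrac{1}{2}$, even using the sharp value $z \leq -\tfrac{1}{6}$ for $F = \Omega(1)$, you only obtain
\[
e \leq \tfrac{1}{2} d^2 + \tfrac{1}{2} d,
\]
which exceeds the claimed bound $\tfrac{1}{2} d^2 + \tfrac{3}{2} d + 2$ by $-d - 2 \geq 3$ for $d \leq -5$; the argument as proposed therefore fails to prove (iv) and cannot isolate the asserted extremal sequence. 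The missing idea is that Theorem \ref{thm:rank_one} is sharp only for objects semistable above their largest wall, whereas here $G$ is forced to be semistable on $W(E,F)$, which for the rank-one object is a strictly smaller wall: its center $\tfrac{3}{4}d - y - 1$ exceeds the center $d - y - 1$ of the largest wall by $-\tfrac{1}{4}d > 0$. The paper exploits exactly this via Proposition \ref{prop:rank_one_between_-1_-2}, the refined bound for rank-one objects that remain semistable between their two largest walls, which gives $e \leq \tfrac{5}{16}d^2 - \tfrac{3}{4}dy + y^2 - \tfrac{1}{8}d + \tfrac{3}{4}$, and this does stay below $\tfrac{1}{2}d^2 + \tfrac{3}{2}d + 2$ on the allowed range of $y$. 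Note also that the same refinement is already needed inside your base case Proposition \ref{prop:classification_(4,0,-4)}, so the problem cannot be absorbed into the induction base. Once this ingredient is added, the rest of your plan matches the paper's proof.
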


\begin{proof}
The proof is by induction on $\Delta(E)$. The start of the induction are not the cases in this statement, but the special cases established in Section \ref{sec:special_cases}. Assume that $e$ is larger than or equal to the claimed bound. We will deal with the situation case by case.
\begin{enumerate}
    \item Assume that $c = -3$ and $d \leq -\tfrac{1}{2}$. Then $e \geq \tfrac{1}{2} d^2 - 2d + \tfrac{11}{8}$. A straightforward computation implies
    \begin{align*}
    \rho^2_Q(E) - \frac{\Delta(E)}{20} &= \frac{32d^3 - 27d^2 + 216de + 144e^2 - 162e}{(8d - 9)^2} + \frac{2d}{5} - \frac{9}{20} \\
    &\geq \frac{9(20d^2 - 48d + 29)(2d - 1)^2}{20(8d - 9)^2} > 0.
    \end{align*}
    This implies two things. Firstly, $\rho^2_Q(E) > 0$ means that $E$ has to be destabilized along a semicircular wall. Assume this wall is induced by a short exact sequence
    \[
    0 \to F \to E \to G \to 0.
    \]
    By Proposition \ref{prop:properties_dest_sequences} on destabilizing sequences we can assume that both $\ch_0(F) \geq 1$ and $\mu(F) < \mu(E)$. Secondly, Proposition \ref{prop:properties_dest_sequences} also implies $\ch_0(F) \in [1, 4]$. We can compute
    \[
    Q_{0, -\tfrac{5}{4}}(E) = 4d^2 - 20d - 12e + \frac{225}{16} \leq -2d^2 + 4d - \frac{39}{16} < 0.
    \]
    By definition of $\Coh^{\beta}(\P^3)$ this implies $\ch^{-5/4}(F) > 0$ and together with $\mu(F) < \mu(E)$ we obtain
    \begin{equation}
    \label{eq:(4, -3, d)_x_bound}
    -\frac{5}{4} \ch_0(F) < \ch_1(F) < -\frac{3}{4} \ch_0(F).
    \end{equation}
    \begin{enumerate}
        \item If $\ch_0(F) = 1$, then \eqref{eq:(4, -3, d)_x_bound} implies $\ch_1(F) = -1$.
        Assume that 
        \[
        \ch(F) = (1, 0, y, z) \cdot \ch(\OO(-1)).
        \]
        Then
        \[
        0 \geq s(E, F) - s_Q(E) \geq (d - 4y - 2) - \frac{3(4d^2 - 14d + 11)}{2(8d - 9)}
        \]
        which implies
        \[
        \frac{4d^2 - 8d + 3}{8(8d - 9)} \leq y \leq 0.
        \]
        We can use the bound from Theorem \ref{thm:rank_one} on $F$ and the bound from Lemma \ref{lem:rank_three_classification_general} on $G$ to obtain
        \[
        e \leq \frac{1}{2}d^2 - dy + y^2 - 2d + \frac{1}{2}y + \frac{11}{8}.
        \]
        This is a parabola in $y$ with minimum at $y = \tfrac{1}{2}d - \tfrac{1}{4}$ which is smaller than our range of allowed $y$. Therefore, the maximum occurs at $y = 0$, i.e., 
        \[
        e = \frac{1}{2} d^2 - 2d + \frac{11}{8}.
        \]
        In this case, $F = \OO(-1)$, $\ch(G) = (3, -2, d - \tfrac{1}{2}, E(3, -2, d - \tfrac{1}{2}))$, and by Theorem \ref{thm:rank_three} this object $G$ has further maps from $\OO(-1)$ that factor through $E$. This means $G$ is strictly-semistable and this case will be subsumed by the following cases.
        
        \item If $\ch_0(F) = 2$, then \eqref{eq:(4, -3, d)_x_bound} implies $\ch_1(F) = -2$.
        Assume that 
        \[
        \ch(F) = (2, 0, y, z) \cdot \ch(\OO(-1)).
        \]
        Then
        \[
        0 \geq s(E, F) - s_Q(E) \geq (d - 2y - 2) - \frac{3(4d^2 - 14d + 11)}{2(8d - 9)}
        \]
        which implies
        \[
        \frac{4d^2 - 8d + 3}{4(8d - 9)} \leq y \leq 0.
        \]
        If $y \leq -1$, then we use the bound from Theorem \ref{thm:rank_two} on $F$ and $G$ to obtain
        \[
        e \leq \frac{1}{2}d^2 - dy + y^2 - 2d + \frac{3}{2}y + \frac{19}{8}.
        \]
        As in the previous case, this parabola in $y$ is increasing in our range of $y$ and thus, the maximum occurs at $y = -1$ where we get
        \[
        e \leq \frac{1}{2} d^2 - d + \frac{15}{8} \leq \frac{1}{2} d^2 - 2d + \frac{11}{8}.
        \]
        If $y = 0$, then we also use Theorem \ref{thm:rank_two} on both $F$ and $G$ to obtain
        \[
        e = \frac{1}{2} d^2 - 2d + \frac{11}{8}.
        \]
        In this case, $F = \OO(-1)^{\oplus 2}$, $\ch(G) = (2, -1, d - 1, E(2, -1, d - 1))$, and by Theorem \ref{thm:rank_two} this object $G$ has further maps from $\OO(-1)$ that factor through $E$. This means $G$ is strictly-semistable and this case will be subsumed by the remaining cases.
        
        \item If $\ch_0(F) = 3$, then \eqref{eq:(4, -3, d)_x_bound} implies $\ch_1(F) = -3$. Assume that
        \[
        \ch(F) = (3, 0, y, z) \cdot \ch(\OO(-1)).
        \]
        Then
        \[
        0 \geq s(E, F) - s_Q(E) \geq \left(d - \frac{4}{3}y - 2\right) - \frac{3(4d^2 - 14d + 11)}{2(8d - 9)}
        \]
        which implies
        \[
        \frac{3(4d^2 - 8d + 3)}{8(8d - 9)} \leq y \leq 0.
        \]
        We can use the bound from Lemma \ref{lem:rank_three_classification_general} on $F$ and the bound from Theorem \ref{thm:rank_one} on $G$ to obtain
        \[
        e \leq \frac{1}{2}d^2 - dy + y^2 - 2d + \frac{3}{2}y + \frac{11}{8}.
        \]
        As in the previous case, this parabola in $y$ is increasing in our range of $y$ and thus, the maximum occurs at $y = 0$ where we get
        \[
        e = \frac{1}{2} d^2 - 2d + \frac{11}{8}.
        \]
        In this case, $F = \OO(-1)^{\oplus 3}$ and the quotient satisfies $\ch(G) = (1, 0, d - \tfrac{3}{2}, E(1, 0, d - \tfrac{3}{2}))$ and by Theorem \ref{thm:rank_two} this object $G$ has a further map from $\OO(-1)$ that factors through $E$. This means $G$ is strictly-semistable and this case will be subsumed by the remaining case.
        
        \item If $\ch_0(F) = 4$, then \eqref{eq:(4, -3, d)_x_bound} implies $\ch_1(F) = -4$.
        Assume that 
        \[
        \ch(F) = (4, 0, y, z) \cdot \ch(\OO(-1)).
        \]
        By induction, we have $z \leq \tfrac{1}{2}y^2 + \tfrac{3}{2}y + 2$. Note that this bound is not sharp for $y \geq -2$, but nonetheless true. We assume for the moment that $y \leq -1$. Then
        \[
        0 \geq s(E, F) - s_Q(E) \geq (d - y - 2) - \frac{3(4d^2 - 14d + 11)}{2(8d - 9)}
        \]
        which implies
        \[
        \frac{4d^2 - 8d + 3}{2(8d - 9)} \leq y \leq -1.
        \]
        Using the bound from Theorem \ref{thm:rank_zero} on $G$ implies
        \[
        e \leq \frac{1}{2}d^2 - dy + y^2 - 2d + \frac{5}{2}y + \frac{27}{8}.
        \]
        As in the previous cases the maximum occurs at $y = -1$ and
        \[
        e \leq \frac{1}{2} d^2 - d + \frac{15}{8} \leq \frac{1}{2} d^2 - 2d + \frac{11}{8}.
        \]
        We are left to deal with $y = 0$. By induction we have $z \leq 0$, and together with Theorem \ref{thm:rank_zero} we get
        \[
        e = \frac{1}{2} d^2 - 2d + \frac{11}{8}
        \]
        and also $z = 0$. In particular, $F = \OO(-1)^{\oplus 4}$ and we obtain the sequence
        \[
        0 \to \OO(-1)^{\oplus 4} \to E \to \OO_V\left(d - \tfrac{3}{2}\right) \to 0.
        \]
    \end{enumerate}
    
    \item Assume that $c = -2$ and $d \leq -3$. Then $e \geq \tfrac{1}{2} d^2 - \tfrac{1}{2}d + \tfrac{2}{3}$. A straightforward computation implies
    \begin{align*}
    \rho^2_Q(E) - \frac{\Delta(E)}{20} &= \frac{8d^3 - 3d^2 + 36de + 36e^2 - 12e}{4(2d - 1)^2} + \frac{2d}{5} - \frac{1}{5} \\
    &\geq \frac{9(5d^3 - 2d^2 + 2d + 2)(d + 2)}{20(2d - 1)^2} > 0.
    \end{align*}
    This implies two things. Firstly, $\rho^2_Q(E) > 0$ means that $E$ has to be destabilized along a semicircular wall. Assume this wall is induced by a short exact sequence
    \[
    0 \to F \to E \to G \to 0
    \]
    with $\ch(F) = (s, x, y, z)$. By Proposition \ref{prop:properties_dest_sequences} on destabilizing sequences we can assume that $s \geq 1$ and $\mu(F) < \mu(E)$. Secondly, Proposition \ref{prop:properties_dest_sequences} also implies $s \in [1, 4]$. We can compute
    \[
    Q_{0, -1}(E) = 4d^2 - 12d - 12e + 4 \leq -2(d + 2)(d + 1) < 0.
    \]
    By definition of $\Coh^{\beta}(\P^3)$ this implies $\ch^{-1}(F) > 0$ and together with $\mu(F) < \mu(E)$ we obtain
    \[
    - s < x < -\frac{1}{2} s.
    \]
    This implies $(s, x) \in \{ (3, -2), (4, -3) \}$.
    \begin{enumerate}
        \item If $(s, x) = (3, -2)$, then
        \[
        0 \geq s(E, F) - s_Q(E) \geq \left(\frac{3}{2}d - 2y\right) - \frac{3d^2 - 2d + 4}{2(2d - 1)}
        \]
        which implies
        \[
        \frac{3d^2 - d - 4}{4(2d - 1)} \leq y \leq 0.
        \]
        We can use the bound from Lemma \ref{lem:rank_three_classification_general} on $F$ and the bound from \ref{thm:rank_one} on $G$ to obtain
        \[
        e \leq \frac{1}{2}d^2 - dy + y^2 - \frac{1}{2}d - y + \frac{2}{3}.
        \]
        As in the previous case, this parabola in $y$ is increasing in our range of $y$ and thus, the maximum occurs at $y = 0$ where we get
        \[
        e = \frac{1}{2} d^2 - \frac{1}{2}d + \frac{2}{3}.
        \]
        Therefore, Lemma \ref{lem:rank_three_classification_general} and Theorem \ref{thm:rank_one} also imply $F = T(-2)$ and $G = \II_C$ for a plane curve $C$ of degree $-d$.
        
        \item If $(s, x) = (4, -3)$, then Proposition \ref{prop:rank_four_special_cases} implies that $y \leq -\tfrac{1}{2}$. We compute
        \[
        0 \geq s(E, F) - s_Q(E) \geq (d - y) - \frac{3d^2 - 2d + 4}{2(2d - 1)}
        \]
        which implies
        \[
        \frac{d^2 - 4}{2(2d - 1)} \leq y \leq -\tfrac{1}{2}.
        \]
        By induction we have $z \leq \tfrac{1}{2} y^2 - 2y + \tfrac{11}{8}$. An application of Theorem \ref{thm:rank_zero} to $G$ leads to 
        \[
        e \leq \frac{1}{2}d^2 - dy + y^2 - 2y + \frac{17}{12}.
        \] 
        As in the previous case, this parabola in $y$ is increasing in our range of $y$ and thus, the maximum occurs at $y = -\tfrac{1}{2}$ where we get
        \[
        e = \frac{1}{2} d^2 + \frac{1}{2}d + \frac{8}{3} \leq \frac{1}{2} d^2 - \frac{1}{2}d + \frac{2}{3}.
        \]
    \end{enumerate}
    
    \item Assume that $c = -1$ and $d \leq -\tfrac{7}{2}$. Then $e \geq \tfrac{1}{2} d^2 - \tfrac{7}{24}$. A straightforward computation implies
    \begin{align*}
    \rho^2_Q(E) - \frac{\Delta(E)}{20} &= \frac{32d^3 - 3d^2 + 72de + 144e^2 - 6e}{(8d - 1)^2} + \frac{2d}{5} - \frac{1}{20} \\
    &\geq \frac{9(40d^3 + 84d^2 - 106d + 31)(2d + 1)}{20(8d - 1)^2} > 0.
    \end{align*}
    This implies two things. Firstly, $\rho^2_Q(E) > 0$ means that $E$ has to be destabilized along a semicircular wall. Assume this wall is induced by a short exact sequence
    \[
    0 \to F \to E \to G \to 0
    \]
    with $\ch(F) = (s, x, y, z)$. By Proposition \ref{prop:properties_dest_sequences} on destabilizing sequences we can assume that $s \geq 1$ and $\mu(F) < \mu(E)$. Secondly, Proposition \ref{prop:properties_dest_sequences} also implies $s \in [1, 4]$. We can compute
    \[
    Q_{0, -1}(E) = 4d^2 - 10d - 18e + 1 \leq -\frac{5}{4} (2d + 5)(2d - 1) < 0.
    \]
    By definition of $\Coh^{\beta}(\P^3)$ this implies $\ch^{-1}(F) > 0$ and together with $\mu(F) < \mu(E)$ we obtain
    \[
    - s < x < -\frac{1}{4} s.
    \]
    This implies $(s, x) \in \{ (2, -1), (3, -2), (3, -1), (4, -3), (4, -2) \}$.
    \begin{enumerate}
        \item If $(s, x) = (2, -1)$, then $y \leq -\tfrac{1}{2}$ and we can compute
        \[
        0 \geq s(E, F) - s_Q(E) \geq (d - 2y) - \frac{12d^2 + 2d - 7}{2(8d - 1)}
        \]
        which implies
        \[
        \frac{4d^2 - 4d + 7}{4(8d - 1)} \leq y \leq -\tfrac{1}{2}.
        \]
        We can use the bound from Theorem \ref{thm:rank_two} on both $F$ and $G$ to obtain
        \[
        e \leq \frac{1}{2}d^2 - dy + y^2 + \frac{1}{2}d - \frac{3}{2} y + \frac{29}{24}.
        \]
        As in the previous case, this parabola in $y$ is increasing in our range of $y$ and thus, the maximum occurs at $y = -\tfrac{1}{2}$ where we get
        \[
        e \leq \frac{1}{2} d^2 + d + \frac{53}{24} \leq \frac{1}{2} d^2 - \frac{7}{24}.
        \]
        
        \item If $(s, x) = (3, -2)$, then $y \leq 0$ and we can compute
        \[
        0 \geq s(E, F) - s_Q(E) \geq \left(\frac{3}{5}d - \frac{4}{5}y\right) - \frac{12d^2 + 2d - 7}{2(8d - 1)}
        \]
        which implies
        \[
        -\frac{12d^2 + 16d - 35}{8(8d - 1)} \leq y \leq -\frac{1}{2}.
        \]
        This is a contradiction to $d \leq -\tfrac{7}{2}$.
        
        \item If $(s, x) = (3, -1)$, then $y \leq -\tfrac{1}{2}$ and we can compute
        \[
        0 \geq s(E, F) - s_Q(E) \geq (3d - 4y) - \frac{12d^2 + 2d - 7}{2(8d - 1)}
        \]
        which implies
        \[
        \frac{36d^2 - 8d + 7}{8(8d - 1)} \leq y \leq -\tfrac{1}{2}.
        \]
        Assume for the moment that $y \leq -\tfrac{3}{2}$. Then we can use the bound from Lemma \ref{lem:rank_three_classification_general} on $F$ and the bound from \ref{thm:rank_one} on $G$ to obtain
        \[
        e \leq \frac{1}{2}d^2 - dy + y^2 - \frac{1}{2}d + \frac{1}{2} y + \frac{17}{24}.
        \]
        As in the previous case, this parabola in $y$ is increasing in our range of $y$ and thus, the maximum occurs at $y = -\tfrac{3}{2}$ where we get
        \[
        e \leq \frac{1}{2} d^2 + d + \frac{53}{24} < \frac{1}{2} d^2 - \frac{7}{24}.
        \]
        The only possibility left is $y = -\tfrac{1}{2}$. In that case, we can use the bound from Lemma \ref{lem:rank_three_classification_general} on $F$ and the bound from \ref{thm:rank_one} on $G$ again to get $z = -\tfrac{1}{6}$ and
        \[
        e = \frac{1}{2} d^2 - \frac{7}{24}.
        \]
        Moreover, in that case $F = \Omega(1)$ and $G = \II_C$ for a plane curve $C \subset \P^3$ of degree $-d - \tfrac{1}{2}$.
        
        \item If $(s, x) = (4, -3)$, then $y \leq -\tfrac{1}{2}$ and we can compute
        \[
        0 \geq s(E, F) - s_Q(E) \geq \left(\frac{1}{2}d - \frac{1}{2}y\right) - \frac{12d^2 + 2d - 7}{2(8d - 1)}
        \]
        which implies
        \[
        -\frac{4d^2 + 3d - 7}{8d - 1} \leq y \leq -\frac{1}{2}.
        \]
        This is a contradiction to $d \leq -\tfrac{7}{2}$.
        
        \item If $(s, x) = (4, -2)$, then $y \leq -1$ and by induction $z \leq \tfrac{1}{2}y^2 - \tfrac{1}{2}y + \tfrac{2}{3}$. We can compute
        \[
        0 \geq s(E, F) - s_Q(E) \geq (d - y) - \frac{12d^2 + 2d - 7}{2(8d - 1)}
        \]
        which implies
        \[
        \frac{4d^2 - 4d + 7}{2(8d - 1)} \leq y \leq -1.
        \]
        Using the bound from Theorem \ref{thm:rank_zero} on $G$ implies
        \[
        e \leq \frac{1}{2}d^2 - dy + y^2 - \frac{1}{2}y + \frac{17}{24}.
        \]
        As previously the maximum occurs at $y = -1$ and
        \[
        e \leq \frac{1}{2} d^2 + d + \frac{53}{24} \leq \frac{1}{2} d^2 - \frac{7}{24}.
        \]
    \end{enumerate}
    
    \item Assume that $c = 0$ and $d \leq -5$. Then $e \geq \tfrac{1}{2} d^2 + \tfrac{3}{2} d + 2$. We can compute
    \begin{align*}
        Q_{0, -\frac{3}{4}} &= 4d^2 - \frac{9}{2}d - 18e \leq -\frac{1}{2}(5d + 24)(2d + 3) < 0, \\
        Q_{0, \frac{2}{3}d} &= -\frac{32}{9}d^3 + 4d^2 + 16de \leq \frac{4}{9}d(5d + 24)(2d + 3) < 0.
    \end{align*}
    Firstly, this means that $E$ has be destabilized along a semicircular wall induced by a short exact sequence
    \[
    0 \to F \to E \to G \to 0.
    \]
    Let $\ch(F) = (s, x, y, z)$. By Proposition \ref{prop:properties_dest_sequences} on destabilizing sequences we may assume $s > 0$ and $\mu(F) < \mu(E) = 0$. Secondly, we get $\ch^{-\tfrac{3}{4}}(F) > 0$ and $\ch^{\tfrac{2}{3}d}(G) > 0$ which yields
    \[
    \mu(F) \in \left( -\frac{3}{4}, \min \left\{0, -\frac{8d}{3s} + \frac{2d}{3} \right\} \right).
    \]
    If $d \leq -6$, then this immediately implies $s \leq 4$. If $d = -5$, then the interval is empty for $s \geq 6$. If $d = -5$ and $s = 5$, then we have $\mu(F) \in (-\tfrac{3}{4}, \tfrac{2}{3})$, but this interval contains no value in $\tfrac{1}{5} \Z$. In either case, we get $s \leq 4$. We are left with the following cases:
    \[
    (s, x) \in \{ (2, -1), (3, -1), (3, -2), (4, -1), (4, -2) \}.
    \]
    \begin{enumerate}
        \item If $(s, x) = (2, -1)$, then $y \leq -\tfrac{1}{2}$ and we can compute
        \[
        0 \geq s(E, F) - s_Q(E) \geq \left(\frac{1}{2}d - y\right) - \frac{3d^2 + 9d + 12}{4d}
        \]
        which implies
        \[
        -\frac{d^2 + 9d + 12}{4d} \leq y \leq -\frac{1}{2}.
        \]
        This is a contradiction to $d \leq -5$.
        
        \item If $(s, x) = (3, -1)$, then $y \leq -\tfrac{1}{2}$ and by Lemma \ref{lem:rank_three_classification_general} $z \leq \tfrac{1}{2} y^2 + \tfrac{17}{24}$. This bound on $z$ is only sharp for $y \leq -\tfrac{3}{2}$. We can compute
        \[
        0 \geq s(E, F) - s_Q(E) \geq \left(\frac{3}{4}d - y\right) - \frac{3d^2 + 9d + 12}{4d}
        \]
        which implies
        \[
        -\frac{9d + 12}{4d} \leq y \leq -\frac{1}{2}.
        \]
        We can compute $\ch(G \otimes \OO(-1)) = (1, 0, d - y - \tfrac{1}{2}, -d + e + y - z + \tfrac{1}{3})$. We can apply Proposition \ref{prop:rank_one_between_-1_-2} with $s = \tfrac{3}{4}d - y - 1$ to obtain
        \[
        e \leq \frac{5}{16}d^2 - \frac{3}{4}dy + \frac{1}{2}y^2 - \frac{1}{8}d + z + \frac{1}{24} \leq \frac{5}{16}d^2 - \frac{3}{4}dy + y^2 - \frac{1}{8}d + \frac{3}{4}.
        \]
        This is a parabola in $y$ with minimum before our range for $y$, i.e., it is increasing in $y$. For $y = -\tfrac{3}{2}$, we get
        \[
        e \leq \frac{5}{16}d^2 + d + 3 \leq \frac{1}{2} d^2 + \frac{3}{2} d + 2.
        \]
        If $y = -\tfrac{1}{2}$, we can instead use the stronger bound $z \leq -\tfrac{1}{6}$ to get
        \[
        e \leq \frac{5}{16}d^2 + \frac{1}{4}d \leq \frac{1}{2} d^2 + \frac{3}{2} d + 2.
        \]
        
        \item If $(s, x) = (3, -2)$, then $y \leq 0$ and we can compute
        \[
        0 \geq s(E, F) - s_Q(E) \geq \left(\frac{3}{8}d - \frac{1}{2}y\right) - \frac{3d^2 + 9d + 12}{4d}
        \]
        which implies
        \[
        -\frac{3d^2 + 18d + 24}{4d} \leq y \leq 0.
        \]
        This is a contradiction to $d \leq -5$.
        
        \item If $(s, x) = (4, -2)$, then Proposition \ref{prop:rank_four_special_cases} says $y \leq -1$. We can compute 
        \[
        0 \geq s(E, F) - s_Q(E) \geq \left(\frac{1}{2}d - \frac{1}{2}y\right) - \frac{3d^2 + 9d + 12}{4d}
        \]
        which implies
        \[
        -\frac{d^2 + 9d + 12}{2d} \leq y \leq -1.
        \]
        This is a contradiction to $d \leq -5$.
        
        \item Lastly, if $(s, x) = (4, -1)$, then Proposition \ref{prop:rank_four_special_cases} says $y \leq -\tfrac{3}{2}$ and by induction we have $z \leq \tfrac{1}{2}y^2 - \tfrac{7}{24}$. We compute
        \[
        0 \geq s(E, F) - s_Q(E) \geq (d - y) - \frac{3d^2 + 9d + 12}{4d}
        \]
        which implies
        \[
        \frac{d^2 - 9d - 12}{4d} \leq y \leq -\frac{3}{2}.
        \]
        We apply Theorem \ref{thm:rank_zero} to $G$ together with our upper bound on $z$ to obtain
        \[
        e \leq \frac{1}{2}d^2 - dy + y^2 - \frac{1}{4}.
        \]
        This is a parabola in $y$ and its maximum in our range of $y$ occurs at $y = -\tfrac{3}{2}$ where we get
        \[
        e = \frac{1}{2} d^2 + \frac{3}{2}d + 2.
        \]
        Therefore, $F \in M(4, -1, -\tfrac{3}{2}, \tfrac{5}{6})$ and by Theorem \ref{thm:rank_zero} we have $G = \OO_V(d + 2)$ for a plane $V \subset \P^3$. \qedhere
    \end{enumerate}
\end{enumerate}
\end{proof}

\begin{cor}
\label{cor:conjecture_rank_four}
Conjecture \ref{conj:irreducible_smooth} holds for rank four sheaves.
\end{cor}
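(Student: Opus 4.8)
The plan is to follow the case-by-case strategy already used for the rank three case in Corollary \ref{cor:conjecture_rank_three}. First I would reduce to the four residue classes $c \in \{-3,-2,-1,0\}$ by tensoring with line bundles $\OO(n)$, which induces isomorphisms of moduli spaces and shifts $(c,d,e)$ in the way recorded in Proposition \ref{prop:function_E}. For each such $c$, Theorem \ref{thm:rank_four} splits the admissible $d$ into a generic range, handled uniformly by the destabilizing sequences of Lemma \ref{lem:rank_four_classification_general}, and finitely many small-$\Delta$ exceptions, handled by Proposition \ref{prop:rank_four_special_cases} together with the multi-wall classifications of Propositions \ref{prop:classification_(4,-2,-2)}, \ref{prop:classification_(4,-1,-5/2)}, and \ref{prop:classification_(4,0,-4)}. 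Throughout, a sheaf $E$ with $\ch(E)=(4,c,d,E(4,c,d))$ is $2$-Gieseker-semistable, hence tilt-semistable above the largest wall, so its destabilizing sequence along the largest actual wall is pinned down by these results.

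For irreducibility I would argue stratum by stratum. In each generic case of Lemma \ref{lem:rank_four_classification_general} the universal destabilizing extension exhibits the moduli space as a Grassmann- or projective bundle over an irreducible base: the space of planes $V \subset \P^3$, possibly fibered further over a Hilbert scheme of plane curves or over a lower-rank moduli space such as $M(4,-1,-\tfrac{3}{2},\tfrac{5}{6})$, $M(2,-1,-\tfrac{1}{2},\tfrac{5}{6})$, $M(1,0,-3,5)$, or $M(1,0,-4,8)$. Each such base is irreducible by the lower-rank cases already settled (Theorems \ref{thm:rank_one}, \ref{thm:rank_two}, \ref{thm:rank_three}, Propositions \ref{prop:final_model_(1, 0, -3, 5)} and \ref{prop:final_model_(1, 0, -4, 8)}) and by the rank four special cases; since the dimension of the relevant extension group is constant along the base, the bundle is genuine and hence irreducible. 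In the multi-wall cases I would show, exactly as in the rank three argument summarized in Remark \ref{rmk:moduli_spaces_rank_three}, that the loci destabilized on the two sides of each wall are projective bundles over irreducible bases and that crossing the wall realizes the passage between two birational models as a blow-up or a flip; as a blow-up or flip of an irreducible variety is again irreducible, the total moduli space is irreducible.

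For smoothness along the stable locus I would use the deformation-theoretic description recalled at the end of Section \ref{sec:stability}: the tangent space at a stable $E$ is $\Ext^1(E,E)$ and the obstructions lie in $\Ext^2(E,E)$. Applying $\RHom(-,E)$ and $\RHom(E,-)$ to a destabilizing sequence $0 \to F \to E \to G \to 0$ and using the known groups $\ext^i$ between the building blocks $\OO(-1)$, $\Omega(1)$, $T(-2)$, $\OO_V(a)$, $\II_C$ and the stable lower-rank constituents, I would bound $\ext^1(E,E)$ from above by the dimension of the explicit parameter space and obtain the reverse inequality from the bundle description, forcing equality and hence smoothness. The off-diagonal vanishings $\hom(F,G)=\hom(G,F)=0$ and $\ext^3(F,G)=\hom(G,F(-4))^{\vee}=0$ needed to control $\chi(F,G)$ would follow from (slope or tilt) stability along the wall together with Serre duality, precisely as in the rank three computation.

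The main obstacle will be the genuinely multi-wall case $\ch(E) = (4,0,-4,4)$ of Proposition \ref{prop:classification_(4,0,-4)}, where above the second wall two distinct families of destabilizing sequences occur: one with subobject $\Omega(1)$ and quotient $G(1)$ for $G \in M(1,0,-4,8)$, and one with subobject $\OO_V(-2)$ and quotient $F \in M(4,-1,-\tfrac{3}{2},\tfrac{5}{6})$. I expect the delicate point to be showing that these two descriptions parametrize loci gluing into a single irreducible component of the expected dimension, and that the bound $\ext^1(E,E) \le \dim M(4,0,-4,4)$ holds uniformly across both, in particular handling the strictly semistable objects on the wall, which lie outside the stable locus and so need not be smooth points but must not create extra components. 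The analogous but milder bookkeeping for $(4,-1,-\tfrac{5}{2},\tfrac{17}{6})$ and $(4,-2,-2,\tfrac{11}{3})$ in Propositions \ref{prop:classification_(4,-1,-5/2)} and \ref{prop:classification_(4,-2,-2)} should then follow the same template.
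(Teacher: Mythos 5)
Your proposal follows essentially the same route as the paper: a case-by-case analysis over $c \in \{-3,-2,-1,0\}$ splitting into the generic range of Lemma \ref{lem:rank_four_classification_general} and the special cases of Propositions \ref{prop:rank_four_special_cases}, \ref{prop:classification_(4,-2,-2)}, \ref{prop:classification_(4,-1,-5/2)}, \ref{prop:classification_(4,0,-4)}, with irreducibility from Grassmann/projective-bundle descriptions over irreducible bases and smoothness from bounding $\ext^1(E,E)$ via the destabilizing sequences and stability-induced vanishings. You also correctly identified the genuine sticking point, the two families of destabilizing sequences for $(4,0,-4,4)$, which the paper resolves exactly as you anticipate: by showing the newly added locus is connected and consists only of smooth points, so no new component can appear.
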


\begin{proof}
The proof will proceed on a case by case basis. Let $E$ be a Gieseker-semistable sheaf with $\ch(E) = (4, c, d, E(4, c, d))$.
\begin{enumerate}
    \item Assume $c = -3$. Then by Lemma \ref{lem:rank_four_classification_general} any such $E$ fits into a short exact sequence
    \[
    0 \to \OO(-1)^{\oplus 4} \to E \to \OO_V\left(d - \tfrac{3}{2}\right) \to 0
    \]
    where $V \subset \P^3$ is a plane. This means the moduli space is a Grassmann-bundle of four-dimensional subspaces of $\Ext^1(\OO_V(d - \tfrac{3}{2}), \OO(-1))$ over the space of planes $\P^3$, i.e., it is irreducible and smooth.
    
    \item Assume $c = -2$. 
    \begin{enumerate}
        \item If $d = -1$, then by Proposition \ref{prop:rank_four_special_cases} the sheaf $E$ fits into a short exact sequence
        \[
        0 \to \OO(-2)^{\oplus 2} \to \OO(-1)^{\oplus 6} \to E.
        \]
        This means the moduli space is the moduli space of quiver representations over the generalized Kronecker quiver with two vertices and four arrows with dimension vector $(2, 6)$. This is irreducible and smooth along the stable points, since this is true for all moduli spaces of quiver representations.
        
        \item If $d = -2$, then by Proposition \ref{prop:classification_(4,-2,-2)} we have to deal with two walls. The first wall destabilizes objects $E$ that fit into a short exact sequence
        \[
        0 \to \OO(-3) \to \OO(-1)^{\oplus 5} \to E \to 0.
        \]
        Since $H^0(\OO(2)) = \C^{10}$, the first moduli space is $\Gr(5, 10)$. Clearly, this Grassmannian is irreducible and smooth of dimension $25$. The second wall deals with extensions $E$ between $T(-2)$ and $\II_C$ for a conic $C \subset \P^3$. Some homological algebra shows $\ext^1(T(-2), T(-2)) = 0$, $\ext^1(\II_C, T(-2)) = 17$, $\ext^1(T(-2), \II_C) = 1$, $\ext^1(\II_C, \II_C) = 8$. This has two consequence. On the one hand, this means that the destabilized locus in our first moduli space $\Gr(5, 10)$ is isomorphic to the space of conics, and the new locus is a $\P^{16}$-bundle over this space of conics. In particular, both loci are irreducible. If we can show that the new points are smooth, then we know that the second space is smooth and irreducible as well. Indeed, for a tilt-stable $E$ that fits into
        \[
        0 \to T(-2) \to E \to \II_C \to 0
        \]
        we can do some homological algebra to get
        \[
        \ext^1(E, E) \leq 8 + 1 + 17 - 1 = 25.
        \]
        
        \item If $d \leq -3$, then $E$ fits into a short exact sequence
        \[
        0 \to T(-2) \to E \to \II_C \to 0
        \]
        where $C$ is a plane curve of degree $-d$. Some standard homological algebra shows $\ext^1(\II_C, T(-2)) = \tfrac{3}{2} d^2 - \tfrac{11}{2}d$. This shows that the moduli space is a $\P^{\tfrac{3}{2} d^2 - \tfrac{11}{2}d - 1}$-bundle over the moduli space of plane degree $-d$ curves. In particular, this space is smooth and irreducible.
    \end{enumerate}
    
    \item Assume $c = -1$.
    \begin{enumerate}
        \item If $d = -\tfrac{3}{2}$, then Proposition \ref{prop:rank_four_special_cases} says that $E$ fits into a short exact sequence
        \[
        0 \to \Omega(1) \to E \to \II_L \to 0
        \]
        for a line $L \subset \P^3$. Some homological algebra shows $\ext^1(\II_L, \Omega(1)) = 8$. This means that the moduli space is a $\P^7$-bundle over the Grassmannian $\Gr(2, 4)$ of lines in $\P^3$.
        
        \item If $d = -\tfrac{5}{2}$, then according to Proposition \ref{prop:classification_(4,-1,-5/2)} we have tow deal with two walls. The smallest wall deals with objects $E$ fitting into a short exact sequence of the form
        \[
        0 \to \OO(-2)^{\oplus 3} \to \OO(-1)^{\oplus 7} \to E \to 0.
        \]
        Therefore, the moduli space right above this wall is given by the moduli space of quiver representation of the generalized Kronecker quiver with four arrows and dimension vector $(3, 7)$. This space is smooth and irreducible of dimension $27$. 
        
        The second walls deals with extensions between $\Omega(1)$ and $\II_C$ for a conic $C \subset \P^3$. Some homological algebra yields $\ext^1(\Omega(1), \Omega(1)) = 0$, $\ext^1(\II_C, \II_C) = 8$, $\ext^1(\Omega(1), \II_C) = 1$, and $\ext^1(\II_C, \Omega(1)) = 19$. This has two consequence. On the one hand, this means that the destabilized locus in our first moduli space is isomorphic to the space of conics, and the new locus is a $\P^{18}$-bundle over this space of conics. In particular, both loci are irreducible. If we can show that the new points are smooth, then we know that the second space is smooth and irreducible as well. Indeed, for a tilt-stable $E$ that fits into
        \[
        0 \to \Omega(1) \to E \to \II_C \to 0
        \]
        we can do some homological algebra to get
        \[
        \ext^1(E, E) \leq 8 + 1 + 19 - 1 = 27.
        \]

        \item If $d \leq -\tfrac{7}{2}$, then according to Lemma \ref{lem:rank_four_classification_general} any such $E$ fits into a short exact sequence
        \[
        0 \to \Omega(1) \to E \to \II_C \to 0
        \]
        where $C$ is a plane curve of degree $-d - \tfrac{1}{2}$. Some homological algebra leads to $\ext^1(\II_C, \Omega(1)) = \tfrac{3}{2} d^2 - 5d - \tfrac{23}{8}$. This shows that the moduli space is a $\P^{\tfrac{3}{2} d^2 - 5d - \tfrac{31}{8}}$-bundle over the moduli space of plane curves of degree $-d-\tfrac{1}{2}$. In particular, this space is smooth and irreducible.
    \end{enumerate}
    
    \item Assume $c = 0$.
    \begin{enumerate}
        \item If $d = 0$, then $E = \OO_X^{\oplus 4}$ and the moduli space is a single point.
        
        \item If $d = -1$, then by Proposition \ref{prop:rank_four_special_cases} any such $E$ fits into a short exact sequence
        \[
        0 \to E \to \OO^{\oplus 4} \to \OO_L(3) \to 0
        \]
        for a line $L \subset \P^3$. Since $H^0(\OO_L(3)) = \C^4$ there is a unique $E$ for each line and the moduli space is simply $\Gr(2, 4)$.
        
        \item If $d = -2$, then by Proposition \ref{prop:rank_four_special_cases} any such $E$ fits into a short exact sequence
        \[
        0 \to \OO(-1)^{\oplus 2} \to \Omega(1)^{\oplus 2} \to E \to 0. 
        \]
        Since $\Hom(\OO(-1), \Omega(1)) = H^0(\Omega(2)) = \C^6$, our moduli space is given by quiver representations of the generalized Kronecker quiver with six arrows and dimensions vector $(2, 2)$. Therefore, it is irreducible and smooth along its locus of stable sheaves.
        
        \item If $d = -3$, then Proposition \ref{prop:rank_four_special_cases} says that $E$ fits into a short exact sequence
        \[
        0 \to \Omega(1) \to E \to G(1) \to 0
        \]
        where $G \in M^{\alpha, \beta - 1}(1, 0, -3, 5)$ for $(\alpha, \beta) \in W(E, \Omega(1))$. By Proposition \ref{prop:final_model_(1, 0, -3, 5)} we know that $G$ fits into a short exact sequence
        \[
        0 \to \OO(-2)^{\oplus 3} \to G \to \OO(-3)^{\oplus 2}[1] \to 0.
        \]
        Some homological algebra shows $\ext^1(G(1), \Omega(1)) = \C^{22}$, i.e., our moduli space is a $\P^{21}$-bundle over $M^{\alpha, \beta - 1}(1, 0, -3, 5)$ for $(\alpha, \beta) \in W(E, \Omega(1))$.
        
        \item If $d = -4$, then according to Proposition \ref{prop:classification_(4,0,-4)} we have to deal with two walls. The smallest wall deals with objects $E$ that fit into a short exact sequence
        \[
        0 \to \OO(-1)^{\oplus 8} \to E \to \OO(-2)^{\oplus 4}[1] \to 0.
        \]
        This means the moduli space of tilt-semistable objects right above this wall is the moduli space of quiver representations over the generalized Kronecker quiver with four arrows with dimension vector $(4, 8)$. This is irreducible and smooth along the locus of stable objects. Its dimension is $49$.
        
        If $E$ is tilt-semistable above the second wall, and destabilized at the second wall, then it fits into a short exact sequence
        \[
        0 \to \Omega(1) \to E \to G(1) \to 0
        \]
        where $G \in M^{\alpha, \beta - 1}(1, 0, -4, 8)$ for $(\alpha, \beta) \in W(E, \Omega(1))$. A straightforward computation shows that $W(E, \Omega(1)) = W(G(1), \II_L)$. By Proposition \ref{prop:final_model_(1, 0, -4, 8)} there some possibilities for $G$.
        
        Firstly, assume that $G(1)$ is extension between $\OO_V(-2)$ and $\II_L$ for a line $L \subset \P^3$ and a plane $V \subset \P^3$.  Some homological algebra computation implies $\ext^1(\Omega(1), \Omega(1)) = 0$, $\ext^1(G(1), G(1)) = 16$, $\ext^1(\Omega(1), G(1)) = 1$, $\ext^1(G(1), \Omega(1)) = 33$ and thus,
        \[
        \ext^1(E, E) \leq 0 + 16 + 1 + 33 - 1 = 49.
        \]
        This means these new points are all smooth.
        
        Secondly, $G(1)$ could be tilt-stable, i.e., fits into a short exact sequence
        \[
        0 \to \OO(-1)^{\oplus 2} \to G(1) \to \OO(-3)[1] \to 0.
        \]
        As before, we compute the cohomology $\ext^1(\Omega(1), \Omega(1)) = 0$, $\ext^1(G(1), G(1)) = 16$, $\ext^1(\Omega(1), G(1)) = 1$, $\ext^1(G(1), \Omega(1)) = 33$ and thus,
        \[
        \ext^1(E, E) \leq 0 + 16 + 1 + 33 - 1 = 49.
        \]
        
        Since the locus of new points is connected and only consists of smooth points, the moduli space above this wall has to stay irreducible.

        \item If $d \leq -5$, then according to Lemma \ref{lem:rank_four_classification_general} any such $E$ fits into a short exact sequence
        \[
        0 \to F \to E \to \OO_V(d + 2) \to 0
        \]
        where $F \in M(4, -1, -\tfrac{3}{2}, \tfrac{5}{6})$ and $V \subset \P^3$ is a plane. We have
        \[
        \ext^1(\OO_V(d+2), F) = 2d^2 - d - 2
        \]
        and thus, our moduli space is a projective bundle over $M(4, -1, -\tfrac{3}{2}, \tfrac{5}{6}) \times \P^3$. \qedhere
    \end{enumerate}
\end{enumerate}
\end{proof}

\begin{rmk}
\label{rmk:moduli_spaces_rank_four}
In the proof of Corollary \ref{cor:conjecture_rank_four} we actually obtained more geometric information about the moduli spaces. Let us summarize these results:
\begin{enumerate}
    \item For $d \leq -\tfrac{1}{2}$ the moduli space $M(4, -3, d, \tfrac{1}{2} d^2 - 2d + \tfrac{11}{8})$ is $\Gr(4, n)$-bundle over $\P^3$ for $n = \binom{-d + 7/2}{2}$.
    
    \item $M(4, -2, -1, \tfrac{5}{3})$ is the moduli space of quiver representations over the generalized Kronecker quiver with four arrows and dimension vector $(2, 6)$.
    
    \item $M(4, -2, -2, \tfrac{11}{3})$ is the blow up of $\Gr(5, 10)$ in a sublocus isomorphic the Hilbert scheme of conics in $\P^3$. 
    
    \item If $d \leq -3$, then $M(4, -2, d, \tfrac{1}{2}d^2 - \tfrac{1}{2}d + \tfrac{2}{3})$ is a $\P^{\tfrac{3}{2}d^2 - \tfrac{11}{2}d - 1}$-bundle over the Hilbert scheme of plane degree $-d$ curves in $\P^3$.
    
    \item $M(4, -1, -\tfrac{3}{2}, \tfrac{5}{6})$ is a $\P^7$-bundle over $\Gr(2, 4)$.
    
    \item $M(4, -1, -\tfrac{5}{2}, \tfrac{17}{6})$ is birational to the moduli space of quiver representations of the generalized Kronecker quiver with four arrows and dimension vector $(3, 7)$. More precisely, $M(4, -1, -\tfrac{5}{2}, \tfrac{17}{6})$ is the blow-up of the aforementioned moduli space of quiver representations in a sublocus isomorphic to the Hilbert scheme of conics in $\P^3$.
    
    \item For $d \leq -\tfrac{7}{2}$ the moduli space $M(4, -1, d, \tfrac{1}{2}d^2  - \tfrac{7}{24})$ is a $\P^{\tfrac{3}{2}d^2 - 5d - \tfrac{31}{8}}$-bundle over the Hilbert scheme of plane degree $-d - \tfrac{1}{2}$ curves in $\P^3$.
    
    \item $M(4, 0, 0, 0)$ is a point.
    
    \item $M(4, 0, -1, -2)$ is $\Gr(2, 4)$.
    
    \item $M(4, 0, -2, 0)$ is the moduli space of quiver representation over the generalized Kronecker quiver with six arrows with dimension vector $(2, 2)$.
    
    \item $M(4, 0, -3, 2)$ is a $\P^{21}$-bundle over the moduli space of quiver representations over the generalized Kronecker quiver with four arrows and dimension vector $(2, 3)$.
    
    \item $M(4, 0, -4, 4)$ is birational to the moduli space of quiver representation over the generalized Kronecker quiver with four arrows and dimension vector $(4, 8)$. The wall crossing is more complicated, and we only showed that $M(4, 0, -4, 4)$ is smooth and irreducible.
    
    \item $M(4, 0, d, \tfrac{1}{2}d^2 + \tfrac{3}{2}d + 2)$ is a $\P^{2d^2 - d -3}$-bundle over $M(4, -1, -\tfrac{3}{2}, \tfrac{5}{6}) \times \P^3$. \qedhere
\end{enumerate}
\end{rmk}

\def\cprime{$'$} \def\cprime{$'$}

\end{document}